\documentclass[11pt,a4paper]{amsart}
\usepackage[margin=30mm]{geometry}
\usepackage[T1]{fontenc}
\usepackage{lmodern,microtype}
\usepackage{amsmath,amssymb,amsthm,mathtools,mathrsfs}
\numberwithin{equation}{section}
\mathtoolsset{showonlyrefs=true}
\usepackage{booktabs,longtable,array,xurl,hyperref}
\hypersetup{colorlinks=true,linkcolor=blue,citecolor=blue,urlcolor=blue}
\newtheorem{theorem}{Theorem}[section]
\newtheorem{proposition}[theorem]{Proposition}
\newtheorem{lemma}[theorem]{Lemma}
\newtheorem{corollary}[theorem]{Corollary}
\theoremstyle{definition}
\newtheorem{definition}[theorem]{Definition}
\newtheorem{remark}[theorem]{Remark}
\theoremstyle{plain}
\newtheorem{mainthm}{Theorem}

\newtheorem*{scalarconsequence}{Scalar consequence}
\DeclareMathOperator{\Law}{Law}
\newcommand{\E}{\mathbb E}
\newcommand{\Prob}{\mathbb P}
\newcommand{\T}{\mathbb T}
\newcommand{\F}{\mathcal F}
\newcommand{\1}{\mathbf1}
\newcommand{\dd}{\,\mathrm d}
\newcommand{\ii}{\mathrm i}

\newcommand{\boldparagraph}[1]{\par\addvspace{0.5em}
  \paragraph{\textbf{#1}}
}

\hypersetup{
 pdftitle={Fourier asymptotics of critical Gaussian multiplicative chaos on the circle},
 pdfauthor={Yin Cai, Bonan Chen, Xiang Fang, Feng Guo},
 pdfsubject={Critical Gaussian multiplicative chaos on the circle: conditional stable Fourier limits and almost-sure integral classes},
 pdfkeywords={critical Gaussian multiplicative chaos, Fourier coefficients, stable convergence, stable random measures, integral test}}

\begin{document}
\title[Fourier asymptotics of critical chaos]{Fourier asymptotics of critical
Gaussian multiplicative chaos on the circle}

\author[Y. Cai]{Yin Cai}
\address{School of Mathematics\\
Hangzhou Normal University\\
Hangzhou 311121\\
P. R. China}
\email{cymath@hznu.edu.cn}

\author[B. Chen]{Bonan Chen}
\address{School of Mathematical Sciences\\
Soochow University\\
Suzhou 215006\\
P. R. China}
\email{bnchen@suda.edu.cn}

\author[X. Fang]{Xiang Fang}
\address{Department of Applied Mathematics\\
National Yang Ming Chiao Tung University\\
Hsinchu 30010\\
Taiwan}
\email{xfang@nycu.edu.tw}

\author[F. Guo]{Feng Guo}
\address{School of Mathematics\\
South China University of Technology\\
Guangzhou 510640\\
P. R. China}
\email{70207994@nuaa.edu.cn}

\date{}
\subjclass[2020]{Primary 60G57; Secondary 60F05, 60F15, 42A16}
\keywords{critical Gaussian multiplicative chaos, Fourier coefficients,
stable convergence, stable random measures, integral test}
\begin{abstract}
We study the high-frequency Fourier coefficients $C_n$ of critical Gaussian
multiplicative chaos $M$ on the circle, for logarithmic covariances with an
arbitrary admissible smooth, possibly nonstationary remainder. For every
fixed finite set of integer offsets, the corresponding vector of neighboring
coefficients, multiplied by $\sqrt{\log n}$, converges stably relative to
the full Gaussian field without additional centering. Conditionally on
that field, the limit consists of Fourier coefficients of one symmetric
complex stable random measure of index one with control proportional to
$M$. For every $s>1$, the normalized modulated measures also converge
stably in $H^{-s}(\T)$ to this conditional stable noise. Unconditionally, the same joint approximation remains valid for offsets of size at most $(\log\log n)^{1/64}$ for every fixed admissible covariance. The scalar limit is an isotropic Cauchy mixture; the joint limiting law retains the spatial distribution of $M$, not only its total mass.  
We also prove an almost-sure
integral test along the entire integer frequency sequence. For a class of regular deterministic gauges $H$, the limsup of
$\sqrt{\log|n|}\,|C_n|/H(\log\log|n|)$ is zero or infinity according as
$\int^\infty dx/H(x)$ converges or diverges. The proof combines index-one
compensation with relative excursion estimates and transfers these conclusions from the canonical model by smooth covariance comparison.
\end{abstract}
\maketitle

\section{Introduction}
\label{sixintro:sec-introduction}

\subsection{Critical Fourier fluctuations}
\label{sixintro:subsec-opening}

Critical Gaussian multiplicative chaos (GMC) is a renormalized
exponential measure of a log-correlated Gaussian field at critical
coupling \cite{DRSV,SH}. On the circle, the canonical measure,
normalized by its total mass, also has a spectral construction through
random orthogonal polynomials associated with circular beta ensembles
\cite{CN}. Its Fourier coefficients measure cancellation in the random
mass, not only concentration. They vanish almost surely despite zero
Fourier dimension \cite{Rajchman}; polynomial decay exponents do not
describe the critical asymptotics.

For star-scale invariant fields, Wong \cite{WongPV} proves tightness at
the square-root-logarithmic normalization and almost-sure
logarithmic-power decay. We ask what nondegenerate law
governs coefficients at this normalization, how neighboring frequencies
are coupled, and how large exceptional coefficients can be along a
single realization. For circle fields with admissible smooth,
possibly nonstationary covariance remainders, we prove a conditional
index-one stable Fourier limit and a separate almost-sure integral test.
The first retains the spatial critical measure in the joint limiting law;
the second classifies rare excursions along the full frequency sequence.

Let $g\in C^\infty(\T^2;\mathbb R)$ be deterministic and symmetric, where
$\T=\mathbb R/(2\pi\mathbb Z)$, and suppose that
\begin{equation}\label{intro:eq-covariance}
 K_g(x,y)=\log|e^{ix}-e^{iy}|^{-1}+g(x,y)
\end{equation}
is positive semidefinite as a covariance distribution. We call such a
remainder $g$ admissible. Let $X^g$ be a real centered Gaussian generalized
field of covariance $K_g$. No stationarity or positivity of $g$ itself is
assumed. With unnormalized arc length $dx$, use the Seneta--Heyde convention
\cite{SH}:
\begin{equation}\label{intro:eq-chaos}
 M_g(dx)=\lim_{\varepsilon\downarrow0}
 \sqrt{\log(1/\varepsilon)}
 e^{\sqrt2X^g_\varepsilon(x)-\E[X^g_\varepsilon(x)^2]}\,dx,
 \qquad 0<M_g(\T)<\infty\quad\hbox{a.s.}
\end{equation}
Here $X^g_\varepsilon$ is convolution with a fixed standard nonnegative
smooth approximate identity of integral one. The limit is weak convergence
in probability along the full scale family. Proposition~\ref{uni:prop-construction} gives its construction and a
canonical version measurable with respect to $X^g$. Define
\[
 C_{g,n}=\int_\T e^{inx}M_g(dx),\qquad
 a_n=\sqrt{\log(2+|n|)},\qquad n\in\mathbb Z.
\]
The exact canonical model is $g=0$. For it we reserve
$\phi=X^0$, $M=M_0$, and $C_n=C_{0,n}$. The canonical field has zero
constant mode; no zero-mode restriction is imposed on general $X^g$.
The unindexed notation is used in the canonical analysis below.

\subsection{Main results}
\label{sixintro:subsec-results}

On an extension of the probability space, define
$(\mathcal Z_{g,j})_{j\in\mathbb Z}$ by the conditional characteristic functions
\begin{equation}\label{contract:kernel}
 \widetilde\E\left[\exp\left(i\operatorname{Re}\sum_{j\in J}
 \overline{\xi_j}\mathcal Z_{g,j}\right)\middle|X^g\right]
 =\exp\left[-\frac1{\sqrt\pi}\int_\T
 \left|\sum_{j\in J}\overline{\xi_j}e^{ijx}\right|M_g(dx)\right]
\end{equation}
for every finite $J\subset\mathbb Z$ and $(\xi_j)_{j\in J}\in\mathbb C^J$.
Equivalently $\mathcal Z_{g,j}=\int e^{ijx}\,d\mathcal S_{M_g}(x)$ for one
common symmetric complex stable random measure of index one with control
$M_g/\sqrt\pi$. The construction uses compensated small jumps, zero
Gaussian part, and zero symmetric drift.

\begin{mainthm}[Full-field stable local Fourier limit]\label{contract:A}
For every fixed admissible $g$ and every fixed finite $J\subset\mathbb Z$,
as $n\to\infty$ through positive integers,
\[
 (a_{n+j}C_{g,n+j})_{j\in J}\longrightarrow
 (\mathcal Z_{g,j})_{j\in J}
 \qquad\hbox{stably relative to }\sigma(X^g).
\]
Explicitly, for every bounded $\sigma(X^g)$-measurable random variable $V$
and every bounded continuous $f:\mathbb C^J\to\mathbb R$,
$\E[Vf((a_{n+j}C_{g,n+j})_{j\in J})]
\to\widetilde\E[Vf((\mathcal Z_{g,j})_{j\in J})]$.
\end{mainthm}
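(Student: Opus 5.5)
We will prove Theorem~\ref{contract:A} first for the canonical model $g=0$ and then transfer it to general admissible $g$ by smooth covariance comparison.

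\textbf{Reduction to conditional characteristic functions.} Stable convergence relative to $\sigma(X^g)$ is equivalent to the following: for every bounded $\sigma(X^g)$-measurable $V$ and every $\xi\in\mathbb C^J$,
\[
\E\Big[V\exp\Big(i\operatorname{Re}\sum_{j\in J}\overline{\xi_j}a_{n+j}C_{g,n+j}\Big)\Big]\to\E\Big[V\exp\Big(-\tfrac1{\sqrt\pi}\int_\T|P_\xi(x)|M_g(dx)\Big)\Big],
\]
where $P_\xi(x)=\sum_{j\in J}\overline{\xi_j}e^{ijx}$. Tightness follows from the characteristic-function argument, or from Wong's bound. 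By density it is enough to take $V$ measurable with respect to the coarse field $X_\delta$ for a fixed $\delta>0$.

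Note that $a_{n+j}/a_n\to1$ and
\[
\operatorname{Re}\sum_j\overline{\xi_j}C_{n+j}=\int\operatorname{Re}\big(e^{inx}P_\xi(x)\big)M(dx).
\]
So we must show that the oscillating functional $a_n\int\operatorname{Re}(e^{inx}h(x))M(dx)$, with $h=P_\xi$ smooth, has a conditional characteristic function converging to $\exp(-\pi^{-1/2}\int|h|\,dM)$.

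\textbf{Multiscale decomposition.} Split the field as $\phi=\phi_{\le r}+\phi_{>r}$, cutting at the spatial scale $r=r_n$ with $r_n n\to\infty$ slowly, say $r_n=(\log n)^{A}/n$. On each arc of length $r$, the coarse part is essentially constant and $e^{inx}h(x)$ oscillates many times. Hence $M$ on such an arc equals the coarse weight $\approx M(I)$ times an independent, rescaled copy of a fine-scale chaos over roughly $nr$ periods. Conditionally on the coarse field, the Fourier functional becomes a sum of nearly independent contributions from disjoint blocks. This is a triangular array with block-decorrelation handled by the fine field's covariance decay beyond distance $r$; a buffer-zone trimming removes boundary terms.

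\textbf{Index-one compensation (the main obstacle).} For the array to produce a Cauchy limit, each block's contribution must have tail
\[
\Prob\big(|a_n Y_I|>t\mid\text{coarse}\big)\sim \frac{c\,M(I)}{t}
\]
in the relevant range, with isotropic angular part $|h|$. Below the truncation level, the conditional mean of the real part must cancel exactly, since symmetry of the phase $e^{inx}$ over a period gives zero drift. The truncated second moment must also be negligible at the $1/\log n$ scale. The tail comes from the critical mass of small balls: at criticality, $\Prob(M(B_\epsilon)>u)$ carries a $\sqrt{\log}$ factor, which is what produces the normalization $a_n$ and the constant $1/\sqrt\pi$. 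Here we rely on the relative excursion estimates and the derivative-martingale representation of Proposition~\ref{uni:prop-construction}: the Seneta--Heyde normalization converts $\sqrt{\log(1/\epsilon)}$ into the tail constant.

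I expect the hardest part to be controlling the block variance and drift uniformly. The critical measure has heavy local concentrations, so we first truncate on the good event where the maximum of $\phi_\epsilon$ stays below $\sqrt2\log(1/\epsilon)-\tfrac{3}{2\sqrt2}\log\log(1/\epsilon)+K$. We then compute second moments of the Fourier sum restricted to thick-point-free regions, which gives the $1/a_n^2$ cancellation. Large blocks are handled via the first-moment tail, and the result is a Lévy measure $\nu(dt)=M(dx)\,|h(x)|\,t^{-2}\,dt$ together with an angular average producing $\int|h|\,dM$.

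\textbf{Transfer to general $g$.} Write $X^g=\phi+G$ with a smooth independent Gaussian field $G$, either via the decomposition $K_g=K_0+g$ where possible, or after Kahane-type interpolation and an additive smooth correction. Then $M_g(dx)=e^{\sqrt2G(x)-\E G^2}M(dx)$ up to the coupling. Multiplying by a smooth positive density only replaces $h$ by $h\,e^{\sqrt2 G-\cdots}$ in the functional above, and the limit formula is linear in $|h|\,dM$. For nonpositive $g$, the admissibility and smooth comparison results stated earlier allow Gaussian interpolation of characteristic functions. The derivative in the interpolation parameter is controlled by the smooth covariance difference, and it vanishes in the limit because the oscillation $e^{inx}$ kills smooth kernels at rate $O(n^{-1})$.
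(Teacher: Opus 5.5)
\textbf{Canonical step.} You justify the index-one compensation by symmetry: below the truncation level the conditional mean ``must cancel exactly, since symmetry of the phase over a period gives zero drift.'' This is false, and it is exactly where the difficulty of the theorem lies.

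Given the coarse field, nothing averages the phase, so cell or block contributions are not conditionally rotation invariant. Because the fine critical chaos has a $1/v$ tail and infinite mean, truncated conditional means carry a logarithmic term. In the paper's notation $\E\min(c_jD(g_j),\delta)=c_j\{\kappa G_j\log_+(\delta/c_j)+B_{g_j}(\delta/c_j)\}$. The actual truncated center therefore contains three pieces, each of which must be shown to cancel through the oscillation with explicit rates (Theorem~\ref{l3c:thm-uniform-center}, via bracket bounds, integration by parts of the early density, and the half-period reindexing of Lemma~\ref{l3c:lem-common-early}):
the linear part $L\sum_je^{iny_j}Z_b^\beta(J_j)$, whose total mass $LZ_b^\beta(\T)$ is of order $\sqrt{\log n}$;
the centering term $h_\delta L\sum_je^{iny_j}W_b(J_j)$, with $h_\delta$ of order $\log\log n$;
and the profile remainders $c_j\Phi_\delta(H_j,c_j)$.

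You also assert, without proof, two further inputs. First, that the Poisson intensity matches $\kappa LW_b$ in the signed sense at thresholds of order one; Wong's fixed-weight asymptotic tail does not give this, and the paper needs the Gaussian smoothing of Theorem~\ref{l3:thm-signed-tail-audit}. Second, that the phases of the large jumps equidistribute against the coarse additive mass (Theorem~\ref{l3:thm-marked-phase-rate}).

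Blocks longer than a period do not help, for four reasons. Each block term is again a critical Fourier coefficient of a rescaled fine chaos, now at frequency $(\log n)^A$. The field cut at scale $r$ has order-one increments on arcs of length $r$, so it is not essentially constant there. A tail conditional on the coarse field cannot be $cM(I)/t$, since $M(I)$ is not coarse-measurable. And covariance decay does not give the conditional independence between blocks that Poisson approximation needs; the paper uses a finite-range auxiliary field and then the coupling $M=\sqrt{2/\pi}\,q_G^{-1}F_*D_*$.

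Finally, the relative excursion estimates you cite are lower bounds for Theorem~\ref{contract:B}, derived from this same Poisson approximation. Proposition~\ref{uni:prop-construction} contains no derivative-martingale representation.

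\textbf{Transfer step.} For non-positive $g$ the interpolation argument fails. Critical chaos has infinite first moment, so Kahane-type interpolation of characteristic functions is not available.

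Even schematically, the interpolation derivative is $a_n^2\iint g(x,y)\operatorname{Re}(\bar\xi e^{inx})\operatorname{Re}(\bar\xi e^{iny})\,F\,M_s(dx)M_s(dy)$. Expanding $g$ in Fourier modes turns this into $a_n^2$ times a bilinear form in the coefficients $\widehat M_s(n+k)$. That is of order one precisely at the normalization $a_n$, not $O(1/n)$: the oscillation acts on a random measure, not on a smooth kernel. The derivative cannot vanish in any case, since the limit $\E\exp(-\pi^{-1/2}\int|P_\xi|\,dM_g)$ depends on $g$. Interpolating unconditional laws would also not yield stability relative to $\sigma(X^g)$.

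The missing idea is the two-sided comparison:
choose a smooth stationary $C$ with $C\pm g\ge S_0$, so that $\phi+Y_+\stackrel{\Law}{=}X^g+Y_-$ with independent smooth $Y_\pm$;
transfer the joint field--measure law of $F_+M$ and $F_-M_g$, where $F_\pm=e^{\sqrt2Y_\pm-\E Y_\pm^2}$;
then divide by $F_-$.

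In your positive case, conditioning on the independent multiplier reduces to a deterministic weight. Here, however, $F_-$ is \emph{not} independent of the base field $X^g+Y_-$. That step needs a multiplier lemma for $\mathcal E$-measurable smooth weights. You truncate the multiplier's Fourier series to finitely many offsets and control the remainder uniformly in $n$ through a localized all-mode fractional-moment bound $\sup_m\E[\1_{A_r}|a_m\widehat\mu(m)|^p]<\infty$, $0<p<1$ (Theorem~\ref{crm:thm-endpoint-moment}, Proposition~\ref{uni:prop-multiplier}). Your proposal does not supply this bound.
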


\begin{corollary}[Functional critical Fourier limit]\label{contract:functional}
Fix an admissible $g$ and $s>1$. Define the random distribution
\[
 \mathcal T_n^g(f)=a_n\int_\T e^{inx}f(x)M_g(dx).
\]
Viewed as an $H^{-s}(\T;\mathbb C)$-valued random element,
\[
 \mathcal T_n^g\longrightarrow \mathcal S_{M_g}
 \qquad\hbox{stably relative to }\sigma(X^g),
\]
where the limit is the same conditional symmetric complex stable noise of
index one and control $M_g/\sqrt\pi$ as in Theorem~\ref{contract:A}.
Equivalently, the normalized modulated critical measures converge as random
distributions in every $H^{-1-\varepsilon}(\T)$, $\varepsilon>0$.
The same functional limit implies the corresponding weighted limit for every fixed finite family of deterministic $H^s$ test functions and for every fixed finite family of $\sigma(X^g)$-measurable $H^s$-valued random test functions.
\end{corollary}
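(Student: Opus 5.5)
We deduce Corollary~\ref{contract:functional} from Theorem~\ref{contract:A} by a Fourier-side argument in $H^{-s}(\T)$: tightness plus convergence of finite-dimensional projections, both handled conditionally on $X^g$. In the Fourier basis $e_k(x)=e^{ikx}$ we have $\mathcal T_n^g(e_k)=a_nC_{g,n+k}$. With $\langle k\rangle=(1+k^2)^{1/2}$, the $H^{-s}$ norm is
\begin{equation}
\|\mathcal T_n^g\|_{H^{-s}}^2=\sum_{k\in\mathbb Z}\langle k\rangle^{-2s}\,|a_nC_{g,n+k}|^2 .
\end{equation}
The limit $\mathcal S_{M_g}$ is a well-defined $H^{-s}$-valued element for $s>1$. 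Indeed, its Fourier coefficients $\mathcal Z_{g,k}$ are conditionally symmetric $1$-stable with scale $M_g(\T)/\sqrt\pi$. Since $\sum_k\langle k\rangle^{-2s}|\mathcal Z_{g,k}|^2$ is a weighted sum of squares of heavy-tailed variables, we control it via $\Prob(|\mathcal Z_{g,k}|>t\mid X^g)\lesssim M_g(\T)/t$ together with a truncation argument.

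\textbf{Step 1: finite-dimensional stable convergence.} For fixed finite $J$, we have $a_n/a_{n+j}\to1$, so Theorem~\ref{contract:A} gives stable convergence of $(a_nC_{g,n+j})_{j\in J}$ to $(\mathcal Z_{g,j})_{j\in J}$ relative to $\sigma(X^g)$. Hence for every trigonometric polynomial $f$, the scalars $\mathcal T_n^g(f)$ converge stably to $\mathcal S_{M_g}(f)$. Equivalently, for each truncation level $N$, the projections $P_N\mathcal T_n^g$ onto modes $|k|\le N$ converge stably in $H^{-s}$ to $P_N\mathcal S_{M_g}$.

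\textbf{Step 2: uniform tail control (the main obstacle).} We must show that for every $\eta>0$,
\begin{equation}
\lim_{N\to\infty}\limsup_{n\to\infty}\Prob\Bigl(\sum_{|k|>N}\langle k\rangle^{-2s}|a_nC_{g,n+k}|^2>\eta\Bigr)=0 .
\end{equation}
Second moments are unusable, because $\E|C_{g,m}|^2$ blows up at criticality. We would therefore use a first-moment bound for $|a_nC_{g,n+k}|$ on a truncation event, uniformly in $k$. For $|k|\le n/2$ we have $a_n\asymp a_{n+k}$, and the tightness estimate underlying Theorem~\ref{contract:A} (or Wong's bound \cite{WongPV}, transferred by the covariance comparison) should give
\begin{equation}
\sup_{m}\Prob\bigl(a_m|C_{g,m}|>t\bigr)\le c/t\qquad\text{for large }t .
\end{equation}
A union bound with thresholds $t_k=\langle k\rangle^{s-1/2-\delta}$, where $\delta>0$ is chosen so that $\sum_k\langle k\rangle^{-2s}t_k^2<\infty$ and $\sum_k t_k^{-1}$ is summable, then controls the tail. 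This is where $s>1$ is needed: we require $s-\tfrac12-\delta>1$ for $\sum_k 1/t_k<\infty$, and the choice should be tuned, possibly with a dyadic decomposition in $k$. For $|k|>n/2$ we use the deterministic bound $|C_{g,m}|\le M_g(\T)$, giving
\begin{equation}
a_n^2M_g(\T)^2\sum_{|k|>n/2}\langle k\rangle^{-2s}\lesssim M_g(\T)^2\,\log n\; n^{1-2s}\longrightarrow0 .
\end{equation}
If the uniform $1/t$ tail is not directly available, we would try first an index-one-type estimate of the form
\begin{equation}
\E\bigl[\min(1,a_m|C_{g,m}|/t)\bigr]\lesssim \E[M_g(\T)\wedge 1]/t+o(1),
\end{equation}
which is the kind of bound the compensation argument behind Theorem~\ref{contract:A} produces.

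\textbf{Step 3: assembly.} Combining Steps 1 and 2 with a standard approximation argument (Billingsley's Theorem 3.2 in its stable form) yields $\mathcal T_n^g\to\mathcal S_{M_g}$ stably in $H^{-s}$. The weighted statements then follow from the continuous mapping theorem: for deterministic $h_1,\dots,h_m\in H^s$, the map $T\mapsto(T(h_i))_i$ is continuous. For $\sigma(X^g)$-measurable random test functions $h_i$, the pairing $(T,h)\mapsto T(h)$ is continuous on $H^{-s}\times H^s$, and stable convergence implies joint convergence of $(\mathcal T_n^g,h)$ to $(\mathcal S_{M_g},h)$.
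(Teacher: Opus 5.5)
Your Steps 1 and 3 are sound. Step 2, which you rightly call the crux, fails as proposed, for two independent reasons.

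\textbf{The uniform tail bound is false.} The bound $\sup_m\Prob(a_m|C_{g,m}|>t)\le c/t$ does not follow from Theorem~\ref{contract:A}, whose tightness is qualitative. It is in fact false. At criticality $\E M_g(\T)=\infty$; compare the logarithmic growth of the capped first moment in Lemma~\ref{crm:lem-capped-first-moment}. With $S_g=M_g(\T)/\sqrt\pi$, the radial tail of the scalar limit gives
\[
 t\,\widetilde\Prob(|\mathcal Z_{g,0}|>t)
 =\E\Big[\frac{tS_g}{\sqrt{S_g^2+t^2}}\Big]
 \ge 2^{-1/2}\,\E\big[S_g\1_{\{S_g\le t\}}\big]\longrightarrow\infty .
\]
Since $\Prob(a_m|C_{g,m}|>t)\to\widetilde\Prob(|\mathcal Z_{g,0}|>t)$ for each fixed $t$, no constant $c$ can work. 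Your fallback bound with $\E[M_g(\T)\wedge1]/t$ fails for the same reason.

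\textbf{The union bound cannot reach $s>1$.} Even a genuine uniform $c/t$ tail would not suffice through a union bound. With $t_k=\langle k\rangle^{\alpha}$ you need $\alpha<s-\tfrac12$ for $\sum_k\langle k\rangle^{-2s}t_k^2<\infty$ and $\alpha>1$ for $\sum_k t_k^{-1}<\infty$, hence $s>3/2$. Your own condition $s-\tfrac12-\delta>1$ says exactly this. Dyadic blocks with union bounds give the same constraint, because a threshold on $|X_k|$ pays $N t^2$ in the square sum of a block of $N$ modes.

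\textbf{The missing ingredient.} You need the localized fractional-moment bound of Corollary~\ref{uni:cor-target-endpoint}. For each $p\in(0,1)$ there are events $E_R^g\in\sigma(X^g)$ increasing to $\Omega$ with $K_R=\sup_m\E[\1_{E_R^g}|a_mC_{g,m}|^p]<\infty$. It comes from the independent endpoint estimate (Theorem~\ref{crm:thm-endpoint-moment}, via \eqref{paper:eq-all-mode}) and the multiplier and covariance transfer, not from the stable limit. The localization matters because the unconditional tails are heavier than $1/t$.

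Choose $p\in(1/s,1)$. Subadditivity of $x\mapsto x^{p/2}$ applied to the squared norm, together with \eqref{uni:eq-ratio}, gives uniformly in $n$
\[
 \E\Big[\1_{E_R^g}\Big(\sum_{|k|>N}\langle k\rangle^{-2s}|a_nC_{g,n+k}|^2\Big)^{p/2}\Big]
 \le K_R\sum_{|k|>N}\langle k\rangle^{-sp}\ell(k)^p ,
\]
which tends to $0$ as $N\to\infty$ because $sp>1$; this is exactly where $s>1$ enters. Markov's inequality and $\Prob((E_R^g)^c)\to0$ then give your Step~2 tail estimate, with no separate treatment of $|k|>n/2$. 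The paper uses the same computation in $H^{-r}$, with $1/p<r<s$, together with the compact embedding into $H^{-s}$, to get tightness instead.

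The $H^{-s}$-valuedness of the limit should likewise be proved by conditional fractional moments, or by the Poisson construction using $\sup_x\|\delta_x\|_{H^{-s}}<\infty$, rather than by a union bound. With these replacements your assembly in Step~3 goes through.
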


\begin{scalarconsequence}
The variable $a_n C_{g,n}$ converges stably relative to $\sigma(X^g)$ to
an isotropic planar Cauchy variable of conditional scale
$S_g=M_g(\T)/\sqrt\pi$. Its conditional density and radial tail are
\[
 p_{S_g}(z)=\frac{S_g}{2\pi(S_g^2+|z|^2)^{3/2}},\qquad
 \widetilde\Prob(|\mathcal Z_{g,0}|>r\mid X^g)
 =\frac{S_g}{\sqrt{S_g^2+r^2}}\quad(r\ge0).
\]
\end{scalarconsequence}
The full-field formulation records the joint asymptotics with every fixed
bounded observable of $X^g$. The limit is conditionally stable; its
unconditional law is a mixture. Write $\nu_g=M_g/M_g(\T)$. After division
by $M_g(\T)$, the scalar limit has deterministic Cauchy scale $1/\sqrt\pi$
and can be realized independently of the field. The normalized vector
still retains the random spatial measure $\nu_g$. Already for offsets
$0$ and $1$, its exponent integrates
$|\overline{\xi_0}+\overline{\xi_1}e^{ix}|$ against that measure; it depends
on the location of the mass, not only on its total.

The normalization is suggested by the auxiliary coarse/fine decomposition.
At spatial resolution $e^{-t}$, the critical additive mass is of order
$t^{-1/2}$ in probability by the Seneta--Heyde theorem \cite{SH}, while
the untruncated fine masses have $1/r$ tails \cite{Wong}. At index one,
tail intensities are linear in the coarse weights. Since a frequency
$n$ probes $t$ of order $\log n$, this suggests the factor $\sqrt{\log n}$,
but does not identify the uncentered oscillatory limit. Its phase
averaging and compensation are part of the proof of
Theorem~\ref{contract:A}.

For each realization, squared normalized amplitudes are Fourier
coefficients of the spatial autocorrelation measure, since
\[
 |\widehat{\nu_g}(n)|^2
 =\int_{\T^2}e^{in(x-y)}\nu_g(dx)\nu_g(dy).
\]
Theorem~\ref{contract:A} concerns a fixed observation window;
our second result classifies exceptional amplitudes along the entire
integer frequency sequence, rather than only on a subsequence.

\begin{mainthm}[Full-frequency integral classes]\label{contract:B}
Fix admissible $g$. Let $H:[1,\infty)\to(0,\infty)$ be deterministic and
$C^1$. Suppose that, for some $x_0>1$ and $A_H<\infty$, all $x\ge x_0$
satisfy
\begin{equation}\label{intro:eq-gauge}
 H(x)\ge x,\qquad 0\le\frac{xH'(x)}{H(x)}\le A_H.
\end{equation}
Writing $\vartheta_n=\log\log(e^e+|n|)$, almost surely
\[
 \limsup_{|n|\to\infty}\frac{a_n|C_{g,n}|}{H(\vartheta_n)}
 =\begin{cases}
 0,&\displaystyle\int_{x_0}^\infty\frac{dx}{H(x)}<\infty,\\[4pt]
 \infty,&\displaystyle\int_{x_0}^\infty\frac{dx}{H(x)}=\infty.
 \end{cases}
\]
The same alternatives hold after replacing $C_{g,n}$ by
$C_{g,n}/M_g(\T)$. For each fixed $g$, the probability-one event may
depend on $H$; one event works for any prescribed countable family of
such deterministic gauges.
\end{mainthm}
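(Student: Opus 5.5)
We prove Theorem~\ref{contract:B} first for the canonical model $g=0$ and then transfer it to general admissible $g$ by smooth covariance comparison. Group the frequencies into dyadic blocks in $\vartheta$: block $B_k=\{n:\ k\le\vartheta_n<k+1\}$, so that $|n|\approx\exp(e^k)$. The integral test $\int dx/H(x)$ converges or diverges exactly when $\sum_k 1/H(k)$ does. The regularity \eqref{intro:eq-gauge} gives $H(x+1)\asymp H(x)$ and $H(x)\ge x$, so $H$ is essentially constant on each block.

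The key input is a uniform conditional tail estimate. For $r$ in the range $1\le r\le H(\vartheta_n)^{O(1)}$, we want
\[
\Prob\bigl(a_n|C_n|>r\mid \mathcal F_{t}\bigr)\asymp \frac{M_t}{r}
\]
up to multiplicative errors tending to one. Here $\mathcal F_t$ is the coarse field at scale $t$ somewhat below $\log n$, and $M_t$ is the coarse mass. This mirrors the scalar Cauchy tail $S/\sqrt{S^2+r^2}\sim S/r$. We obtain it from the coarse/fine decomposition underlying Theorem~\ref{contract:A}: conditionally on $\mathcal F_t$, $C_n$ is a sum over roughly $e^t$ nearly independent cells of coarse weight times fine mass, and the fine masses have $1/r$ tails \cite{Wong}. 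A single large fine mass dominates, which gives the one-big-jump tail. The upper tail bound requires index-one compensation to control the bulk sum, which is of order $a_n^{-1}$ times a log factor. We will use a truncated first moment plus a Fuk--Nagaev style bound, rather than a second moment.

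\textbf{Convergent case.} By the tail estimate and a union bound within each block, one would naively get a block probability of order $|B_k|/H(k)$, which is far too large. The fix is to use smoothing in frequency: $C_n$ and $C_{n'}$ are nearly identical once $|n-n'|\ll$ the inverse of the excursion scale. More precisely, a single fine cell at scale $s$ produces a large coefficient for all $n$ in a window of width $e^{s}$. So we count excursion \emph{events}, meaning large fine cells, rather than frequencies. The expected number of cells with mass exceeding $H(k)/a_n$ that are visible in block $B_k$ is then $\lesssim \sum_{\text{scales}}M/H(k)$, summed over $O(1)$ effective scale-generations per block since $\log|n|$ ranges over a factor $e$. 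This gives block probability $\lesssim M_\infty/H(k)$ conditionally. Borel--Cantelli, applied conditionally on $M(\T)<\infty$ and after replacing $H$ by $\epsilon H$, yields the limsup value $0$.

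\textbf{Divergent case.} Pick one frequency $n_k\in B_k$ per block. We need asymptotic independence of the events $\{a_{n_k}|C_{n_k}|>KH(k)\}$ across $k$. Because $\log n_k\approx e^k$ grows geometrically, the fine fields at the scales relevant to different blocks are nearly disjoint. We condition on $\mathcal F_{t_k}$ with $t_k=e^{k-1}$. The conditional probabilities are then $\gtrsim M_{t_k}\sqrt{t_k}/(\dots)$, which reduces to $c\,M(\T)/H(k)$ after using Seneta--Heyde ($\sqrt{t}M_t\to M$ in the appropriate normalization). A conditional (Lévy) Borel--Cantelli lemma along the filtration $\mathcal F_{t_k}$ then gives infinitely many occurrences almost surely for every $K$, hence the limsup value $\infty$.

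\textbf{Remaining steps.} The statement for $C_{g,n}/M_g(\T)$ follows immediately, since $M_g(\T)\in(0,\infty)$. A countable family of gauges is handled by intersecting countably many events. For the transfer to general $g$, we write $X^g$ as $\phi$ plus a smooth independent perturbation, locally via a Kahane-type decomposition, after splitting $g$ into positive and negative smooth parts. Then $M_g=e^{h}M$ with $h$ smooth, and the Fourier coefficients of $e^hM$ are a rapidly convergent convolution of the $C_{n-m}$ with the $\widehat{e^h}(m)$. Since $\vartheta$ changes negligibly under bounded shifts, this combination preserves both alternatives. \textbf{Main obstacle.} The main obstacle is the upper bound in the convergent case uniformly over all $|n|$ in a block. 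This requires a maximal inequality over frequencies, namely the chaining/counting of large cells described above, without losing powers of $\log\log n$. We will first attempt a bound on $\sup_{n\in B_k}$ via a Bernstein inequality for trigonometric sums over each cell-scale, absorbing the bulk part by compensation.
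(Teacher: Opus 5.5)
Your divergent case has a genuine gap. Lévy's conditional Borel--Cantelli lemma along $(\mathcal F_{t_k})$ applies only if each $A_k=\{a_{n_k}|C_{n_k}|>KH(k)\}$ is $\mathcal F_{t_{k+1}}$-measurable. But $C_{n_k}$ is a terminal coefficient that depends on every scale beyond $\log n_k$, whereas your $t_{k+1}=e^{k}\le\log n_k$. The appeal to ``nearly disjoint scales'' hides the real obstruction.

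To get adapted events you must replace $C_{n_k}$ by the coefficient of the time-$T_k$ measure, with $T_k$ below the next conditioning time. Cells at scales beyond $T_k$ still carry index-one $1/r$ tails, with total conditional intensity of order $(\log n_k/T_k)^{1/2}$ in the $a_n$-normalization. So the replacement fails at level $r_k$ with probability of order $(\log n_k/T_k)^{1/2}/r_k$; this is the bound $C/(\sqrt K\,r)$ of Theorem~\ref{l4w:thm-weak-bridge} for $T=K\log n$. That is the same order as the excursion probability itself, and $\sum_k 1/r_k=\infty$ is precisely the divergent hypothesis. Hence whenever $\log n_{k+1}/\log n_k$ stays bounded, as with your one frequency per unit $\vartheta$-block, the approximation errors are not summable and cannot be discarded by Borel--Cantelli. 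With a ratio of about $e$ they are not even small compared with the main term. Making them summable would require $T_k/\log n_k\to\infty$, i.e.\ a gauge-dependent super-geometric thinning that still keeps $\sum_k1/r_k$ divergent, which you have not set up.

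The paper closes this gap differently. It thins to $n_j=\lceil e^{d^j}\rceil$ with $d=4K$ and $K$ large. Then $\sum_j1/H(j\log d)$ still diverges, while the approximation error is only a small fixed fraction $C/\sqrt K$ of $\sum_jr_j^{-1}$. It then combines two ingredients:
the relative excursion lower bound of Theorem~\ref{pc:thm-excursion}, conditioned at $\mathcal F_{b_j}$ with the terminal mass as comparison variable; and
the adapted counting Lemma~\ref{p4:lem-count}, which needs only cumulative relative control of the errors.
This yields infinitely many excursions only with positive probability, so a zero--one law is indispensable. Proposition~\ref{l4t:prop-envelope-zero-one} obtains it from Cameron--Martin invariance of the limsup classes under finite Gaussian-coordinate shifts.

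Two smaller points. First, your convergent case leaves the maximal inequality open. Proposition~\ref{l4v:cor-upper-integral-class} uses one coarse partition per block and a cutoff $\sigma_j\asymp\eta_j^2/N_j$ small enough that Bernstein beats the union over all $e^{O(N_j)}$ frequencies. It controls the discarded large-cell mass once, through its predictable compensator $h_jW_{t_j}(\T)+C\mathcal A_{t_j}$. The factor $h_j\asymp\vartheta$ there is harmless only because $\int dx/H<\infty$ forces $H(x)/x\to\infty$. Second, for general $g$ there is no pathwise identity $M_g=e^hM$. The two-sided comparison only makes $F_+M$ and $F_-M_g$ equal in law jointly with augmented fields. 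That suffices, because each limsup class is a Borel event invariant under positive smooth multipliers.
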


For $H(x)=x(\log x)^p$, $p\ge0$, modified near $1$ to remain positive,
Theorem~\ref{contract:B} gives the zero alternative for $p>1$ and the
infinite alternative for $0\le p\le1$. In particular, for every fixed
$\varepsilon>0$, almost surely
\[
 |C_{g,n}|=o\!\left(
 \frac{(\log\log|n|)(\log\log\log|n|)^{1+\varepsilon}}
      {\sqrt{\log|n|}}\right)
 \qquad (|n|\to\infty),
\]
whereas
\[
 \limsup_{|n|\to\infty}
 \frac{\sqrt{\log|n|}\,|C_{g,n}|}
      {(\log\log|n|)(\log\log\log|n|)}
 =\infty\quad\hbox{a.s.}
\]
The first bound implies almost-sure Fourier decay. Together these
statements separate the deterministic-frequency fluctuation scale from
the almost-sure envelope.

The double logarithm reflects the scale separation in the recurrence
argument. In the auxiliary model, put $u=\log n$.
Theorem~\ref{l4w:thm-weak-bridge} approximates a terminal coefficient
using the field through time $Ku$, for a sufficiently large fixed $K$. The lower-class proof selects
$n_j=\lceil\exp(d^j)\rceil$, with fixed $d$ large enough that this
approximation is measurable before the next coarse scale. Then
$\log\log n_j\sim j\log d$, and the relative $1/r$ excursion estimates
lead to the series $\sum_j1/H(j\log d)$ and hence to the integral in
Theorem~\ref{contract:B}. The selected coefficients are not independent:
their dependence and approximation errors are treated in
Section~\ref{paper:sec-lower}, while Section~\ref{paper:sec-upper}
controls all intervening frequencies. The integral test therefore
requires more than fixed-threshold convergence.

Both principal theorems hold for each fixed admissible covariance. The
random measure $M_g$ changes with $g$; it is the form of the limiting
kernel and the integral classes that are universal. We do not assert
convergence rates uniform over a class of remainders.

Write $\mathcal Z_j=\mathcal Z_{0,j}$ in the canonical model.
Theorem~\ref{canon:thm-window} first gives an unconditional approximation
on the growing window $|j|\le w_n$, where
$w_n=\lfloor(\log\log n)^{1/64}\rfloor$. Its distance
$d_{\mathrm{BL},w}$ uses tests of absolute value and Lipschitz constant
at most one for the metric
$d_w(z,z')=1\wedge\max_{|j|\le w}|z_j-z'_j|$ on
$\mathbb C^{\{-w,\ldots,w\}}$. The smooth-covariance transfer in
Theorem~\ref{uni:thm-window} shows that the same explicit window holds
for every fixed admissible $g$. This refinement is unconditional and
pointwise in $g$; no uniformity over covariance remainders or optimality
of the exponent $1/64$ is asserted.

\subsection{Relation to earlier work}
\label{sixintro:subsec-comparison}

Garban and Vargas \cite{SixGV} initiated the harmonic analysis of circle
GMC. They proved almost-sure Fourier decay throughout the subcritical
range $\gamma<\sqrt2$ and, for $\gamma<1/\sqrt2$, a Gaussian-mixture
limit whose random variance is determined by the chaos at parameter
$2\gamma$. They also asked for optimal almost-sure Fourier asymptotics
and for the critical Rajchman property. Theorem~\ref{contract:B} addresses
the almost-sure asymptotic question at the critical parameter; it does
not determine the subcritical almost-sure envelopes.

At the level of polynomial Fourier decay, Lin, Qiu and Tan \cite{SixLQT}
determined the Fourier dimension of one-dimensional subcritical GMC.
Orsoni and Verreault \cite[Theorem 1.1]{SixOV} obtained the exact Fourier
dimension on tori for smooth log-correlated covariances in every dimension,
under $0<\gamma<\sqrt{2d}$, together with results exhibiting the role of
boundary geometry. These results concern the strict subcritical range.
At criticality the Fourier dimension is zero, so the questions here are
instead logarithmic: the distribution of a coefficient at high frequency
and the exceptional amplitudes along a realization.

For the canonical critical circle measure, the qualitative Rajchman
property was proved in \cite{Rajchman}. Arguin and Hamdan
\cite[Theorem 1.1]{SixAH} considered critical chaos of a compact-range
star-scale field on an interval and proved
$(\log n)^\alpha c_n\to0$ in probability for every $\alpha<1/4$.
Their Section~1.3 also suggests an order $(\log n)^{-1}$ on heuristic
grounds. 
Theorem~\ref{contract:A} gives a nondegenerate limit at normalization
$\sqrt{\log n}$ for the smooth periodic covariance class; our circle
theorems do not by themselves settle the interval prediction.

During the final preparation of the present manuscript, Wong
\cite{WongPV} posted a related work on power variations of critical GMC
and their spectral applications. For star-scale invariant critical fields
in arbitrary dimension, \cite[Theorem 1.8]{WongPV} proves tightness of
$\sqrt{\log\|\mathbf n\|_\infty}\,\widehat M(\mathbf n)$ and, for every
$\varepsilon>0$, the almost-sure decay
$(\log\|\mathbf n\|_\infty)^{1/2-\varepsilon}\widehat M(\mathbf n)\to0$.
The same paper proves stable convergence for power variations
\cite[Theorem 1.3]{WongPV}. Thus the
square-root-logarithmic normalization appears at the level of
tightness, while quantitative almost-sure decay is known up to a
  small logarithmic-power loss there. The main contribution in
Theorem~\ref{contract:A} is the
identified index-one law of the complex Fourier coefficients. At
index one, the heavy tail alone does not determine the uncentered
oscillatory limit; the compensation needed to identify it is part of the
present argument. Theorem~\ref{contract:B} proceeds along a different direction: it gives a
zero--infinity integral criterion for regular gauges along the entire
integer frequency sequence.

An exact description of circle GMC is available through random orthogonal
polynomials. Chhaibi and Najnudel \cite[Corollaries 2.3 and 2.5]{CN}
describe finite collections of its Fourier coefficients, including at
criticality, in terms of Verblunsky coefficients and the total mass.
This identifies the law of the measure but does not give the high-frequency
limits studied here. Theorem~\ref{contract:A} also records the joint
asymptotics with the original Gaussian field, rather than only a marginal
law of the measure.

For holomorphic multiplicative chaos, Najnudel, Paquette and Simm
\cite{NPS} studied high-frequency coefficients, and Najnudel, Paquette,
Simm and Vu \cite{NPSV} proved joint neighboring-coefficient limits in the
$L^1$ phase. Atherfold and Najnudel \cite{AN} established the corresponding
critical holomorphic result. Those coefficients belong to a holomorphic
random function and have Gaussian-mixture limits. Here the observable is
the Fourier coefficient of the positive critical measure itself, and its
limiting law has stable index one. The two settings therefore lead to related local-limit questions but different observables and limiting
laws.

General covariance decompositions for log-correlated fields are already
available through Junnila, Saksman and Webb \cite{JSW}, including
constructions of critical chaos under regular covariance perturbations.
Our transfer argument addresses the more specific issue of preserving
the full-field Fourier kernel and the full-frequency integral classes.
The canonical construction, positive derivative truncations, rooted law,
terminal locality and canonical coupling are imported from
\cite{Rajchman}; Section~\ref{crm:sec-model} states these results explicitly.
The index-one compensation and relative excursion estimates used for the
new asymptotic conclusions are proved in this paper.

\subsection{Proof strategy}
\label{sixintro:subsec-mechanisms}

The canonical analysis starts from oscillatory cell sums for the auxiliary
finite-range field and its positive barrier-truncated derivative measure.
Sufficiently separated cells have independent futures conditional on the
complete coarse field. The terminal barrier event is used only to
localize estimates, not as an additional conditioning event.

For Theorem~\ref{contract:A}, the compensation step controls the actual
conditional center of the cell sum. Lemma~\ref{crm:lem-capped-first-moment}
bounds the remainder after the logarithmic term in the capped first
moment is removed. Spatial regularity of this remainder and an exact
cutoff identity yield the center estimate of
Theorem~\ref{l3c:thm-uniform-center}.
Theorem~\ref{l3:thm-signed-tail-audit} controls the signed tail error by
Gaussian smoothing of a bounded primitive in the logarithmic mass
variable; convergence of the remainder itself is not assumed.
The local spatial function is allowed to vary with the smoothing variable,
rather than being treated as independent of the Gaussian root.
Together with coarse-measure convergence, phase averaging and marked
Poisson approximation, these estimates give conditional kernel control.
One common marked process handles all offsets, so the construction
identifies their joint law rather than separate scalar limits.
Approximation of bounded field observables and the fixed canonical
coupling then yield Proposition~\ref{canon:prop-profile}.

For the divergent class in Theorem~\ref{contract:B},
Theorem~\ref{pc:thm-excursion} retains unbounded marks and gives a
lower excursion estimate suited to growing thresholds. Recurrence
also requires control of the dependence between terminal coefficients.
The weak-$L^1$ approximation in Theorem~\ref{l4w:thm-weak-bridge}
makes the coefficient of the $1/r$ error arbitrarily small, with a
remaining error summable on the selected scales. Its finite-time
approximations are measurable before the next coarse scale.
Lemma~\ref{p4:lem-count} converts this relative control into
recurrence even though the principal approximation error need not be
summable. Uniform positive probability over integer excursion amplitudes,
followed by a Cameron--Martin zero--one argument, gives the infinite
limsup for the canonical measure.

The convergent class is proved separately in
Proposition~\ref{l4v:cor-upper-integral-class}: one partition per frequency
block controls the discarded positive mass once, while conditional
Bernstein bounds remain summable after a union over every frequency
in the block. Combining the two arguments yields the canonical
classification in Proposition~\ref{canon:prop-integral}.

For general covariances, Lemma~\ref{uni:lem-sandwich} supplies independent
smooth fields so that the augmented field laws agree. Critical-chaos
factorization then reduces transfer to positive smooth multiplication.
The localized fractional-moment bound of
Lemma~\ref{uni:lem-canonical-endpoint}, proved independently of
Proposition~\ref{canon:prop-profile}, controls the Fourier-convolution tails. This is a uniform bound on the
moments of individual modes, not a moment bound for their supremum.
Proposition~\ref{uni:prop-multiplier} then permits multiplication and
inversion even for weights dependent on the field, while
Lemma~\ref{uni:lem-environment} retains the environmental convergence
under changes of field law. At index one, the positive multiplier
changes the stable control in the same way as the measure, giving
$M_g/\sqrt\pi$ without an additional covariance weight.
The integral classes instead transfer pathwise by
Lemma~\ref{l4t:lem-pathwise-multiplier}, without using
Theorem~\ref{contract:A}.

\subsection{Organization}
Section~\ref{crm:sec-model} fixes the auxiliary construction and the
canonical coupling; Section~\ref{paper:sec-common} collects the common
quantitative estimates. Sections~\ref{l3c:sec-uniform-center} and
\ref{paper:sec-profile} establish the compensation estimates and canonical stable Fourier limit. The convergent and divergent integral classes are
proved in Sections~\ref{paper:sec-upper} and \ref{paper:sec-lower},
respectively. Section~\ref{uni:sec-universality} completes
Theorems~\ref{contract:A} and \ref{contract:B} by covariance transfer.
The supporting local estimates are proved in
Appendices~\ref{crm:sec-fine}--\ref{crm:sec-anticlustering}.

\section{Construction, localization and the canonical coupling}
\label{crm:sec-model}
Sections~\ref{crm:sec-model}--\ref{paper:sec-lower} work in the auxiliary
and exact canonical models. Thus $\phi$, $M$, $C_n$, and $\mathcal Z_j$
have the unindexed canonical meaning fixed in the introduction. General
$X^g$ and $M_g$ return in Section~\ref{uni:sec-universality}.

\subsection{Auxiliary field and positive measures}
Fix an even nonconstant kernel $k\in C_c^\infty(\mathbb R)$, supported
in $[-1,1]$, positive definite, with $k(0)=1$ and $xk'(x)\le0$.
In particular $0\le k\le1$. On $\T=\mathbb R/(2\pi\mathbb Z)$ set
\[
 q_v(x)=\sum_{j\in\mathbb Z}k(v(x+2\pi j)),\qquad
 \E X_t(x)X_s(y)=\int_0^{t\wedge s}q_{e^a}(x-y)\dd a.
\]
Let $X=(X_t(x))_{t\ge0,\,x\in\T}$ be the centered Gaussian field
with the displayed covariance, and let $(\F_t)_{t\ge0}$ be its
completed natural filtration.
Use the Fourier convention $\widehat\nu(n)=\int_\T e^{\ii nx}\nu(\dd x)$,
with unnormalized Lebesgue measure $\dd x$. For fixed deterministic $\beta>0$ put
\[
 R_t(x)=\beta+\sqrt2t-X_t(x),\quad
 I_t(x)=\1_{\{\inf_{0\le s\le t}R_s(x)>0\}},\quad
 w_t(x)=e^{\sqrt2X_t(x)-t},
\]
\[
 W_t(\dd x)=w_t(x)\dd x,\qquad
 Z_t^\beta(\dd x)=R_t(x)I_t(x)w_t(x)\dd x.
\]
When the dependence on \(\beta\) needs to be displayed, we write \(R_t^\beta\) and \(I_t^\beta\), otherwise the superscript is suppressed.

\begin{theorem}
\label{crm:thm-construction}
The following assertions hold.
\begin{enumerate}
\item[(i)] \textup{Positive truncations and locality.}
$Z_t^\beta$ converges to a positive finite measure $Z^\beta$,
with the measure-valued martingale closure
\[
 \E[Z^\beta(A)\mid\F_t]=Z_t^\beta(A),\qquad
 \E Z^\beta(A)=\beta|A|
\]
for deterministic Borel sets $A$. Its restriction to $A$ is measurable
with respect to the coarse past and future field over $A$. Thus restrictions
at distance greater than $e^{-t}$ have independent futures given $\F_t$.

\item[(ii)] \textup{Rooted law.}
The finite rooted density
\[
 \frac{\dd\mathbb Q_{t,y}^\beta}{\dd\Prob}
 =\beta^{-1}R_t(y)I_t(y)w_t(y)
\]
makes $(R_s(y))_{s\le t}$ a BES$(3)$ process started at $\beta$.

\item[(iii)] \textup{Untruncated measure and barriers.}
There is a positive finite untruncated derivative measure $D_*$, with
$D_*(\mathbb T)>0$ almost surely,
$Z^\beta\le D_*$ and $Z^\beta=D_*$ on
\[
 E_\beta=\bigcap_{N=1}^\infty
 \{\min_{0\le s\le N,\,x\in\T}R_s(x)>0\}.
\]
Moreover $\Prob(E_\beta)\uparrow1$ as $\beta\to\infty$.

\item[(iv)] \textup{Seneta--Heyde convergence.}
$\sqrt t\,W_t\to\sqrt{2/\pi}D_*$ weakly in probability
(the measure-valued Seneta--Heyde theorem \cite{SH}).
\end{enumerate}

The same construction on a fixed real interval, with covariance
$\int_0^{s\wedge t}k(e^a(x-y))\dd a$, gives the fine measures used below.
\end{theorem}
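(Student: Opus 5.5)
We follow the standard derivative-martingale route for the star-scale field $X$ with kernel $q_{e^a}$, proving the four assertions in the order (ii), (i), (iii), (iv).

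\textbf{The rooted law (ii).} Fix $y$ and consider the one-point process $s\mapsto X_s(y)$. It is a standard Brownian motion, since $q_{e^a}(0)=k(0)=1$ up to the periodization, which for $e^a\ge 1/\pi$ only adds terms vanishing at $0$ because $\operatorname{supp}k\subset[-1,1]$. Tilting by $w_t(y)$ is Girsanov's theorem: under $w_t(y)\,\dd\Prob$, the process $X_s(y)-\sqrt2 s$ is a Brownian motion. Hence $R_s(y)$ becomes $\beta$ minus a Brownian motion. The additional factor $R_t(y)I_t(y)/\beta$ is the Doob $h$-transform of Brownian motion killed at $0$, which yields BES$(3)$ started at $\beta$.

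\textbf{Positive truncations and locality (i).} The same computation shows that $s\mapsto R_sI_sw_s(x)$ is a nonnegative $(\F_s)$-martingale for each $x$. By Fubini, $Z^\beta_t(A)$ is then a nonnegative martingale with mean $\beta|A|$. Nonnegative martingale convergence gives an a.s. limit for each $A$ in a countable generating algebra, and hence a limit measure $Z^\beta$. The closure identity $\E[Z^\beta(A)\mid\F_t]=Z^\beta_t(A)$ requires uniform integrability, and this is the main obstacle. We plan to obtain it by a second-moment bound on the truncated quantity. Under $\mathbb Q^\beta_{t,y}$, decompose $X$ into its one-point part plus an independent remainder. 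Splitting the integral over $x$ dyadically in $|x-y|\approx e^{-s}$, the BES$(3)$ lower envelope $R_s(y)\gtrsim s^{1/2-\delta}$ gives
\[
\E_{\mathbb Q}\bigl[Z^\beta_t(A)\,\1_{G}\bigr]\le C,
\]
uniformly in $t$, where $G$ is a good event of $\mathbb Q$-probability close to one. This localization yields uniform integrability.

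Locality follows from the finite range of the field. Since $k(e^a\cdot)$ is supported in $|x|\le e^{-a}$, the increments $X_\cdot-X_t$ at points more than $e^{-t}$ apart are independent of each other and of $\F_t$. Consequently $Z^\beta|_A$ is a functional of $\F_t$ together with the future increments over $A$.

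\textbf{Untruncated measure and barriers (iii).} The truncations are increasing in $\beta$ on the barrier events $E_\beta$, and these events increase to a full-measure event. The increase is because $\min_{s\le N,x}R_s>0$ eventually holds for some $\beta$, via $\sup_x X_s(x)-\sqrt2 s\to-\infty$ (first-order maximum asymptotics of log-correlated fields). We therefore set $D_*=\lim_\beta Z^\beta$, which is consistent because $Z^\beta=Z^{\beta'}$ on $E_\beta$ for $\beta'>\beta$ (all indicators equal one, and $R^{\beta'}-R^\beta$ is constant, so the difference is $(\beta'-\beta)W_t\to0$). The same identity gives $Z^\beta\le D_*$. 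Positivity $D_*(\T)>0$ follows from $\E Z^\beta(\T)>0$ combined with a zero-one argument using the independence of scales.

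\textbf{Seneta--Heyde convergence (iv).} This is cited from \cite{SH}: its proof compares $\sqrt t\,W_t$ with $Z^\beta_t$ via the BES$(3)$ renewal estimate $\E[R_t^{-1}]\sim\sqrt{2/(\pi t)}$. The interval version is obtained verbatim with the nonperiodized kernel.
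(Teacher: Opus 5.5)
Your outline follows the standard derivative-martingale route. The paper does not reprove this theorem; it imports (i)--(iv) from \cite{Rajchman}, which rests on \cite{DRSV} and \cite{SH}. Part (ii), the locality argument in (i), and the citation for (iv) are fine.

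The step that fails as written is $\Prob(E_\beta)\uparrow1$ in (iii). Since $E_\beta$ increases to $\{\sup_{s\ge0,\,x\in\T}(X_s(x)-\sqrt2s)<\infty\}$, you need this supremum to be almost surely finite. First-order maximum asymptotics, $\max_xX_s(x)=\sqrt2s+o(s)$, do not give that: they are compatible with $\max_xX_s(x)-\sqrt2s\to+\infty$. The needed fact lives at the level of the logarithmic correction to the maximum.

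A naive union bound does not reach it either. On an $e^{-r}$-mesh, the expected number of points with $X_r(x)\ge\sqrt2r+\beta$ is of order $e^{-\sqrt2\beta}r^{-1/2}$, which is not summable in $r$. The missing idea is to count only \emph{first} crossings. These are mesh roots that have stayed below the barrier through time $r$ and then cross during $[r,r+1]$, which is controlled by the independent Gaussian residual. Counting them this way inserts the killed density $p_r^0(\beta,z)$. The Green-kernel bound $\sum_rp_r^0(\beta,z)\le C\min(\beta,z)$ then makes the series converge. This is the paper's Lemma~\ref{crm:lem-crossing}; on the circle it gives $\Prob(E_\beta^c)\le C_ke^{-\sqrt2\beta}$, which is all (iii) needs.

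Smaller points:

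\emph{Uniform integrability.} Your spine localization is valid: use Markov on the good event $G$, with $\sup_{t,y}\mathbb Q_{t,y}^\beta(G^c)$ small by consistency of the rooted laws in $t$. Here it is unnecessary, though. The bracket computation of Lemmas~\ref{crm:lem-finel2} and \ref{crm:lem-bracket} gives $\E V_s(0)^2\le C_\beta e^ss^{-3/2}$ against a kernel of $L^1$ norm $e^{-s}\|k\|_1$. So $Z_t^\beta(f)$ is bounded in $L^2$ and the closure identity is immediate.

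\emph{The additive martingale.} The identity $Z^\beta=Z^{\beta'}$ on $E_\beta$ and your zero--one argument both use that the critical additive martingale $W_t(\T)$ tends to zero. You should say how you get this, e.g.\ by the classical spine argument, or from tightness of $\sqrt t\,W_t(\T)$ via \cite{SH}.

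\emph{The zero--one step.} It also needs the cocycle identity $D_*(\dd x)=w_t(x)D_*^{(t)}(\dd x)$, so that $\{D_*(\T)=0\}$ lies in the tail field of the independent scale increments.

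\emph{Building the limit measure.} Almost-sure limits on a countable algebra of sets do not by themselves give a countably additive measure. Instead, use the martingales $Z_t^\beta(f)$ for $f$ in a countable dense subset of $C(\T)$ together with the Riesz representation theorem. Then extend the closure identity to Borel sets $A$ by a monotone-class argument.
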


The construction results in this theorem are imported from
\cite[Sections 2, 4 and 5.2]{Rajchman}, using the same periodized kernel,
unnormalized arc length and derivative normalization. Section 2 there
establishes the positive martingale closure, the rooted BES$(3)$ law and
the global barrier construction; Section 4 establishes measurability of
terminal restrictions and their conditional locality. The
measure-valued Seneta--Heyde identification is proved in Section 5.2
from the real-line result of \cite[Theorem 5]{SH}. These results use the
critical construction of \cite{DRSV}; no Fourier asymptotic is imported.
The quantitative estimates required here are proved below.

The additive critical measure tends to zero at infinite fine time.
Consequently positive derivative truncations with different starting
heights agree at the terminal time whenever neither barrier is crossed,
although their finite-time derivative densities differ.

Write $E_{\beta,t}=\{\min_{0\le s\le t,x\in\T}R_s(x)>0\}$.
Then $E_{\beta,t}\downarrow E_\beta$ as $t\to\infty$, so
$\Prob(E_{\beta,t}\setminus E_\beta)\to0$ without any assumption
about atoms of an infinite-time supremum. This no-hit convention contains
the strict global-supremum event and preserves all terminal comparisons
and barrier exhaustion. The event $E_{\beta,t}$ is $\F_t$-measurable;
$E_\beta$ is not. Independence is always conditional
on $\F_t$, never on $E_\beta$. A family is \emph{tight on $E_\beta$} if
$\lim_{A\to\infty}\sup_n\Prob(E_\beta,|Y_n|>A)=0$; finite initial
indices can equivalently be omitted.

\subsection{All-integer endpoint scales and the mesh}
For an integer $n$ sufficiently large, always use
\begin{equation}\label{crm:eq-scales}
 u=\log n,\qquad L=\sqrt u,\qquad
 b=u+4\log u,\qquad t=u-6\log u,\qquad\rho=e^{-b}.
\end{equation}
Thus $b-t=10\log u$, $ne^{-b}=u^{-4}$ and $e^t/n=u^{-6}$.
Choose
\[
 m=3\left\lceil\frac{2\pi}{3\rho}\right\rceil,
 \quad d=2\pi/m,\quad J_j=[jd,(j+1)d),\quad y_j=jd,\quad z_j=e^{\ii ny_j}.
\]
For large $n$, $\rho/2<d\le\rho$. The three colors $j\bmod3$ work
across the circular seam because $m$ is divisible by three. Distinct
same-color cells are separated by at least $2d>\rho$.
All cell sums run over $0\le j<m$.
Let $\mu=Z^\beta$ and $U_j=L\mu(J_j)$.
For $f\in C^1(\T;\mathbb C)$, use
$\|f\|_{C^1}=\|f\|_\infty+\|f'\|_\infty$,
put $f_j=f(y_j)$, and set
$T_{n,f}=\sum_jz_jf_jU_j$.  Then
\begin{equation}\label{crm:eq-mesherror}
 \left|L\widehat{f\mu}(n)-T_{n,f}\right|
 \le Ld(n\|f\|_\infty+\|f'\|_\infty)\mu(\T).
\end{equation}
Its expectation is $O_{\beta}(\|f\|_{C^1}u^{-7/2})$. We write
$\lambda=\log u$ and $W_s^\beta=I_s^\beta W_s$ throughout.

\subsection{Comparison with the canonical field}
Let $X_*$ denote the limit of $X_t$ as $t\to\infty$ in
$L^2(\Omega;H^{-1}(\T;\mathbb R))$. The covariance comparison of \cite[Section 6]{Rajchman} can be made
on one probability space. There is an independent smooth stationary
Gaussian field $Y$ with zero constant mode such that
\[
 H=X_*+Y=\phi_*+G,\qquad
 G=\lim_{s\to\infty}\frac1{2\pi}\int_\T X_s(x)\,dx,
 \qquad \sigma_G^2=\E G^2,
\]
where $\phi_*$ has the exact-circle law of the introduction and $G$ is
independent of $\phi_*$. Writing
$F_*=e^{\sqrt2Y-\E Y(x)^2}$ and $q_G=e^{\sqrt2G-\sigma_G^2}$, the terminal
measure identity is
\begin{equation}\label{crm:eq-same-space-transfer}
 M_H^{\rm crit}=\sqrt{2/\pi}\,F_*D_*=q_GM_{\phi_*}^{\rm crit}.
\end{equation}
In particular, $M=\sqrt{2/\pi}\,q_G^{-1}F_*D_*$ on this coupling.
The multiplier $F_*$ and its reciprocal are smooth almost surely.
We do not assume that $q_G$ is independent of $D_*$.
If $G_b=(2\pi)^{-1}\int X_b$, then
$\operatorname{Var}(G-G_b)=\sigma_G^2e^{-b}$, by integration of the
constant Fourier mode of the scale covariance.
The comparison is an identity of terminal measures; all subsequent
frequency limits are taken after these measures have been constructed.

\section{Quantitative cell and coarse-measure estimates}\label{paper:sec-common}
All constants in this section may depend on the fixed kernel and
barrier. Quantitative bounds are obtained before barrier exhaustion.
We collect the elementary cell estimates first; their proofs are in
Appendices~\ref{crm:sec-fine}--\ref{crm:sec-endpoint-closure}.

\subsection{Fine tails, rooted increment estimates and independent moment control}
The real-line fine field $Y_s(v)$ has covariance
$\int_0^{s\wedge t}k(e^a(v-v'))\,da$ and starts from zero.
Let $D$ be its derivative chaos on $I=[0,1]$, and $Z^{[q]}$ its
positive version killed at spatial starting barrier $q\ge0$.
Let \(C_D=C_D(k)<\infty\) denote a constant for which the weak-\(L^1\)
tail bound for \(D(I)\) below holds.
For bounded nonnegative $g,q$,
\begin{equation}\label{paper:eq-fine}
 \E Z^{[q]}(g)=\int_I qg,\qquad
 0\le Z^{[q]}(g)\le D(g),\qquad
 \Prob(D(I)>v)\le C_D/v.
\end{equation}
The terminal measures agree if
$\sup_{s\ge0,v\in I}(Y_s(v)-\sqrt2s)<\inf_I q$.
The probability that this supremum exceeds $a$ is at most
$C_ke^{-\sqrt2a}$. Also
$\E|Z^a(g)|^2\le a^2|\int g|^2+C_ke^{\sqrt2a}\|g\|_2^2$.
The tail estimate is the fixed-weight consequence of \cite{Wong};
the crossing and second-moment bounds are proved in Appendix~\ref{crm:sec-fine}.

At a mesh anchor set
\[
 H_j(v)=X_b(y_j+e^{-b}v)-X_b(y_j),\qquad c_j=Le^{-b}w_b(y_j),
\]
\[
 M_j=e^{\sqrt2\sup_{[0,1]}H_j},\qquad \mathcal A_n=\sum_jc_jM_j.
\]
On $E_{\beta,b}$, conditional coarse/fine scaling gives
\begin{equation}\label{paper:eq-cell-scaling}
 U_j=c_jZ^{[R_b(y_j)-H_j]}(g_j),\qquad
 g_j=e^{\sqrt2H_j}\1_{[0,e^bd]}.
\end{equation}
Under the derivative-rooted law, every fixed exponential moment of
$\|H_j\|_\infty$ is bounded uniformly in the anchor and time.
Moreover $\E_{\mathbb Q}R_b^{-2}\le2/b$. The following estimates hold for the full cell array.

\begin{proposition}
\label{crm:prop-budget}
For fixed $\beta>0$,
\begin{equation}
\label{paper:eq-budget}
 \sup_n\E[\1_{E_{\beta,b}}\mathcal A_n]\le C_{\beta,k},\qquad
 v\sum_j\Prob(U_j>v\mid\F_b)\le C_D\mathcal A_n\quad(v>0),
\end{equation}
\[
 \sum_j\E[U_j^2\1_{\{U_j\le\delta\}}\mid\F_b]
 \le2C_D\delta\mathcal A_n\quad(\delta>0).
\]
More generally, for every fixed $C<\infty$,
\begin{equation}\label{crm:eq-general-envelope}
 \sup_n\E\left[\1_{E_{\beta,b}}
        \sum_jc_j e^{C\|H_j\|_\infty}\right]<\infty.
\end{equation}
\end{proposition}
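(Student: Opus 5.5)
The plan is to prove the general envelope \eqref{crm:eq-general-envelope} first; the first bound in \eqref{paper:eq-budget} is then the case $C=\sqrt2$, since $M_j\le e^{\sqrt2\|H_j\|_\infty}$. The other two inequalities in \eqref{paper:eq-budget} are conditional on $\F_b$ and follow from \eqref{paper:eq-cell-scaling} and the fine tail \eqref{paper:eq-fine}.

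\textbf{Envelope bound.} We have $c_j=Le^{-b}w_b(y_j)$, and on $E_{\beta,b}$ we have $I_b(y_j)=1$. Insert $1=R_b(y_j)/R_b(y_j)$ and pass to the rooted law of Theorem~\ref{crm:thm-construction}(ii):
\[
 \E\bigl[\1_{E_{\beta,b}}c_je^{C\|H_j\|_\infty}\bigr]
 \le Le^{-b}\beta\,\E_{\mathbb Q_{b,y_j}^\beta}\bigl[R_b(y_j)^{-1}e^{C\|H_j\|_\infty}\bigr].
\]
By Cauchy--Schwarz this is at most
\[
 Le^{-b}\beta\bigl(\E_{\mathbb Q}R_b^{-2}\bigr)^{1/2}\bigl(\E_{\mathbb Q}e^{2C\|H_j\|_\infty}\bigr)^{1/2}.
\]
The stated bounds give $\E_{\mathbb Q}R_b^{-2}\le 2/b$ and a uniform bound on fixed exponential moments of $\|H_j\|_\infty$. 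So each term is $O(Le^{-b}b^{-1/2})$. Summing over the $m\asymp e^{b}$ cells gives $O(L/\sqrt b)=O(\sqrt{u/b})=O(1)$, uniformly in $n$. The only delicate point is that $H_j$ is measured under the rooted law at the anchor $y_j$ itself. This is precisely the form in which the exponential moment bound is stated, so I expect no obstacle here beyond checking the constants.

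\textbf{Conditional tail.} On $E_{\beta,b}$, \eqref{paper:eq-cell-scaling} gives $U_j=c_jZ^{[q_j]}(g_j)$ with $q_j=R_b(y_j)-H_j$. Since $g_j\le e^{\sqrt2\sup H_j}\1_I=M_j\1_I$ (as $e^bd\le1$) and $Z^{[q]}\le D$, we get
\[
 U_j\le c_jM_jD_j(I),
\]
where $D_j$ is a fine derivative chaos independent of $\F_b$. Hence
\[
 \Prob(U_j>v\mid\F_b)\le \Prob\bigl(D(I)>v/(c_jM_j)\bigr)\le C_Dc_jM_j/v .
\]
Summing over $j$ gives $v\sum_j\Prob(U_j>v\mid\F_b)\le C_D\mathcal A_n$. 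Off $E_{\beta,b}$, I would interpret the inequality on $E_{\beta,b}$ only, or note that $U_j$ vanishes where the barrier kills the cell. I would state it as holding on $E_{\beta,b}$, matching how it is used.

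\textbf{Truncated second moment.} Use the layer-cake formula with the tail bound just obtained:
\[
 \E[U_j^2\1_{\{U_j\le\delta\}}\mid\F_b]\le\int_0^\delta 2v\,\Prob(U_j>v\mid\F_b)\,dv\le 2\delta C_Dc_jM_j .
\]
Summing over $j$ gives $2C_D\delta\mathcal A_n$. The main care throughout is that the fine-scale independence and scaling in \eqref{paper:eq-cell-scaling} hold conditionally on $\F_b$ with $q_j,g_j$ frozen. This is the content of the conditional coarse/fine scaling already recorded, so the tail bound applies with frozen $\F_b$-measurable weights.
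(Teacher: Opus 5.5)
Your proposal is correct and is essentially the paper's proof: rooting at the anchor $y_j$ with Cauchy--Schwarz, $\E_{\mathbb Q}R_b^{-2}\le 2/b$ and the uniform rooted exponential moments of $\|H_j\|_\infty$ (Lemma~\ref{crm:lem-shape}) for the envelope; the positive-barrier domination $U_j\le c_jM_jD_j([0,1])$ with the weak-$L^1$ fine tail for the conditional tail bound; and the layer-cake identity for the truncated second moment. The one nuance is that the paper states the two conditional bounds without restricting to $E_{\beta,b}$, because coarsely killed points contribute zero to $\mu(J_j)$ and the surviving part is still a positive-barrier fine chaos dominated by $D_j$. So no claim that $U_j$ itself vanishes is needed, and your version localized to $E_{\beta,b}$ already suffices for every later use.
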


The two conditional estimates above hold without inserting the global
survival event.
The exponential-envelope bound remains valid when the
local increment \(H_j\) is taken on any fixed bounded rescaled interval.
The proof is given in Appendix~\ref{crm:sec-fine}.

Three-color conditional independence implies that a centered sum of
these small marks, with deterministic coefficients bounded by one,
has conditional second moment at most $C_kv\mathcal A_n$.

We next record an endpoint moment estimate obtained from positive barriers and exact conditional means.

\begin{theorem}
\label{crm:thm-endpoint-moment}
For every fixed $\beta>0$, deterministic $f\in C^1(\T)$, and
$0<p<1$, there is $C_{\beta,k,p}<\infty$ such that
\begin{equation}\label{crm:eq-endpoint-pmoment}
 \sup_{\substack{n\in\mathbb N\\n\ge e^2}}
 \E\left[\1_{E_\beta}
 \left|\sqrt{\log n}\int_\T f(x)e^{\ii nx}Z^\beta(\dd x)
 \right|^p\right]
 \le C_{\beta,k,p}\|f\|_{C^1}^{p}.
\end{equation}
\end{theorem}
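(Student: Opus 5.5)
First reduce \eqref{crm:eq-endpoint-pmoment} to the cell sum $T_{n,f}=\sum_j z_jf_jU_j$. By \eqref{crm:eq-mesherror} the mesh error has expectation $O_\beta(\|f\|_{C^1}u^{-7/2})$. Jensen's inequality for $p<1$ therefore bounds its $p$-th moment by $C\|f\|_{C^1}^p$, and it suffices to bound $\E[\1_{E_{\beta,b}}|T_{n,f}|^p]$, using $E_\beta\subset E_{\beta,b}$. Small $n$ is handled by $\E Z^\beta(\T)=2\pi\beta$, since $|\cdot|^p$ is concave. Write $\|f\|_\infty\le\|f\|_{C^1}$ and normalize $\|f\|_{C^1}=1$.

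The key device is a conditional truncation at the level $\delta=A:=\mathcal A_n$, the budget from Proposition~\ref{crm:prop-budget}. Work conditionally on $\F_b$ on the $\F_b$-measurable event $E_{\beta,b}$. Split each mark as $U_j=U_j\1_{\{U_j\le A\}}+U_j\1_{\{U_j>A\}}$.

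\emph{Large marks.} For $\lambda>0$,
\[
\Prob\Bigl(\bigl|\textstyle\sum_j z_jf_jU_j\1_{\{U_j>A\}}\bigr|>\lambda\mid\F_b\Bigr)\le\sum_j\Prob(U_j>A\mid\F_b)\le C_D
\]
whenever $\lambda>0$. This is too crude, so use a layer-cake over thresholds instead. Bound $|\sum U_j\1_{\{U_j>v\}}|$ by the weak-$L^1$ estimate $v\sum_j\Prob(U_j>v\mid\F_b)\le C_DA$. Then
\[
\Prob\Bigl(\bigl|T_{n,f}\bigr|>\lambda\mid\F_b\Bigr)\le\sum_j\Prob(U_j>\lambda\mid\F_b)+\Prob\Bigl(\bigl|\textstyle\sum_jz_jf_jU_j\1_{\{U_j\le\lambda\}}\bigr|>\lambda\mid\F_b\Bigr).
\]
The first term is at most $C_DA/\lambda$.

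\emph{Small marks.} For the second term, separate the conditional mean from the centered part. The centered part has conditional variance at most $C_k\lambda A$, by the three-color independence remark following Proposition~\ref{crm:prop-budget}. Chebyshev's inequality then gives a contribution $C\lambda A/\lambda^2=CA/\lambda$. The conditional mean is $\sum_jz_jf_j\E[U_j\1_{\{U_j\le\lambda\}}\mid\F_b]$. Write $\E[U_j\mid\F_b]=L\,Z^\beta_b(J_j)$, exact by the martingale closure in Theorem~\ref{crm:thm-construction}(i), minus $\E[U_j\1_{\{U_j>\lambda\}}\mid\F_b]$. The truncated-tail piece equals $\int_\lambda^\infty\Prob(U_j>v\mid\F_b)\,dv$, which is only logarithmically controlled; this is where the logarithmic term in the capped first moment enters. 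I would bound it by $C A(1+\log_+(A^{-1}\Lambda_j))$-type terms. Here $\Lambda_j$ is controlled through \eqref{crm:eq-general-envelope}, which absorbs the $\log$ after taking the $p$-th moment.

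\emph{Oscillatory main term.} The term $L\sum_jz_jf_jZ_b^\beta(J_j)$ must be shown small. The density $R_bI_bw_b$ varies on scale $e^{-b}$ while $n e^{-b}=u^{-4}$, so phases do not cancel directly. Instead, integrate over each cell: $L\int f e^{inx}Z_b^\beta(dx)$ differs from it by \eqref{crm:eq-mesherror}-type error. Then express $\int fe^{inx}Z_b^\beta(dx)$ using smoothness of $X_b$ at frequency $n\gg e^b$: repeated integration by parts in $x$, with derivatives of $X_b$ of size $e^{b}$ in rooted $L^2$, gives a gain $(e^b/n)^k$. Since $e^b/n=u^4$ this fails as set up. So I would lower the conditioning time to $t$ (where $e^t/n=u^{-6}$): apply the same decomposition with conditioning on $\F_t$ and cells still of size $d$. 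The exact conditional mean is then $L\int fe^{inx}Z_t^\beta(dx)$, which integration by parts bounds by $O(Lu^{-6}\|Z_t\|\cdots)$, negligible. The weak-$L^1$ and variance bounds at $\F_t$ follow the same proof, since the three-color separation $2d>\rho>e^{-t}$ fails for $t<b$. I would therefore instead take cells of size $e^{-t}$ for the independence, while keeping mesh error via a two-level decomposition.

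\emph{Conclusion and main obstacle.} Combining gives $\Prob(|T_{n,f}|>\lambda\mid\F_b)\le C(A+A')/\lambda$ with $A'$ an envelope of bounded expectation. Then $\E[\1_{E_{\beta,b}}|T|^p]\le C\E[(A+A')^p]\le C$ by the layer-cake formula $\int_0^\infty p\lambda^{p-1}\min(1,CA/\lambda)\,d\lambda=C_p A^p$ and Jensen. The main obstacle is the conditional-center step: controlling the truncated conditional mean and the oscillatory integral of the coarse truncated measure uniformly in $n$. I would attack it first via the $\F_t$ versus $\F_b$ two-scale scheme described above.
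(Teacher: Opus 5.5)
\textbf{There is a genuine gap: the conditional mean.} Your handling of the stochastic parts is sound and close to the paper's. The mesh reduction works. So do the weak-type bounds $C\mathcal A_n/\lambda$ for the marks above $\lambda$ and, via Chebyshev with three colors, for the centered marks below $\lambda$. The layer-cake integral $\int_0^\infty p\lambda^{p-1}\min(1,C\mathcal A_n/\lambda)\,d\lambda=C_p\mathcal A_n^p$ also closes. (The paper instead uses a fixed unit cutoff, Jensen for the centered small marks, and $p$-subadditivity for $\sum_jU_j^p\1_{\{U_j>1\}}$.)

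You propose to bound $\sum_jz_jf_j\E[U_j\1_{\{U_j>\lambda\}}\mid\F_b]$ cellwise in absolute value and to absorb a logarithm into a $p$th moment. This cannot work. The conditional tail of $U_j$ is of order $c_jG_j/v$ all the way up to $v\approx c_je^{\sqrt2R_b(y_j)}=Le^{\sqrt2\beta}$. So each cell compensator is about $c_jG_jh_\lambda$ with $h_\lambda=\beta+\kappa\log(L/\lambda)$. The absolute sum is therefore about $h_\lambda LW_b(\T)\approx h_\lambda\sqrt{2/\pi}\,Z^\beta(\T)$. The factor $\log L\asymp\log u$ is deterministic, so no random envelope such as \eqref{crm:eq-general-envelope} can absorb it. Your layer-cake would then return a bound of order $(\log u)^p$, not one uniform in $n$.

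\textbf{The missing idea is index-one compensation.} You must identify the large-mark compensator up to a bounded error, and then remove its logarithmically large part by Fourier cancellation rather than by size.

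\emph{Identifying the compensator.} The paper works at cutoff $1$ with $h=\beta+\log L/\sqrt2$. Lemma~\ref{crm:lem-cell-shift} lowers the fine starting barrier by $h$ and gets the exact identity $\E_{\mathrm f}[Z^{[q]}-Z^{[(q-h)_+]}]=\int g\min(q,h)$. The actual $\E[U\1_{\{U>1\}}\mid\F_b]$ differs from $c$ times this by at most $C_kce^{2\sqrt2\|H\|_\infty}$, and the cell sum of these errors has bounded localized mean. Proposition~\ref{crm:prop-compensator-expansion} then gives $K_n(f)=hL\sum_jf(y_j)e^{\ii ny_j}W_b(J_j)+\mathcal E_n(f)-\mathcal B_n(f)$. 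Here $\mathcal E_n(f)$ has bounded localized $L^1$ norm, and the small-radius term $\mathcal B_n$ has size $Lh^3b^{-3/2}\to0$.

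\emph{Cancelling the $h$-term.} On $E_\beta$ one has $w_b\,dx=Z_b^{\beta+1}-Z_b^\beta$. Each barrier martingale at frequency $n$ splits into two pieces:
\begin{itemize}
\item a band increment $Z_b-Z_t$, whose $L^2$ norm is $O((\log u)^{1/2}u^{-1/4})$ by the bracket in Lemma~\ref{crm:lem-bracket};
\item an early part at time $t$, which is $O(u^{-5})$ by integration by parts, using $e^t/n=u^{-6}$.
\end{itemize}
Both beat $h=O(\log u)$; this is Lemma~\ref{crm:lem-additive-negligible}.

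\textbf{Your $P_n$ term.} The same band/early split handles it while keeping the $\F_b$ cells. Re-celling at scale $e^{-t}$, as you suggest, would destroy the phase freezing, since $ne^{-t}=u^6$.

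Working at a fixed cutoff also spares you from carrying this compensator identification uniformly in the layer-cake variable $\lambda$.
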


The proof is given in Appendix~\ref{crm:sec-endpoint-closure}.
As a consequence, the endpoint estimate yields
\begin{equation}
\label{paper:eq-all-mode}
 \sup_{m\in\mathbb Z}\E[\1_{E_\beta}
                 |a_m\widehat D_*(m)|^p]<\infty
 \qquad(\beta>0,\ 0<p<1).
\end{equation}
Indeed, use $f=1$ and $D_*=Z^\beta$ on $E_\beta$ for positive large
modes, with the bounded ratio of $a_m$ to $\sqrt{\log m}$. Negative
modes follow by conjugation. The finitely many remaining modes, including
zero, are bounded by
$\E[\1_{E_\beta}D_*(\T)^p]\le\E Z^\beta(\T)^p\le(2\pi\beta)^p$.
This consequence uses no limiting Fourier law.

The corresponding linear derivative-martingale bracket is
\begin{equation}\label{paper:eq-bracket}
 \E\frac{d}{ds}\langle Z_s^\beta(f),\overline{Z_s^\beta(f)}\rangle
 \le C_{\beta,k}\|f\|_2^2s^{-3/2},\qquad s\ge1.
\end{equation}
Integrating from $t$ to $b$ costs $C(b-t)t^{-3/2}\|f\|_2^2$.
On $E_\beta$, integration by parts at the early time also gives
$\E[\1_{E_\beta}L|\widehat{fZ_t^\beta}(n)|]
\le C_{\beta,k}\|f\|_{C^1}u^{-5}$.

\subsection{Quantitative Seneta--Heyde convergence and phase averaging}
\label{l3:sec-coarse-phase-rate}

This section proves a polynomial rate directly from the fixed-barrier
martingale estimates. No external quantitative Seneta--Heyde theorem is
used. The constants depend on the deterministic barrier; no uniformity
as that barrier tends to infinity is asserted.

Write
\[
 W_b^\beta(\dd x)=I_b^\beta(x)w_b(x)\dd x,
 \qquad Z^\beta=\lim_{s\to\infty}Z_s^\beta,
 \qquad c_{\rm SH}=\sqrt{2/\pi}.
\]
For finite positive measures on a compact metric space $(S,d)$, define
\[
 d_{\rm BL}(\nu,\nu')
 =\sup_{\|F\|_\infty\le1,\,\operatorname{Lip}(F)\le1}
                   |\nu(F)-\nu'(F)|.
\]
For the product of circles we use the sum of the two geodesic distances.

\begin{lemma}\label{l3:lem-SH-bridge-martingale}
Fix \(b>0\) and a deterministic \(f\in L^2(\T)\), and set
\[
 M_s^b(f)=\E[W_b^\beta(f)\mid\F_s],
 \qquad 0\le s\le b.
\]
Then \(M^b(f)\) is square-integrable and, for \(1\le a\le b/2\),
\begin{equation}\label{l3:eq-SH-bridge-variance}
 b\,\E|W_b^\beta(f)-M_a^b(f)|^2
 \le C_{\beta,k}\|f\|_2^2
 \left(a^{-1/2}+b^{-1/2}\log(2+b)\right).
\end{equation}
\end{lemma}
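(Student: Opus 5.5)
The plan is to write the bridge martingale as an Itô integral and bound its bracket. Since $W_b^\beta(f)=\int f(x)I_b(x)w_b(x)\,dx$ is a functional of the Gaussian field on $[0,b]$, square integrability is immediate from $I_bw_b\le w_b$, because $\E W_b(|f|)^2$ is finite for fixed $b$. The Itô/Clark–Ocone representation then gives
\[
 W_b^\beta(f)-M_a^b(f)=\int_a^b dM_s^b(f),\qquad
 \E|W_b^\beta(f)-M_a^b(f)|^2=\E\int_a^b d\langle M^b(f)\rangle_s .
\]
So it suffices to show
\[
 \E\,\frac{d}{ds}\langle M^b(f)\rangle_s\le C\|f\|_2^2\,b^{-1}s^{-3/2}
\]
for $s\le b/2$, together with a bound of order $b^{-3/2}\log b$ for the integrated bracket over $[b/2,b]$. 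Integrating the first bound from $a$ gives $a^{-1/2}/b$; the second gives the $b^{-1/2}\log(2+b)/b$ term.

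To get these bracket bounds I will compute $M_s^b(f)$ explicitly. Conditionally on $\F_s$, the field increment on $[s,b]$ at $x$ is Brownian in $x$-wise time and independent of $\F_s$. The ballot/reflection formula for Brownian motion with drift $\sqrt2$ killed at the barrier gives
\[
 M_s^b(f)=\int f(x)\,I_s(x)\,w_s(x)\,h_{b-s}(R_s(x))\,dx,
\]
where $h_r(R)=\Prob(\text{BM started at }R\text{ stays positive up to time }r)$ under the Girsanov-tilted law. This satisfies $h_r(R)\le C\min(1,R/\sqrt r)$ and $|\partial_R h_r(R)|\le Cr^{-1/2}$. Itô's formula gives
\[
 dM_s^b=\int f\,I_s\,w_s\,\bigl(\sqrt2 h-\partial_Rh\bigr)\,dX_s\,dx ,
\]
with no extra drift since $M^b$ is a martingale, and hence
\[
 \frac{d}{ds}\langle M^b\rangle_s=\iint f(x)\overline{f(y)}\,q_{e^s}(x-y)\,I_sw_s(x)\,I_sw_s(y)\,\Phi_s(x)\,\Phi_s(y)\,dx\,dy ,
\qquad \Phi_s=\sqrt2h_{b-s}(R_s)-\partial_Rh_{b-s}(R_s).
\]
Because $q_{e^s}$ is supported on $|x-y|\le e^{-s}$, I will use Cauchy–Schwarz, $|f(x)f(y)|\le\frac12(|f(x)|^2+|f(y)|^2)$. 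This reduces the two-point expectation to
\[
 \sup_x\int_{|y-x|\le e^{-s}}\E\bigl[I_sw_s(x)\,I_sw_s(y)\,|\Phi_s(x)\Phi_s(y)|\bigr]\,dy .
\]

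This two-point estimate is the main obstacle. I will root at $x$ using the density $\beta^{-1}R_sI_sw_s(x)$ of Theorem~\ref{crm:thm-construction}(ii), so that $R_\cdot(x)$ becomes BES$(3)$. Conditionally on the root, for $|y-x|\le e^{-s}$ the weight $w_s(y)$ equals $w_s(x)$ times a bounded-exponential-moment factor, as in the anchor estimates before Proposition~\ref{crm:prop-budget}. This leaves
\[
 e^{-s}\,\E_{\mathbb Q}\bigl[R_s^{-1}\,e^{-s}w_s(x)\,\Phi_s(x)^2\bigr].
\]
The factor $e^{-s}w_s(x)$ under the rooted law contributes through the decorrelation at scale $e^{-s}$. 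For $s\le b/2$ I will use $h_{b-s}(R)\asymp R/\sqrt b$ to get $\Phi_s^2\lesssim R_s^2/b+1/b$. The BES$(3)$ moments $\E R_s^{-1}\lesssim s^{-1/2}$ and $\E R_s\lesssim s^{1/2}$, combined with the expected rooted weight $\E_{\mathbb Q}[R_s^{-1}\cdots]$, should give the $b^{-1}s^{-3/2}$ rate; this is the same computation that produces \eqref{paper:eq-bracket}, now carrying the extra $1/b$ from $h$. For $s\in[b/2,b]$ the function $h$ is no longer linear. There I will instead bound $\Phi_s\le C(1+(b-s)^{-1/2})$, and the singularity near $s=b$ integrates to a logarithm, giving $b^{-3/2}\log(2+b)$.

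Finally, multiplying by $b$ yields \eqref{l3:eq-SH-bridge-variance}. The delicate points I expect to check are the uniformity of the rooted two-point comparison in $x$, and that the contribution of $\partial_Rh$ near the barrier ($R_s$ small) is controlled by the $R_s^{-1}$ weight and not worse.
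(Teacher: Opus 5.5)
Your overall structure is the paper's: the explicit formula $M_s^b(f)=\int f\,I_sw_s\,h_{b-s}(R_s)\,dx$, the It\^o coefficient $I_sw_s(\sqrt2h-\partial_Rh)$, and the bracket with kernel $q_{e^s}$. Your target rate $b^{-1}s^{-3/2}$ for $s\le b/2$ is also correct. The genuine gap is the terminal range $s\in[b/2,b]$.

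The uniform bound $\Phi_s\le C(1+(b-s)^{-1/2})$ gives only $\Phi_s^2\le C(1+(b-s)^{-1})$. Combined with the one-point factor $s^{-3/2}$, this fails on two counts. First, $\int_{b/2}^b(b-s)^{-1}\,ds=\infty$, so the terminal singularity does not integrate to a logarithm. Second, even the constant part contributes $b\int_{b/2}^bs^{-3/2}\,ds\asymp b^{1/2}$ to the left side of \eqref{l3:eq-SH-bridge-variance}. No bound uniform in $R$ can do better, because $\sqrt2h_\tau(R)\to\sqrt2$ as $R\to\infty$ and $\partial_Rh_\tau(0)=c_{\rm SH}\tau^{-1/2}$.

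The missing step is to keep the $r$-dependence and integrate against the actual one-point weight. The double tilt gives
\[
 \E[I_sw_s^2\Phi_s^2]=e^{s+\sqrt2\beta}\int_0^\infty\Phi(r)^2e^{-\sqrt2r}p_s^0(\beta,r)\,dr,
 \qquad p_s^0(\beta,r)\le C\beta rs^{-3/2},
\]
so everything reduces to bounding $\int_0^\infty re^{-\sqrt2r}\Phi(r)^2\,dr$ with $\tau=b-s$.
\begin{itemize}
\item For $\tau\ge1$, the bounds $h_\tau(r)\le Cr/\sqrt\tau$ and $|\partial_rh_\tau|\le C\tau^{-1/2}$ give $O(\tau^{-1})$. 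You need this on the whole range, not only for $s\le b/2$.
\item For $0<\tau\le1$, use $h_\tau\le1$ together with $\tau^{-1}\int_0^\infty re^{-r^2/\tau}\,dr=\tfrac12$. In other words, the factor $r$ from the killed density near the barrier absorbs the $\tau^{-1/2}$ blow-up of $\partial_rh_\tau$.
\end{itemize}
This yields the bracket rate $C\|f\|_2^2s^{-3/2}/(1+b-s)$. Its integral over $[b/2,b]$ is $O(b^{-3/2}\log(2+b))$, and the same bound is what justifies the isometry up to $s=b$.

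There are also two secondary points. First, the two-point estimate you call the main obstacle is immediate. With $A_s=I_sw_s\Phi_s$, Cauchy--Schwarz in $\omega$ and stationarity give $|\E[A_s(x)A_s(y)]|\le\E A_s(0)^2$. Young's inequality with $\|q_{e^s}\|_1=e^{-s}\|k\|_1$ then reduces everything to the one-point display above. Your rooted version instead replaces $I_s(y)\Phi_s(y)$ by $\Phi_s(x)$ without justification; the two roots differ, and $\partial_Rh$ at $R_s(y)$ can be large when $R_s(x)$ is not. Your reduced display also carries one factor $e^{-s}$ too many.

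Second, the $s^{-3/2}$ rate does not come from decorrelation, nor from the typical BES$(3)$ moments $\E_{\mathbb Q}R_s^{\pm1}$, which would give much worse rates. Under the root one has exactly $e^{-s}w_s(x)=e^{\sqrt2\beta-\sqrt2R_s(x)}$. This confines $R_s$ to $O(1)$, where the BES$(3)$ density is $O(r^2s^{-3/2})$.
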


\begin{proof}
Let \(\Phi\) denote the standard normal distribution function and set
\[
 h_\tau(r)=2\Phi(r/\sqrt\tau)-1,
 \qquad r\ge0,\quad \tau>0.
\]
Conditioning at a spatial point and tilting the future Gaussian increment
by its exponential martingale gives, for \(s<b\),
\[
 M_s^b(f)
 =
 \int_\T f(x)I_s^\beta(x)w_s(x)
 h_{b-s}(R_s(x))\,\dd x.
\]
The remaining radial process is standard Brownian motion started at
\(R_s(x)\) and killed at zero, so \(h_{b-s}(R_s(x))\) is precisely its
survival probability over the remaining time \(b-s\).
The finite-time terminal measure has finite second moments, so its conditional expectation is square integrable.

For $\tau>0$,
\[
 h_\tau(0)=0,\qquad
 \partial_rh_\tau(r)=c_{\rm SH}\tau^{-1/2}e^{-r^2/(2\tau)},
 \qquad \partial_\tau h_\tau=\tfrac12\partial_{rr}h_\tau.
\]
It\^o's formula before killing, then stopping at zero, gives the
stochastic coefficient
\[
 I_s^\beta(x)w_s(x)a_{b-s}(R_s(x)),\qquad
 a_\tau(r)=\sqrt2h_\tau(r)-\partial_rh_\tau(r).
\]
The time and spatial heat-equation terms cancel, as do the radial
$\sqrt2$ drift and the cross variation with $w_s$.
There is no boundary term because $h_\tau(0)=0$ for $s<b$.

Let \(p_s^0(x,y)\) denote the transition density of standard Brownian motion killed upon hitting \(0\).
The same double Gaussian tilt used for the derivative-martingale
bracket gives, at a fixed point,
\[
 \E[I_s^\beta w_s^2a_\tau(R_s)^2]
 =e^{s+\sqrt2\beta}\int_0^\infty
       a_\tau(r)^2e^{-\sqrt2r}p_s^0(\beta,r)\dd r.
\]
Use $p_s^0(\beta,r)\le C\beta r s^{-3/2}$ and the spatial kernel
$L^1$ norm $e^{-s}\|k\|_1$. Stationarity, Cauchy--Schwarz, and
Young's inequality reduce the expected bracket to
\[
 C\beta e^{\sqrt2\beta}\|k\|_1\|f\|_2^2s^{-3/2}
       \int_0^\infty r e^{-\sqrt2r}a_\tau(r)^2\dd r.
\]
For $\tau\ge1$, use
$h_\tau(r)\le Cr/\sqrt\tau$ and
$|\partial_rh_\tau(r)|\le C/\sqrt\tau$; the last integral is
$O(\tau^{-1})$. For $0<\tau\le1$, use $h_\tau\le1$ and
\[
 \tau^{-1}\int_0^\infty r e^{-r^2/\tau}\dd r=\frac12.
\]
Thus the integral is bounded by $C/(1+\tau)$, including the
apparently singular terminal layer.
Consequently, for \(s\ge1\),
\begin{equation}
\label{l3:eq-SH-bridge-bracket}
 \E\frac{\dd}{\dd s}\langle M^b(f),\overline{M^b(f)}\rangle_s
 \le C_{\beta,k}\|f\|_2^2
                  \frac{s^{-3/2}}{1+b-s}.
\end{equation}

As $s\uparrow b$, the displayed conditional density converges to
$I_bw_b$ in $L^2$ after spatial integration. This follows either from
conditional-expectation convergence or finite-time Gaussian domination;
the event of hitting the barrier at the exact deterministic time has
probability zero at each spatial point. The integrable bracket bound
also permits the martingale isometry up to $b$.
Split its integral at $b/2$. On $[a,b/2]$,
$b/(1+b-s)\le2$; on $[b/2,b]$, use $s^{-3/2}\le Cb^{-3/2}$.
This gives \eqref{l3:eq-SH-bridge-variance}.
\end{proof}

\begin{theorem}
\label{l3:thm-SH-rate}
For each fixed \(\beta>0\), there is \(C_{\beta,k}<\infty\) such that,
for all sufficiently large \(b\),
\begin{equation}
\label{l3:eq-SH-BL-rate}
 \E\,d_{\rm BL}\!\left(
   \sqrt b\,W_b^\beta,
   c_{\rm SH}Z^\beta
 \right)
 \le C_{\beta,k}b^{-1/5}.
\end{equation}
Moreover, for every deterministic \(f\in L^1(\T)\cap L^2(\T)\),
\begin{equation}
\label{l3:eq-SH-test-rate}
 \E\left|
   \sqrt b\,W_b^\beta(f)-c_{\rm SH}Z^\beta(f)
 \right|
 \le
 C_{\beta,k}b^{-1/5}
 \bigl(\|f\|_{L^1}+\|f\|_{L^2}\bigr).
\end{equation}
\end{theorem}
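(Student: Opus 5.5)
The plan is to compare both quantities with the intermediate martingale value $M_a^b(f)$ at a scale $a=b^{\theta}$, with $\theta\in(0,1)$ chosen at the end. We split
\[
 \sqrt b\,W_b^\beta(f)-c_{\rm SH}Z^\beta(f)
 =\sqrt b\bigl(W_b^\beta(f)-M_a^b(f)\bigr)
 +\bigl(\sqrt b\,M_a^b(f)-c_{\rm SH}Z_a^\beta(f)\bigr)
 +c_{\rm SH}\bigl(Z_a^\beta(f)-Z^\beta(f)\bigr).
\]
The first term is handled by Lemma~\ref{l3:lem-SH-bridge-martingale} and Cauchy--Schwarz. It gives $C\|f\|_2(a^{-1/4}+b^{-1/4}\log^{1/2}b)$ in $L^1$.

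The third term is controlled by the derivative-martingale bracket \eqref{paper:eq-bracket}. Integrating from $a$ to $\infty$ gives $\E|Z_a^\beta(f)-Z^\beta(f)|^2\le C\|f\|_2^2a^{-1/2}$, so this term is $O(\|f\|_2a^{-1/4})$ in $L^1$.

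For the middle term we use the explicit formula
\[
 M_a^b(f)=\int f\,I_aw_a\,h_{b-a}(R_a)\,\dd x .
\]
We expand $h_\tau(r)=c_{\rm SH}r\tau^{-1/2}+E_\tau(r)$, where $|E_\tau(r)|\le Cr^3\tau^{-3/2}$, and also $|h_\tau(r)|\le 1$. This yields
\[
 \sqrt b\,M_a^b(f)=c_{\rm SH}\sqrt{b/(b-a)}\,Z_a^\beta(f)+\sqrt b\int fI_aw_aE_{b-a}(R_a)\,\dd x .
\]
The prefactor satisfies $\sqrt{b/(b-a)}-1=O(a/b)$, and $\E|Z_a^\beta(f)|\le\beta\|f\|_1$. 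The remainder is bounded in expectation through the rooted law of Theorem~\ref{crm:thm-construction}(ii): it is at most $C\beta\|f\|_1\sqrt b\,\E_{\mathbb Q}[R_a^{-1}\min(R_a^3b^{-3/2},1)]$. Under BES$(3)$ we have $R_a\approx\sqrt a$, so this is $O(\sqrt b\,a\,b^{-3/2})=O(a/b)$. To make the step rigorous, split according to whether $R_a\le\sqrt b$ or not, and use Gaussian tails of BES$(3)$ for the second event.

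This step is where the $\|f\|_{L^1}$ norm enters, since the middle term uses only first moments. It is also the main thing to verify: the error must be bounded in $L^1$ uniformly using only first-moment (rooted) computations, with the spatial function $f$ entering linearly.

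Collecting the three terms gives $C(\|f\|_1+\|f\|_2)(a^{-1/4}+a/b+b^{-1/4}\log^{1/2}b)$. Taking $a=b^{4/5}$ gives $a^{-1/4}=b^{-1/5}$ and $a/b=b^{-1/5}$, which proves \eqref{l3:eq-SH-test-rate}.

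For the bounded-Lipschitz bound \eqref{l3:eq-SH-BL-rate}, we approximate an arbitrary test function $F$ with $\|F\|_\infty\le1$ and $\operatorname{Lip}(F)\le1$ by a finite net, using Arzelà--Ascoli quantitatively. Take a piecewise-linear interpolation on a mesh of size $\eta$. Its error is at most $\eta(\sqrt bW_b^\beta(\T)+c_{\rm SH}Z^\beta(\T))$, which has expectation $O(\eta)$ because $\E\sqrt b W_b^\beta(\T)$ is bounded by the same first-moment computation.

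It then suffices to control the supremum over interpolants with values on the grid. Write each interpolant as $\sum_i v_i\psi_i$, where the $\psi_i$ are hat functions and the coefficients satisfy $|v_i|\le1$. The supremum is then bounded by $\sum_i|\sqrt bW_b^\beta(\psi_i)-c_{\rm SH}Z^\beta(\psi_i)|$. This wastes too much, so the better route is to exploit the Lipschitz constraint through a multiscale (dyadic chaining) decomposition $F=\sum_k F_k$, where $F_k$ lies in the span of level-$k$ hat functions with coefficients $O(2^{-k})$.

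Applying the $L^2$ part of the argument at level $k$ costs $2^{-k}\sum_i\|\psi_i\|_2$-type terms. Using instead the square-function bound $\E\sum_i|\cdot|\le(\#i)^{1/2}(\E\sum_i|\cdot|^2)^{1/2}$, with orthogonality of the $L^2$ martingale errors, each level costs $C2^{-k}\,2^{k/2}\cdot2^{-k/2}$ times the rate. The first-moment middle term sums directly in $L^1$. The levels then sum geometrically up to $2^{-K}\approx\eta$, giving a total of $O(b^{-1/5}\log b+\eta)$.

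To obtain the clean rate $b^{-1/5}$, we would redo the balancing with slightly improved exponents. If needed, the rate of the first and third terms in the sup-norm can be improved using the bracket bounds with the spatial kernel $L^1$ norm. The main obstacle is this uniformity over Lipschitz tests without losing a logarithm. The first attempt will be to absorb the logarithm by choosing $a=b^{4/5+\epsilon'}$, since the bridge term has slack $b^{-1/4}\log^{1/2}b\ll b^{-1/5}$.
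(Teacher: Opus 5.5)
Your argument for \eqref{l3:eq-SH-test-rate} is the paper's argument. It uses the same three-term split at $a=b^{4/5}$, the bridge-variance lemma and the derivative bracket for the two martingale pieces, and a rooted first-moment bound for the middle piece. One slip: the cap by $1$ in $\min(R_a^3b^{-3/2},1)$ is not valid for your remainder $E_\tau$, which grows linearly in $r$ once $r\gg\sqrt\tau$. It is also unnecessary. The cubic bound holds for all $r\ge0$, and together with $\E_{\mathbb Q}R_a^2=\beta^2+3a$ it gives $O((a+\beta^2)/b)$ directly, with no splitting at $R_a\le\sqrt b$. That bound is pointwise in $|f|$, so it is really a total-variation bound and also covers the bounded-Lipschitz supremum of the middle piece.

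For \eqref{l3:eq-SH-BL-rate} you take a genuinely different route. The paper uses Fourier duality: an error measure $\eta$ with $\E|\eta(f)|^2\le A\|f\|_2^2$ satisfies $\E\|\eta\|_{H^{-1}}\le C\sqrt A$ (test against $e^{ik\cdot}$ and sum $(1+k^2)^{-1}$), and every bounded-Lipschitz test has uniformly bounded $H^1$ norm.

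Your Faber--Schauder chaining is the real-space version of the same fact, and it works. The level-$k$ coefficients are $O(2^{-k})$, and the $2^k$ hat functions have total squared $L^2$ norm $O(1)$. Cauchy--Schwarz over one level therefore costs $C2^{-k/2}\sqrt A$, which sums to $O(\sqrt A)$ uniformly in the number of levels. So no logarithm arises, and the preliminary interpolation mesh is unnecessary.

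Your worry about a $\log b$ loss is therefore unfounded. The proposed remedy would not have worked anyway: with $a=b^{4/5+\epsilon'}$ the middle term $a/b$ exceeds $b^{-1/5}$, so a logarithm multiplying $a^{-1/4}$ cannot be absorbed by moving $a$.

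The Fourier route is shorter. The chaining route avoids any Fourier basis and would transfer unchanged to intervals or other domains where only $L^2$-test bounds are available.
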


\begin{proof}
Decompose the signed measure as
\begin{align*}
 \sqrt b W_b^\beta-c_{\rm SH}Z^\beta
 &=\sqrt b(W_b^\beta-M_a^b)
   +(\sqrt b M_a^b-c_{\rm SH}Z_a^\beta)\\
 &\quad+c_{\rm SH}(Z_a^\beta-Z^\beta).
\end{align*}
The first and third measures have uniformly bounded second moments
against $L^2$ tests. The first bound is
\eqref{l3:eq-SH-bridge-variance}. For the third, the derivative bracket
integrated from $a$ to infinity gives
\[
 \E|Z_a^\beta(f)-Z^\beta(f)|^2
 \le C_{\beta,k}\|f\|_2^2a^{-1/2}.
\]
The terminal martingale closure and square integrability are already
part of the fixed-barrier construction.

For the middle measure, Taylor's formula in integral form gives,
for every $r\ge0$ and $a\le b/2$,
\[
 |\sqrt b h_{b-a}(r)-c_{\rm SH}r|
 \le C\left(\frac ab r+\frac1b r^3\right).
\]
This uses $0\le z-\int_0^z e^{-v^2/2}\dd v\le z^3/6$, valid
for all $z\ge0$. Finite rooting and
$\E_{\mathbb Q_{a,x}^\beta}R_a^2=\beta^2+3a$ therefore give
\begin{equation}\label{l3:eq-SH-bias-TV}
 \E\|\sqrt b M_a^b-c_{\rm SH}Z_a^\beta\|_{\rm TV}
 \le C\beta\frac{a+\beta^2}{b}.
\end{equation}
For an arbitrary deterministic test, the corresponding right side
carries its $L^1$ norm.

To pass from individual tests to the BL supremum, use the Fourier
$H^{-1}(\T)$ norm of each of the first and third signed measures.
For a random signed measure $\eta$ with
$\E|\eta(e^{\ii k\cdot})|^2\le A$ for every integer $k$,
\[
 \E\|\eta\|_{H^{-1}}\le
 \left(CA\sum_{k\in\mathbb Z}(1+k^2)^{-1}\right)^{1/2}
 \le C\sqrt A.
\]
Every test with $\|f\|_\infty,\operatorname{Lip}(f)\le1$ has
uniformly bounded $H^1$ norm on the circle. Duality hence bounds its
supremum by $C\|\eta\|_{H^{-1}}$. Combine these estimates with
\eqref{l3:eq-SH-bias-TV} to obtain
\begin{equation}\label{l3:eq-SH-two-time-rate}
 \E\,d_{\rm BL}\left(\sqrt b\,W_b^\beta,
                       c_{\rm SH}Z^\beta\right)
 \le C_{\beta,k}\left(
 a^{-1/4}+b^{-1/4}\sqrt{\log(2+b)}+\frac{a+\beta^2}{b}\right).
\end{equation}
Taking $a=b^{4/5}$ gives
\eqref{l3:eq-SH-BL-rate}; the term
$b^{-1/4}\sqrt{\log(2+b)}$ is bounded by a constant times $b^{-1/5}$.

For a fixed deterministic \(f\in L^1(\T)\cap L^2(\T)\), the same
decomposition, using Cauchy--Schwarz for the first and third terms
and the preceding pointwise rooting estimate for the middle term, gives
\[
 \E\left|
   \sqrt b\,W_b^\beta(f)-c_{\rm SH}Z^\beta(f)
 \right|
 \le C_{\beta,k}
 \left[
 a^{-1/4}+b^{-1/4}\sqrt{\log(2+b)}
 +\frac{a+\beta^2}{b}
 \right]
 \bigl(\|f\|_{L^1}+\|f\|_{L^2}\bigr).
\]
With the same choice \(a=b^{4/5}\), this proves
\eqref{l3:eq-SH-test-rate}.
\end{proof}

\begin{theorem}
\label{l3:thm-marked-phase-rate}
Let $u=\log n$, $L=\sqrt u$, $b=u+4\log u$, and $t=u-6\log u$.
Define the finite measures on $\T\times\mathbb S^1$ by
\[
 \Gamma_n(\dd x,\dd\zeta)
       =LW_b(\dd x)\delta_{e^{\ii nx}}(\dd\zeta),\qquad
 \Gamma^\beta(\dd x,\dd\zeta)
       =c_{\rm SH}Z^\beta(\dd x)m_{\rm H}(\dd\zeta).
\]
Then, for each fixed $\beta>0$,
\begin{equation}\label{l3:eq-marked-BL-rate}
 \E\left[\1_{E_{\beta,b}}
          d_{\rm BL}(\Gamma_n,\Gamma^\beta)\right]
 \le C_{\beta,k}u^{-1/5}.
\end{equation}
Here $m_{\rm H}$ is Haar probability measure, and the target equals
$c_{\rm SH}D_*\otimes m_{\rm H}$ on $E_\beta$.
\end{theorem}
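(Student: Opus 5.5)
Note first that on $E_{\beta,b}$ we have $I_b^\beta\equiv1$, so $W_b=W_b^\beta$ there. Also $L=\sqrt u$ while $\sqrt b=\sqrt u\,(1+O(\lambda/u))$. Hence $\Gamma_n$ differs from the marked measure $\sqrt b\,W_b^\beta(\dd x)\delta_{e^{\ii nx}}(\dd\zeta)$ by a total variation of order $(\lambda/u)\sqrt b\,W_b^\beta(\T)$. That error has expectation $O(\lambda/u)$ by Theorem~\ref{l3:thm-SH-rate} with $f=1$.

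The plan is to insert an intermediate measure in which the phase is averaged at a mesoscopic spatial scale. Fix a partition of $\T$ into arcs $B_i$ of length $\ell=u^{-\kappa}$, with $\kappa>0$ to be chosen. Since $n\ell\gg1$, the map $x\mapsto e^{\ii nx}$ winds roughly $n\ell/2\pi$ times around $\mathbb S^1$ on each arc. We then split
\[
 \Gamma_n-\Gamma^\beta=\bigl(\Gamma_n-\widetilde\Gamma_n\bigr)+\bigl(\widetilde\Gamma_n-c_{\rm SH}Z^\beta\otimes m_{\rm H}\bigr),\qquad
 \widetilde\Gamma_n=\sqrt b\,W_b^\beta\otimes m_{\rm H}.
\]
The second piece is bounded by $d_{\rm BL}(\sqrt b\,W_b^\beta,c_{\rm SH}Z^\beta)$, because a test $F(x,\zeta)$ averaged in $\zeta$ is still bounded and $1$-Lipschitz in $x$. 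Its expectation is therefore at most $Cb^{-1/5}\le Cu^{-1/5}$ by \eqref{l3:eq-SH-BL-rate}.

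The first piece is the main step: showing that $\sqrt b\,W_b$ equidistributes the phase. For a BL test $F$, replace $F(x,\zeta)$ by $F(x_i,\zeta)$ on $B_i$, at cost $\ell\sqrt bW_b^\beta(\T)$. It then suffices to bound
\[
 \sum_i\Bigl|\int_{B_i}\sqrt b\,w_b(x)\bigl(F(x_i,e^{\ii nx})-\textstyle\int F(x_i,\cdot)\dd m_{\rm H}\bigr)\dd x\Bigr|.
\]
The density $w_b$ is smooth, but its derivative is large, of order $e^{b}$ on spatial scale $e^{-b}$. Since $n e^{-b}=u^{-4}$, the field varies on a scale much finer than the wavelength $1/n$, so direct integration by parts against $w_b$ fails. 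I would therefore not use $w_b$ directly, and instead replace $W_b$ by its conditional expectation given $\F_t$, where $e^{t}/n=u^{-6}$. Lemma~\ref{l3:lem-SH-bridge-martingale} gives $M_t^b(f)=\int f I_t w_t h_{b-t}(R_t)\,\dd x$. The error $\sqrt b(W_b^\beta-M_t^b)$ is a martingale increment over $[t,b]$. Its second moment tested against $e^{\ii kx}$ is $O((b-t)t^{-3/2})$ by \eqref{l3:eq-SH-bridge-bracket}, which gives a small $H^{-1}$ norm after multiplication by $b$. More precisely the bound is $b\cdot(b-t)t^{-3/2}\approx\lambda u^{-1/2}$, and I expect to use it together with the $L^2$-in-$\zeta$ Fourier expansion of $F$. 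In the Fourier expansion of $F(x_i,\cdot)$, the nonzero modes $\zeta^k$ produce the terms $\int_{B_i}e^{\ii knx}\sqrt b\,\dd M_t^b$. The density of $M_t^b$ varies on scale $e^{-t}\gg 1/n$, so integrating by parts once gains $e^{t}/(kn)\le u^{-6}/k$, at the price of the arc boundaries and $\|\partial_x(w_t h_{b-t}(R_t))\|$. That derivative is controlled through Gaussian derivative moments of $X_t$ at scale $e^t$, exactly as in the early-time integration-by-parts bound stated after \eqref{paper:eq-bracket}.

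The hardest points are the regularity of $F$ in $\zeta$ and the barrier indicator. $F$ is only Lipschitz, so I would mollify it in $\zeta$ at a polynomial scale $u^{-\gamma}$, which costs $u^{-\gamma}$ and makes the Fourier coefficients summable with only polynomial growth. For the indicator $I_t$, the arguments are carried out on $E_{\beta,b}$, where it equals one identically. Optimizing $\kappa,\gamma$ and the martingale exponents makes every error at most $Cu^{-1/5}$. Finally, the identification $c_{\rm SH}Z^\beta\otimes m_{\rm H}=c_{\rm SH}D_*\otimes m_{\rm H}$ on $E_\beta$ is immediate from Theorem~\ref{crm:thm-construction}(iii).
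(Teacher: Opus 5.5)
Most of your reductions are sound. These are the change of normalization from $L$ to $\sqrt b$, the treatment of the angular average through \eqref{l3:eq-SH-BL-rate}, and integration by parts for the early density since $e^t/n=u^{-6}$. Your late/early split through $M_t^b=\E[W_b^\beta\mid\F_t]$ with the bracket \eqref{l3:eq-SH-bridge-bracket} is also fine; it is a legitimate substitute for the paper's identity $W_b-W_t=(Z_b^{\beta+1}-Z_t^{\beta+1})-(Z_b^\beta-Z_t^\beta)$ on $E_{\beta,b}$.

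The gap is the freezing step, which costs you the rate in the late term. After replacing $F(x,\zeta)$ by $F(x_i,\zeta)$ on $B_i$ and expanding only in $\zeta$, you must test the late increment $\eta=\sqrt b(W_b^\beta-M_t^b)$ against $\1_{B_i}(x)e^{\ii knx}$. A small $H^{-1}$ norm of $\eta$ gives nothing for such tests: indicators are not in $H^1$, and smoothed versions have $H^1$ norm of order $kn\sqrt\ell$. What the bracket does give is the frequency-uniform bound $\E|\eta(f)|^2\le C\sigma_n^2\|f\|_2^2$, with $\sigma_n^2$ of order $u^{-1/2}$ up to logarithms. Since $F$ ranges over all BL tests, the frozen coefficients are controlled only arc by arc. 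The estimates you propose therefore bound your displayed arc sum only by $\sum_i(\ell\sigma_n^2)^{1/2}=\ell^{-1/2}\sigma_n$.

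Balancing this against the freezing cost $\ell$ forces $\ell\approx u^{-1/6}$. That leaves an error of order $u^{-1/6}$ up to logarithms, which is weaker than \eqref{l3:eq-marked-BL-rate}. So your closing claim that optimizing $\kappa,\gamma$ yields $Cu^{-1/5}$ is not correct. The constraints $\ell\le u^{-1/5}$ and $\ell^{-1/2}u^{-1/4}\le u^{-1/5}$ are incompatible. Moving the intermediate time does not help either: the late variance bound stays of order $u^{-1/2}$, while the early term needs $e^t\ll n$.

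The missing idea is to keep the spatial regularity of the test and expand in both variables, as the paper does. Subtract the angular average and replace $F_\perp$ by its product Fej\'er mean $\sum_{|k|,|a|\le K,\,a\ne0}d_{k,a}e^{\ii kx+\ii a\theta}$. This has sup error $O(\log(2+K)/K)$ and satisfies $\sum(1+k^2+a^2)|d_{k,a}|^2\le C$ uniformly over BL tests.

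At $\theta=nx$ every term is a single exponential of frequency $q=an+k$ with $|q|\ge n/2$, so the frequency-uniform second-moment bound applies mode by mode. Weighted Cauchy--Schwarz with $\sum_{|k|,|a|\le K}(1+k^2+a^2)^{-1}\le C\log(2+K)$ then bounds the late term by $C\sigma_n\sqrt{\log(2+K)}$ simultaneously for all tests. The same weights give the absolute coefficient sum $\sum|d_{k,a}|\le C\sqrt{\log(2+K)}$ needed for the early integration by parts. With $K=\lceil u\rceil$ these contributions are $O(u^{-1/4}\log u)$, and the only term of size $u^{-1/5}$ left is the Seneta--Heyde rate. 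Freezing with a smooth partition of unity instead of indicators would keep the $H^1(\T^2)$ norm bounded, but then you are running exactly this argument and the freezing is superfluous.
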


\begin{proof}
On $E_{\beta,b}$, $W_b=W_b^\beta$. The target $Z^\beta$
is defined on the whole probability space; its identification with
$D_*$ is used only on $E_\beta$. Also
\(L/\sqrt b=1+O((\log u)/u)\). Thus
\eqref{l3:eq-SH-BL-rate}, together with this deterministic normalization error, controls the spatial marginal against BL tests. The needed expected masses are bounded because
\[
 \E[LW_b^\beta(\T)]=2\pi Lh_b(\beta)
 \le C\beta L/\sqrt b,\qquad
 \E Z^\beta(\T)=2\pi\beta.
\]

Let $F(x,\theta)$ be a real BL test on the two-dimensional torus, and
subtract its angular average $F_0(x)$. The latter is a spatial BL test
and was just handled. Write $F_\perp=F-F_0$.
Apply the product Fej\'er kernel with cutoff $K$ to $F_\perp$.
Its trigonometric approximation $F_{\perp,K}$ has no angular zero
mode, its $H^1(\T^2)$ norm is bounded by a universal constant, and
\[
 \|F_\perp-F_{\perp,K}\|_\infty
 \le C\frac{\log(2+K)}K.
\]
For completeness, the last estimate follows by integrating the
Lipschitz modulus against the product kernel; the first absolute
moment of the one-dimensional Fej\'er kernel is
$O((\log(2+K))/K)$. Differentiation commutes with its convolution,
so the $H^1$ norm does not increase. The approximation error against
the source has the asserted expected bound by the mass estimate;
both functions have zero angular average against the target.

Write
\[
 F_{\perp,K}(x,\theta)
 =\sum_{|k|,|a|\le K,\ a\ne0}d_{k,a}e^{\ii kx+\ii a\theta}.
\]
Then
\[
 \sum_{k,a}(1+k^2+a^2)|d_{k,a}|^2\le C,\qquad
 \sum_{|k|,|a|\le K}(1+k^2+a^2)^{-1}\le C\log(2+K).
\]
Every evaluated source mode has frequency $q=an+k$, and
$|q|\ge n/2$. On $E_{\beta,b}$, both barriers survive through $b$, so
\[
 W_b-W_t=(Z_b^{\beta+1}-Z_t^{\beta+1})
                       -(Z_b^\beta-Z_t^\beta).
\]
Denote the right side multiplied by $L$ by $\mathcal N_n$.
For every integer frequency $q$, the derivative bracket gives
\[
 \E|\widehat{\mathcal N_n}(q)|^2
 \le C_{\beta,k}L^2(b-t)t^{-3/2}
 \le C_{\beta,k}u^{-1/2}\log u.
\]
Weighted Fourier Cauchy--Schwarz therefore bounds the expectation of
the supremum over these truncated BL tests by
\[
 C_{\beta,k}u^{-1/4}\sqrt{\log u}\,
                              \sqrt{\log(2+K)}.
\]
No independence of the Fourier coefficients is used.

For the remaining early measure, periodic integration by parts gives
simultaneously for every $|q|\ge n/2$,
\[
 L|\widehat W_t(q)|\le\frac{2L}{n}\|w_t'\|_1.
\]
At each point $X_t$ and $\partial_xX_t$ are independent, and
$\E\|w_t'\|_1\le C_ke^t$. The coefficient $\ell^1$ norm is at most
$C\sqrt{\log(2+K)}$ by the same weighted Cauchy--Schwarz estimate.
Thus the expected early contribution is at most
$C_ku^{-11/2}\sqrt{\log(2+K)}$. These are unlocalized estimates,
so they remain valid after the external indicator $\1_{E_{\beta,b}}$
is inserted.
Combining these estimates, for \(2\le K\le n/2\),
\begin{equation*}
\begin{split}
 &\E\left[\1_{E_{\beta,b}}
 d_{\rm BL}(\Gamma_n,\Gamma^\beta)\right]\\
 &\qquad\le
 C_{\beta,k}\left[
 b^{-1/5}
 +\frac{\log u}{u}
 +\frac{\log(2+K)}K
 +\sqrt{\log(2+K)}
   \left(u^{-1/4}\sqrt{\log u}+u^{-11/2}\right)
 \right].
 \end{split}
\end{equation*}
Choosing \(K=\lceil u\rceil\), the phase term is
\(O(u^{-1/4}\log u)\), while all remaining terms are bounded by
\(Cu^{-1/5}\). This proves \eqref{l3:eq-marked-BL-rate}.
\end{proof}

\begin{corollary}
\label{l3:cor-discrete-marked-rate}
For the deterministic circular mesh with cell length $d\le e^{-b}$,
put
\[
 \Gamma_n^{\rm mesh}
 =L\sum_j W_b(J_j)\,\delta_{(y_j,e^{\ii ny_j})}.
\]
Then
\begin{equation}
\E\left[\1_{E_{\beta,b}}
       d_{\rm BL}(\Gamma_n^{\rm mesh},\Gamma^\beta)\right]
 \le C_{\beta,k}u^{-1/5}.
 \label{l3:eq-discrete-marked-rate}
\end{equation}
\end{corollary}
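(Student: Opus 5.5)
The corollary follows from Theorem~\ref{l3:thm-marked-phase-rate} by a deterministic transport estimate. We compare $\Gamma_n^{\rm mesh}$ with $\Gamma_n$ and then apply the triangle inequality for $d_{\rm BL}$.

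Both measures are images of the same positive measure $LW_b$ on $\T$. The first uses the map $x\mapsto(y_{j(x)},e^{\ii ny_{j(x)}})$, where $j(x)$ is the index with $x\in J_{j(x)}$. The second uses $x\mapsto(x,e^{\ii nx})$. For a BL test $F$ on $\T\times\mathbb S^1$, with the sum of geodesic distances as metric,
\[
 |\Gamma_n^{\rm mesh}(F)-\Gamma_n(F)|
 \le L\int_\T\bigl(|x-y_{j(x)}|+|e^{\ii nx}-e^{\ii ny_{j(x)}}|\bigr)W_b(\dd x)
 \le (1+n)d\,LW_b(\T).
\]
Since $d\le e^{-b}$ and $ne^{-b}=u^{-4}$, this is at most $2u^{-4}LW_b(\T)$ for large $n$. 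The bound is pathwise and uniform in $F$, so it controls $d_{\rm BL}(\Gamma_n^{\rm mesh},\Gamma_n)$.

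Next we take expectations with the indicator $\1_{E_{\beta,b}}$. On $E_{\beta,b}$ we have $W_b=W_b^\beta$. By the mass bound in the proof of Theorem~\ref{l3:thm-marked-phase-rate}, $\E[LW_b^\beta(\T)]=2\pi Lh_b(\beta)\le C\beta$. Hence
\[
 \E[\1_{E_{\beta,b}}d_{\rm BL}(\Gamma_n^{\rm mesh},\Gamma_n)]\le C_\beta u^{-4}.
\]
Adding \eqref{l3:eq-marked-BL-rate} gives the bound $C_{\beta,k}(u^{-1/5}+u^{-4})\le C_{\beta,k}u^{-1/5}$.

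No step is a real obstacle. The only care needed is the phase coordinate. The geodesic distance on $\mathbb S^1$ between $e^{\ii nx}$ and $e^{\ii ny_j}$ is at most $n|x-y_j|$; in fact it is at most $\pi/2$ times the chordal distance, which only changes constants. This is small because the mesh is finer than $1/n$ by the factor $u^{-4}$.

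We also check that the circular seam causes no problem. The cells $J_j$ partition $\T$, and the point $y_j$ lies within distance $d$ of every point of $J_j$ in the geodesic metric. So the transport bound holds on the whole circle.
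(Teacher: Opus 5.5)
Your proposal is correct and is essentially the paper's own proof. It uses the same mass-preserving map $x\mapsto(y_{j(x)},e^{\ii ny_{j(x)}})$ to get $d_{\rm BL}(\Gamma_n^{\rm mesh},\Gamma_n)\le(n+1)d\,LW_b(\T)$, the same localized mass bound $\E[LW_b^\beta(\T)]\le C\beta$ giving $C_\beta u^{-4}$, and the triangle inequality with Theorem~\ref{l3:thm-marked-phase-rate}. In your first display the phase term should be the geodesic distance rather than the chordal one; as you note, it is still bounded by $n|x-y_{j(x)}|$, so the bound $(1+n)d$ stands.
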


\begin{proof}
Map each source point $(x,e^{\ii nx})$, $x\in J_j$, to
$(y_j,e^{\ii ny_j})$. The product geodesic distance is at most
$(n+1)d$. This deterministic mass-preserving coupling gives
\[
 d_{\rm BL}(\Gamma_n^{\rm mesh},\Gamma_n)
 \le(n+1)d\,LW_b(\T).
\]
On $E_{\beta,b}$ the expected source mass is bounded by
$\E[LW_b^\beta(\T)]\le C\beta$.
Since \(d\le e^{-b}\) and \(ne^{-b}=u^{-4}\),
\[
 \E\left[\1_{E_{\beta,b}}
 d_{\rm BL}(\Gamma_n^{\rm mesh},\Gamma_n)\right]
 \le C_\beta u^{-4}.
\]
The triangle inequality and
Theorem~\ref{l3:thm-marked-phase-rate} imply \eqref{l3:eq-discrete-marked-rate}.
\end{proof}

\boldparagraph{Transport representation of the bounded-Lipschitz metric.}
For finite positive measures \(\nu,\nu'\) on a compact metric space
\((S,d)\), one has
\begin{equation}\label{l3:eq-BL-transport}
 d_{\rm BL}(\nu,\nu')
 =
 \inf_{\pi}
 \left\{
 \int_{S\times S}\min\{d(x,y),2\}\,\pi(\dd x,\dd y)
 +\nu(S)
 +\nu'(S)-2\pi(S\times S)
 \right\},
\end{equation}
where the infimum is over subcouplings \(\pi\) whose marginals are
dominated by \(\nu\) and \(\nu'\).
This is the standard Kantorovich--Rubinstein representation obtained by
adjoining a cemetery point and truncating the transport cost at \(2\).

\subsection{Late global barrier crossings}
\label{l4:sec-barrier-time-tail}

Use the auxiliary finite-range field, the positive derivative barrier
$R_t^\beta=\beta+\sqrt2t-X_t$, and the events
\[
 E_{\beta,b}=\{\min_{0\le s\le b,\,x\in\mathbb T}
                          R_s^\beta(x)>0\},\qquad
 E_\beta=\bigcap_{N\ge1}E_{\beta,N}.
\]
The sharp unit-interval future crossing bound and the rooted exponential-moment estimate imply the following rate.

\begin{lemma}
\label{l4:lem-late-global-crossing}
For every fixed $\beta>0$, there is $C_{\beta,k}<\infty$ such that
\[
 \mathbb P(E_{\beta,b}\setminus E_\beta)
       \le C_{\beta,k}b^{-1/2},\qquad b\ge1.
\]
\end{lemma}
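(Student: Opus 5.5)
The plan is to write $E_{\beta,b}\setminus E_\beta$ as the event that the field survives the barrier through time $b$ but crosses it at some later time:
\[
 E_{\beta,b}\setminus E_\beta
 =E_{\beta,b}\cap\Bigl\{\exists s>b,\ x\in\T:\ X_s(x)-\sqrt2s\ge\beta\Bigr\}.
\]
We control this event conditionally on $\F_b$, using the coarse/fine scaling of \eqref{paper:eq-cell-scaling} on the mesh at resolution $e^{-b}$. Cover $\T$ by the $m\asymp e^b$ cells $J_j$. On cell $j$, the future field after time $b$ is, up to the anchor increment $H_j$ and rescaling, the real-line fine field $Y$. A late crossing in cell $j$ then requires
\[
 \sup_{s\ge0,\,v\in[0,e^bd]}\bigl(Y_s(v)-\sqrt2s\bigr)\ \ge\ R_b(y_j)-\sup_{[0,1]}H_j .
\]
By the sharp unit-interval future crossing bound stated after \eqref{paper:eq-fine}, the conditional probability of this is at most
\[
 C_k\exp\bigl(-\sqrt2(R_b(y_j)-\|H_j\|_\infty)\bigr).
\]
There are finitely many unit subintervals per cell, since $e^bd\le1$.

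A union bound over cells then gives
\[
 \Prob(E_{\beta,b}\setminus E_\beta\mid\F_b)
 \le C_k\1_{E_{\beta,b}}\sum_j e^{-\sqrt2R_b(y_j)}e^{\sqrt2\|H_j\|_\infty}\wedge 1 .
\]
The key identity is
\[
 e^{-\sqrt2R_b(y)}=e^{-\sqrt2\beta}e^{\sqrt2X_b(y)-2b}
 =e^{-\sqrt2\beta}e^{-b}w_b(y).
\]
Recalling $c_j=Le^{-b}w_b(y_j)$ and $L=\sqrt b$ up to the logarithmic shift, this turns the sum into
\[
 \frac{e^{-\sqrt2\beta}}{L}\sum_j c_je^{\sqrt2\|H_j\|_\infty}.
\]
Taking expectations and invoking the general envelope bound \eqref{crm:eq-general-envelope} with $C=\sqrt2$ yields
\[
 \Prob(E_{\beta,b}\setminus E_\beta)\le C_{\beta,k}L^{-1}\asymp b^{-1/2}.
\]
Here the mesh should be taken at scale $e^{-b}$ for this general $b$ rather than through $n$. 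Proposition~\ref{crm:prop-budget} is stated through $u=\log n$, but its proof (the rooted exponential-moment estimate with $\E_{\mathbb Q}$ of the envelope, plus the BES(3) rooting) depends only on the terminal time. Equivalently, one can rewrite $\E[\1_{E_{\beta,b}}\sum_j e^{-b}w_b(y_j)e^{\sqrt2\|H_j\|}]$ via the rooted density of Theorem~\ref{crm:thm-construction}(ii) as
\[
 \beta\sum_j e^{-b}\,\E_{\mathbb Q_{b,y_j}}\bigl[R_b^{-1}e^{\sqrt2\|H_j\|_\infty}\bigr].
\]
Cauchy--Schwarz with $\E_{\mathbb Q}R_b^{-2}\le 2/b$ and the uniform exponential moments of $\|H_j\|_\infty$ then give $C\,me^{-b}b^{-1/2}=O(b^{-1/2})$.

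The main obstacle is justifying that the future crossing estimate applies conditionally with a random, $\F_b$-measurable starting height. The conditioning must be on $\F_b$ alone, with the barrier indicator entering only as the factor $\1_{E_{\beta,b}}$, which is $\F_b$-measurable. One must also check that the future increments on a cell are exactly a copy of $Y$ independent of $\F_b$, with the coarse spatial variation inside the cell absorbed into $H_j$. I expect the translation-invariant independent-increment structure of the scale covariance to give this directly. If the crossing bound is only available for the fine field started from zero, I would apply it to $Y_s(v)=X_{b+s}(y_j+e^{-b}v)-X_b(y_j+e^{-b}v)$, whose law is that of the unit-scale fine field by scaling of $q_{e^a}$. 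This leaves the barrier level $R_b(y_j+e^{-b}v)\ge R_b(y_j)-\sqrt2\|H_j\|$ within a bounded multiple of the exponent, and the envelope bound \eqref{crm:eq-general-envelope} absorbs it for any fixed constant.
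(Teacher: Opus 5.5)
Your proposal is correct and follows essentially the same route as the paper. The paper also takes a union bound over the $e^{-b}$-mesh cells of the conditional future-crossing probability $C_ke^{-\sqrt2(R_b(y_j)-\sup H_j)}=C_ke^{-\sqrt2\beta}e^{-b}w_b(y_j)M_j$, then roots at each anchor with Cauchy--Schwarz, $\E_{\mathbb Q}R_b^{-2}\le2/b$ and the uniform exponential moments of $\|H_j\|_\infty$. Your direct rooted computation, rather than \eqref{crm:eq-general-envelope}, which is only stated along $b=\log n+4\log\log n$, is exactly what the paper uses to cover general $b\ge1$.
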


\begin{proof}
Put $\rho=e^{-b}$ and cover the circle by the deterministic mesh of
cell lengths $d\in(\rho/2,\rho]$. At an anchor $y_j$, set
\[
 r_j=R_b^\beta(y_j),\quad
 H_j(v)=X_b(y_j+\rho v)-X_b(y_j),\quad
 M_j=e^{\sqrt2\sup_{0\le v\le1}H_j(v)},\quad
 a_j=\rho w_b(y_j).
\]
On $E_{\beta,b}$, the barrier at every point in that cell is positive.
Its minimum starting height is
$q_j=r_j-\sup_{0\le v\le d/\rho}H_j(v)>0$.
After conditioning on the complete field through $b$, its rescaled
future is the fixed unit-interval fine field. If a future crossing
occurs in this cell, that fine field crosses the constant height
$q_j$. The sharp crossing estimate therefore bounds this probability
by $C_ke^{-\sqrt2q_j}$. The estimate extends from heights at least one
to every positive height by increasing its constant.

Take a union over the cells, without making any independence assertion
between them. Since
$e^{-\sqrt2r_j}=e^{-\sqrt2\beta}a_j$, the conditional probability of
any future crossing is at most
\[
 C_ke^{-\sqrt2\beta}\sum_j a_jM_j
 \qquad\text{on }E_{\beta,b}.
\]
The rooted estimate bounds its expectation. Indeed
$\mathbf1_{E_{\beta,b}}\le I_b^\beta(y_j)$, and the finite-root
identity gives
\[
 \mathbb E[\mathbf1_{E_{\beta,b}}a_jM_j]
 \le\rho\beta\,
   \mathbb E_{\mathbb Q_{b,y_j}^\beta}[M_j/R_b^\beta(y_j)]
 \le C_k\rho\beta b^{-1/2}.
\]
The last inequality is Cauchy--Schwarz, the uniform exponential moment bound for the rooted local increment, and
$\mathbb E_{\mathbb Q}(R_b^\beta)^{-2}\le2/b$.
There are $O(\rho^{-1})$ cells. Summing proves the assertion. A crossing
at a finite future time is precisely the failure relevant to
$E_\beta$; no assumption about an atom of an infinite-time supremum is
used.
\end{proof}

Consequently, along any deterministic sequence of times with
$\sum_j b_j^{-1/2}<\infty$, late crossings after survival through $b_j$
occur only finitely often almost surely. More quantitatively, any
bounded random quantity with absolute value at most one can have its
localizing indicator changed from $E_\beta$ to $E_{\beta,b}$ at an
$L^1$ cost at most $C_{\beta,k}b^{-1/2}$. This is a time-localization
estimate at fixed barrier, not a rate for letting the barrier itself
tend to infinity.

\section{Critical compensation and signed tails}
\label{l3c:sec-uniform-center}

An index-one tail leaves a logarithmic centering term. We identify the actual truncated center by controlling the error after subtracting this logarithmic term from the capped first moment, and then control the
signed tail error by Gaussian smoothing.
The estimates are uniform over the cutoffs used later in the center and excursion estimates.

Use the common mesh and scales, and put $\gamma=\beta+1$.
We initially allow $1<t<b$, $t\asymp b\asymp u$ and $\tau=b-t\ge1$.
Write $\theta=e^bd\in(1/2,1]$. The normalizer is $L=\sqrt u$.
For $0<\delta\le1$ and an integer frequency $k$, the actual hard center
at mass cutoff $\delta$ is
\begin{equation}\label{l3c:eq-hard-center}
 D_{n,\delta}(k)=\sum_{j=0}^{m-1}e^{iky_j}
       \E[U_j\1_{\{U_j\le\delta\}}\mid\F_b].
\end{equation}
For $0\le w\le n/2$, set $N_w=2\lfloor w\rfloor+1$ and
$\mathcal K_{n,w}=\{n+r:r\in\mathbb Z,\ |r|\le w\}$.

The fixed fine derivative chaos $D$ has tail constant
$\kappa=1/\sqrt2$. We write 
\(\log_+T=\max\{\log T,0\}\).  For $g_H=e^{\sqrt2H}\1_{[0,\theta]}$, write
\[
 G_H=\int_0^\theta e^{\sqrt2H(v)}\,dv,\qquad
 J_H=\int_0^\theta H(v)e^{\sqrt2H(v)}\,dv,
\]
\[
 B_g(T)=\E\min(D(g),T)-\kappa\left(\int g\right)\log_+T,
 \qquad \Phi_\delta(H,c)=J_H+B_{g_H}(\delta/c).
\]

\subsection{Bounds and regularity for capped first moments}
\label{l1:sec-response-regularity}
\begin{lemma}[Uniform capped first moment]
\label{crm:lem-capped-first-moment}
Let $D$ be the untruncated derivative chaos of the fixed real-line fine
field on $I=[0,1]$. For a deterministic bounded nonnegative Borel function
$g$ supported in $I$, put
\[
 G=\int_I g(u)\,du,\qquad J=\int_I g(u)^2\,du,\qquad
 \ell_g(T)=\mathbb E[\min\{D(g),T\}],\quad T>0.
\]
There is a constant $C_k<\infty$, independent of $g$ and $T$, such that
\begin{equation}
\label{crm:eq-capped-remainder}
 \left|\ell_g(T)-\frac{G}{\sqrt2}\log_+T\right|
 \le C_k(1+J+G^2),\qquad T>0.
\end{equation}
In particular, this is a bounded
remainder estimate, stronger than a leading-order asymptotic
$\ell_g(T)\sim G\log T/\sqrt2$.
\end{lemma}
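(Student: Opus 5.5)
We write $\ell_g(T)=\int_0^T\Prob(D(g)>v)\,\dd v$ and split at $v=1$. The piece over $[0,1]$ is bounded by $1$; moreover, for $T\le1$ we have $\log_+T=0$ and $\ell_g(T)\le1$. So it suffices to prove, uniformly in $g$ and $T\ge1$,
\[
 \left|\int_1^T\Bigl(\Prob(D(g)>v)-\frac{G}{\sqrt2\,v}\Bigr)\dd v\right|
 \le C_k(1+J+G^2).
\]
The plan is to establish a tail expansion with an integrable error. Concretely, we aim to show that for $v\ge1$,
\[
 \Bigl|\Prob(D(g)>v)-\frac{G}{\sqrt2\,v}\Bigr|
 \le C_k\,\frac{\min\{1,(J+G^2)/v\}}{v}
 \;+\;\text{(a term integrable in $v$ against $\dd v$ with integral $\le C_k(1+J+G^2)$)},
\]
or equivalently that the difference has $\int_1^\infty$ bounded by $C_k(1+J+G^2)$.

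The first tool is a one-big-jump decomposition via rooting. For any $T$, Fubini gives
\[
 \E[D(g)\1_{\{D(g)\le T\}}]+T\,\Prob(D(g)>T)=\ell_g(T).
\]
We root at a spatial point $x$ drawn with density $g(x)\,\dd x$. Under the rooted law, Theorem~\ref{crm:thm-construction}(ii) on the fine interval, the radial process is BES$(3)$. The field then decomposes around $x$ into dyadic annuli at scales $e^{-s}$, and the mass $D(g)$ seen from the root is approximately
\[
 \sum_s e^{-\sqrt2 R_s}\times(\text{independent local masses}).
\]
The tail constant $\kappa=1/\sqrt2$ arises as the density of $\log D$ near the root, namely the BES$(3)$ Green function weighting $e^{-\sqrt2 r}$. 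Under rooting, $\Prob(D(g)>v)$ becomes $\frac1v\int g(x)\,\mathbb Q_x(D(g)\approx v\ \text{size-biased})\,\dd x$. This yields the leading term $G/(\sqrt2 v)$ plus corrections coming from the mass that is not in the root's own cluster.

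The second tool controls those corrections by second-moment bounds. The interaction between the mass near the root and the mass elsewhere is governed by the two-point function. The truncated-second-moment estimate
\[
 \E|Z^a(g)|^2\le a^2\Bigl|\int g\Bigr|^2+C_ke^{\sqrt2a}\|g\|_2^2
\]
from \eqref{paper:eq-fine} should produce exactly the $G^2$ and $J$ terms. The $G^2$ term comes from two separate macroscopic clusters, and the $J=\|g\|_2^2$ term comes from close points. The intended route is to replace $D$ by the barrier-killed $Z^{[a]}$ at level $a\approx\frac{1}{\sqrt2}\log T$, with the crossing bound $C_ke^{-\sqrt2a}$. The relevant quantities are then estimated as follows:
\[
 \E\min(D,T)=\E\min(Z^{[a]},T)+O\bigl(T\,\Prob(\sup>a)\bigr)+(\text{overshoot}).
\]
Here $\E Z^{[a]}(g)=aG$ exactly, which gives $aG=\frac{G}{\sqrt2}\log T$ with the right constant. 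The capping loss $\E(Z^{[a]}-T)_+$ is bounded by $\E (Z^{[a]})^2/T\le(a^2G^2+C_kTJ)/T$.

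The main obstacle is that $a^2G^2/T$ and the overshoot term are not uniformly $O(1+J+G^2)$. The difference $D-Z^{[a]}$ on the crossing event has an untamed $1/r$ tail, so a naive bound loses a factor $\log T$. I would address this by applying the barrier decomposition in a telescoping fashion over levels $a_k=k$. The increments satisfy $\E[Z^{[a+1]}-Z^{[a]}]=G$, and the tail of the increment beyond $e^{\sqrt2 a}$ is controlled by the crossing probability times the conditional mass, which after a shift is again a fixed fine chaos. Summing these level contributions should convert the $\log T$ losses into a geometric series, with constants depending only on $k$.
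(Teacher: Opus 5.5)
Your proposal contains a complete proof, but you do not recognize it, and the route you actually commit to is not a proof. The barrier comparison you write down is exactly the paper's argument, and it closes immediately: $Z^a$ at $a=\log T/\sqrt2$, the crossing bound $\mathbb P(S_I\ge a)\le C_ke^{-\sqrt2a}$, the exact mean $\mathbb E Z^a(g)=aG$, and $\mathbb E Z^a(g)^2\le a^2G^2+C_ke^{\sqrt2a}J$.

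The obstacle you name is spurious on both counts.
\begin{itemize}
\item With $T=e^{\sqrt2a}$ one has $a^2G^2/T=a^2e^{-\sqrt2a}G^2\le 2e^{-2}G^2$, uniformly in $a>0$, so this term is already $O(G^2)$.
\item The overshoot never sees the $1/r$ tail of $D-Z^a$, because the cap absorbs it. Since $0\le Z^a(g)\le D(g)$ and $Z^a(g)=D(g)$ off $\{S_I\ge a\}$, you have $0\le\min\{D(g),T\}-\min\{Z^a(g),T\}\le T\,\mathbf 1_{\{S_I\ge a\}}$. Its expectation is at most $T\cdot C_ke^{-\sqrt2a}=C_k$, with no logarithmic loss.
\end{itemize}
Using $(x-T)_+\le x^2/T$ for the lower bound, you obtain for $T>1$
\[
 aG-C_kJ-2e^{-2}G^2\le\mathbb E\min\{Z^a(g),T\}\le\ell_g(T)\le\mathbb E\min\{Z^a(g),T\}+C_k\le aG+C_k .
\]
Since $aG=G\log_+T/\sqrt2$, this together with $0\le\ell_g(T)\le 1$ for $T\le1$ is the lemma. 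This is precisely the paper's proof.

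The parts of the proposal you present as the main line are unsupported and unnecessary. Reducing to a pointwise expansion of $\mathbb P(D(g)>v)$ with an integrable error is much stronger than the lemma, which concerns only the integrated quantity. The paper's barrier argument is designed precisely to avoid quantifying any tail asymptotic, and your rooted one-big-jump heuristic does not supply such an expansion.

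The telescoping over integer levels rests on a claim you assert without argument: that the tail of $Z^{[a+1]}-Z^{[a]}$ beyond $e^{\sqrt2a}$ is controlled by the crossing probability times a shifted copy of the fixed fine chaos. Justifying that would require a decomposition of the future field at the crossing time, which the proposal never sets up. Drop both the tail expansion and the telescoping, and keep the direct one-level barrier comparison, which already gives the bounded remainder.
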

\begin{proof}
Use the positive constant-barrier measures $Z^q$, $q>0$, on the same fine
field as $D$. The terminal comparison and the sharp crossing estimate
in Lemma~\ref{crm:lem-crossing} give
\[
 0\le Z^q\le D,\qquad
 Z^q=D\ \hbox{on }\{S_I<q\},\qquad
 \mathbb P(S_I\ge q)\le C_k e^{-\sqrt2q},
\]
where the last bound extends from $q\ge1$ to $q>0$ by increasing its
constant. Lemma~\ref{crm:lem-finel2} and martingale closure give
\[
 \mathbb E Z^q(g)=qG,\qquad
 \mathbb E[Z^q(g)^2]\le q^2G^2+C_k e^{\sqrt2q}J.
\]
Fix $T>1$ and set $q=\log T/\sqrt2$. On the no-crossing event the two
terminal masses agree; elsewhere the cap is at most $T$. Therefore
\[
 \ell_g(T)\le\mathbb E Z^q(g)+T\mathbb P(S_I\ge q)
 \le qG+C_k.
\]
Conversely, positivity and $(x-T)_+\le x^2/T$ imply
\begin{align*}
 \ell_g(T)&\ge\mathbb E\min\{Z^q(g),T\}\\
 &\ge qG-\frac{\mathbb E[Z^q(g)^2]}{T}\\
 &\ge qG-C_kJ-q^2e^{-\sqrt2q}G^2
 \ge qG-C_kJ-\frac{2}{e^2}G^2.
\end{align*}
For $0<T\le1$, use $0\le\ell_g(T)\le T\le1$. This also treats
$T=1$ without introducing a zero starting barrier.
\end{proof}

\begin{remark}[Barrier comparison for the bounded remainder]
For $T>1$, choose $q=\log T/\sqrt2$. The crossing error is bounded
by a constant and the second-moment error by $C_kJ+C G^2$, yielding
\eqref{crm:eq-capped-remainder} without integrating an unquantified
$o(1/T)$ tail error.
\end{remark}

\begin{lemma}
\label{l1:lem-response-lipschitz}
Fix the fine derivative-chaos law $D$ on $[0,1]$. For
$\alpha\in[1/2,1]$, $H\in C([0,1];\mathbb R)$ and $c>0$, put
\[
 g_H=e^{\sqrt2H},\quad G_H=\int_0^\alpha g_H,
 \quad J_H=\int_0^\alpha Hg_H,
\]
\[
 B_H(T)=\mathbb E\min\{D(g_H\mathbf1_{[0,\alpha]}),T\}
                 -\frac{G_H}{\sqrt2}\log_+T,
 \qquad \Phi(H,c)=J_H+B_H(1/c).
\]
There is a constant $C_k<\infty$, independent of $\alpha,c,c',H,H'$,
such that, with $S=\max(\|H\|_\infty,\|H'\|_\infty)$,
\begin{align}
 |\Phi(H,c)|&\le C_k e^{2\sqrt2\|H\|_\infty},
 \label{l1:eq-response-envelope}\\
 |\Phi(H,c)-\Phi(H',c')|
 &\le C_k e^{2\sqrt2 S}
       \bigl(\|H-H'\|_\infty+|\log c-\log c'|\bigr),
 \label{l1:eq-response-lipschitz}\\
 |c\Phi(H,c)-c'\Phi(H',c')|
 &\le C_k(c+c')e^{2\sqrt2 S}
       \bigl(\|H-H'\|_\infty+|\log c-\log c'|\bigr).
 \label{l1:eq-weighted-response-lipschitz}
\end{align}
In the last two bounds, the sum of the two differences may also be
replaced by its minimum with $1$, after increasing $C_k$, by the
individual envelope bounds. These inequalities include $c>1$, $c'>1$,
and crossings of the $\log_+$ threshold.
\end{lemma}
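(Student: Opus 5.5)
Our plan is to write $\Phi(H,c)$ as $J_H+B_H(1/c)$ and bound each piece separately. For $J_H$ everything is elementary. For $B_H(T)$ we use Lemma~\ref{crm:lem-capped-first-moment}, together with an explicit derivative of the cap in $T$.

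\emph{Envelope.} Put $h=\|H\|_\infty$. We have $|J_H|\le \alpha h e^{\sqrt2h}\le Ce^{2\sqrt2h}$. Lemma~\ref{crm:lem-capped-first-moment} with $g=g_H\mathbf1_{[0,\alpha]}$ gives
\[
|B_H(T)|\le C_k(1+J+G^2).
\]
Here $J=\int_0^\alpha g_H^2\le e^{2\sqrt2h}$ and $G^2\le e^{2\sqrt2h}$. This proves \eqref{l1:eq-response-envelope} uniformly in $T=1/c$, including $T\le1$.

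\emph{Lipschitz bound in $H$ at fixed $c$.} Set $\eta=\|H-H'\|_\infty$; we may assume $\eta\le1$, since otherwise the envelope suffices. The $J$-difference is at most
\[
\int_0^\alpha |Hg_H-H'g_{H'}|\le C(1+S)e^{\sqrt2S}\eta\le Ce^{2\sqrt2S}\eta.
\]
The $\log$-term difference is $|G_H-G_{H'}|\log_+T/\sqrt2$, which is not bounded uniformly in $T$ in isolation. So the capped expectations must be compared jointly, which is the main obstacle. We plan to use monotonicity and scaling of the positive measure $D$. Since $g_{H'}\le e^{\sqrt2\eta}g_H$ pointwise, we get
\[
\min\{D(g_{H'}),T\}\le e^{\sqrt2\eta}\min\{D(g_H),Te^{-\sqrt2\eta}\},
\]
and symmetrically in the other direction. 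This reduces the $H$-variation to the combination of a multiplicative factor $e^{\pm\sqrt2\eta}=1+O(\eta)$ and a cap shift $\log T\mapsto\log T\pm\sqrt2\eta$.

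Write $\lambda=e^{\sqrt2\eta}$. Then
\[
B_{H'}(T)-B_H(T)\le \lambda B_H(T/\lambda)-B_H(T)+\frac{1}{\sqrt2}\Bigl[\lambda G_H\log_+(T/\lambda)-G_{H'}\log_+T\Bigr].
\]
The first part is $(\lambda-1)B_H(T/\lambda)+[B_H(T/\lambda)-B_H(T)]$. Its first term is bounded by $C\eta e^{2\sqrt2S}$ through the envelope. The bracketed log terms are handled by the pointwise bound $\lambda^{-1}G_H\le G_{H'}\le\lambda G_H$. They combine to at most $C\eta G\log_+T+CG\eta$ plus a term of the wrong sign. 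The uncontrolled $\eta G\log_+T$ piece is exactly cancelled by the matching piece inside $(\lambda-1)\ell_H(T/\lambda)$, once we expand $\ell_H=B_H+G\log_+/\sqrt2$ rather than bounding $B_H$ alone. We will organize the computation that way, so that only $(\lambda-1)|B_H|$ and the $T$-shift of $B_H$ remain.

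\emph{Lipschitz bound in $\log c$.} For the $T$-shift, $\ell_g$ is concave and nondecreasing with $\ell_g'(T)=\mathbb P(D(g)>T)$. For $T\ge1$ the weak-$L^1$ tail \eqref{paper:eq-fine}, applied with $D(g)\le e^{\sqrt2S}D(I)$, gives $T\ell_g'(T)\le C_De^{\sqrt2S}$. Hence
\[
\Bigl|\frac{d}{d\log T}B_g\Bigr|\le C_De^{\sqrt2S}+\frac{G}{\sqrt2}\le Ce^{\sqrt2S}.
\]
For $T<1$ we have $\log_+T=0$ and $T\ell'_g(T)\le T\le1$. Across the threshold $T=1$ the function is continuous and piecewise $C^1$ in $\log T$. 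Integrating therefore gives $|B_g(T)-B_g(T')|\le Ce^{\sqrt2S}|\log T-\log T'|$. This bound is used both for the $\sqrt2\eta$ cap shift above and for the change $c\to c'$, and it yields \eqref{l1:eq-response-lipschitz}.

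\emph{Weighted bound and the minimum with $1$.} For \eqref{l1:eq-weighted-response-lipschitz}, write
\[
c\Phi-c'\Phi'=c(\Phi-\Phi')+(c-c')\Phi'.
\]
We may assume $|\log c-\log c'|\le1$, using the envelope otherwise. Then $|c-c'|\le C(c+c')|\log c-\log c'|$, and the envelope together with \eqref{l1:eq-response-lipschitz} finishes the bound. Replacing the sum of differences by its minimum with $1$ follows because, when the sum is at least $1$, the two individual envelopes already dominate the difference.
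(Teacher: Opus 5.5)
Your envelope bound, the $J_H$-difference, the radial estimate in $\log T$ via the weak-$L^1$ tail, and the weighted bound are correct and essentially coincide with the paper's argument. The gap is in the spatial Lipschitz bound for $B_H(T)$ uniformly in $T$. This is the substantive part of the lemma, since it is later applied with $T=\delta/c_j$ enormous.

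Pointwise domination $\lambda^{-1}g_H\le g_{H'}\le\lambda g_H$, with $\lambda=e^{\sqrt2\eta}$, only gives $\lambda^{-1}\ell_H(\lambda T)\le\ell_{H'}(T)\le\lambda\ell_H(T/\lambda)$. Your own computation then stops at
\[
 B_{H'}(T)-B_H(T)\le\frac{(\lambda G_H-G_{H'})\log_+T}{\sqrt2}
 +C\eta e^{2\sqrt2S}.
\]
Nothing makes $G_{H'}$ close to $\lambda G_H$. For $H'=H-\eta$ one has $G_{H'}=\lambda^{-1}G_H$, so this upper bound is of order $\eta G_H\log T\to\infty$. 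Yet the true difference (for $T\ge1$) is $\lambda^{-1}B_H(\lambda T)-B_H(T)+\lambda^{-1}G_H\log\lambda/\sqrt2=O(\eta e^{2\sqrt2S})$, by your own radial estimate.

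The ``matching piece inside $(\lambda-1)\ell_H(T/\lambda)$'' that you expect to cancel is exactly the term $(\lambda-1)G_H\log_+(T/\lambda)/\sqrt2$ already contained in your bracket. It is the source of the divergence, not a cancellation. The only available negative term, $-G_{H'}\log_+T/\sqrt2$, cancels it only when $G_{H'}\approx\lambda G_H$, essentially when $H'=H+\eta$. One-sided multiplicative domination discards how the mass $D(g)$ and the linear term $\int g$ move together, and no rearrangement of those same inequalities recovers that information.

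The missing idea is concavity in the weight. For fixed $T$, the map $g\mapsto\mathbb E\min\{D(g),T\}$ is concave on nonnegative weights, because $D$ is linear in $g$ and $x\mapsto\min\{x,T\}$ is concave. Subtracting the linear term $(\int g)\log_+T/\sqrt2$ keeps $g\mapsto B_g(T)$ concave.

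Now set $\varepsilon=\|g_{H'}/g_H-1\|_\infty\le1/4$ and $r=(4\varepsilon)^{-1}$. The weights $g_a=g_H+a(g_{H'}-g_H)$, $-r\le a\le1+r$, satisfy $g_H/2\le g_a\le3g_H/2$. Hence Lemma~\ref{crm:lem-capped-first-moment} bounds $|B_{g_a}(T)|$ by $C_ke^{2\sqrt2S}$, uniformly in $T$.

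The secant slope of the concave function $a\mapsto B_{g_a}(T)$ on $[0,1]$ lies between its secant slopes on $[-r,0]$ and $[1,1+r]$, each at most $2C_ke^{2\sqrt2S}/r$. This gives
\[
 |B_{H'}(T)-B_H(T)|\le C_ke^{2\sqrt2S}\varepsilon\le C_ke^{2\sqrt2S}\|H-H'\|_\infty
\]
for every $T>0$, including $T<1$ and threshold crossings. Large $\|H-H'\|_\infty$ is handled by the envelopes, as you already do. This is the argument the paper uses.
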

\begin{proof}
Lemma~\ref{crm:lem-capped-first-moment} and
\(|J_H|\le\|H\|_\infty e^{\sqrt2\|H\|_\infty}\) prove
\eqref{l1:eq-response-envelope}. For fixed $T>0$, the functional $g\mapsto B_g(T)$ is concave on
nonnegative weights: $g\mapsto\mathbb E\min\{D(g),T\}$ is concave,
and the subtracted logarithmic term is linear. By \eqref{crm:eq-capped-remainder},
\[
 |B_g(T)|\le C_k(1+\|g\|_\infty^2)
\]
uniformly in $T$ and $\alpha\le1$.

Put $\delta=\|H-H'\|_\infty$ and $M=e^{\sqrt2S}$. If $\delta$ is
bounded below by a fixed positive constant, the two individual bounds
already give $|B_H(T)-B_{H'}(T)|\le C_kM^2\delta$. It remains to
consider $\delta$ so small that
\[
 \varepsilon=\|g_{H'}/g_H-1\|_\infty\le1/4.
\]
If $\varepsilon=0$ there is nothing to prove. Otherwise set
$r=(4\varepsilon)^{-1}$ and consider
\[
 g_a=g_H+a(g_{H'}-g_H),\qquad -r\le a\le1+r.
\]
Every such weight satisfies $g_H/2\le g_a\le3g_H/2$, hence
$|B_{g_a\mathbf1_{[0,\alpha]}}(T)|\le C_kM^2$.
The concave function
$a\mapsto B_{g_a\mathbf1_{[0,\alpha]}}(T)$
has its secant slope on $[0,1]$ between the secant slopes on
$[-r,0]$ and $[1,1+r]$. Therefore
\[
 |B_H(T)-B_{H'}(T)|\le C_kM^2/r
 \le C_kM^2\varepsilon\le C_kM^2\delta.
\]
This proves the spatial Lipschitz estimate uniformly in $T>0$, including
$T<1$. The elementary derivative of $x\mapsto xe^{\sqrt2x}$ similarly
gives $|J_H-J_{H'}|\le C_ke^{2\sqrt2S}\delta$.

For the radial variable, $s\mapsto B_H(e^s)$ is locally absolutely
continuous and has derivative, for almost every $s\ne0$,
\[
 e^s\mathbb P(D(g_H\mathbf1_{[0,\alpha]})>e^s)
 -\frac{G_H}{\sqrt2}\mathbf1_{\{s>0\}}.
\]
For $s>0$, its absolute value is at most $C_ke^{\sqrt2S}$ by
$D(g_H)\le e^{\sqrt2S}D([0,1])$ and the fixed weak-$L^1$ bound.
For $s<0$, it is at most $e^s\le1$. The function is continuous at
zero. Integration therefore proves the required global Lipschitz
bound in $\log c$, including threshold crossings. Combining spatial
and radial bounds proves \eqref{l1:eq-response-lipschitz}.
Finally
$|c-c'|\le(c+c')|\log c-\log c'|$
 together with \eqref{l1:eq-response-envelope} to obtain \eqref{l1:eq-weighted-response-lipschitz}.
\end{proof}

Applying Lemma~\ref{l1:lem-response-lipschitz} with $c/\delta$ in place of $c$ gives,
uniformly over $\delta,c>0$ and $\theta\in[1/2,1]$,
\begin{align}
 |\Phi_\delta(H,c)|
 &\le C e^{C\|H\|_\infty},\notag\\
 |\Phi_\delta(H,c)-\Phi_\delta(H',c')|
 &\le C e^{C\max(\|H\|_\infty,\|H'\|_\infty)}
 \min\{1,\|H-H'\|_\infty+|\log(c/c')|\}.
 \label{l3c:eq-response-input}
\end{align}
There is no dependence on $|\log\delta|$ in these inequalities.

\subsection{Local fluctuations and the late-scale increment}
The following two results separate late random fluctuations from the
early oscillatory mean. In the second statement write
$\rho=e^{-b}$, $1\le t<b$, and
$\Psi_\theta(H,T)=J_H+B_{e^{\sqrt2H}\mathbf1_{[0,\theta]}}(T)$.

\begin{theorem}
\label{lev1:thm-late-decoration}
Let $1<\eta<3/2$, and let $1\le t<b$ be large. Put $\rho=e^{-b}$
and use the circular deterministic mesh with spacing
$\rho/2<\ell\le\rho$. Let $y_j$ be its anchors, and let
\[
 H_j(v)=X_b(y_j+\rho v)-X_b(y_j),\quad 0\le v\le1,
 \qquad c_j=L e^{\sqrt2X_b(y_j)-2b}=Le^{-b}w_b(y_j),
\]
where $L>0$ is deterministic. Suppose $A_j=A_j(c_j,H_j)$ is measurable
with respect to the endpoint field on this cell and satisfies
\begin{equation}\label{lev1:eq-decoration-envelope}
 |A_j|\le C_0e^{\gamma\|H_j\|_\infty}
\end{equation}
for fixed $C_0,\gamma<\infty$. Deterministic dependence on the row,
cell index, or normalized cell length is allowed. Define
\[
 Y_j=I_b^\beta(y_j)c_jA_j,
 \qquad M=\sum_j a_jY_j,
 \qquad U=\E[M\mid\F_t],
\]
where $a_j$ are deterministic and $|a_j|\le A_0$.
Then $M$ is integrable, and
\begin{equation}\label{lev1:eq-late-variance}
 \E|M-U|^2
 \le C_{k,\gamma,\eta}C_0^2A_0^2\,
       \beta e^{\sqrt2\beta}L^2(b-t+1)(t-\log3)^{-\eta}
\end{equation}
for $t$ sufficiently large. Constants do not depend on $b,t,L$, the
number of mesh cells, or the chosen deterministic coefficients.
For the endpoint scales $L=\sqrt{\log n}$,
$b=\log n+4\log\log n$, and $t=\log n-6\log\log n$, the right side of
\eqref{lev1:eq-late-variance} tends to zero for each fixed $\beta$.
\end{theorem}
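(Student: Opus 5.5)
The plan is to write $M-U$ as a martingale increment over the time interval $[t,b]$ and bound its bracket. Set $N_s=\E[M\mid\F_s]$ for $t\le s\le b$. Then $N_b=M$, because each $Y_j$ is $\F_b$-measurable: $c_j$, $H_j$ and $I_b^\beta(y_j)$ are functions of $X_b$. Also $N_t=U$. Integrability of $M$ follows from the envelope \eqref{lev1:eq-decoration-envelope} together with the exponential-envelope bound \eqref{crm:eq-general-envelope}; the latter controls $\E\sum_j c_j e^{\gamma\|H_j\|_\infty}$ after rooting at $y_j$. So it suffices to bound $\E|M-U|^2=\E\int_t^b \dd\langle N,\overline N\rangle_s$.

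I would avoid differentiating the nonlinear functional $A_j$ directly and instead use the orthogonal decomposition of $M-U$ in cell pairs:
\[
 \E|M-U|^2=\sum_{j,j'}a_j\overline{a_{j'}}\,\E\bigl[(Y_j-\E[Y_j\mid\F_t])\overline{(Y_{j'}-\E[Y_{j'}\mid\F_t])}\bigr].
\]
The kernel $k$ has support in $[-1,1]$, so increments of $X$ after time $s$ at points farther apart than $e^{-s}$ are independent. Hence the covariance of the $(t,b]$-parts of $Y_j$ and $Y_{j'}$ vanishes unless $|y_j-y_{j'}|\lesssim e^{-t}$. More precisely, I would write the increment as $\int_t^b$ of stochastic integrals against $\dd X_s$ and use $\E[\dd X_s(x)\dd X_s(x')]=q_{e^s}(x-x')\dd s$. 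The contributions at time $s$ then couple only pairs within distance $e^{-s}$. The number of such pairs is $O(e^{b-s})$ per cell. This spatial localization is what produces the $L^2$ norm of the kernel, of order $e^{-s}$, exactly as in the derivative bracket \eqref{paper:eq-bracket}.

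The key step is a two-point rooted bound at each time $s\in[t,b]$. Conditioning at $y_j$ and tilting by $w_b(y_j)$ turns $c_j$ into $L\rho$ times a rooted density. The barrier indicator $I_b^\beta(y_j)$ together with the factor $e^{\sqrt2X_b}$ gives, via the Brownian-motion-killed-at-zero transition density $p_s^0(\beta,r)\le C\beta r s^{-3/2}$, a squared weight of order $\beta e^{\sqrt2\beta}s^{-3/2}$ times $\int r e^{-\sqrt2 r}(\cdots)^2\dd r$. This is the same double Gaussian tilt as in Lemma~\ref{l3:lem-SH-bridge-martingale}. The factors $e^{\gamma\|H_j\|_\infty}$ are absorbed by Cauchy--Schwarz and the uniform rooted exponential moments of $\|H_j\|_\infty$. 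The cost is that the clean exponent $3/2$ degrades to any $\eta<3/2$, because Hölder is needed to separate the barrier density from the decoration. The shift $t-\log3$ reflects comparing the time $s$ with the scale at which three-cell neighborhoods decouple. Summing over cells with the $O(e^{b-s})$ neighbors, each cell carrying weight $\rho^2 e^{2b}=1$, and multiplying by $L^2A_0^2C_0^2$, should give a bracket density of at most $C\beta e^{\sqrt2\beta}L^2 s^{-\eta}$ for $s\ge t$. Integrating over $[t,b]$ then gives the factor $(b-t+1)(t-\log3)^{-\eta}$.

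I expect the main obstacle to be the terminal layer $s$ near $b$. There $A_j$ depends on the not-yet-revealed local field $H_j$, so the martingale $\E[Y_j\mid\F_s]$ is not an explicit Itô integral. I would handle this by bounding the conditional variance of each cell's contribution over the final unit of time crudely, using the second moment of $Y_j$ itself under rooting, which is where the $+1$ in $b-t+1$ comes from. Only on $[t,b-1]$ would I use the explicit heat-equation structure, through the killed-Brownian survival function as in Lemma~\ref{l3:lem-SH-bridge-martingale}. Finally, at the endpoint scales $L^2(b-t+1)t^{-\eta}\asymp u\cdot\log u\cdot u^{-\eta}\to0$, since $\eta>1$.
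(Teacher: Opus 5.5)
Your first reduction, expanding $\E|M-U|^2$ into conditional covariances over cell pairs that vanish beyond anchor distance of order $e^{-t}$, is the same as the paper's. The step you then rely on for $s\in[t,b-1]$ does not go through.

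The explicit It\^o/heat-equation structure of Lemma~\ref{l3:lem-SH-bridge-martingale} exists only because there the functional is $I_bw_b$, whose conditional expectation is the explicit $I_sw_sh_{b-s}(R_s)$. For a general measurable $A_j(c_j,H_j)$ obeying only \eqref{lev1:eq-decoration-envelope}, the conditional mean $\E[Y_j\mid\F_s]$ equals $L\rho\,I_s^\beta(y_j)w_s(y_j)\Theta_{b-s}$. Here $\Theta_{b-s}$ is an implicit function of $R_s(y_j)$ and of the time-$s$ field on the cell window. To find its martingale integrand you must differentiate $\Theta$ in the root and local-shape variables. This reintroduces the differentiation you wanted to avoid, now of a smoothed functional, and you supply no bounds for it. Your one-variable integral $\int re^{-\sqrt2r}(\cdots)^2\,dr$ also presupposes a coefficient depending on $R_s$ alone. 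So the obstacle is not confined to the last unit of time.

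Nor does your crude terminal-layer device extend using only $\E Y_j^2$. Over a unit step starting at $s$ there are $O(e^{2b-s})$ correlated pairs, and Cauchy--Schwarz per pair gives a contribution of order $L^2e^{b-s}b^{-\eta}$. That is far too large once $b-s$ is large.

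What is missing is a rooted bound on \emph{absolute} quantities at each pair's branching scale; such a bound needs no regularity of $A_j$. The paper proceeds as follows.
\begin{itemize}
\item It bounds each conditional covariance by $\E|Y_jY_k|+\E[\E(|Y_j|\mid\F_t)\E(|Y_k|\mid\F_t)]$.
\item For anchors at distance $d\le3e^{-t}$ it takes $s=\min(b,\log(1/d))\ge t-\log3$. After time $s$ the two root increments are uncorrelated, so the product of their future exponential martingales is an ordinary Gaussian tilt with bounded shape means.
\item It drops both survival indicators to the single indicator $I_s^\beta(y_j)$ and writes $w_s(y_k)=w_s(y_j)e^{\sqrt2H_{s,y_j}(\cdot)}$ on a fixed window.
\item It roots once at $y_j$ with H\"older exponent $q=3/(2\eta)$. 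This gives order $L^2e^{-2b}e^ss^{-\eta}$ per pair.
\item The product of conditional means is handled with two independent future copies sharing the coarse field, again rooted at time $t$.
\item Summing over distance shells $\rho<d\le3e^{-t}$ turns $e^ss^{-\eta}$ into $\int_{t-\log3}^b s^{-\eta}\,ds$. This is the source of the factor $(b-t+1)(t-\log3)^{-\eta}$.
\end{itemize}

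Your time-slicing can be rescued in the same spirit. Write $\E|M-U|^2$ as a sum of orthogonal unit-step martingale increments; in each step only pairs within distance $O(e^{-s})$ contribute. Bound those localized conditional covariances by the two-copy rooted estimate at the step's endpoints. That gives $O(L^2s^{-\eta})$ per step, with no bracket or It\^o-integrand computation, and summing over the $O(b-t+1)$ steps gives the stated bound.
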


The proof is given in Appendix~\ref{lev1:sec-late-fluctuation}.

\begin{lemma}
\label{l1:lem-profile-removal}
Use the complete circular mesh with $m=O(e^b)$ anchors $y_j$, cell length
$d\le\rho$, and $\theta=d/\rho$. Put
\[
 c_j=L e^{\sqrt2X_b(y_j)-2b},\qquad T_j=\delta/c_j,
\]
\[
 H_j(v)=X_b(y_j+\rho v)-X_b(y_j),\qquad
 H_j^{\rm new}(v)=
 [X_b-X_t](y_j+\rho v)-[X_b-X_t](y_j).
\]
Here $\delta>0$ is deterministic; the bound below is uniform in this
cutoff. For arbitrary coarse-measurable complex weights $\zeta_j$ with
$|\zeta_j|\le A$, where $A$ is deterministic, define
\[
 \mathcal R_n=
 \sum_j\zeta_j I_b^\gamma(y_j)c_j\Psi_\theta(H_j,T_j),\qquad
 \mathcal R_n^{\rm new}=
 \sum_j\zeta_j I_b^\gamma(y_j)c_j
                     \Psi_\theta(H_j^{\rm new},T_j).
\]
Then
\begin{equation}\label{l1:eq-old-profile-removal}
 \mathbb E|\mathcal R_n-\mathcal R_n^{\rm new}|
 \le C_{k,A}L e^{t-b}.
\end{equation}
For the endpoint scales the right side is $O(u^{-19/2})$.
The bound in \eqref{l1:eq-old-profile-removal} also holds for the $L^1$ norm of the difference after
conditioning on $\mathcal F_t$.
\end{lemma}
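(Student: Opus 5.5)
The plan is to compare the two profiles cell by cell. The difference between $H_j$ and $H_j^{\rm new}$ is the early-field increment
\[
 \Delta_j(v)=X_t(y_j+\rho v)-X_t(y_j),\qquad 0\le v\le 1,
\]
which is $\F_t$-measurable and smooth at scale $e^{-t}\gg\rho$. I would bound $\|\Delta_j\|_\infty\le\rho\,\|\partial_xX_t\|_{L^\infty(\text{cell})}$. At each point the derivative $\partial_xX_t$ is Gaussian with variance $\int_0^t e^{2a}|k''(0)|\,da\le Ce^{2t}$. Hence $\|\Delta_j\|_\infty$ is of order $e^{t-b}$ times a random factor, and every fixed exponential moment of that factor is bounded uniformly in $j$, $t$ and $b$.

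Next I apply \eqref{l3c:eq-response-input}, i.e. Lemma~\ref{l1:lem-response-lipschitz}, with the same radial argument $T_j$ on both sides, so that the $\log c$ difference vanishes. This gives
\[
 |\Psi_\theta(H_j,T_j)-\Psi_\theta(H_j^{\rm new},T_j)|
 \le C e^{C(\|H_j\|_\infty+\|\Delta_j\|_\infty)}\,\|\Delta_j\|_\infty .
\]
The bound is uniform in $\delta$ and $\theta$, which is exactly why the estimate is uniform in the cutoff. Since $|\zeta_j|\le A$, I obtain
\[
 \E|\mathcal R_n-\mathcal R_n^{\rm new}|\le CA\sum_j\E\bigl[I_b^\gamma(y_j)\,c_j\,e^{C\|H_j\|_\infty}\,e^{C\|\Delta_j\|_\infty}\,\|\Delta_j\|_\infty\bigr].
\]

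It then remains to bound each summand by $CLe^{-b}e^{t-b}$. With $m=O(e^b)$ cells, summing gives $CLe^{t-b}$. I would use the finite rooted law of Theorem~\ref{crm:thm-construction}(ii): on $I_b^\gamma(y_j)$,
\[
 c_j=Le^{-b}w_b(y_j),\qquad \E[I_b^\gamma(y_j)\,w_b(y_j)\,F]=\E\bigl[R_b^\gamma(y_j)\,I_b^\gamma(y_j)\,w_b(y_j)\,F/R_b^\gamma(y_j)\bigr]=\gamma\,\E_{\mathbb Q}\bigl[F/R_b^\gamma(y_j)\bigr].
\]
I then apply H\"older's inequality under $\mathbb Q$. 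The factor $1/R_b$ has bounded second moment, since $\E_{\mathbb Q}R_b^{-2}\le 2/b$; this in fact gives an extra $b^{-1/2}$ which we do not need. The factor $e^{C\|H_j\|_\infty}$ has uniformly bounded moments under the rooted law by the stated exponential-moment bound. The remaining factor is $\|\Delta_j\|_\infty e^{C\|\Delta_j\|_\infty}$.

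\textbf{Main obstacle.} The hard step is showing that $\E_{\mathbb Q}\bigl[\|\Delta_j\|_\infty^p e^{pC\|\Delta_j\|_\infty}\bigr]^{1/p}\le Ce^{t-b}$ under the rooted tilt. The tilt by $w_b(y_j)$ shifts the Gaussian field by the deterministic drift
\[
 \sqrt2\,\E[X_s(\cdot)X_b(y_j)]=\sqrt2\int_0^s q_{e^a}(\cdot-y_j)\,da .
\]
Its spatial increment over distance $\rho v$ is bounded by $\sqrt2\int_0^t e^a\rho\|k'\|_\infty\,da\le Ce^{t-b}$, using $q_v(x)=k(vx)$ locally and the Lipschitz bound on $k$. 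The barrier indicator and the BES(3) conditioning affect only the radial process at the root. The smooth increment $\Delta_j$ is a Gaussian functional that, after subtracting its regression on $X_\cdot(y_j)$, is independent of the root path. So conditionally on the root path it is Gaussian: its mean is at most $Ce^{t-b}(1+\sup_s|X_s(y_j)-\sqrt2 s|\,\text{regression coefficient})$, where the regression coefficient is itself $O(e^{t-b})$, and its standard deviation is $O(e^{t-b})$. A Borell–TIS or Kolmogorov chaining bound over $v\in[0,1]$ then controls the supremum. Moments of the root path that enter are polynomial in $R$ and are absorbed by the BES(3) moments, $\E_{\mathbb Q}R_b^2=\gamma^2+3b$, times the factor $e^{t-b}$. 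If the polynomial $R$ factor causes trouble, I would instead project onto the increment $X_t-X_0$ at the root. The increment coefficients are bounded by $\rho e^{t}$ times a bounded quantity, so the $\sqrt b$ factor appears only multiplied by $\rho$-level smallness, and can be bounded crudely by $u^{1/2}$ at the endpoint scales.

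For the endpoint scales, $Le^{t-b}=u^{1/2}u^{-10}=u^{-19/2}$. For the conditional version, I would perform the same computation conditionally on $\F_t$. The factors $\Delta_j$ and $\zeta_j$ are then fixed, and $c_jI_b^\gamma e^{C\|H_j\|_\infty}$ is handled by the conditional rooted identity between times $t$ and $b$. Taking expectations afterward yields the same bound for the $L^1$ norm of the conditional difference, by the tower property.
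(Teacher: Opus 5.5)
Your proof is correct, but it goes through the BES(3) rooted law, which the paper avoids. The paper observes that each cell error is nonnegative once absolute values are taken. It therefore simply drops $I_b^\gamma(y_j)$ and works under the plain one-point Gaussian tilt $\widetilde{\mathbb P}_{b,y_j}$ with density $w_b(y_j)$. Under that tilt the field stays Gaussian with a deterministic mean shift. Lemma~\ref{l1:lem-scaled-old-profile} then shows that the early increment $H^{\rm old}$ (your $\Delta_j$) has mean $O(e^{2(t-b)})$, by evenness of $k$, and a centered part whose derivative variance is $O(e^{2(t-b)})$. Hence $e^{b-t}\|H^{\rm old}\|_\infty$ has all exponential moments, uniformly in $1\le t\le b$, and Cauchy--Schwarz with \eqref{l1:eq-Psi-profile-continuity} gives $Le^{-b}e^{t-b}$ per cell. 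Your drift computation in the last paragraph is essentially this estimate, so you could have stopped there.

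Keeping the indicator and passing to $\mathbb Q$ gains a factor $\gamma b^{-1/2}$ that is never used. It also creates what you call the main obstacle: under $\mathbb Q$, the regression of $\Delta_j$ on the root path has the random mean $\int_0^t[k(e^{s-b}v)-1]\,dX_s(y_j)$. Your proposed bound $Ce^{2(t-b)}\sup_{s\le t}R_s$ suffices only when $e^{t-b}\sqrt t$ stays bounded. That holds at the endpoint scales, but not throughout the range $1\le t<b$ in which the lemma is stated. For uniformity, integrate by parts as in Lemma~\ref{crm:lem-shape}, using $|R_t-R_s|\le|B_t-B_s|$ for the underlying three-dimensional Brownian motion $B$. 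This gives $e^{2(t-b)}$ times a variable with all exponential moments.

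The conditional assertion needs no conditional rooting: $|\E[\,\cdot\mid\F_t]|\le\E[\,|\cdot|\mid\F_t]$ together with the tower property gives it immediately. Also, the $\zeta_j$ are only coarse measurable, so they are not fixed given $\F_t$; this is harmless, since only $|\zeta_j|\le A$ is used.
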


The proof is given in Appendix~\ref{l1:sec-old-profile}.

Here are the other quantitative estimates used below, stated in the exact
forms needed. All constants may depend on the fixed barrier, kernel,
and displayed fixed exponents; none depends on $n,w,\delta$ or $k$.
In particular, Theorem~\ref{lev1:thm-late-decoration} gives, for
$1<\eta<3/2$, every endpoint-local functional satisfying
$|A_j|\le C_0e^{a\|H_j\|_\infty}$, and deterministic $|a_j|\le1$,
\begin{equation}\label{l3c:eq-late-input}
 \E\left|\sum_j a_j I_b^\gamma(y_j)c_jA_j
  -\E\left[\sum_j a_j I_b^\gamma(y_j)c_jA_j\mid\F_t\right]\right|^2
 \le C_{\beta,k,a,\eta}C_0^2\,V_\eta.
 \end{equation}
 Here $V_\eta=L^2(\tau+1)(t-\log3)^{-\eta}$.

The fixed-barrier linear bracket \eqref{paper:eq-bracket} gives, for $a=\beta,\beta+1$,
\begin{equation}\label{l3c:eq-linear-input}
 \E|L(\widehat Z_b^a(k)-\widehat Z_t^a(k))|^2
 \le C_{\beta,k}V_{\rm lin},\qquad
 V_{\rm lin}=L^2\tau t^{-3/2},
 \qquad k\in\mathbb Z.
\end{equation}
The kernel subscript on constants denotes the fixed kernel rather than
the Fourier frequency.

For the signed fine-tail error define
\begin{equation}\label{l3c:eq-tail-decoration}
 e_H(T)=T\Prob(D(g_H)>T)-\kappa G_H,\qquad
 Y_j^{\rm tail}(\delta)=I_b^\gamma(y_j)c_j e_{H_j}(\delta/c_j),
\end{equation}
and define the explicit early-tail quantity
\begin{equation}\label{l3c:eq-Q-definition}
 Q_{n,\delta}(t,b)=
 \E\sum_j\left|\E[Y_j^{\rm tail}(\delta)\mid\F_t]\right|.
\end{equation}
The weak-$L^1$ fine tail bound gives
$|e_H(T)|\le Ce^{C\|H\|_\infty}$, so
\eqref{l3c:eq-late-input} already applies to these terms.
Corollary~\ref{l3:cor-common-early-tail-audit} below supplies
a numerical bound for \eqref{l3c:eq-Q-definition}.  Its absolute sum is crucial:
it bounds every deterministic choice of coefficients of modulus at
most one simultaneously.

\subsection{One common bound for all early oscillatory means}
\begin{lemma}
\label{l3c:lem-common-increment}
Suppose $a_0>0$ is fixed, $0<h\le a_0e^{-t}$, and $y\in\T$.
Put $\Delta_h=\sup_{|v|\le h}|X_t(y+v)-X_t(y)|$.
For each fixed $A<\infty$ and $0<p\le1$,
\begin{equation}\label{l3c:eq-common-increment}
 \E[w_t(y)e^{A\Delta_h}\Delta_h^p]
 \le C_{k,A,a_0,p}(e^th)^p.
\end{equation}
\end{lemma}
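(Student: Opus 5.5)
The natural route is a Girsanov (Cameron--Martin) tilt at the point $y$. This converts the weight $w_t(y)$ into a shift of the field, after which the estimate reduces to a Gaussian increment bound at scale $e^{-t}$. Since $\E w_t(y)=1$, the tilted law $\widetilde\Prob$ with density $w_t(y)$ is a probability measure. Under it the field $X_t(\cdot)$ has the same covariance but mean $\sqrt2\,\E[X_t(\cdot)X_t(y)]=\sqrt2\int_0^t q_{e^a}(\cdot-y)\dd a$. Thus
\[
 \E[w_t(y)e^{A\Delta_h}\Delta_h^p]=\widetilde\E[e^{A\Delta_h}\Delta_h^p],
\]
and under $\widetilde\Prob$ the increment $X_t(y+v)-X_t(y)$ is a centered Gaussian process plus the deterministic drift
\[
 m(v)=\sqrt2\int_0^t\bigl(q_{e^a}(v)-1\bigr)\dd a .
\]

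The first step is to bound the drift. Because $k$ is smooth and even with $k(0)=1$, we have $|k(x)-1|\le Cx^2$, and $|k(x)-1|\le 1$ for all $x$. For $|v|\le h\le a_0e^{-t}$ this gives
\[
 |m(v)|\le C\int_0^t \min\{1,e^{2a}v^2\}\dd a\le C\,e^{2t}h^2\le C_{a_0}\,e^th .
\]
Here we used $e^th\le a_0$; periodization contributes no extra terms for $e^a|v|$ small. So the drift contributes at most $C(e^th)$, which is of order $(e^th)^p$ times a constant because $e^th\le a_0$ and $p\le1$.

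The second step handles the centered part $\widetilde\Delta_h=\sup_{|v|\le h}|\widetilde X(y+v)-\widetilde X(y)|$. Its increments satisfy
\[
 \E|\widetilde X(y+v)-\widetilde X(y+v')|^2=2\int_0^t\bigl(1-q_{e^a}(v-v')\bigr)\dd a\le C e^{2t}|v-v'|^2 .
\]
The same computation as for the drift gives this bound when $e^t|v-v'|\lesssim1$. Hence, after rescaling $v=hs$, the process is Lipschitz in $L^2$ with constant $Ce^th$ on $s\in[-1,1]$. Dudley's entropy bound, or Kolmogorov's criterion applied to a Gaussian process, then gives $\E\widetilde\Delta_h\le Ce^th$. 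Borell--TIS adds Gaussian concentration of $\widetilde\Delta_h$ around its mean with variance proxy at most $C(e^th)^2\le C a_0^2$. Consequently $\widetilde\E e^{2A\widetilde\Delta_h}\le C_{A,a_0}$ and $\widetilde\E\widetilde\Delta_h^{2p}\le C(e^th)^{2p}$.

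The final step combines the two. We have $\Delta_h\le\widetilde\Delta_h+C e^th$ and $(x+y)^p\le x^p+y^p$. Applying Cauchy--Schwarz,
\[
 \widetilde\E[e^{A\Delta_h}\Delta_h^p]\le e^{ACa_0}\bigl(\widetilde\E e^{2A\widetilde\Delta_h}\bigr)^{1/2}\Bigl(\bigl(\widetilde\E\widetilde\Delta_h^{2p}\bigr)^{1/2}+(Ce^th)^p\Bigr)\le C_{k,A,a_0,p}(e^th)^p .
\]

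The main point requiring care is the uniform-in-$t$ increment variance bound $Ce^{2t}|v-v'|^2$. This needs $e^t|v-v'|\le 2a_0$, which holds here. The contribution from scales $a$ near $t$ must be summed correctly: the integral $\int_0^t e^{2a}\dd a\sim e^{2t}/2$ is dominated by its top end, so no logarithmic loss appears. The bound is also uniform in $y$, since everything is stationary.
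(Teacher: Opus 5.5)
Your proof is correct, and it reaches the bound by a somewhat different technical route from the paper's.

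Both arguments begin with the one-point Cameron--Martin tilt by $w_t(y)$. After that the paper works with the rescaled derivative process $Q(z)=e^{-t}\partial_xX_t(y+e^{-t}z)$ on the fixed window $[-a_0,a_0]$:
\begin{itemize}
\item it shows that the tilted mean of $Q$ and the variances of $Q$ and $Q'$ are bounded uniformly in $t$ and $y$;
\item it uses the elementary bound $\sup|Q|\le|Q(0)|+C\|Q'\|_{L^2}$ together with Gaussian exponential-square estimates to obtain every linear exponential moment of $\sup|Q|$;
\item it concludes from the pathwise mean-value inequality $\Delta_h\le he^t\sup_{[-a_0,a_0]}|Q|$.
\end{itemize}
In that route the factor $(e^th)^p$ comes out deterministically. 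Both $e^{A\Delta_h}$ and $\Delta_h^p$ are then controlled by a single random variable that does not depend on $h$, so no chaining, no Borell--TIS and no Cauchy--Schwarz splitting are needed.

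You instead keep the increment process itself. You separate the deterministic tilted drift, which you bound sharply by $Ce^{2t}h^2$, from the centered part. You control the centered part through the $L^2$-Lipschitz estimate $Ce^{2t}|v-v'|^2$, Dudley's bound and Borell--TIS, and then combine the pieces with $p$-subadditivity and Cauchy--Schwarz.

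Your route needs only second-order regularity of $k$ at the origin. The paper's bound on $\operatorname{Var}Q'$ uses fourth-order information, namely $k^{(4)}(0)$. The cost is heavier Gaussian-process machinery and an extra combination step. Since $k\in C_c^\infty$ here, the paper's derivative argument is the lighter option, but both proofs are complete.

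A minor point: the restriction $e^t|v-v'|\lesssim1$ in your variance bound is unnecessary. For $a\ge0$ and $|v|\le\pi$ one has $q_{e^a}(v)=k(e^av)$, and $|1-k(x)|\le Cx^2$ holds for all real $x$.
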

\begin{proof}
Under the probability tilt with density $w_t(y)$, consider the Gaussian
process $Q(z)=e^{-t}\partial_xX_t(y+e^{-t}z)$ on $[-a_0,a_0]$.
Its variance and the variance of $Q'(z)$ are uniformly bounded, by
the integrals of $e^{2s-2t}$ and $e^{4s-4t}$, respectively.
Its tilted mean and its first derivative are uniformly bounded as well:
the mean is
$\sqrt2e^{-t}\int_0^t e^s k'(e^{s-t}z)\,ds$.
Period images vanish on this fixed local window for sufficiently large
$t$; a bounded remaining range of times changes only the constant.
The fundamental theorem of calculus bounds
$\sup|Q|$ by $|Q(0)|+C\|Q'\|_2$.  Gaussian exponential-square estimates
therefore imply all fixed linear exponential moments of $\sup|Q|$,
uniformly in $t,y$.  Finally
$\Delta_h\le he^t\sup|Q|$.  Since $he^t\le a_0$, this proves the
claim, including the exponential multiplier.
\end{proof}

\begin{lemma}
\label{l3c:lem-common-early}
Suppose $w\le n/2$, $ne^{-b}\to0$, and $e^t/n\le c_0$ for a fixed
constant $c_0$.  Set
\[
 M_\delta(k)=\sum_j e^{iky_j}I_b^\gamma(y_j)c_j\Phi_\delta(H_j,c_j),
 \qquad U_\delta(k)=\E[M_\delta(k)\mid\F_t].
\]
There exists a nonnegative random variable $A_n^{\rm early}$ such that
on $E_{\beta,b}$,
\[
 \max_{k\in\mathcal K_{n,w}}|U_\delta(k)|\le A_n^{\rm early},
\]
and, uniformly over $0<\delta\le1$,
\begin{equation}\label{l3c:eq-common-early}
 \E A_n^{\rm early}\le C_{\beta,k}E_n,
 \qquad E_n=Le^{t-b}
       +L\left\{\frac{e^t}{n}
               +\left(\frac{e^t}{n}\right)^{1/2}\tau^{-1/4}\right\}.
\end{equation}
\end{lemma}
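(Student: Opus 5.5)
The plan is to compute $U_\delta(k)$ explicitly by conditioning at time $t$, and then to extract oscillatory cancellation from the early field, which is smooth at scale $e^{-t}\gg 1/n$ with $|k|\ge n/2$. First replace $\Phi_\delta(H_j,c_j)$ by $\Phi_\delta(H_j^{\rm new},c_j)$ using Lemma~\ref{l1:lem-profile-removal}. By its conditional form this costs $C L e^{t-b}$ in $L^1$ after conditioning on $\F_t$, uniformly in $k$ because the bound uses only $|\zeta_j|\le1$. This gives the first term of $E_n$; the $k$-independent absolute sum is what makes the maximum over $\mathcal K_{n,w}$ harmless. Next write $c_j=Le^{-b}w_t(y_j)\,e^{\sqrt2(X_b-X_t)(y_j)-(b-t)}$ and $I_b^\gamma(y_j)=I_t^\gamma(y_j)\cdot(\text{future survival})$. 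Conditionally on $\F_t$, the increment $X_b-X_t$ is independent of $\F_t$ and stationary. Hence
\[
 \E[I_b^\gamma(y_j)c_j\Phi_\delta(H_j^{\rm new},c_j)\mid\F_t]
 =Le^{-b}w_t(y_j)I_t^\gamma(y_j)\,F_\delta\bigl(R_t^\gamma(y_j),\,w_t(y_j)\bigr)
\]
for a deterministic function $F_\delta$. By the rooted estimate and \eqref{l3c:eq-response-input}, $F_\delta$ is bounded by $C$ times a survival-type factor and is Lipschitz in $\log w_t$ and in the root height $R_t$, uniformly in $\delta$.

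Thus $U_\delta(k)$, up to the removed error, is a Riemann sum $\sum_j e^{iky_j}\,\Xi(y_j)$ with $\Xi(y)=Le^{-b}w_t(y)I_t^\gamma(y)F_\delta(\dots)$, a function of the early field at $y$. Group consecutive anchors into blocks of length $h\asymp(n^{-1}e^{-t})^{1/2}$, or simply of length $h$ with $n^{-1}\ll h\le e^{-t}$, chosen to balance the two terms. Within a block, freeze $\Xi$ at the block's left anchor. The exponential sum $\sum e^{iky_j}$ over a block then has size $O(1/(|k|d))$ relative to $h/d$ terms, so its contribution is $O(1/(nh))$ relative. This gives the term $Le^t/n$ after summing masses with $\E\sum_j Le^{-b}w_t\lesssim L e^{-b}\cdot e^{b}\cdot(\text{block count factor})$. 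The freezing error is controlled by Lemma~\ref{l3c:lem-common-increment} with $\Delta_h$, $p=1$ or $1/2$, applied to the variation of $w_t$, $X_t$, $R_t$ over length $h$. The change of the survival indicator $I_t^\gamma$ within a block is handled on $E_{\beta,b}$, where $\gamma$-barriers are not hit. The dependence on the $R_t$ variable produces the factor $\tau^{-1/4}$: $\partial_R$ of the survival probability over the remaining time $\tau$ is $O(\tau^{-1/2})$, and with $p=1/2$ interpolation one gets $(e^th)^{1/2}\tau^{-1/4}$-type bounds. Choosing $h=c\,e^{-t}$ at the natural scale yields the stated $E_n$.

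Define $A_n^{\rm early}$ as the sum of the conditional removal error, the block boundary terms, and the freezing errors, all with absolute values inside. Each is independent of $k\in\mathcal K_{n,w}$ since $|k|\ge n/2$ is the only property used. The main obstacle is the regularity of $F_\delta$ in the root height near the terminal layer, where survival over time $\tau$ is not smooth. There I would use the heat-kernel bounds from the proof of Lemma~\ref{l3:lem-SH-bridge-martingale}, namely $\partial_r h_\tau\le C\tau^{-1/2}$, together with the rooted moment $\E_{\mathbb Q}R_t^{-1}\lesssim t^{-1/2}$ to obtain the $\tau^{-1/4}$ gain uniformly in $\delta$.
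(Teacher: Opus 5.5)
Your first two steps match the paper: removing the early-scale increment via Lemma~\ref{l1:lem-profile-removal} with a $k$-free absolute envelope, and exact tilting at time $t$ to a function of $X_t(y_j)$. The gap is in the blocking step, and it is quantitative.

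With blocks of length $h\le e^{-t}$, the frozen block sums cost $CL/(nh)$. Lemma~\ref{l3c:lem-common-increment}, however, bounds the freezing error only by $CL\{e^th+(e^th)^{1/2}\tau^{-1/4}\}$, since there are $O(e^b)$ anchors with prefactor $Le^{-b}w_t$.

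\begin{itemize}
\item Your final choice $h=ce^{-t}$ makes the freezing error of order $L$. That does not even tend to zero at the endpoint scales.
\item The balanced choice $h=(ne^t)^{-1/2}$ gives $L\{(e^t/n)^{1/2}+(e^t/n)^{1/4}\tau^{-1/4}\}$. This is not $O(E_n)$ when $e^t/n\to0$.
\end{itemize}

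No choice of $h$ works. The block gain $1/(nh)$ is useful only for $h\gg1/n$, while the H\"older-$1/2$ loss $(e^th)^{1/2}\tau^{-1/4}$ has the stated size only for $h\lesssim1/n$. Summation by parts at the mesh scale rescues the Lipschitz part but makes the H\"older part worse. Your weaker rate still tends to zero at the endpoint scales, but it does not prove the lemma as stated.

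The missing idea is to compare the whole circular sum with its own translate by about half an oscillation period. For $k\in\mathcal K_{n,w}$, let $q_k$ be the integer nearest to $\pi/(kd)$ and put $s_k=q_kd$. Then $|s_k|\le7/n$ and $|1-e^{iks_k}|\ge1$ for large $n$. The complete circular mesh is invariant under $j\mapsto j+q_k$, and $e^{iky_j}$ is $m$-periodic in $j$. Hence, for $\Xi$ taken without the indicator $I_t^\gamma$ (which is identically one on $E_{\beta,b}$),
\[
 (1-e^{iks_k})\sum_je^{iky_j}\Xi(y_j)
 =\sum_je^{ik(y_j+s_k)}\bigl[\Xi(y_j+s_k)-\Xi(y_j)\bigr].
\]
So the maximum over $k$ is bounded by the $k$-free variable $\sum_j\sup_{|s|\le7/n}|\Xi(y_j+s)-\Xi(y_j)|$. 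This involves the modulus of $\Xi$ only at displacement $O(1/n)$. Lemma~\ref{l3c:lem-common-increment} with $h=7/n$ and $p=1,1/2$ then gives exactly $L\{e^t/n+(e^t/n)^{1/2}\tau^{-1/4}\}$.

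A smaller point concerns the source of $\tau^{-1/4}$. It does not come from the rooted moment $\E_{\mathbb Q}R_t^{-1}$; no rooting is needed here, and $\E w_t=1$ suffices for the envelope. Moving the barrier by $a$ changes the survival indicator only on a maximum strip of probability at most $C|a|/\sqrt\tau$. Cauchy--Schwarz against the exponential envelope of $\Phi_\delta$ then gives $\min(1,|a|^{1/2}\tau^{-1/4})$. The plain Lipschitz bound in $R_t$ that you assert would need an extra argument, since $\Phi_\delta$ is unbounded and correlated with the tilted root path.
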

\begin{proof}
Remove the part of $H_j$ coming from scales at most $t$.  The sum of
the absolute cell errors has expectation at most $CLe^{t-b}$ by \eqref{l1:eq-old-profile-removal} and \eqref{l3c:eq-response-input}.
Its conditional expectation is a common envelope for the changes
in every $U_\delta(k)$.

After replacing \(H_j\) by \(H_j^{\rm new}\), exact future exponential tilting gives
\[
 U_\delta^0(k)=Le^{-b}\sum_j e^{iky_j}I_t^\gamma(y_j)V_t(y_j),
 \quad
 V_t(y)=w_t(y)A_{\tau,\delta}(r_t(y),C_t(y)),
\]
where $r_t=\gamma+\sqrt2t-X_t$ and $C_t=Le^{-b}w_t$.
For a fixed anchor $y$, let $\E_{\rm tilt}$ denote expectation
under the future-field law, conditional on $\F_t$, tilted by
\[
 \exp\{\sqrt2(X_b(y)-X_t(y))-\tau\}.
\]
Under this law,
\[
 B_s=X_{t+s}(y)-X_t(y)-\sqrt2s,\qquad 0\le s\le\tau,
\]
is standard Brownian motion. Put
\[
 H(v)=[X_b-X_t](y+\rho v)-[X_b-X_t](y),
 \qquad 0\le v\le1.
\]
Both $B$ and $H$ are obtained from this tilted future field.
Define
\[
 A_{\tau,\delta}(r,C)=\E_{\rm tilt}
 \left[\1_{\{\max_{s\le\tau}B_s<r\}}
 \Phi_\delta(H,Ce^{\tau+\sqrt2B_\tau})\right].
\]
Gaussian exponential moments of the future increment and the density of the Brownian maximum give, for every real $r,a$ and every $C>0$,
\begin{align}
 |A_{\tau,\delta}(r,C)|&\le C_k,\notag\\
 |A_{\tau,\delta}(r-a,Ce^{\sqrt2a})-A_{\tau,\delta}(r,C)|
 &\le C_k\{\min(1,|a|)
       +\min(1,|a|^{1/2}\tau^{-1/4})\}.
 \label{l3c:eq-future-response-modulus}
\end{align}
Indeed the estimate for the change in \(\Phi_\delta\) uses \eqref{l3c:eq-response-input};
the change of the barrier indicator is confined to a maximum strip
of probability at most $C|a|/\sqrt\tau$, and Cauchy--Schwarz applies to the exponentially bounded local-increment factor.  These constants are independent of $\delta$.

On $E_{\beta,b}$ the indicators $I_t^\gamma$ all equal one.  For each
$k\in[n/2,3n/2]$, choose $q_k$ nearest to $\pi/(kd)$ and let
$s_k=q_kd$.  For all sufficiently large $n$,
$|s_k|\le 7/n$ and $|1-e^{iks_k}|\ge1$, uniformly in $k$.
Reindexing the complete mesh gives the exact inequality
\[
 \left|Le^{-b}\sum_j e^{iky_j}V_t(y_j)\right|
 \le Le^{-b}\sum_j|V_t(y_j+s_k)-V_t(y_j)|.
\]
Hence the maximum over all $k\in\mathcal K_{n,w}$ is bounded by the
single random variable
\[
 Le^{-b}\sum_j\sup_{|s|\le7/n}|V_t(y_j+s)-V_t(y_j)|.
\]
Using \eqref{l3c:eq-future-response-modulus} and
\(|e^{\sqrt2a}-1|\le C|a|e^{\sqrt2|a|}\), each supremum is bounded by
\[
 Cw_t(y_j)e^{\sqrt2\Delta_{7/n}}
 \{\Delta_{7/n}+\Delta_{7/n}^{1/2}\tau^{-1/4}\}.
\]
Apply Lemma~\ref{l3c:lem-common-increment} with exponents $1$ and
$1/2$, and use $me^{-b}\le C$.
Adding the replacement error
\eqref{l1:eq-old-profile-removal} proves
\eqref{l3c:eq-common-early}.
\end{proof}

\subsection{The exact cutoff identity and its common errors}

For the cell increment \(H_j\), write
\[
 g_j:=g_{H_j}
     =e^{\sqrt2H_j}\1_{[0,\theta]},
 \qquad
 G_j:=G_{H_j}
     =\int_0^\theta e^{\sqrt2H_j(v)}\,dv.
\]

\begin{proposition}
\label{crm:prop-cap-barrier-removal}
Set $a_j=c_j/L$ and let $\mu=Z^\beta$ be the positive terminal derivative measure. Here $\mathbb E_D$ integrates only the independent fine field, with
the coarse parameters fixed, and $\ell_g(T)=\mathbb E_D\min(D(g),T)$.
On the coarse-survival event $E_{\beta,b}$, define
\[
 C_j=\mathbb E\!\left[
 \min\{L\,\mu(J_j),1\}\mid\mathcal F_b\right],
 \qquad
 F_j=\mathbb E_D\min\{c_jD(g_j),1\}
         =c_j\ell_{g_j}(1/c_j).
\]
Then
\begin{equation}
\label{crm:eq-cap-barrier-removal}
 0\le F_j-C_j
 \le C_k e^{-\sqrt2\beta}a_jM_j
 \quad\hbox{on }E_{\beta,b}.
\end{equation}
Consequently, for any coarse-measurable complex numbers $z_j$ with
$|z_j|\le1$, and with
$\mathcal A_n=L\sum_j a_jM_j$,
\begin{equation}
\label{crm:eq-summed-cap-barrier-removal}
 \mathbf1_{E_{\beta,b}}
 \left|\sum_jz_j(F_j-C_j)\right|
 \le C_k e^{-\sqrt2\beta}
       \mathbf1_{E_{\beta,b}}\frac{\mathcal A_n}{L}.
\end{equation}
For the late mesh in \eqref{crm:eq-scales}, the right side tends to zero in
$L^1$. No first-moment comparison between $D$ and its finite-barrier
version is asserted.
\end{proposition}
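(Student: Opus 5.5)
The plan is to compare, conditionally on $\F_b$, the capped mass $\min\{L\mu(J_j),1\}$ with the capped mass $\min\{c_jD(g_j),1\}$ of the untruncated fine chaos, using the conditional coarse/fine scaling \eqref{paper:eq-cell-scaling}. On $E_{\beta,b}$ we have $L\mu(J_j)=U_j=c_jZ^{[q]}(g_j)$ with spatial starting barrier $q=R_b(y_j)-H_j$. This barrier is positive on the cell because survival through $b$ holds at every point, and its infimum over the cell is $q_j=r_j-\sup H_j$. The fine field here is independent of $\F_b$, so $\E[\cdot\mid\F_b]$ acts as $\E_D$ with coarse parameters frozen. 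Then $C_j=\E_D\min\{c_jZ^{[q]}(g_j),1\}$.

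\emph{Lower bound.} By \eqref{paper:eq-fine}, $0\le Z^{[q]}(g_j)\le D(g_j)$. Since $x\mapsto\min\{x,1\}$ is monotone, $C_j\le F_j$ pointwise in the coarse data.

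\emph{Upper bound.} The terminal measures $Z^{[q]}$ and $D$ agree on the no-crossing event $\{\sup_{s,v}(Y_s(v)-\sqrt2 s)<\inf_I q\}$. Off this event the difference of the two capped quantities is at most $1$, because both lie in $[0,1]$. Hence
\[
 F_j-C_j\le \Prob_D\bigl(\sup_{s\ge0,v\in I}(Y_s(v)-\sqrt2s)\ge q_j\bigr)\le C_ke^{-\sqrt2 q_j},
\]
where the crossing bound is extended from heights at least one to all positive heights by enlarging $C_k$, exactly as in Lemma~\ref{l4:lem-late-global-crossing}. Now
\[
 e^{-\sqrt2q_j}=e^{-\sqrt2 r_j}e^{\sqrt2\sup H_j}\le e^{-\sqrt2 r_j}M_j ,
\]
and $e^{-\sqrt2 r_j}=e^{-\sqrt2\beta}e^{\sqrt2X_b(y_j)-2b}=e^{-\sqrt2\beta}a_j$ since $a_j=c_j/L=e^{-b}w_b(y_j)$. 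One point needs care: the supremum in $M_j$ runs over $[0,1]$, while the cell only needs $[0,\theta]$ with $\theta\le1$. This only helps, since it makes $M_j$ larger. Together these give \eqref{crm:eq-cap-barrier-removal}.

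\emph{Summed bound and convergence.} Summing with $|z_j|\le1$ and using the triangle inequality gives \eqref{crm:eq-summed-cap-barrier-removal}, since $\sum_ja_jM_j=\mathcal A_n/L$. The $L^1$ convergence follows from Proposition~\ref{crm:prop-budget}: $\E[\1_{E_{\beta,b}}\mathcal A_n]\le C_{\beta,k}$ uniformly in $n$, and we divide by $L=\sqrt u\to\infty$.

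\emph{Main obstacle.} The main point of care is the measurability step identifying $C_j$ with a pure fine-field expectation, that is, checking that the cell restriction of $Z^\beta$ on $E_{\beta,b}$ is exactly the rescaled fine positive measure. This requires two facts. First, the scaled barrier at the anchor plus the local coarse profile gives the spatial barrier $q$. Second, the periodic circle field's future over a cell of length at most $e^{-b}$ coincides in law with the real-line fine field. I would take both from Theorem~\ref{crm:thm-construction}(i) and \eqref{paper:eq-cell-scaling}. Beyond that, the only delicate point is to avoid any first-moment comparison: the bound uses only the cap at $1$ on the crossing event.
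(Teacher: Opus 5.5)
Your proposal is correct and follows the paper's proof essentially step for step. You use the coarse/fine scaling \eqref{paper:eq-cell-scaling} and the domination $Z^{[q]}\le D$ for the lower bound. For the upper bound you use agreement of the terminal masses on the no-crossing event, the cap at $1$ and Lemma~\ref{crm:lem-crossing}, together with the identity $e^{-\sqrt2 r_j}=e^{-\sqrt2\beta}a_j$; summation and the uniform bound on $\E[\1_{E_{\beta,b}}\mathcal A_n]$ from Proposition~\ref{crm:prop-budget} then give the $L^1$ limit.
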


The proof is given in Appendix~\ref{crm:sec-response-toolkit}.

Put
\[
 h_\delta=\beta+\kappa\log(L/\delta),\qquad
 W_b(J)=\int_Jw_b(x)\,dx,\qquad
 p_j(\delta)=\Prob(U_j>\delta\mid\F_b).
\]
For the exact cutoff decomposition, set
\begin{align}
 \mathscr C_{n,\delta}(k)
 &:=
 \sum_j e^{iky_j}c_jG_j
       \bigl(h_\delta-R_b^\beta(y_j)\bigr)_+,
 \notag\\
 \mathscr V_{n,\delta}(k)
 &:=
 \sum_j e^{iky_j}
 \left\{
   \E_D[\min\{c_jD(g_j),\delta\}]
   -
   \E[\min\{U_j,\delta\}\mid\F_b]
 \right\}.
 \label{l3c:eq-cap-corrections}
\end{align}
For sufficiently large $n$, $h_\delta>0$.  On $E_{\beta,b}$ the
coarse/fine factorization gives the exact identity
\begin{align}
 \sum_j e^{iky_j}\E[\min(U_j,\delta)\mid\F_b]
 &=P_n(k)-h_\delta L\sum_j e^{iky_j}W_b(J_j)\notag\\
 &\quad+\mathscr R_{n,\delta}(k)+\mathscr C_{n,\delta}(k)
     -\mathscr V_{n,\delta}(k),\label{l3c:eq-cap-identity}\\
 P_n(k)&=\sum_j e^{iky_j}\E[U_j\mid\F_b],\notag\\
 \mathscr R_{n,\delta}(k)&=\sum_j e^{iky_j}c_j\Phi_\delta(H_j,c_j).
 \notag
\end{align}
This follows from
\[
 \E\min(c_jD(g_j),\delta)
 =c_j\{\kappa G_j\log_+(\delta/c_j)+B_{g_j}(\delta/c_j)\}
\]
and $\log(\delta/c_j)=\sqrt2(R_b^\beta(y_j)-h_\delta)$.
The log-positive-part correction has nonnegative cell terms
$c_jG_j(h_\delta-R_b^\beta(y_j))_+$.
The fine-barrier correction is bounded cellwise by
$C_{\beta,k}(\delta/L)c_j e^{C\|H_j\|_\infty}$, since the cap
has range $[0,\delta]$ and the probability of crossing the future
barrier is at most $C_{\beta,k}c_j e^{C\|H_j\|_\infty}/L$.
Thus both corrections possess common absolute envelopes over every
choice of coefficients of modulus at most one.

The first correction \(\mathscr C_{n,\delta}(k)\) is controlled by the following rooted estimate.
\begin{lemma}
\label{l3c:lem-small-radius}
For every fixed $1<\eta<3/2$, fixed $A<\infty$, and $h>0$,
\begin{align}
 \E\left[\1_{E_{\beta,b}}\sum_jc_j e^{A\|H_j\|_\infty}
             (h-R_b^\beta(y_j))_+\right]
 &\le C_{\beta,k,A,\eta}Lb^{-\eta}h^{2\eta},
 \label{l3c:eq-radial-positivepart}\\
 \E\left[\1_{E_{\beta,b}}\sum_jc_j e^{A\|H_j\|_\infty}
             \1_{\{R_b^\beta(y_j)\le h\}}\right]
 &\le C_{\beta,k,A,\eta}Lb^{-\eta}h^{2\eta-1}.
 \label{l3c:eq-radial-indicator}
\end{align}
\end{lemma}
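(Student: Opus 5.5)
Since every term is nonnegative, the plan is to bound each cell term through the rooted law of Theorem~\ref{crm:thm-construction}(ii) at time $b$ and anchor $y_j$. On $E_{\beta,b}$ we have $\1_{E_{\beta,b}}\le I_b^\beta(y_j)$. Writing $c_j=La_j$ with $a_j=e^{-b}w_b(y_j)$, the change of measure $\beta^{-1}R_b^\beta(y_j)I_b^\beta(y_j)w_b(y_j)$ gives
\[
 \E\bigl[\1_{E_{\beta,b}}c_je^{A\|H_j\|_\infty}(h-R_b^\beta(y_j))_+\bigr]
 \le L\beta e^{-b}\,\E_{\mathbb Q^\beta_{b,y_j}}\!\left[e^{A\|H_j\|_\infty}
 \frac{(h-R_b)_+}{R_b}\right],
\]
and similarly with $\1_{\{R_b\le h\}}/R_b$ for \eqref{l3c:eq-radial-indicator}. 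Since there are $m=O(e^b)$ anchors, the factor $e^{-b}$ is absorbed by the sum. It therefore suffices to prove a uniform rooted bound of the form $\E_{\mathbb Q}[e^{A\|H\|_\infty}R_b^{-1}\1_{\{R_b\le h\}}]\le Cb^{-\eta}h^{2\eta-1}$, because $(h-R)_+\le h\1_{\{R\le h\}}$ then yields the first estimate with $h^{2\eta}$.

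To decouple the exponential local increment, I apply Hölder with exponents $p,p'$, where $p'$ is close to $1$. The uniform rooted exponential moments of $\|H_j\|_\infty$ from Section~\ref{paper:sec-common} bound the factor $\E_{\mathbb Q}[e^{pA\|H\|_\infty}]^{1/p}$. What remains is $\E_{\mathbb Q}[R_b^{-p'}\1_{\{R_b\le h\}}]^{1/p'}$, where $R_b$ is a BES$(3)$ process at time $b$ started from $\beta$. Its density is bounded by $C_\beta r^2b^{-3/2}$ uniformly in $r$, which comes from the explicit formula $p^0_b(\beta,r)\,r/\beta$ together with $p^0_b\le C\beta rb^{-3/2}$. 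This gives
\[
 \E_{\mathbb Q}[R_b^{-p'}\1_{\{R_b\le h\}}]\le C_\beta b^{-3/2}\int_0^h r^{2-p'}\,dr
 = C b^{-3/2}h^{3-p'}.
\]
Raising to the power $1/p'$ gives $b^{-3/(2p')}h^{(3-p')/p'}$. Choose $p'$ with $3/(2p')=\eta$, which is possible since $1<\eta<3/2$ forces $p'\in(1,3/2)$. Then $(3-p')/p'=2\eta-1$ exactly, which proves \eqref{l3c:eq-radial-indicator}. The same computation with $(h-r)_+\le h$ gives \eqref{l3c:eq-radial-positivepart}.

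The point I expect to need care is that the Hölder step must keep the constant independent of $h$. Large $h$ is harmless, since the density bound $r^2b^{-3/2}$ holds for all $r$ and the bound is only polynomial. Small $h$ is exactly where the weight $R_b^{-1}$ must be integrable against the rooted density, and $r^{2-p'}$ is integrable at $0$ because $p'<3$. A second check concerns $H_j$, which is taken over $[0,1]$ while the rooted exponential moment bound was stated for the cell increment. The remark after Proposition~\ref{crm:prop-budget} allows any fixed bounded rescaled interval, so this causes no difficulty.
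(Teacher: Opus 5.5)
Your proof is correct and follows the paper's argument. You root at each anchor so that $I_b^\beta w_b$ becomes $\beta/R_b$ times the BES$(3)$ law, apply H\"older with exponent $3/(2\eta)$ on the radial factor against the uniform rooted exponential moments, use $p_b^{(3)}(\beta,r)\le Cr^2b^{-3/2}$, and sum over the $O(e^b)$ anchors. The only cosmetic difference is that you deduce \eqref{l3c:eq-radial-positivepart} from the indicator bound via $(h-R_b)_+\le h\1_{\{R_b\le h\}}$, whereas the paper applies H\"older directly to $((h-R_b)_+/R_b)^p$; both give $h^{2\eta}$.
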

\begin{proof}
Anchor rooting changes $I_b^\beta w_b$ to $\beta/R_b$ times
the BES$(3)$ root law.  Set $p=3/(2\eta)\in(1,3/2)$.
The density bound $p_b^{(3)}(\beta,r)\le Cr^2b^{-3/2}$ gives
\[
 \E_{\mathbb Q}\left(\frac{(h-R_b)_+}{R_b}\right)^p
 \le C_pb^{-3/2}h^3,
 \qquad
 \E_{\mathbb Q}\frac{\1_{\{R_b\le h\}}}{R_b^p}
 \le C_pb^{-3/2}h^{3-p}.
\]
Apply H\"older with this $p$ and the uniform exponential-moment bounds of every fixed order under the rooted law.  Multiplication by $\beta Le^{-b}$ and summation over
$O(e^b)$ anchors proves the assertions.
\end{proof}

Consequently the common cap errors satisfy
\begin{equation}\label{l3c:eq-cap-errors}
 \E[\1_{E_{\beta,b}}\sup_k
       (|\mathscr C_{n,\delta}(k)|+|\mathscr V_{n,\delta}(k)|)]
 \le C_{\beta,k,\eta}
       \{Lb^{-\eta}h_\delta^{2\eta}+\delta/L\}.
\end{equation}
Here and below $\sup_k$ can mean any set of integer frequencies,
because the proof uses absolute sums before taking the supremum.

Finally, subtracting $\delta p_j(\delta)$ converts the cap into the
hard cutoff.  Fine-barrier coupling gives the common estimate
\begin{equation}\label{l3c:eq-tail-barrier}
 \E\left[\1_{E_{\beta,b}}\sum_j
 \left|\delta p_j(\delta)-\kappa c_jG_j
                      -c_je_{H_j}(\delta/c_j)\right|\right]
 \le C_{\beta,k}\delta/L.
\end{equation}
Thus the signed-tail error in \eqref{l3c:eq-tail-decoration},
rather than a pointwise Wong remainder, is the exact additional
quantity needed at an arbitrary cutoff.

\subsection{The uniform quantitative theorem}
Define the common bound for the linear and mesh errors
\begin{equation}\label{l3c:eq-linear-mesh-budget}
 A_n^{\rm lin}=
 \frac{L(1+\beta+\sqrt t)(1+e^t)}{n}+Ln e^{-b},
\end{equation}
and let
\begin{align}
 \mathcal V_{n,w,\delta}
 &=N_w\{V_\eta+(1+h_\delta)^2V_{\rm lin}\},
 \label{l3c:eq-V-definition}\\
 \mathcal A_{n,\delta}
 &=E_n+(1+h_\delta)A_n^{\rm lin}
       +Lb^{-\eta}h_\delta^{2\eta}+\delta/L
       +Q_{n,\delta}(t,b).
 \label{l3c:eq-A-definition}
\end{align}

\begin{theorem}[Uniform estimate for the actual truncated centers]
\label{l3c:thm-uniform-center}
Fix $\beta>0$ and $1<\eta<3/2$.  Under the scale assumptions above,
uniformly over $0\le w\le n/2$, $0<\delta\le1$, and $\varepsilon>0$,
\begin{equation}\label{l3c:eq-max-probability}
 \Prob\left(E_{\beta,b}\cap
 \left\{\max_{k\in\mathcal K_{n,w}}|D_{n,\delta}(k)|>
                   \varepsilon\right\}\right)
 \le C_{\beta,k,\eta}
 \left\{\frac{\mathcal V_{n,w,\delta}}{\varepsilon^2}
               +\frac{\mathcal A_{n,\delta}}{\varepsilon}\right\}.
\end{equation}
In fact the localized first moment itself satisfies
\begin{equation}\label{l3c:eq-truncated-expectation}
 \E\left[\1_{E_{\beta,b}}
       \max_{k\in\mathcal K_{n,w}}|D_{n,\delta}(k)|\right]
 \le C_{\beta,k,\eta}
       \{\sqrt{\mathcal V_{n,w,\delta}}+\mathcal A_{n,\delta}\}.
\end{equation}
These are bounds for the actual conditional center.  They do not
postulate independence of neighboring cells or convergence of $B_g(T)$.
\end{theorem}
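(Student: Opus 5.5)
The probability bound \eqref{l3c:eq-max-probability} follows from \eqref{l3c:eq-truncated-expectation}. I will split $D_{n,\delta}(k)$ into a centered part with small second moment and a remainder dominated by a single $k$-independent envelope, and then use Chebyshev on the first and Markov on the second. So the work is to decompose $D_{n,\delta}(k)$ on $E_{\beta,b}$. Write the hard center as the cap minus the tail mass:
\[
 D_{n,\delta}(k)=\sum_je^{iky_j}\E[\min(U_j,\delta)\mid\F_b]-\delta\sum_je^{iky_j}p_j(\delta).
\]
Apply the exact cutoff identity \eqref{l3c:eq-cap-identity} to the first sum. By \eqref{l3c:eq-tail-barrier}, the second sum equals $\kappa\sum_j e^{iky_j}c_jG_j+\sum_je^{iky_j}Y_j^{\rm tail}(\delta)$ up to a common absolute error of order $\delta/L$.

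The next step is to identify the linear terms. The quantity $P_n(k)=L\widehat{Z_b^\beta}$ at the mesh level is the conditional mean. The combination $-h_\delta L\sum_j e^{iky_j}W_b(J_j)-\kappa\sum_je^{iky_j}c_jG_j$ should cancel against $P_n(k)$, because $c_jG_j(R_b^\beta(y_j))$ reproduces $LR_b W_b(J_j)$ up to the barrier factor and $\kappa\log(L/\delta)+\beta-R_b$ regroups into $h_\delta-R_b$. I plan to write $W_b=Z_b^{\beta+1}-Z_b^\beta$ on $E_{\beta,b}$ and $R_bW_b=Z_b^\beta$ there. Then everything linear becomes a combination, with coefficients bounded by $1+h_\delta$, of $L\widehat{Z_b^a}$ at frequencies $k$, for $a=\beta,\beta+1$, plus mesh errors. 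The mesh errors are bounded by \eqref{crm:eq-mesherror}, giving the $Lne^{-b}$ term of $A_n^{\rm lin}$.

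Each $L\widehat{Z_b^a}(k)$ is then split as $L\widehat{Z_t^a}(k)+L(\widehat{Z_b^a}-\widehat{Z_t^a})(k)$. The early piece is bounded, uniformly in $k\in\mathcal K_{n,w}$, by periodic integration by parts; this is the $L(1+\beta+\sqrt t)(1+e^t)/n$ term. The late piece has second moment $V_{\rm lin}$ by \eqref{l3c:eq-linear-input}. Taking the maximum over $N_w$ frequencies through $\E\max|\cdot|\le(\sum_k\E|\cdot|^2)^{1/2}$ produces $\sqrt{N_w(1+h_\delta)^2V_{\rm lin}}$.

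The remaining nonlinear terms are $\mathscr R_{n,\delta}(k)$, the tail sum, $\mathscr C_{n,\delta}(k)$ and $\mathscr V_{n,\delta}(k)$. The corrections $\mathscr C$ and $\mathscr V$ are handled by \eqref{l3c:eq-cap-errors}, which gives $Lb^{-\eta}h_\delta^{2\eta}+\delta/L$. Insert $I_b^\gamma(y_j)=1$ on $E_{\beta,b}$, so $\mathscr R_{n,\delta}(k)=M_\delta(k)$. Then split $M_\delta(k)=(M_\delta(k)-U_\delta(k))+U_\delta(k)$, and do the same for the tail sum with its $\F_t$-conditional expectation. The centered parts have second moment $V_\eta$ by \eqref{l3c:eq-late-input}; this needs the envelopes for $\Phi_\delta$ and $e_H$, which hold uniformly in $\delta$. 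A union over $N_w$ frequencies again gives $\sqrt{N_wV_\eta}$. The conditional means are bounded by $A_n^{\rm early}$ from Lemma~\ref{l3c:lem-common-early}, giving $E_n$, and by the absolute-sum quantity $Q_{n,\delta}(t,b)$. Summing all contributions gives \eqref{l3c:eq-truncated-expectation}.

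The main obstacle is the exact algebraic cancellation of the leading linear terms. The terms $P_n$, $h_\delta LW_b$ and $\kappa c_jG_j$ are individually of order $h_\delta$, which is about $\log L$, so any leftover would be fatal. I also need to verify that $c_jG_j$ matches $LW_b(J_j)$ exactly under the rescaling $\theta=e^bd$, since $c_jG_j=Le^{-b}w_b(y_j)\int_0^\theta e^{\sqrt2H_j}=LW_b(J_j)$. If this holds, the check reduces to bookkeeping, and everything else is a direct assembly of earlier estimates.
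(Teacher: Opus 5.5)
Your argument is the paper's proof and it goes through. The steps match one for one:
\begin{itemize}
\item the cap identity \eqref{l3c:eq-cap-identity} combined with the tail comparison \eqref{l3c:eq-tail-barrier};
\item the linear terms rewritten through $W_b=Z_b^{\beta+1}-Z_b^\beta$ on $E_{\beta,b}$, with an early part (integration by parts) and a late part (bracket);
\item the $\Phi_\delta$ and signed-tail sums split at $\F_t$ into late fluctuations controlled by \eqref{l3c:eq-late-input} and common early envelopes from Lemma~\ref{l3c:lem-common-early} and $Q_{n,\delta}(t,b)$;
\item \eqref{l3c:eq-cap-errors} for $\mathscr C_{n,\delta}$ and $\mathscr V_{n,\delta}$.
\end{itemize}

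Two remarks in your write-up are wrong, however, and should be dropped.

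First, the ``main obstacle'' does not exist: the linear terms do not cancel, and no cancellation is needed. On $E_{\beta,b}$ their sum is exactly $L\sum_j e^{iky_j}\int_{J_j}(R_b^\beta(x)-h_\delta-\kappa)w_b(x)\,dx$, which is not zero. It is harmless because, for $k\in\mathcal K_{n,w}$, each $L\widehat{Z_b^a}(k)$ is small. The early part is $O(L(1+\beta+\sqrt t)e^t/n)$ by integration by parts, and the late part has $L^2$ size $O(\sqrt{V_{\rm lin}})$. The prefactor $h_\delta+\kappa$ is then absorbed by the factors $(1+h_\delta)$ already built into $\mathcal V_{n,w,\delta}$ and $\mathcal A_{n,\delta}$. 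That is exactly what your own third paragraph does, and it is how the paper treats the additive term. So the terms are not individually of order $h_\delta$ at these frequencies, and a leftover is not fatal.

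Second, \eqref{l3c:eq-max-probability} does not follow from \eqref{l3c:eq-truncated-expectation}: Markov's inequality would give $\sqrt{\mathcal V_{n,w,\delta}}/\varepsilon$, not $\mathcal V_{n,w,\delta}/\varepsilon^2$. Both bounds follow from the decomposition itself — Chebyshev plus Markov for the probability, Cauchy--Schwarz for the mean — which is how you in fact proceed.
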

\begin{proof}
For the sums involving
\(\Phi_\delta(H_j,c_j)\) and
\(e_{H_j}(\delta/c_j)\), subtract their respective
conditional means at $t$.  Each late fluctuation satisfies
\eqref{l3c:eq-late-input}, uniformly in $k$ and $\delta$.  Hence the
sum of the expected squares over $\mathcal K_{n,w}$ is at most
$CN_wV_\eta$.  Their early means have common absolute envelopes:
Lemma~\ref{l3c:lem-common-early} handles the terms involving \(\Phi_\delta\), and
\eqref{l3c:eq-Q-definition} handles the signed tail.
All pointwise survival indicators equal one on $E_{\beta,b}$, so these
globally defined local decompositions coincide there with the \(\Phi_\delta\) terms and tail terms.

For completeness, the early linear Fourier coefficients have a common
bound over every $|k|\ge n/2$.  On $E_{\beta,b}$ the density of $Z_t^a$,
for $a=\beta,\beta+1$, is $R_t^aw_t$.  Integration by parts gives
\[
 \sup_{|k|\ge n/2}|L\widehat Z_t^a(k)|
 \le\frac{2L}{n}\int_\T
       |(\sqrt2R_t^a(x)-1)w_t(x)\partial_xX_t(x)|\,dx.
\]
The expectation of the right side, without any survival restriction,
is at most $CL(1+\beta+\sqrt t)e^t/n$:
evenness makes $\partial_xX_t(x)$ independent of $X_t(x)$, and
exponential tilting changes the radial variable into $a-N(0,t)$.
The late linear terms satisfy \eqref{l3c:eq-linear-input}; taking the
sum of their squared moduli over the window costs exactly $N_w$.
Freezing a frequency $k\in[n/2,3n/2]$ at the cell anchor has common
absolute error at most $C Lnd$ times the positive total mass, whose
expectation is bounded.  For the additive term use, on $E_{\beta,b}$,
\[
 W_b(dx)=Z_b^{\beta+1}(dx)-Z_b^\beta(dx).
\]
Its early, late, and frozen-phase errors are therefore bounded by
the same estimates.  Its coefficient in the hard-center identity is
$h_\delta+\kappa$, absorbed by $C(1+h_\delta)$.

Insert these decompositions into \eqref{l3c:eq-cap-identity}, subtract
the tail as in \eqref{l3c:eq-tail-barrier}, and apply
\eqref{l3c:eq-cap-errors}.  The resulting maximum is bounded, on
$E_{\beta,b}$, by the sum of a random variable whose second moment is at
most $C\mathcal V_{n,w,\delta}$ and a nonnegative common envelope
whose first moment after multiplication by $\1_{E_{\beta,b}}$ is
at most $C\mathcal A_{n,\delta}$.  Equivalently, multiply the cap
and fine-barrier error envelopes by this indicator before adding them
to the globally integrable early-mean and linear envelopes.
Chebyshev and Markov prove \eqref{l3c:eq-max-probability};
Cauchy--Schwarz proves \eqref{l3c:eq-truncated-expectation}.
The same bound therefore holds after replacing the maximum by its
minimum with one.
\end{proof}

\subsection{Gaussian smoothing and aggregate tail estimates}
Put $\kappa=1/\sqrt2$. For $H\in C([0,1])$ and
$\alpha\in[1/2,1]$, write
\[
 g_H=e^{\sqrt2H},\qquad G_H=\int_0^\alpha g_H(v)\,dv,
 \qquad D_H=\int_0^\alpha g_H(v)D(dv),
\]
\[
 b_H(s)=\mathbb E\min(D_H,e^s)-\kappa G_Hs_+,
 \qquad Q_H(s)=e^s\mathbb P(D_H>e^s)-\kappa G_H.
\]
Thus \(b_H\) provides the bounded primitive used below to control the signed tail error in the logarithmic mass variable.
Gaussian smoothing gives this control while allowing the local increment \(H\) to vary with the Gaussian root.
Lemmas~\ref{crm:lem-capped-first-moment}, \ref{l1:lem-response-lipschitz}
and \ref{crm:lem-finetail} give, uniformly in $s$ and $\alpha$,
\begin{equation}\label{l3:eq-smoothing-inputs}
\begin{aligned}
 |b_H(s)|+|Q_H(s)|&\le C e^{C\|H\|_\infty},\\
 |b_H(s)-b_{H'}(s)|
 &\le C e^{C\max(\|H\|_\infty,\|H'\|_\infty)}
 \|H-H'\|_\infty.
\end{aligned}
\end{equation}
Moreover $b_H$ is locally absolutely continuous, with almost-everywhere
derivative
\begin{equation}\label{l3:eq-smoothing-derivative}
 q_H(s)=e^s\mathbb P(D_H>e^s)-\kappa G_H\mathbf1_{\{s>0\}}.
\end{equation}
All these statements concern the same fine measure $D$ on the same interval.
\begin{lemma}
\label{l3:lem-moving-shape-smoothing}
Let $Z\sim N(0,\tau)$, $\tau\ge\tau_0$, and let $G$ be a random
continuous function independent of $Z$, with uniformly bounded
exponential moments of its supremum norm of every fixed order. Let
$H(z)=H_0+m+az+G$, where $H_0,m,a$ are deterministic continuous
functions,
\[
 \|m\|_\infty\le C_0,\qquad \|a\|_\infty\le C_0/\tau<1/2.
\]
Then, uniformly in $q\in\mathbb R$,
\begin{equation}\label{l3:eq-moving-shape-smoothing}
 \left|\mathbb E q_{H(Z)}(q-\sqrt2Z)\right|
 \le C e^{C\|H_0\|_\infty}\tau^{-1/2}.
\end{equation}
No differentiability of $H\mapsto b_H$ is assumed.
\end{lemma}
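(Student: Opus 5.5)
The natural route is to write $q_H$ as the derivative of the bounded primitive $b_H$ and integrate by parts against the Gaussian density. The smallness $\tau^{-1/2}$ is exactly what one gains when the derivative falls on a density of standard deviation $\sqrt\tau$. The complication is that the shape $H(Z)$ also depends on $Z$, through the term $az$.

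\emph{Setup.} Condition on $G$, since it is independent of $Z$, and write $\varphi_\tau$ for the $N(0,\tau)$ density. We want to bound
\[
 I=\int_{\mathbb R} q_{H(z)}(q-\sqrt2z)\varphi_\tau(z)\,dz .
\]
Since $s\mapsto b_H(s)$ is locally absolutely continuous with derivative \eqref{l3:eq-smoothing-derivative}, for each fixed shape we have
\[
 \frac{d}{dz}\,b_{H}(q-\sqrt2 z)=-\sqrt2\,q_{H}(q-\sqrt2z).
\]

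\emph{Decoupling the moving shape.} Cut $\mathbb R$ into intervals $[z_i,z_{i+1}]$ of length $h$ and freeze the shape at the left endpoint, $H_i=H(z_i)$. On each interval, integration by parts gives
\[
 \int_{z_i}^{z_{i+1}} q_{H_i}(q-\sqrt2z)\varphi_\tau(z)\,dz
 =-\tfrac1{\sqrt2}\Bigl[b_{H_i}(q-\sqrt2z)\varphi_\tau(z)\Bigr]_{z_i}^{z_{i+1}}
 +\tfrac1{\sqrt2}\int_{z_i}^{z_{i+1}} b_{H_i}(q-\sqrt2z)\varphi_\tau'(z)\,dz .
\]
\begin{itemize}
\item \emph{Interior term.} By the envelope in \eqref{l3:eq-smoothing-inputs}, the bound $\int|\varphi_\tau'|\le C\tau^{-1/2}$, and $\|H(z)\|_\infty\le\|H_0\|_\infty+C_0+|z|C_0/\tau+\|G\|_\infty$, this term sums to $Ce^{C\|H_0\|_\infty}e^{C\|G\|_\infty}\tau^{-1/2}$. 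The factor $e^{C|z|/\tau}$ has bounded Gaussian expectation because $\tau\ge\tau_0$.
\item \emph{Boundary terms.} These telescope after summation by parts into
\[
 \sum_i\bigl(b_{H_i}-b_{H_{i-1}}\bigr)(q-\sqrt2z_i)\,\varphi_\tau(z_i).
\]
The Lipschitz bound in \eqref{l3:eq-smoothing-inputs}, with $\|H_i-H_{i-1}\|_\infty\le\|a\|_\infty h\le C_0h/\tau$, bounds each summand by $Ce^{C(\cdots)}(h/\tau)\varphi_\tau(z_i)$. The sum is a Riemann sum for $\int\varphi_\tau=1$, so the total is $O(e^{C(\cdots)}/\tau)$, which is smaller than $\tau^{-1/2}$.
\end{itemize}

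\emph{Freezing error.} Replacing $q_{H(z)}$ by $q_{H_i}$ inside each interval is the main obstacle, because no regularity of $H\mapsto q_H$ is available: $q_H$ is a tail function and need not be Lipschitz in $H$. Two options:
\begin{itemize}
\item \emph{Avoid freezing.} Integrate by parts directly in $z$ for the composite map $z\mapsto b_{H(z)}(q-\sqrt2z)$. It is absolutely continuous, since it is Lipschitz in the shape and absolutely continuous in $s$, with total derivative $-\sqrt2q_{H(z)}+\partial_zb_{H(z)}(s)|_{s\text{ fixed}}$. The second piece is bounded a.e. by $Ce^{C\|H(z)\|}\|a\|_\infty\le Ce^{C\|H(z)\|}/\tau$, by the Lipschitz estimate. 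Then
\[
 \sqrt2 I=\int b\,\varphi_\tau'\,dz+\int(\partial_z\text{-shape term})\,\varphi_\tau\,dz ,
\]
which is $O(\tau^{-1/2})+O(\tau^{-1})$.
\item \emph{Justification.} The Lipschitz-in-shape and absolute-continuity-in-$s$ properties do not immediately give the chain rule. I would justify it by the discretization above: compute the increment of the composite over a mesh and let $h\to0$. Only difference quotients are used, so no differentiability of $H\mapsto b_H$ is needed, in line with the remark in the statement.
\end{itemize}

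\emph{Conclusion.} Finally take the expectation over $G$. All constants carry a factor $e^{C\|G\|_\infty}$, which is integrable by assumption, and the deterministic $m$ only changes the constant via $C_0$. This gives \eqref{l3:eq-moving-shape-smoothing} uniformly in $q$.
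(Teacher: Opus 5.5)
Your architecture matches the paper's. You integrate by parts the composite $F(z)=b_{H(z)}(q-\sqrt2z)$ against the Gaussian density, so that $\mathbb E F'(Z)=\mathbb E[(Z/\tau)F(Z)]=O(e^{C\|H_0\|_\infty}\tau^{-1/2})$. You charge the motion of the shape to the Lipschitz bound in \eqref{l3:eq-smoothing-inputs}, at cost $O(\|a\|_\infty)=O(1/\tau)$.

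\textbf{The gap.} It is at the step you flag and then leave open: the identity $F'(z)=-\sqrt2\,q_{H(z)}(q-\sqrt2z)+O(e^{C\|H(z)\|_\infty}\|a\|_\infty)$ for almost every $z$. Write $s(z)=q-\sqrt2z$. Splitting a difference quotient of $F$ at $z$ leaves the radial quotient $[b_{H(z)}(s(z)+\sqrt2\epsilon)-b_{H(z)}(s(z))]/\epsilon$. Absolute continuity of $b_{H(z)}$ in $s$ identifies its limit only off a null set of $s$ that depends on the shape. Since $s(z)$ and $H(z)$ move together, the curve may lie inside these exceptional sets.

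Your proposed remedy, discretizing the composite, brings back the freezing error you yourself called the main obstacle. It writes $F(z_{i+1})-F(z_i)$ as a shape change plus $-\sqrt2\int_{z_i}^{z_{i+1}}q_{H_i}(s(z))\,dz$. Letting the mesh shrink then needs $q_{H_i}(s(z))\to q_{H(z)}(s(z))$, which fails wherever $D_{H(z)}$ has an atom at $e^{s(z)}$. Nothing in your argument excludes such atoms. A symptom: you never use the hypothesis $\|a\|_\infty<1/2$.

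\textbf{How the paper closes it.} It mollifies $b_H$ in $s$ only. The radial quotient then converges everywhere, and your splitting gives the bound for $F_\varepsilon$. To let $\varepsilon\downarrow0$, it fixes $G$ and a realization of the fine measure and notes that
\[
 z\mapsto D_{H(z)}e^{\sqrt2z-q}=e^{-q}\int_0^\alpha e^{\sqrt2(H_0+m+G)}e^{\sqrt2(1+a)z}\,dD
\]
is strictly increasing, because $1+a>1/2$. Hence it equals one for at most one $z$. Fubini then gives $\mathbb P(D_{H(z)}=e^{q-\sqrt2z}\mid G)=0$ for a.e.\ $z$. This is exactly continuity of $q_{H(z)}$ at $s(z)$, and dominated convergence finishes.

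\textbf{A lighter repair of your own route.} The map $x\mapsto\mathbb E\min(D_H,x)=\int_0^x\mathbb P(D_H>y)\,dy$ is concave. So $b_H$ has one-sided $s$-derivatives at every point, and the right derivative is $e^s\mathbb P(D_H>e^s)-\kappa G_H\mathbf 1_{\{s\ge0\}}$, which equals $q_H(s)$ for $s\ne0$. The map $F$ is locally Lipschitz, by the radial envelope and the shape Lipschitz bound, so it is differentiable almost everywhere. At a differentiability point, take the quotient from $z-\epsilon$ with $\epsilon\downarrow0$, which increases the radial argument. The radial part converges to $-\sqrt2\,q_{H(z)}(s(z))$, and the shape part is $O(e^{C\|H(z)\|_\infty}\|a\|_\infty)$. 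The rest of your argument then goes through with neither mollification nor the no-atom argument.
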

\begin{proof}
Mollify $b_H$ only in its real argument $s$, using a smooth approximate
identity of support $[-\varepsilon,\varepsilon]$, and denote the result
by $b_H^\varepsilon$. Fix $G$ and put
$F_\varepsilon(z)=b_{H(z)}^\varepsilon(q-\sqrt2z)$.
This function is locally Lipschitz. Splitting each difference quotient
into a change of \(H(z)\) at fixed radial argument and a change of radial
argument with \(H(z)\) fixed, and using \eqref{l3:eq-smoothing-inputs}, gives
for almost every $z$
\[
 \left|F_\varepsilon'(z)+
       \sqrt2\,\partial_s b_{H(z)}^\varepsilon(q-\sqrt2z)\right|
 \le C e^{C\|H(z)\|_\infty}\|a\|_\infty.
\]
This difference-quotient argument avoids a multivariable chain rule on
a potentially exceptional curve. Gaussian integration by parts gives
\[
 \mathbb E F_\varepsilon'(Z)
       =\mathbb E\left[\frac Z\tau F_\varepsilon(Z)\right].
\]
The bound \eqref{l3:eq-smoothing-inputs} and
\(\|a\|_\infty\le C_0/\tau\) show that the absolute
value of the right side, averaged also over $G$, is at most
$Ce^{C\|H_0\|}\tau^{-1/2}$. The error due to the variation of \(H(z)\) is at most
$Ce^{C\|H_0\|}/\tau$.

It remains to justify taking \(\varepsilon\downarrow0\) for the
varying increment \(H(z)\). Introduce the independent fine measure $D$. For every
fixed realization of $G,D$, the quantity
\[
 \frac{D_{H(z)}}{e^{q-\sqrt2z}}
 =e^{-q}\int_0^\alpha
     e^{\sqrt2(H_0(v)+m(v)+G(v))}
     e^{\sqrt2(1+a(v))z}D(dv)
\]
is strictly increasing in $z$ whenever the measure is nonzero, since
$1+a(v)>1/2$. If the measure is zero, it never equals one. Thus equality
to one occurs for at most one $z$. Fubini implies
$\mathbb P(D_{H(z)}=e^{q-\sqrt2z}\mid G)=0$ for Lebesgue-almost every
$z$, for almost every $G$. The additional kink $q-\sqrt2z=0$ also
occurs at only one point. At all remaining points the radial derivative
in \eqref{l3:eq-smoothing-derivative} is continuous as a function of
its radial argument, with \(H(z)\) fixed. Therefore radial mollification
converges there to $q_{H(z)}(q-\sqrt2z)$. Its exponential envelope is
Gaussian-integrable uniformly in $\varepsilon$. Dominated convergence
proves the claim.
\end{proof}

\begin{theorem}[Quantitative signed aggregate tail matching]
\label{l3:thm-signed-tail-audit}
Use the circular mesh and the endpoint scales
\[
 u=\log n,\quad L=\sqrt u,\quad b=u+4\log u,\quad
 t=u-6\log u,\quad \tau=b-t=10\log u,\quad \rho=e^{-b}.
\]
At its anchors set $c_j=L\rho w_b(y_j)$ and
$H_j(v)=X_b(y_j+\rho v)-X_b(y_j)$, with
$G_j=\int_0^\alpha e^{\sqrt2H_j(v)}dv$ and
$\alpha=d/\rho\in[1/2,1]$. Fix a barrier $\gamma>0$ and any
$1<\eta<3/2$. For arbitrary deterministic complex weights
$|a_j|\le1$, let
\[
 T_v=\sum_j a_j I_b^\gamma(y_j)c_j
                         Q_{H_j}(\log(v/c_j)).
\]
For $0<\delta\le1\le A$, choose
\[
 R=\gamma+\kappa\log(L/\delta)
                  +K\sqrt{\tau\log u},
\]
where $K$ is any sufficiently large fixed constant. Then, uniformly in
$v\in[\delta,A]$,
\begin{equation}\label{l3:eq-signed-tail-rate}
 \mathbb E|T_v|
 \le C_{\gamma,k,\eta,K}\left[
 \tau^{-1/2}+L t^{-3/4}R^{1/2}+u^{-3}
       +L(\tau+1)^{1/2}(t-\log3)^{-\eta/2}\right].
\end{equation}
In particular, for any fixed powers of $\log u$ used for $\delta^{-1}$
and $A$, the right side is $O_{\gamma,k}((\log u)^{-1/2})$.
The bound is uniform over all the deterministic weights; it is not an
expectation of the supremum over such weights.
\end{theorem}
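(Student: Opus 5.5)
We decompose $T_v$ in time, following the proof of Theorem~\ref{l3c:thm-uniform-center}. Write $Y_j=I_b^\gamma(y_j)c_jQ_{H_j}(\log(v/c_j))$. By \eqref{l3:eq-smoothing-inputs} we have $|Q_H|\le Ce^{C\|H\|_\infty}$, so Theorem~\ref{lev1:thm-late-decoration} applies in the form \eqref{l3c:eq-late-input}. It shows that $T_v-\E[T_v\mid\F_t]$ has $L^2$ norm at most $C V_\eta^{1/2}=CL(\tau+1)^{1/2}(t-\log3)^{-\eta/2}$, which is the last term of \eqref{l3:eq-signed-tail-rate}. We then replace $H_j$ by $H_j^{\rm new}$, with error $O(Le^{t-b})=O(u^{-3})$ or better. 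This follows from Lemma~\ref{l1:lem-profile-removal} applied to the primitive. Alternatively, note that $Q_H(\log(v/c))$ is the radial derivative of $b_H$, and repeat the concavity and Lipschitz argument of Lemma~\ref{l1:lem-response-lipschitz} for the tail functional. That lemma is stated with $\Psi_\theta$; the tail version is analogous. This leaves the early mean $\sum_j a_j\E[Y_j^{\rm new}\mid\F_t]$, which we bound in absolute sum, using $|a_j|\le1$.

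For the early mean at a fixed anchor $y$, we tilt the future as in Lemma~\ref{l3c:lem-common-early}. Under the tilt, $B_s=X_{t+s}(y)-X_t(y)-\sqrt2 s$ is a standard Brownian motion on $[0,\tau]$, and $\log(v/c_j)=\log(v/C_t)-\tau-\sqrt2B_\tau$. The conditional mean becomes
\[
 Le^{-b}w_t(y)\,\E_{\rm tilt}\bigl[\1_{\{\max_{s\le\tau}B_s<r_t\}}Q_{H}(q-\sqrt2B_\tau)\bigr],
\]
with $q=\log(v/C_t)-\tau$ and $r_t=R_t^\gamma(y)$. Decompose $H$ into its Gaussian regression on $B_\tau$, which gives the form $m+aB_\tau$ with $\|a\|\lesssim1/\tau$, plus an independent remainder $G$. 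Lemma~\ref{l3:lem-moving-shape-smoothing} then gives a bound $C\tau^{-1/2}$ for the expectation without the barrier indicator. The barrier indicator is the difficulty. We split it as $1-\1_{\{\max B\ge r_t\}}$.

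If $r_t\ge R$, the crossing probability is at most $e^{-r_t^2/(2\tau)}$, and this is $\le u^{-K^2/4}$ once $R$ exceeds $K\sqrt{\tau\log u}$ on top of the cutoff height. In the resulting sum $\sum_j Le^{-b}w_t(y_j)\tau^{-1/2}$ we use $I^\gamma_t w_t\lesssim R_t^\gamma w_t/R$ together with $\E Z^\gamma_t(\T)$ bounded. This makes the main contribution $O(\tau^{-1/2})$. Here the shift in $R$ by $\kappa\log(L/\delta)$ accounts for the cells whose cap $v/c_j$ is below one, where $Q$ changes form.

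The complementary region $r_t\le R$ is bounded crudely by the envelope $|Q|\le Ce^{C\|H\|}$. Rooting at $t$ and using the BES$(3)$ density, as in Lemma~\ref{l3c:lem-small-radius}, gives
\[
 Le^{-b}\sum_j\E[I_t^\gamma w_t\1_{\{R_t\le R\}}]\lesssim L\,\E_{\mathbb Q}[R_t^{-1}\1_{\{R_t\le R\}}]\lesssim L t^{-3/2}R^2 .
\]
This is too large by itself. We therefore interpolate with Cauchy--Schwarz against $\E_{\mathbb Q}R_t^{-2}\le 2/t$, which yields the stated $Lt^{-3/4}R^{1/2}$. Uniformity in $v\in[\delta,A]$ is automatic, because every estimate is uniform in the radial shift $q$.

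The main obstacle is the step where the barrier indicator and the moving shape interact: Lemma~\ref{l3:lem-moving-shape-smoothing} must be applied conditionally on $\{\max B<r\}$, and the conditioned law of $B_\tau$ is no longer Gaussian. I would first try to handle this with the reflection principle. This writes the killed density as a difference of two Gaussians, $N(0,\tau)$ and one shifted by $2r$, and applying the lemma to each term would give the bound directly.
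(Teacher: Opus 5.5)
Your skeleton is the paper's: late fluctuation by Theorem~\ref{lev1:thm-late-decoration}, a one-point future tilt, regression of the increment on $B_\tau$, Lemma~\ref{l3:lem-moving-shape-smoothing}, removal of killing on $\{r\ge R\}$, and the envelope on $\{r<R\}$. However, two steps fail as written.

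\emph{The replacement of $H_j$ by $H_j^{\rm new}$ inside $Q_{H_j}$ is not justified.} Lemma~\ref{l1:lem-profile-removal} rests on Lemma~\ref{l1:lem-finitepart-continuity}, whose proof uses concavity of $g\mapsto\mathbb E\min\{D(g),T\}$. The tail map $g\mapsto T\,\mathbb P(D(g)>T)$ is not concave. A Lipschitz bound for $H\mapsto Q_H(s)$ uniform in $s$ would amount to controlling $\mathbb P(e^s/(1+\epsilon)<D_H\le e^s)$ at order $\epsilon e^{-s}$, which is a density bound the paper never has. Applying the removal lemma to the primitive $b_H$ controls $b_H$, not its radial derivative pointwise in $v$; the paper only has the Lipschitz bound \eqref{l3:eq-smoothing-inputs} for $b_H$ and never differentiates in $H$. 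The step is also unnecessary. $H_{\rm old}$ is $\mathcal F_t$-measurable, so you can keep it as the conditionally deterministic shape $H_0$ in Lemma~\ref{l3:lem-moving-shape-smoothing}. The resulting factor $e^{C\|H_{\rm old}\|_\infty}$ is absorbed by the rooted exponential moments at time $t$. This is exactly how the paper bounds the conditional means, without removing the early-scale increment.

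\emph{Your bound for the main sector $\{r\ge R\}$ does not give $O(\tau^{-1/2})$.} From $I_t^\gamma w_t\le R_t^\gamma I_t^\gamma w_t/R$ and $\mathbb E Z_t^\gamma(\mathbb T)=2\pi\gamma$ you get $C\gamma L\tau^{-1/2}/R$. Since $L=\sqrt u$ while $R\asymp\log u$ (for $\delta^{-1}$ a power of $\log u$), this diverges. The gain you need is $t^{-1/2}$, not $R^{-1}$. Rooting at the anchor gives $\mathbb E[I_t^\gamma w_t(y)e^{C\|H_{\rm old}\|_\infty}]=\gamma\,\mathbb E_{\mathbb Q_{t,y}^\gamma}[R_t^{-1}e^{C\|H_{\rm old}\|_\infty}]\le C\gamma t^{-1/2}$, by Cauchy--Schwarz and $\mathbb E_{\mathbb Q}R_t^{-2}\le2/t$. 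The sum over $O(\rho^{-1})$ anchors with prefactor $L\rho$ is then $CLt^{-1/2}\tau^{-1/2}=O(\tau^{-1/2})$.

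Two smaller points. In the sector $\{r<R\}$, the direct bound $LR^2t^{-3/2}$ is smaller, not larger, than $Lt^{-3/4}R^{1/2}$. Cauchy--Schwarz against $\mathbb E_{\mathbb Q}[R_t^{-2}\mathbf 1_{\{R_t<R\}}]\le CRt^{-3/2}$ is needed only to absorb $e^{C\|H_{\rm old}\|_\infty}$ once the old increment is kept. The reflection-principle route in your last paragraph would not work as stated, because the residual $G$ is independent of $B_\tau$ but not of the running maximum. It is also not needed. On $\{r\ge R\}$, Cauchy--Schwarz against $\mathbb P(\max_{s\le\tau}B_s\ge r)\le u^{-K^2/2}$ removes the killing. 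The same bound $r-h_v\ge K\sqrt{\tau\log u}$ makes the correction $\kappa G_H\mathbf 1_{\{s\le0\}}$ separating $Q_H$ from $q_H$ negligible.
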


The proof is given in Appendix~\ref{l3:sec-radial-smoothing-audit}.

\begin{corollary}
\label{l3:cor-common-early-tail-audit}
In the notation and parameter range of
Theorem~\ref{l3:thm-signed-tail-audit}, define
\[
 \mathcal Q_{n,v}
 =\mathbb E\sum_j\left|
   \mathbb E\left[I_b^\gamma(y_j)c_j
        Q_{H_j}(\log(v/c_j))\mid\mathcal F_t\right]\right|.
\]
Then, uniformly for every $v\ge\delta$,
\[
 \mathcal Q_{n,v}
 \le C_{\gamma,k,K}
       \left[\tau^{-1/2}+L t^{-3/4}R^{1/2}+u^{-3}\right],
 \qquad
 R=\gamma+\kappa\log(L/\delta)+K\sqrt{\tau\log u}.
\]
In particular, for every fixed logarithmic-power lower cutoff,
$\mathcal Q_{n,v}\le C_{\gamma,k}(\log u)^{-1/2}$.
The statement bounds the sum of the absolute conditional means, so it
simultaneously controls all bounded deterministic coefficient choices.
\end{corollary}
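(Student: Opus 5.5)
The plan is to split $\E\bigl[Y_j\mid\F_t\bigr]$ for $Y_j:=I_b^\gamma(y_j)c_jQ_{H_j}(\log(v/c_j))$ into a late fluctuation and an early conditional mean, exactly as in Theorem~\ref{l3:thm-signed-tail-audit}, and to note that the proof of that theorem already bounds the early part cellwise. Concretely, $\mathcal Q_{n,v}$ is the early-mean portion of the bound~\eqref{l3:eq-signed-tail-rate}; the last term $L(\tau+1)^{1/2}(t-\log3)^{-\eta/2}$ there is the square root of the late variance from~\eqref{l3c:eq-late-input}, which is absent here. That explains why the corollary's right side lacks this term and why no $\eta$ dependence survives.

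\textbf{Step 1: reduce to a single anchor under the future tilt.} For each anchor I would condition on $\F_t$ and use exact future exponential tilting, as in the proof of Lemma~\ref{l3c:lem-common-early}. This writes
\[
 \E[Y_j\mid\F_t]=Le^{-b}I_t^\gamma(y_j)w_t(y_j)\,
 \E_{\rm tilt}\bigl[\1_{\{\max_{s\le\tau}B_s<r_t(y_j)\}}\,
 Q_{H}(\log(v/c)-\sqrt2B_\tau-\tau)\bigr],
\]
where $B$ is a standard Brownian motion and $c=Le^{-b}w_t(y_j)$. The local increment $H$ decomposes as an early deterministic profile, plus a part linearly correlated with $B_\tau$ with coefficient $O(1/\tau)$, plus an independent remainder. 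This is precisely the structure $H(z)=H_0+m+az+G$ of Lemma~\ref{l3:lem-moving-shape-smoothing}. Because the absolute value is taken after conditioning at each anchor separately, any bound obtained cellwise and then summed controls $\mathcal Q_{n,v}$ directly. This is why the corollary controls all deterministic coefficients at once.

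\textbf{Step 2: split the radial range at $R$.} When $r_t(y_j)\ge R$, the barrier indicator differs from $1$ only on the event $\{\max B\ge R\}$. Since $R-\gamma-\kappa\log(L/\delta)\ge K\sqrt{\tau\log u}$, this event has probability $\le u^{-K^2/2}$, which gives the $u^{-3}$ term. Without the indicator, Lemma~\ref{l3:lem-moving-shape-smoothing} with $Z=B_\tau\sim N(0,\tau)$ yields $Ce^{C\|H_0\|_\infty}\tau^{-1/2}$ uniformly in the shift $q=\log(v/c)-\tau$. Summing $Le^{-b}w_t(y_j)e^{C\|H_0\|}$ over anchors has bounded expectation by \eqref{crm:eq-general-envelope}, giving the $\tau^{-1/2}$ term. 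When $r_t(y_j)<R$, I would use only the envelope $|Q_H|\le Ce^{C\|H\|_\infty}$ together with a rooted small-radius estimate at time $t$, in the manner of Lemma~\ref{l3c:lem-small-radius}, with exponent $\eta=3/4$ and a Bessel density bound. This gives
\[
 L\,\E_{\mathbb Q}\bigl[R_t^{-1}\1_{\{R_t\le R\}}\bigr]\lesssim L\,t^{-3/2}R^{2}.
\]
The weaker form $Lt^{-3/4}R^{1/2}$ that appears in the statement should follow from Hölder or Cauchy--Schwarz on the indicator; either form suffices.

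\textbf{Step 3: the final consequence.} With $\tau=10\log u$, $L=\sqrt u$, $t\asymp u$, and $R=O(\log u)$ whenever $\delta^{-1}$ is a fixed power of $\log u$, the middle term is $O(u^{-1/4}\log u)$. Hence the whole bound is $O((\log u)^{-1/2})$.

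The main obstacle is Step 1: verifying that under the tilt, the dependence of $H_j$ on $B_\tau$ really has the affine form with slope $\|a\|_\infty\le C_0/\tau<1/2$, with a remainder independent of $B_\tau$ and having uniform exponential moments. For this I would compute the Gaussian regression of $[X_b-X_t](y_j+\rho v)-[X_b-X_t](y_j)$ on $X_b(y_j)-X_t(y_j)$. The covariance is supported on the last $O(1)$ scales, so the regression coefficient is $O(1/\tau)$. The tilt then contributes only a bounded deterministic drift $m$.
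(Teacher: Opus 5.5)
Your route is the paper's: exact future tilting at each anchor, the regression $H=H_{\rm old}+m_\tau+a_\tau B_\tau+G_\tau$, a split of $r=R_t^\gamma(y_j)$ at $R$, Gaussian smoothing on $\{r\ge R\}$, the rooted envelope on $\{r<R\}$, and no late-fluctuation term because absolute values are taken anchorwise after conditioning. One step, however, fails as written.

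\textbf{The gap: $Q_H$ is not $q_H$.} Lemma~\ref{l3:lem-moving-shape-smoothing} controls $\E\, q_{H(Z)}(q-\sqrt2Z)$, where $q_H(s)=e^s\Prob(D_H>e^s)-\kappa G_H\1_{\{s>0\}}$ is the derivative of the bounded primitive $b_H$. The corollary concerns $Q_H(s)=q_H(s)-\kappa G_H\1_{\{s\le0\}}$. For $Q_H$ the uniformity in the shift that you invoke is false. If the shift is very negative, then $Q_H(q-\sqrt2Z)\approx-\kappa G_H$ with high probability, so its mean is of order one, not $\tau^{-1/2}$.

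You therefore have to bound the correction $\kappa\,\E_{\rm tilt}[G_H\1_{\{s\le0\}}]$ on $\{r\ge R\}$. Under the tilt your shift equals $\sqrt2(r-h_v)$, where $h_v=\gamma+\kappa\log(L/v)$, so $s\le0$ forces $B_\tau\ge r-h_v$. For $v\ge\delta$ one has $h_v\le h_\delta$, hence
\[
 r-h_v\ge R-h_\delta=K\sqrt{\tau\log u}.
\]
Cauchy--Schwarz, with the exponential moments of $G_H\le e^{\sqrt2\|H\|_\infty}$, then bounds the correction by $Ce^{C\|H_{\rm old}\|_\infty}u^{-K^2/4}$. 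This is absorbed into the $u^{-3}$ term once $K$ is large.

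This is the only place where the hypothesis $v\ge\delta$ and the term $\kappa\log(L/\delta)$ in $R$ are needed. In your write-up neither plays any role: removing the killing indicator alone only needs $R\ge K\sqrt{\tau\log u}$. That is the symptom of the omission.

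\textbf{Two smaller points.}
\begin{itemize}
\item On $\{r<R\}$ the summand carries the factor $e^{C\|H_{\rm old}\|_\infty}$. So the bound $L\,t^{-3/2}R^2$, obtained by integrating $r^{-1}$ against the BES$(3)$ density, is not available. You need H\"older. Cauchy--Schwarz together with $\E_{\mathbb Q}[R_t^{-2}\1_{\{R_t<R\}}]\le CRt^{-3/2}$ gives exactly $L\,t^{-3/4}R^{1/2}$. The two forms are not comparable for large $R$, so "either suffices" holds only for the logarithmic-cutoff consequence.
\item The envelope you sum on $\{r\ge R\}$ should be the time-$t$ rooted bound $\E[I_t^\gamma w_t e^{C\|H_{\rm old}\|_\infty}]\le C\gamma t^{-1/2}$, which uses only the local indicator. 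You cite \eqref{crm:eq-general-envelope}, but that bound is localized on $E_{\beta,b}$, whereas $\mathcal Q_{n,v}$ carries no global localization. The same rooting argument proves the bound you actually need.
\end{itemize}
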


The proof is given in Appendix~\ref{l3:sec-radial-smoothing-audit}.

\begin{corollary}
\label{l3:cor-actual-tail-audit}
Fix $\beta>0$, let $U_j=LZ^\beta(J_j)$, and take
$\gamma=\beta+1$ in
Theorem~\ref{l3:thm-signed-tail-audit}. For arbitrary deterministic $|a_j|\le1$,
\[
 \mathbb E\left[\mathbf1_{E_{\beta,b}}
 \left|v\sum_j a_j\mathbb P(U_j>v\mid\mathcal F_b)
            -\kappa\sum_j a_j c_jG_j\right|\right]
 \le C_{\beta,k}\left[(\log u)^{-1/2}+A/L\right]
\]
uniformly for $v\in[\delta,A]$ when $\delta^{-1}$ and $A$ are fixed
powers of $\log u$.
\end{corollary}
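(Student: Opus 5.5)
The plan is to reduce Corollary~\ref{l3:cor-actual-tail-audit} to Theorem~\ref{l3:thm-signed-tail-audit} by two exact or nearly exact identifications. The first uses the conditional coarse/fine scaling \eqref{paper:eq-cell-scaling}: on $E_{\beta,b}$, $U_j=c_jZ^{[R_b(y_j)-H_j]}(g_j)$. The second replaces the killed fine measure by the untruncated $D$ at a small cost. Since $v\,\mathbb P(c_jD(g_j)>v)=c_j\bigl(Q_{H_j}(\log(v/c_j))+\kappa G_j\bigr)$ with $\alpha=\theta$, the quantity inside the absolute value becomes
\[
 \sum_j a_jc_jQ_{H_j}(\log(v/c_j))
 +v\sum_ja_j\bigl[\mathbb P(U_j>v\mid\F_b)-\mathbb P_D(c_jD(g_j)>v)\bigr].
\]

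For the second sum, I would argue as in \eqref{l3c:eq-tail-barrier} and Proposition~\ref{crm:prop-cap-barrier-removal}. The killed and untruncated fine masses agree unless the fine field crosses the starting height $R_b^\beta(y_j)-\sup H_j$. By the crossing estimate of \eqref{paper:eq-fine}, this happens with conditional probability at most $C_ke^{-\sqrt2(R_b^\beta(y_j)-\sup H_j)}=C_ke^{-\sqrt2\beta}a_jM_j$, where $a_j=c_j/L$. Both tail probabilities are bounded by one, so each cell error is at most $v\,C_ke^{-\sqrt2\beta}c_jM_j/L\le A\,C\,c_jM_j/L$ for $v\le A$. Summing the absolute values with $|a_j|\le1$ and applying \eqref{paper:eq-budget} then bounds the localized expectation by $C_{\beta,k}A/L$. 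This accounts for the second term in the claimed bound.

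For the first sum, I would insert the indicators $I_b^\gamma(y_j)$, $\gamma=\beta+1$. On $E_{\beta,b}$ the barrier at height $\beta$ survives everywhere, so the barrier at $\gamma>\beta$ does too, and $I_b^\gamma(y_j)=1$ for every $j$. Hence $\1_{E_{\beta,b}}|\sum_j a_jc_jQ_{H_j}|\le|T_v|$, and we may drop the localization. Theorem~\ref{l3:thm-signed-tail-audit}, applied with these deterministic weights and $\alpha=\theta$, then gives the bound
\[
 C\bigl[\tau^{-1/2}+Lt^{-3/4}R^{1/2}+u^{-3}+L(\tau+1)^{1/2}(t-\log3)^{-\eta/2}\bigr]
\]
uniformly in $v\in[\delta,A]$. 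With $\delta^{-1}$ and $A$ fixed powers of $\log u$, we have $R=O(\log u)$, so $Lt^{-3/4}R^{1/2}=O(u^{-1/4}\sqrt{\log u})$. The theorem's stated consequence makes the whole right side $O((\log u)^{-1/2})$.

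The main obstacle is the last term, $L(\tau+1)^{1/2}(t-\log3)^{-\eta/2}$. With $L=\sqrt u$ it is of order $u^{1/2-\eta/2}\sqrt{\log u}$, which is small because $\eta>1$. I would cite the theorem's "in particular" clause for this, choosing $\eta$ close to $3/2$ if needed. A secondary point is that Theorem~\ref{l3:thm-signed-tail-audit} is stated for the fine law on $[0,\alpha]$ with $g_H=e^{\sqrt2H}$. I would check that this matches $g_j=e^{\sqrt2H_j}\1_{[0,e^bd]}$ exactly under the rescaling, so that no extra mesh error arises.
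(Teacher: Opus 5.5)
Your proposal is correct and follows the paper's own proof essentially step for step. The paper also bounds the killed-versus-untruncated discrepancy by the crossing probability $C e^{-\sqrt2\beta}c_jM_j/L$ and sums it with \eqref{paper:eq-budget} to get the $A/L$ term. It likewise uses that $I_b^\gamma(y_j)=1$ on $E_{\beta,b}$, so the remaining difference is exactly $T_v$, which Theorem~\ref{l3:thm-signed-tail-audit} and its logarithmic-power consequence bound by $(\log u)^{-1/2}$.
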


The proof is given in Appendix~\ref{l3:sec-radial-smoothing-audit}.

\subsection{Fully numerical rates and admissible windows}
For the scales in \eqref{crm:eq-scales}, $b=u+4\log u$, $t=u-6\log u$,
$\tau=10\log u$, one has
\begin{align}
 \mathcal V_{n,w,\delta}
 &\le C N_w\{u^{1-\eta}\log u
           +(1+h_\delta)^2u^{-1/2}\log u\},
 \label{l3c:eq-standard-variance}\\
 \mathcal A_{n,\delta}
 &\le C\{u^{-5/2}(\log u)^{-1/4}
       +(1+h_\delta)u^{-7/2}
       +u^{1/2-\eta}h_\delta^{2\eta}+\delta u^{-1/2}
       +Q_{n,\delta}(t,b)\}.
 \label{l3c:eq-standard-absolute}
\end{align}
The harmless exponentially small $n^{-1}$ contributions and the smaller
$u^{-11/2}$ and $u^{-19/2}$ terms are absorbed in this display.

Corollary~\ref{l3:cor-common-early-tail-audit} gives the numerical bound needed in \eqref{l3c:eq-Q-definition}:
\begin{equation}
\label{l3c:eq-signed-tail-rate}
\begin{aligned}
Q_{n,\delta}(t,b)
 &\le C_{\beta,k,K}\{\tau^{-1/2}+Lt^{-3/4}R_\delta^{1/2}+u^{-3}\},\\
 R_\delta
 &=\gamma
 +\kappa\log(L/\delta)
 +K\sqrt{\tau\log u},
\end{aligned}
\end{equation}
where $K$ is a sufficiently large fixed constant.  The first term comes from integrating the bounded remainder against a Gaussian radial density of variance $\tau$; the small-root sector costs the second term.  The proof bounds each early conditional mean in absolute value before summing, so this is a common envelope and has no window-size factor.  
For each fixed $a<\infty$, uniformly over
$(\log u)^{-a}\le\delta\le1$, it gives
\[
 Q_{n,\delta}(t,b)
 \le C_{\beta,k,a}\{(\log u)^{-1/2}
                         +u^{-1/4}(\log u)^{1/2}\}.
\]
Together with \eqref{l3c:eq-max-probability} and
\eqref{l3c:eq-truncated-expectation}, these are fully numerical bounds.

Consequently, for every fixed $a<\infty$
and $0<r<1/2$, choose $1+r<\eta<3/2$.  Uniformly for
$w\le u^r$ and $(\log u)^{-a}\le\delta\le1$,
\begin{align}\label{l3c:eq-polynomial-window-rate}
 &\E\left[\1_{E_{\beta,b}}\min\left\{1,
       \max_{|j|\le w}|D_{n,\delta}(n+j)|\right\}\right]\notag\\
 &\quad\le C\bigl\{
 u^{(r+1-\eta)/2}(\log u)^{1/2}
 +u^{r/2-1/4}(\log u)^{3/2}
 +(\log u)^{-1/2}\notag\\
 &\qquad+u^{1/2-\eta}(\log u)^{2\eta}
 +u^{-1/4}(\log u)^{1/2}\bigr\}
 \longrightarrow0.
\end{align}
In particular the cutoff $\delta=(\log u)^{-1/32}$ requires no
integration over thresholds and no additional logarithmic loss.
For $w\le(\log u)^q$, with any fixed $q$, all the variance and cap
terms in this display decay faster than any fixed negative power of
$\log u$; the displayed radial-smoothing rate is the dominant term.
The corresponding failure bound at tolerance
$\varepsilon=(\log u)^{-s}$ is
$O((\log u)^{s-1/2})$ plus terms negligible at every fixed logarithmic
power, as long as $s<1/2$.

\begin{corollary}
\label{l3c:cor-pilot-center}
Put $\lambda=\log\log n=\log u$ and
$w=\lfloor\lambda^{1/64}\rfloor$.  For every fixed $a<\infty$,
uniformly over $\lambda^{-a}\le\delta\le1$,
\begin{equation}\label{l3c:eq-pilot-center-mean}
 \E\left[\1_{E_{\beta,b}}
       \max_{|j|\le w}|D_{n,\delta}(n+j)|\right]
 \le C_{\beta,k,a}\lambda^{-1/2}.
\end{equation}
In particular,
\begin{equation}\label{l3c:eq-pilot-center-prob}
 \Prob\left(E_{\beta,b}\cap
 \left\{\max_{|j|\le w}|D_{n,\delta}(n+j)|
 >\lambda^{-1/128}\right\}\right)
 \le C_{\beta,k,a}\lambda^{-63/128}.
\end{equation}
\end{corollary}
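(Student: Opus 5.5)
This is a direct specialization of Theorem~\ref{l3c:thm-uniform-center}, specifically of its expectation bound \eqref{l3c:eq-truncated-expectation}, to the window $w=\lfloor\lambda^{1/64}\rfloor$. The probability bound \eqref{l3c:eq-pilot-center-prob} then follows by Markov's inequality.

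I would first fix $\eta\in(1,3/2)$, say $\eta=5/4$, and note that $N_w\le 3\lambda^{1/64}$. For $\lambda^{-a}\le\delta\le1$ we have $h_\delta=\beta+\kappa\log(L/\delta)\le\beta+\kappa(\tfrac12 u\text{-free})$. More precisely, $\log(L/\delta)=\tfrac12\log u+\log(1/\delta)\le(\tfrac12+a)\lambda+O(1)$, since $L=\sqrt u$ and $\log u=\lambda$. So $h_\delta=O_{a}(\lambda)$ is only logarithmic in $u$.

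Next I would insert these into \eqref{l3c:eq-standard-variance}. This gives
\[
\mathcal V_{n,w,\delta}\le C\lambda^{1/64}\{u^{-1/4}\lambda+\lambda^2u^{-1/2}\lambda\},
\]
so $\sqrt{\mathcal V_{n,w,\delta}}=O(u^{-1/8}\lambda)$. This decays faster than any power of $\lambda$, and in particular it is $o(\lambda^{-1/2})$.

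Then I would insert the same bounds into \eqref{l3c:eq-standard-absolute}. The terms $u^{-5/2}$, $(1+h_\delta)u^{-7/2}$, $u^{1/2-\eta}h_\delta^{2\eta}=O(u^{-3/4}\lambda^{5/2})$ and $\delta u^{-1/2}$ are all polynomially small in $u$. The only term that is not is $Q_{n,\delta}(t,b)$.

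That term I would bound by \eqref{l3c:eq-signed-tail-rate}, via Corollary~\ref{l3:cor-common-early-tail-audit} with $\gamma=\beta+1$. In it, $\tau^{-1/2}=(10\lambda)^{-1/2}$, $u^{-3}$ is negligible, and $R_\delta=O_a(\lambda)+K\sqrt{10}\,\lambda$. The middle term is then $Lt^{-3/4}R_\delta^{1/2}=O(u^{1/2}u^{-3/4}\lambda^{1/2})=O(u^{-1/4}\lambda^{1/2})$.

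Collecting everything, \eqref{l3c:eq-truncated-expectation} gives $C_{\beta,k,a}\lambda^{-1/2}$. The constant is uniform over the stated $\delta$-range, because the only $\delta$-dependence enters through $h_\delta$ and $R_\delta$, both of which are bounded by $C_a\lambda$.

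For \eqref{l3c:eq-pilot-center-prob}, I would apply Markov's inequality to $\1_{E_{\beta,b}}\max_{|j|\le w}|D_{n,\delta}(n+j)|$ at level $\lambda^{-1/128}$. This gives $C\lambda^{-1/2+1/128}=C\lambda^{-63/128}$.

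The main point needing care is checking that the window factor $N_w$ enters only the variance term and not the absolute envelope $Q_{n,\delta}$. This is the case because Lemma~\ref{l3c:lem-common-early}, \eqref{l3c:eq-cap-errors} and Corollary~\ref{l3:cor-common-early-tail-audit} are common envelopes over all frequencies. Given that, the polylogarithmic window costs only a harmless $\lambda^{1/128}$ factor against $u^{-1/8}$.
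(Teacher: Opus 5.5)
Your proposal is correct and follows the paper's own proof: fix $\eta=5/4$, note that every term in \eqref{l3c:eq-truncated-expectation} other than $Q_{n,\delta}$ is a negative power of $u$ times a power of $\lambda$, with the window factor $N_w=O(\lambda^{1/64})$ entering only $\mathcal V_{n,w,\delta}$. Then bound $Q_{n,\delta}$ by \eqref{l3c:eq-signed-tail-rate}, whose dominant term $\tau^{-1/2}\asymp\lambda^{-1/2}$ gives the rate, and conclude with Markov's inequality; you just make explicit the bookkeeping (e.g.\ $h_\delta,R_\delta=O_a(\lambda)$) that the paper leaves implicit.
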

\begin{proof}
Fix, for example, $\eta=5/4$.  In
\eqref{l3c:eq-truncated-expectation} and its proof, every term except
$Q_{n,\delta}$ is a negative power of $u$ times a fixed power of
$\log u$, and therefore is $o(\lambda^{-1/2})$.
Combining \eqref{l3c:eq-truncated-expectation} with
\eqref{l3c:eq-signed-tail-rate} gives
\eqref{l3c:eq-pilot-center-mean}, and Markov's inequality gives
\eqref{l3c:eq-pilot-center-prob}.
The same argument applies to
$w\le c_0\lambda^{1/64}$ for every fixed $c_0<\infty$, with the
constant also depending on $c_0$.
\end{proof}

\begin{corollary}
\label{l3c:cor-scalar-growing-center}
For every fixed $a>0$, with $\lambda=\log\log n$,
\[
 \sup_{\lambda^{-a}\le\delta\le\lambda^a}
 \E\bigl[\mathbf1_{E_{\beta,b}}
             |D_{n,\delta}(n)|\bigr]
 \le C_{\beta,k,a}\lambda^{-1/2}
\]
for all sufficiently large $n$. The supremum is outside the expectation.
\end{corollary}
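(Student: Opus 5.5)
The plan is to rerun the proof of Theorem~\ref{l3c:thm-uniform-center} at the single frequency $k=n$, so that $w=0$ and $N_w=1$. The only new issue is the range of cutoffs $\lambda^{-a}\le\delta\le\lambda^{a}$, which now extends to $\delta>1$. Every constant in Theorem~\ref{l3c:thm-uniform-center} and in Lemma~\ref{l3c:lem-common-early} was uniform in $\delta$ except through $h_\delta$, $\delta/L$ and $Q_{n,\delta}$. Each of these remains controlled for $\delta\le\lambda^a$:
\begin{itemize}
\item $h_\delta=\beta+\kappa\log(L/\delta)$ satisfies $h_\delta\le C\log u$ for $\delta\ge\lambda^{-a}$, and $h_\delta\ge \kappa\log L-\kappa a\log\lambda>0$ for $\delta\le\lambda^a$.
\item $\delta/L\le\lambda^a u^{-1/2}$.
\item The Lipschitz and envelope bounds \eqref{l3c:eq-response-input} carry no $|\log\delta|$ dependence at all.
\end{itemize}

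First, I will check the exact cutoff identity \eqref{l3c:eq-cap-identity} and the error bounds \eqref{l3c:eq-cap-errors} and \eqref{l3c:eq-tail-barrier} for $1<\delta\le\lambda^a$. The identity only used $h_\delta>0$ and the factorization $\E\min(c_jD(g_j),\delta)=c_j\{\kappa G_j\log_+(\delta/c_j)+B_{g_j}(\delta/c_j)\}$, neither of which needs $\delta\le1$. The fine-barrier corrections are bounded cellwise by $C(\delta/L)c_je^{C\|H_j\|_\infty}$, hence by $C\lambda^aL^{-1}\mathcal A_n$-type envelopes, whose expectation is $O(\lambda^au^{-1/2})$ by Proposition~\ref{crm:prop-budget}. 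Lemma~\ref{l3c:lem-small-radius} with $h=h_\delta\le C\log u$ gives the cap error $O(u^{1/2-\eta}(\log u)^{2\eta})$.

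Second, I will handle the signed-tail term $Q_{n,\delta}$. Corollary~\ref{l3:cor-common-early-tail-audit} is stated uniformly for every $v\ge\delta$ with $R=\gamma+\kappa\log(L/\delta)+K\sqrt{\tau\log u}$. For $\delta\ge\lambda^{-a}$ we have $R\le C\log u$. So $Q_{n,\delta}\le C(\tau^{-1/2}+Lt^{-3/4}(\log u)^{1/2}+u^{-3})\le C\lambda^{-1/2}$, because $\tau=10\log u=10\lambda$. For $\delta>1$, $R$ is only smaller, so the same bound holds.

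Third, I will insert these bounds into \eqref{l3c:eq-truncated-expectation} with $\eta=5/4$ and $N_w=1$. This gives
\[
\sqrt{\mathcal V}\le C\{u^{-1/8}(\log u)^{1/2}+(\log u)^{3/2}u^{-1/4}\}
\]
and
\[
\mathcal A_{n,\delta}\le C\lambda^{-1/2}+O(u^{-1/4}\lambda^{C}).
\]
Every term other than $Q_{n,\delta}$ is a negative power of $u$ times a power of $\lambda$, so it is $o(\lambda^{-1/2})$ uniformly in the $\delta$-range. Since each bound is for a fixed deterministic $\delta$ and the constants do not depend on $\delta$, taking the supremum outside the expectation is immediate.

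The main obstacle is confirming that no step of Theorem~\ref{l3c:thm-uniform-center} or its inputs silently used $\delta\le1$. Two places need checking. In Theorem~\ref{l3:thm-signed-tail-audit} the tail statement was phrased with $v\in[\delta,A]$ and $0<\delta\le1\le A$. For $\delta>1$ I will apply it with lower cutoff $1$ and $A=\delta\le\lambda^a$, a fixed power of $\log u$ as permitted there. The second place is the hard-center conversion \eqref{l3c:eq-tail-barrier}. I would verify that its fine-barrier coupling bound scales as $\delta/L$ for all $\delta>0$.
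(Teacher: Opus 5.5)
Your proposal is correct and follows essentially the same route as the paper: rerun the proof of Theorem~\ref{l3c:thm-uniform-center} with $w=0$, then check three things. First, \eqref{l3c:eq-response-input} and the cap and fine-barrier errors hold for every $\delta>0$ at cost $C\delta/L$. Second, $h_\delta$ stays positive and $O(\lambda)$. Third, Corollary~\ref{l3:cor-common-early-tail-audit} bounds the early-tail term by $C\lambda^{-1/2}$, with every other term being a negative power of $u$ times a power of $\log u$. The only cosmetic difference is that the paper handles $\delta\le1$ by citing Corollary~\ref{l3c:cor-pilot-center}, and reruns the argument only for $1\le\delta\le\lambda^a$ with lower cutoff $\lambda^{-a}$ in the tail corollary; you instead run it over the whole range at once.
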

\begin{proof}
For $\delta\le1$, use Corollary~\ref{l3c:cor-pilot-center}.
For $1\le\delta\le\lambda^a$, repeat the proof of
Theorem~\ref{l3c:thm-uniform-center} with $w=0$. The estimates \eqref{l3c:eq-response-input} hold for every $\delta>0$, and
$h_\delta=\beta+\log(L/\delta)/\sqrt2$ remains positive and
$O(\lambda)$. The cap and tail comparison errors are at most
$C\delta/L$, uniformly in the displayed range. The other terms in
\eqref{l3c:eq-truncated-expectation}, apart from the common early-tail
term, are negative powers of $u$ times fixed powers of $\log u$.
Corollary~\ref{l3:cor-common-early-tail-audit}, with lower cutoff
$\lambda^{-a}$, bounds that term uniformly by $C\lambda^{-1/2}$.
This proves the assertion for the scalar center.
\end{proof}

\section{Stable Fourier limits}\label{paper:sec-profile}
We use one approximation of the marked cell process both for the stable Fourier limit and for the divergent integral class.
The approximation retains unbounded large marks, which will be essential for rare excursions.
Write \(\lambda=\log u=\log\log n\) and $E=E_{\beta,b}$ in localized expectations, let
$s_l=(y_l,z_l)$ with $z_l=e^{\ii ny_l}$, and put
\[
 p_v=\sum_l\Prob(U_l>v\mid\F_b),\qquad
 D_v(j)=\sum_l z_le^{\ii jy_l}
             \E[U_l\1_{\{U_l\le v\}}\mid\F_b].
\]
Thus $D_v(j)=D_{n,v}(n+j)$ in the center notation. Here
$d_w(x,x')=1\wedge\max_{|j|\le w}|x_j-x_j'|$ and
$\mathcal W_w$ is the infimum expected cost over couplings for this
metric. Constants below may depend on $\beta,k$ and fixed exponents.
\subsection{Tail probabilities and intensity estimates}

\begin{proposition}
\label{crm:prop-local-anticlustering}
For the deterministic circular mesh with
$\rho/2<d\le\rho=e^{-b}$ and
$U_j=L\,Z^\beta(J_j)$, let
$\mathcal E_n$ consist of unordered pairs of distinct cells whose indices
differ by one or two modulo the number of cells. Then for every fixed
$\varepsilon>0$,
\begin{equation}
\label{crm:eq-anticlustering}
 \mathbb E\!\left[\mathbf1_{E_{\beta,b}}
 \sum_{\{j,k\}\in\mathcal E_n}
 \mathbb P(U_j>\varepsilon,
           U_k>\varepsilon\mid\mathcal F_{b})\right]
 \longrightarrow0.
\end{equation}
Moreover, with
$p_j(\varepsilon)=
\mathbb P(U_j>\varepsilon\mid\mathcal F_{b})$,
\[
 \mathbf1_{E_{\beta,b}}\max_jp_j(\varepsilon)
 \longrightarrow0\quad\hbox{in probability},
\]
and the summed edge-product error
$\mathbf1_{E_{\beta,b}}
\sum_{\{j,k\}\in\mathcal E_n}p_j(\varepsilon)p_k(\varepsilon)$
tends to zero in $L^1$.
\end{proposition}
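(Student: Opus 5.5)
We work throughout on $E_{\beta,b}$ and use the conditional coarse/fine scaling \eqref{paper:eq-cell-scaling}. Given $\F_b$, the pair $(U_j,U_k)$ is a function of the fine field over the union of two or three adjacent cells. Its spatial extent is at most $3\rho$ in original units, that is, at most three rescaled unit intervals. The plan has three steps.

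\textbf{Step 1: the maximum.} For the single-cell statement we use the weak-$L^1$ bound of Proposition~\ref{crm:prop-budget}:
\[
 p_j(\varepsilon)\le \min\{1,\ C_Dc_jM_j/\varepsilon\}.
\]
It therefore suffices to show $\1_{E_{\beta,b}}\max_j c_jM_j\to0$ in probability. To see this, fix $\eta>0$ and bound
\[
 \Prob\bigl(E_{\beta,b},\ \max_j c_jM_j>\eta\bigr)
 \le \sum_j\Prob\bigl(I_b^\beta(y_j)c_jM_j>\eta\bigr).
\]
We root each term at $y_j$ via Theorem~\ref{crm:thm-construction}(ii). This turns the probability into
\[
 \beta\rho\,\E_{\mathbb Q}\bigl[R_b^{-1}e^{-\sqrt2(\cdots)}\1\{\cdots\}\bigr].
\]
More directly, $c_j=L\rho\,e^{\sqrt2(\beta-R_b(y_j))+\sqrt2\beta}\cdot e^{-\sqrt2\beta}$ up to the identity $e^{-\sqrt2 r_j}=e^{-\sqrt2\beta}\rho w_b$. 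So the event $c_jM_j>\eta$ forces
\[
 R_b(y_j)\le \kappa\log(L/\eta)+\beta+\|H_j\|_\infty .
\]
Lemma~\ref{l3c:lem-small-radius}, inequality \eqref{l3c:eq-radial-indicator}, with $h$ of order $\log L\asymp\log u$, gives
\[
 \E\Bigl[\1_{E}\sum_j c_je^{A\|H_j\|}\1\{R_b\le h\}\Bigr]\le CLb^{-\eta}h^{2\eta-1}.
\]
This is $o(1)$ for $\eta>1$ up to logs, once the $\|H_j\|$ excess is handled by splitting on $\|H_j\|\le \log u$. The complementary part $\{\|H_j\|>\log u\}$ is controlled by \eqref{crm:eq-general-envelope} and Markov. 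On the remaining event $c_jM_j\le \eta$ is combined with Markov in the form $\Prob(c_jM_j>\eta)\le\eta^{-1}\E[c_jM_j\1\{\cdots\}]$. This gives the first claim.

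\textbf{Step 2: the edge-product error.} This follows from Step 1 and the budget. We have
\[
 \sum_{\{j,k\}\in\mathcal E_n}p_jp_k\le 2\max_jp_j\cdot\sum_j p_j\le 2\max_j p_j\cdot C_D\mathcal A_n/\varepsilon .
\]
The sum is also at most $C\mathcal A_n/\varepsilon$. Since $\mathcal A_n\1_E$ is bounded in $L^1$, we still need uniform integrability of $\1_E\mathcal A_n$ to pass from convergence in probability to $L^1$. To get it, we instead bound $p_jp_k\le (C_Dc_jM_j/\varepsilon)(C_Dc_kM_k/\varepsilon)\wedge p_j$. We then split according to whether $R_b(y_j)\le h$, with $h=K\log u$. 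On $\{R_b(y_j)\le h\}$, \eqref{l3c:eq-radial-indicator} gives an $L^1$ bound tending to zero. On the complement, $c_jM_j\le L e^{-\sqrt2 h}e^{C\|H_j\|}$ is polynomially small; the $\|H_j\|$ factor is absorbed as before, and we sum against $c_kM_k$ using \eqref{crm:eq-general-envelope} with the neighbouring increment.

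\textbf{Step 3: joint exceedances (main obstacle).} Here we must show that two neighbouring fine masses are rarely simultaneously large beyond the product bound. The plan is to condition on $\F_b$ and run the fine field on the union interval $[0,3]$. At the first fine time $s_0=\log 3$ the two cells separate, because the kernel has range one; after that their futures are conditionally independent given $\F_{b+s_0}$. We would then apply the single-cell weak-$L^1$ estimate at time $b+s_0$ to each cell. This yields
\[
 \Prob(U_j>\varepsilon,U_k>\varepsilon\mid\F_{b})\le \varepsilon^{-2}C\,\E\bigl[c'_jM'_jc'_kM'_k\mid\F_b\bigr],
\]
where the primed quantities are taken at time $b+\log3$ and differ from $c_jM_j$, $c_kM_k$ by factors $e^{C\sup|X_{b+\log 3}-X_b|}$ with bounded exponential moments. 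Taking also the minimum with $C c_jM_j/\varepsilon$, the summed bound has the same form as in Step 2, up to these Gaussian factors. The same small-radius splitting then gives $L^1$ convergence to zero. The delicate point is keeping the barrier indicator at the correct time and keeping the exponential envelopes uniform. The fallback for this is Theorem~\ref{lev1:thm-late-decoration}-style rooting at $y_j$, using Cauchy--Schwarz with $\E_{\mathbb Q}R_b^{-2}\le 2/b$.
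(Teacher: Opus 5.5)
Your Steps 1 and 2 are essentially sound. On $\{c_jM_j>\eta\}$ one does have $R_b^\beta(y_j)<\beta+\kappa\log(L/\eta)+\|H_j\|_\infty$; note that $c_j=Le^{\sqrt2(\beta-R_b^\beta(y_j))}$, without the extra factor $\rho$ you wrote. Together with \eqref{l3c:eq-radial-indicator} and \eqref{crm:eq-general-envelope}, this handles both the maximum and the edge products.

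\textbf{The gap is in Step~3, for pairs at index distance one.} The cells $J_j$ and $J_{j+1}$ share the endpoint $y_{j+1}$, so their distance is zero. Terminal locality only decouples restrictions at distance greater than $e^{-(b+s)}$. Hence no finite fine time $s_0$ makes $Z^\beta|_{J_j}$ and $Z^\beta|_{J_{j+1}}$ conditionally independent given $\mathcal F_{b+s_0}$. The finite range of $k$ separates only the index-distance-two pairs: there an intervening cell of length $d>\rho/2$ gives independence after fine time $\log 2$. For adjacent cells, the factorization behind your bound $\varepsilon^{-2}C\,\E[c_j'M_j'c_k'M_k'\mid\mathcal F_b]$ is false. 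The configuration it ignores is critical mass accumulating near the shared point and making both cells large at once. That is exactly what \eqref{crm:eq-anticlustering} must rule out.

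\textbf{How the paper closes it.} It uses a joint fractional moment instead of independence. Both adjacent cells are dominated, in one fine-field coupling, by the common coarse envelope at the shared anchor times $D([-\theta,0])$ and $D([0,\theta])$ respectively. Lemma~\ref{crm:lem-adjacent-mixed-moment} (Wong's separated-set estimate, $\E[D([-1,0])^hD([0,1])^h]<\infty$ for $h<1$) and Markov then bound the joint exceedance by $C_h\varepsilon^{-2h}$ times the $2h$-th power of that envelope. Finally \eqref{crm:eq-higher-power-budget} with $q=2h\in(1,3/2)$ sends the sum to zero. The same budget gives your Steps 1 and 2 in one line, via $p_j\le C\varepsilon^{-1}c_jM_j$ and $2p_jp_k\le p_j^q+p_k^q$.

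\textbf{How your route could be repaired.} You would need a boundary-layer cut.
\begin{itemize}
\item Remove the interval of length $2\delta d$ centred at $y_{j+1}$. The joint event then implies that either this piece has $L$-mass above $\varepsilon/2$, or both trimmed cells exceed $\varepsilon/2$.
\item The first possibility has summed localized probability $O(\delta/\varepsilon)$. This uses a fine weak-$L^1$ bound with constant proportional to interval length, obtained by scaling the fine field at time $\log(1/\delta)$.
\item The trimmed cells are at distance $2\delta d$, so they are conditionally independent given $\mathcal F_{b+\log(1/\delta)}$. Your Step~2 argument then applies, with $\delta$-dependent Gaussian factors.
\item Let $n\to\infty$ first, then $\delta\downarrow0$.
\end{itemize}
As written, Step~3 does not prove \eqref{crm:eq-anticlustering}.
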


The proof is given in Appendix~\ref{crm:sec-anticlustering}.

For deterministic complex coefficients of modulus at most one, the
conditionally centered sum of $U_l\1_{\{U_l\le v\}}$ has conditional
second moment at most $Cv\mathcal A_n$ on $E$, where
$\E[\1_E\mathcal A_n]\le C$. Also
$\E[\1_E p_v]\le C/v$ for all $v>0$.
The cells admit blocks of $\lceil u^2\rceil$ consecutive retained cells
separated by two omitted cells. Given $\F_b$, retained blocks are
independent. Put $m_B=LZ_b^\beta(B)$, summing over retained blocks.
For $q\in(1/2,3/4)$, we have
\begin{align}
 \E\left[\1_E\sum_{l\text{ omitted}}\Prob(U_l>v\mid\F_b)\right]
       &\le Cv^{-1}u^{-3/2},\label{pc:eq-gaps}\\
 \E\left[\1_E\sum_{\substack{\{l,h\}\text{ retained}\\
                  \mathrm{dist}(l,h)\le2}}
     \Prob(U_l>v,U_h>v\mid\F_b)\right]
       &\le Cv^{-2q}u^{q-3/4},\label{pc:eq-neighbours}\\
 \E[\1_E\sum_Bm_B^2]&\le C\lambda u^{-1/2}.
       \label{pc:eq-blockmass}
\end{align}
Pairs at cell distance at least three are conditionally independent,
and $\sum_{l\in B}\Prob(U_l>v\mid\F_b)\le m_B/v$.
These estimates follow from Lemma~\ref{crm:lem-adjacent-mixed-moment},
Lemma~\ref{crm:lem-higher-power-budget}, and
Lemma~\ref{crm:lem-block-squaremeans} in Appendix~\ref{crm:sec-anticlustering}. For \eqref{pc:eq-neighbours}, apply
Lemma~\ref{crm:lem-higher-power-budget} with exponent $2q$.
The statements concern the original conditional law; conditioning on
the terminal survival event would not preserve the independence.

\boldparagraph{Signed tail matching.}
Put $c_l=Le^{-b}w_b(y_l)$,
$H_l(s)=X_b(y_l+e^{-b}s)-X_b(y_l)$ and
$G_l=\int_0^{e^bd}e^{\sqrt2H_l(s)}ds$.
Then $c_lG_l=LW_b(J_l)$ exactly. For fixed logarithmic-power cutoffs
$0<\delta<1<A$, set
\begin{align}
 e_T&=\sup_{\substack{|a_l|\le1\text{ deterministic}\\\delta\le v\le A}}
 \E\left[\1_E\left|\sum_la_l
 \{v\Prob(U_l>v\mid\F_b)-2^{-1/2}c_lG_l\}\right|\right]
 \nonumber\\[-1mm]
 &\le C(\lambda^{-1/2}+A/\sqrt u).
 \label{pc:eq-signed}
\end{align}
The absolute value is after the signed sum. An expectation of the
supremum over weights, or a sum of absolute cell errors, is not asserted.
The weights may depend on $n$ and a deterministic partition.

Corollary~\ref{l3:cor-actual-tail-audit} gives
\eqref{pc:eq-signed}.

\boldparagraph{Center range.}
Corollary~\ref{l3c:cor-pilot-center} gives, for each fixed $a,c_0<\infty$,
\begin{equation}\label{pc:eq-centers}
 \E[\1_E\max_{|j|\le c_0\lambda^{1/64}}|D_v(j)|]
 \le C\lambda^{-1/2},\qquad \lambda^{-a}\le v\le1.
\end{equation}
For the scalar coordinate $j=0$,
Corollary~\ref{l3c:cor-scalar-growing-center} gives the same bound up to
$v\le\lambda^a$, with the supremum over deterministic cutoffs outside
the expectation.

\boldparagraph{Identification of the intensity.}
Let \(m_{\rm H}\) be Haar probability measure on \(\mathbb S^1\), and set
\[
 \Lambda_n=\frac L{\sqrt2}\sum_lW_b(J_l)\delta_{s_l},
 \qquad
 \Lambda_*=\frac1{\sqrt\pi}Z^\beta\otimes m_{\rm H}.
\]
Let \(\mathfrak T\) be the unbalanced transport cost with matched-mass
cost
\[
 1\wedge\bigl(d_\T(x,x')+|z-z'|\bigr)
\]
and unit cost for unmatched mass.
Since
\(\Lambda_n=2^{-1/2}\Gamma_n^{\rm mesh}\),
\(\Lambda_*=2^{-1/2}\Gamma^\beta\), and
\(|z-z'|\le d_{\mathbb S^1}(z,z')\),
Corollary~\ref{l3:cor-discrete-marked-rate} and
\eqref{l3:eq-BL-transport}, together with homogeneity, give
\begin{equation}
 \E[\1_E\mathfrak T(\Lambda_n,\Lambda_*)]\le Cu^{-1/5}.
 \label{pc:eq-coarse}
\end{equation}
The total-mass consequence of \eqref{pc:eq-signed} and \eqref{pc:eq-coarse}, with
$S=Z^\beta(\T)/\sqrt\pi$, is
\begin{equation}
 \E[\1_E|p_v-S/v|]
 \le C\{(\lambda^{-1/2}+u^{-1/5})/v+u^{-1/2}\}
 \quad(\lambda^{-a}\le v\le\lambda^a).
 \label{pc:eq-scalar-tail}
\end{equation}
For $v<1$ the extra $1/(v\sqrt u)$ is absorbed in the first term.

\subsection{Poisson approximation of the complete marked process}
\label{pc:sec-process}
For any $v>0$ let
\[
 N_v=\sum_l\1_{\{U_l>v\}}\delta_{(s_l,U_l)},\qquad
 \nu_v=\sum_l\delta_{s_l}\otimes
            \Law(U_l\mid\F_b)|_{(v,\infty)}.
\]
Set
\begin{equation}
 B_n(v)=v^{-2q}u^{q-3/4}+v^{-2}\lambda u^{-1/2}
                      +v^{-1}u^{-3/2}.
 \label{pc:eq-B}
\end{equation}

\begin{lemma}
\label{pc:lem-PPP}
Under \eqref{paper:eq-budget} and \eqref{pc:eq-gaps}--\eqref{pc:eq-blockmass}, conditionally on $\F_b$, $N_v$ can be coupled to a Poisson
point process of intensity $\nu_v$ with failure probability $\Pi_v$
satisfying $\E[\1_E\Pi_v]\le CB_n(v)$. The same bound holds after
restriction to any deterministic mark set, including $v<r\le A$.
\end{lemma}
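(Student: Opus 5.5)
The plan is a conditional Poisson approximation in two stages: block-wise at most one exceedance, then Poissonization of the resulting independent Bernoulli marks. Work conditionally on $\F_b$ throughout. Split the cells into the retained blocks $B$ of $\lceil u^2\rceil$ consecutive cells and the omitted cells; the blocks are conditionally independent given $\F_b$. Write
\[
 N_v=N_v^{\rm ret}+N_v^{\rm om}.
\]
The omitted part is empty except on an event of conditional probability at most $\sum_{l\ \rm omitted}\Prob(U_l>v\mid\F_b)$. By \eqref{pc:eq-gaps} its localized expectation is at most $Cv^{-1}u^{-3/2}$. The omitted part of the intensity $\nu_v$ has the same total mass, so deleting it from a Poisson process costs the same bound.

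Within a retained block, replace the point process by the single-point process $\xi_B$. It equals $\delta_{(s_l,U_l)}$ if exactly one $l\in B$ has $U_l>v$, and zero otherwise. The two processes differ only when at least two cells of $B$ exceed $v$. I split such pairs by cell distance. Pairs at distance at most two are controlled by \eqref{pc:eq-neighbours}, giving $Cv^{-2q}u^{q-3/4}$. Pairs at distance at least three within the block are conditionally independent. Their probability is therefore at most $\bigl(\sum_{l\in B}p_l\bigr)^2\le m_B^2/v^2$, and summing with \eqref{pc:eq-blockmass} gives $Cv^{-2}\lambda u^{-1/2}$. This exactly reproduces the three terms of $B_n(v)$.

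Next, conditionally on $\F_b$, the $\xi_B$ are independent. Each $\xi_B$ is either empty or a single point, with
\[
 \Prob(\xi_B\neq0\mid\F_b)=\pi_B\le\sum_{l\in B}p_l\le m_B/v.
\]
The Le Cam/Poisson coupling for a single Bernoulli point puts $\xi_B$ together with a Poisson process $\Pi_B$ whose intensity is the sub-probability law of $\xi_B$. The failure probability is $O(\pi_B^2)$, hence $O(m_B^2/v^2)$ after summation, which is already in the budget. Let $\nu_v^{\rm ret}$ be the sum of the block intensities of the $\Pi_B$. It remains to compare $\nu_v^{\rm ret}$ with the restriction of $\nu_v$ to retained cells, namely $\sum_{l\in B}\delta_{s_l}\otimes\Law(U_l\mid\F_b)|_{(v,\infty)}$. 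The difference is a nonnegative measure of total mass $\sum_{l\in B}p_l-\pi_B$. This is at most the expected number of exceedance pairs in $B$, which is bounded by the same two pair estimates. A Poisson process with the larger intensity can therefore be obtained by superposing an independent Poisson process with the small difference intensity. That extra process is empty with conditional probability $1$ minus its mass, so the mismatch is again within the budget.

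Adding the omitted-cell intensity as a further independent Poisson process, which is empty outside the gap error, gives a Poisson process of intensity exactly $\nu_v$. Its conditional failure probability $\Pi_v$ is at most a constant times the sum of the quantities above. Multiplying by $\1_E$ and taking expectations yields $\E[\1_E\Pi_v]\le CB_n(v)$. Restricting to a deterministic mark set such as $(v,A]$ is just the thinning of both processes by the same deterministic map, which does not increase the failure probability.

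The main subtlety is that every step must be carried out under the conditional law given $\F_b$, without conditioning on $E$, since conditioning on the terminal survival event would destroy block independence. The indicator $\1_E$ is inserted only after the conditional failure bounds have been obtained. For the distant pairs I will use $\Prob(U_l>v,U_h>v\mid\F_b)=p_lp_h$ directly, as justified by locality in Theorem~\ref{crm:thm-construction}(i) for cells at distance greater than $e^{-b}$.
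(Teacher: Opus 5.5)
Your proposal is correct and follows the paper's proof essentially step for step. You delete the gap points via \eqref{pc:eq-gaps} and erase blocks with two or more exceedances, bounding pairs at distance at most two by \eqref{pc:eq-neighbours} and separated pairs by $m_B^2/v^2$ with \eqref{pc:eq-blockmass}. You then Poissonize each single-point block with the same mark law, restore the missing block and gap intensities by independent Poisson superposition, and note that restriction cannot increase disagreement. One harmless slip: the restored block mass $\E[N_B\1_{\{N_B\ge2\}}\mid\F_b]$ is bounded by $\E[N_B(N_B-1)\mid\F_b]$, which is twice (not once) the expected number of exceedance pairs; this only changes the constant.
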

\begin{proof}
Fix a coarse realization. Remove the gap points. Their total failure
probability is bounded by
\(\sum_{l\text{ omitted}}\Prob(U_l>v\mid\F_b)\).
In every retained block erase all points when its count $N_B$ is at
least two. The failure probability is at most
$\E[N_B(N_B-1)\mid\F_b]/2$.
The conditional block processes are independent.

The remaining process in a block is either empty or a single point.
Its intensity $\eta_B$ has mass at most
$\sum_{l\in B}\Prob(U_l>v\mid\F_b)\le m_B/v$.
Couple its Bernoulli count with a Poisson count of the same mean,
using the same normalized mark law when both counts equal one. The
failure probability is at most $C\eta_B((\T\times\mathbb S^1)\times(v,\infty))^2$.
The difference between the original and erased block intensities is
a positive measure of mass at most
$\E[N_B\1_{\{N_B\ge2\}}\mid\F_b]
\le\E[N_B(N_B-1)\mid\F_b]$.
Restore this intensity, and then the omitted gap intensity, by adding
independent Poisson processes. The probability of adding any point is
at most the added mass. This produces exactly intensity $\nu_v$.

For pairs at distance one or two, use \eqref{pc:eq-neighbours}.
For all other within-block pairs, conditional independence bounds their
sum by $m_B^2/(2v^2)$. Thus all Bernoulli and separated-pair costs
are controlled by $Cv^{-2}\sum_Bm_B^2$.
Now average with $\1_E$ and use \eqref{pc:eq-gaps}--\eqref{pc:eq-blockmass}.
Restriction of coupled processes cannot increase their disagreement.
\end{proof}

\subsection{Conditional approximation of Fourier coefficients}
\label{pc:sec-window}
Let $Q_{\beta,w}$ be the probability kernel of the symmetric stable
Poisson integral with intensity $\Lambda_*\otimes r^{-2}dr$ and jump map
\[
 \Phi_w(x,z,r)=(rz e^{\ii jx})_{|j|\le w}.
\]
Compensate radii at most one and take the $L^2$ limit at zero; the
small-radius variance is finite because $\int_0^1r^2r^{-2}dr=1$.
There are finitely many radii above one. Haar phase makes the
compensator of every fixed radial annulus zero. Define
\[
 K_{n,\beta,w}=\Law((a_{n+j}\widehat Z^\beta(n+j))_{|j|\le w}\mid\F_b),
 \qquad \widehat\mu(m)=\int e^{\ii mx}\mu(dx).
\]
The terminal kernel $Q_{\beta,w}$ need not be $\F_b$-measurable:
the distance below is evaluated pointwise, then averaged.

\begin{theorem}
\label{pc:thm-window}
For $w\le n/2$, $0<\delta<1<A$ and $0<h<\delta/4$, let $e_T$ be
\eqref{pc:eq-signed},
$e_C=\E[\1_E(1\wedge\max_{|j|\le w}|D_1(j)|)]$, and
$e_P=\E[\1_E\mathfrak T(\Lambda_n,\Lambda_*)]$.
Then,
\begin{align}
 &\E[\1_E\mathcal W_w(K_{n,\beta,w},Q_{\beta,w})]
 \nonumber\\
 &\quad\le C\bigg\{u^{-7/2}+\frac{w+1}{n\sqrt u}
       +\sqrt{(2w+1)\delta}+A^{-1}+B_n(\delta)
       +\frac{h+(w+1)Ah}{\delta}\bigg\}
 \nonumber\\
 &\qquad+Ce_C+CAh^{-3}\delta^{-1}e_T
       +C[\delta^{-1}+(w+1)\log(A/\delta)]e_P.
 \label{pc:eq-window}
\end{align}
\end{theorem}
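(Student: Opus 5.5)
We bound the conditional Wasserstein distance by a chain of intermediate random vectors, each coupled to the next on the same space given $\F_b$. The triangle inequality for $\mathcal W_w$ then lets us add the costs after averaging with $\1_E$. The chain runs as follows.

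\textbf{Step 1: from the coefficient vector to the cell sum.} We replace $(a_{n+j}\widehat Z^\beta(n+j))_{|j|\le w}$ by the mesh sums $T_{n,e^{ij\cdot}}=\sum_l z_le^{ijy_l}U_l$. By \eqref{crm:eq-mesherror}, with $f=e^{ij\cdot}$, the error has expectation $O(u^{-7/2})$ uniformly over the window. Replacing $a_{n+j}$ by $L$ costs $O((w+1)/(n\sqrt u))$.

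\textbf{Step 2: split the cell sum at $\delta$.} We write
\[
T=\sum_l z_le^{ijy_l}U_l\1_{\{U_l\le\delta\}}+\sum_l z_le^{ijy_l}U_l\1_{\{U_l>\delta\}}.
\]
The small part equals its conditional center plus a fluctuation. The center is at most $D_\delta(j)$. Moving from cutoff $1$ to cutoff $\delta$ is handled by a mid-range sum of marks; that sum is absorbed in Step 4, which leaves the term $e_C$. By three-color independence the fluctuation has conditional second moment at most $C\delta\mathcal A_n$ per coordinate. Summing over the $2w+1$ coordinates and applying Cauchy--Schwarz gives $\sqrt{(2w+1)\delta}$.

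\textbf{Step 3: truncate and Poissonize the large marks.} Marks above $A$ contribute with probability at most $\E[\1_E p_A]\le C/A$, since the metric $d_w$ is capped at one. For the marks in $(\delta,A]$ we apply Lemma~\ref{pc:lem-PPP} to couple $N_\delta$ with a Poisson process of intensity $\nu_\delta$ restricted to that range. The cost is $B_n(\delta)$.

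\textbf{Step 4: replace $\nu_\delta$ by $\Lambda_*\otimes r^{-2}dr$ on $(\delta,A]$.} This is the heart of the argument. We discretize radii on a grid of mesh $h$ and compare counting intensities annulus by annulus.
\begin{itemize}
\item The grid discretization moves each jump by at most $h$, and there are about $1/\delta$ expected points with $w+1$ coordinates each. This gives the term $(h+(w+1)Ah)/\delta$.
\item The annulus masses $\sum_l a_l\Prob(U_l>r\mid\F_b)$ must match $\kappa\sum_l a_lc_lG_l/r$ with spatial and phase weights $a_l$. We take the weights to be smooth test functions of $s_l$, approximated at resolution tied to $h$. The signed tail estimate \eqref{pc:eq-signed} then controls the mismatch. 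The smoothing and grid of size $O(h^{-1})$ radial cells, each with a test function whose regularity costs $h^{-2}$, produce the factor $Ah^{-3}\delta^{-1}e_T$.
\item Since $c_lG_l=LW_b(J_l)$, the matched intensity is $\Lambda_n\otimes r^{-2}dr$. We then move $\Lambda_n$ to $\Lambda_*$ by the transport cost \eqref{pc:eq-coarse}. Unmatched mass costs order $\delta^{-1}$ in expected point count. Matched mass moves jumps by $rd(s,s')$ summed over $(w+1)$ frequencies, and integrating against $r^{-2}dr$ gives $\log(A/\delta)$. This produces $[\delta^{-1}+(w+1)\log(A/\delta)]e_P$.
\end{itemize}
Two Poisson processes whose intensities differ by a measure of small total variation can be coupled with failure probability at most that variation. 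This justifies converting each intensity comparison into a coupling cost.

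\textbf{Step 5: match the truncated target.} The Poisson integral over $(\delta,A]$ with Haar marks is uncompensated, yet it has zero mean because the Haar phase averages out. Its distance from $Q_{\beta,w}$ consists of two pieces. The jumps below $\delta$, after compensation, have $L^2$ norm $\sqrt{(2w+1)\delta}$ times the mass of $\Lambda_*$. The jumps above $A$ occur with probability $O(A^{-1})$.

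\textbf{Main obstacle.} The hardest step is Step 4. The estimate \eqref{pc:eq-signed} is only for deterministic weights with the absolute value taken after summation, so the radial and spatial test functions must be discretized in advance. The bookkeeping must be arranged so that the conversion to a Poisson coupling via total variation is paid only through the stated $h^{-3}$ factor.
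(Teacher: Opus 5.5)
Your chain of couplings is the paper's: mesh and normalizer errors, the split at $\delta$, $1$ and $A$, the three-color second-moment bound for small marks, Lemma~\ref{pc:lem-PPP} for the middle marks, grid intensity matching through \eqref{pc:eq-signed}, rounding to grid representatives, and partial transport of $\Lambda_n$ to $\Lambda_*$. Two points in Step~4, which you rightly call the crux, are misdescribed. Correcting them does not change the bound.

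\textbf{Where $Ah^{-3}\delta^{-1}e_T$ comes from.} This factor has nothing to do with the regularity of smooth test functions. The estimate \eqref{pc:eq-signed} is uniform over all deterministic weights of modulus at most one, so smoothness buys nothing. Moreover, comparing intensities against smooth weights does not by itself give the cellwise total-variation control that the Poisson count coupling needs.

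Instead, take indicator weights $a_l=\1\{s_l\in\mathcal C\}$, where $\mathcal C$ runs over the $O(h^{-2})$ product arcs of side $h$ in $\T\times\mathbb S^1$. These weights are deterministic because the anchors $s_l=(y_l,e^{\ii ny_l})$ are. Fix a radial interval $(r_1,r_2]\subset[\delta,A]$. The discrepancy between $\nu_\delta$ and $\Lambda_n\otimes r^{-2}dr$ on $\mathcal C\times(r_1,r_2]$ is the difference of the tail errors at $r_1$ and $r_2$, each divided by a radius at least $\delta$. Its localized mean is therefore at most $2e_T/\delta$, and summing over the $O(Ah^{-3})$ product cells gives the term.

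Similarly, the factor $w+1$ in the rounding and transport terms does not come from summing over coordinates. It comes from $|e^{\ii jx}-e^{\ii jx'}|\le|j|\,d_\T(x,x')$ under the sup metric $d_w$.

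\textbf{The mid-range compensator.} Your Step~2 leaves the center at cutoff $\delta$, whereas $e_C$ is defined at cutoff $1$. The difference is $\sum_l v_l\E[U_l\1_{\{\delta<U_l\le1\}}\mid\F_b]$ with $v_l=(z_le^{\ii jy_l})_{|j|\le w}$. This sum is not small a priori, and Step~4 as written never accounts for it.

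It must stay attached to the Poissonized middle process, of which it is exactly the compensator on $(\delta,1]$. Concretely: make $1$ a node of the radial grid, round marks without crossing radius one, and compare compensated integrals at every stage. Because the compensated radii are at most one, the rounding, count-coupling and transport costs of the compensator are dominated by the costs you already have. Haar phase then kills the target compensator, as in your Step~5.

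Finally, $\Lambda_*$ and $Q_{\beta,w}$ are not $\F_b$-measurable. The couplings are therefore built for each fixed realization and the resulting distance is averaged afterwards, rather than constructed conditionally on $\F_b$.
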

\begin{proof}
Write $v_l=(z_le^{\ii jy_l})_{|j|\le w}$. The exact decomposition is
\begin{align*}
 \sum_lv_lU_l={}&D_1+
 \sum_lv_lU_l\1_{\{\delta<U_l\le A\}}
 -\sum_lv_l\E[U_l\1_{\{\delta<U_l\le1\}}\mid\F_b]\\
 &+\sum_lv_l\{U_l\1_{\{U_l\le\delta\}}
       -\E[U_l\1_{\{U_l\le\delta\}}\mid\F_b]\}
 +\sum_lv_lU_l\1_{\{U_l>A\}}.
\end{align*}
The small centered sum costs $C\sqrt{(2w+1)\delta}$ by \eqref{paper:eq-budget}, since
the squared maximum is at most the sum of the squared coordinates.
The target compensated small jumps have the same bound by the Poisson
variance identity and $\E Z^\beta(\T)=2\pi\beta$.
The probability of any source or target jump above $A$ is at most
$C/A$ after averaging. The center costs $e_C$.

Lemma~\ref{pc:lem-PPP} replaces the actual middle process by a Poisson
process of intensity $\nu^{\delta,A}=\nu_\delta|_{r\le A}$ at cost
$CB_n(\delta)$. Its compensator is exactly the deterministic middle
compensator in the display. It is not discarded or replaced by a
center at the shrinking cutoff.

Partition both circles into arcs of length at most $h$ and the radial
interval $[\delta,A]$ into intervals of length at most $h$, with $1$
an endpoint. There are $O(Ah^{-3})$ product cells. On a base cell the
membership weights at $s_l$ are deterministic, so \eqref{pc:eq-signed} applies.
The intensity error on a radial interval $(r_1,r_2]$ is the difference
of two tail errors, and has localized expected absolute value at most
$2e_T/\delta$. The summed grid-intensity error is therefore
$CAh^{-3}\delta^{-1}e_T$.

Move marks to representatives of their product cells, preserving
whether the radius is at most one. The displacement of either the
jump map or its compensated version is at most $C[h+(w+1)Ah]$.
The expected total middle intensity is $O(\delta^{-1})$.
Couple grid counts using the minimum of their two intensities, adding
independent unmatched counts. Their probability cost is at most the
intensity discrepancy; the deterministic compensator costs no more,
since its radii are at most one. This proves the grid and rounding terms.

Finally partially transport $\Lambda_n$ to $\Lambda_*$, sharing each
matched radius. For base distance $a\le1$, the jump displacement is
at most $r(w+1)a$, whose radial integral is
$(w+1)\log(A/\delta)a$. Unmatched mass costs at most $1/\delta$ per
unit mass. The compensated part obeys the same bounds. Finite-grid
approximations give measurable near-optimal couplings, hence the
last term of \eqref{pc:eq-window}.

The cell phase error has expected maximum at most
$L(n+w)d\E Z^\beta(\T)=O(u^{-7/2})$.
Also $\max_{|j|\le w}|a_{n+j}-L|\le C(w+1)/(n\sqrt u)$.
These yield the first two terms by the finite first moment of the
barrier measure. All couplings compare kernels at the same coarse
realization; no independence from the terminal target is asserted.
\end{proof}

\begin{corollary}
\label{pc:cor-window}
For $w\le c_0\lambda^{1/64}$ the left side of \eqref{pc:eq-window}
is at most $C_{\beta,k,c_0}\lambda^{-1/128}$.
For $w_n=\lfloor(\log\log n)^{1/64}\rfloor$, barrier exhaustion gives
unconditional approximation of the auxiliary $D_*$ window by the
stable window with control $D_*/\sqrt\pi$.
\end{corollary}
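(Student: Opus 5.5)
The plan is to substitute explicit parameter choices into \eqref{pc:eq-window}, bound each term with the estimates already collected, and then pass from the localized auxiliary kernel to the unconditional $D_*$ window by barrier exhaustion. Throughout, $\lambda=\log u$ and $w\le c_0\lambda^{1/64}$. I will take
\[
 \delta=\lambda^{-1/32},\qquad A=\lambda^{1/32},\qquad h=\lambda^{-1/16}.
\]
The exact exponents will be adjusted so that every term is $O(\lambda^{-1/128})$. The choice must respect two competing costs: the grid term $h^{-3}\delta^{-1}Ae_T$, with $e_T\le C\lambda^{-1/2}$, and the rounding term $(w+1)Ah/\delta$.

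The first group of terms is routine. The terms $u^{-7/2}$ and $(w+1)/(n\sqrt u)$ are negligible. The small-jump term satisfies $\sqrt{(2w+1)\delta}\le C\lambda^{1/128-1/64}=C\lambda^{-1/128}$, and this term is what forces $\delta\le\lambda^{-1/32}$ up to constants. The tail term is $A^{-1}=\lambda^{-1/32}$.

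For $B_n(\delta)$ I use \eqref{pc:eq-B} with, say, $q=5/8$. Each summand is a negative power of $u$ times a fixed power of $\lambda$, hence $o(\lambda^{-1})$.

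The rounding term is
\[
 \frac{h+(w+1)Ah}{\delta}\le C\lambda^{1/64+1/32-1/16+1/32}=C\lambda^{1/64}.
\]
This is too large, so $h$ must be taken smaller, for instance $h=\lambda^{-1/8}$. That gives $(w+1)Ah/\delta\le C\lambda^{1/64+1/16-1/8}=C\lambda^{-3/64}$.

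The grid term then becomes $Ah^{-3}\delta^{-1}e_T\le C\lambda^{1/32+3/8+1/32-1/2}=C\lambda^{-1/16}$, using \eqref{pc:eq-signed} with $A/\sqrt u$ negligible. The center term is $e_C\le C\lambda^{-1/2}$ by \eqref{pc:eq-centers} with $v=1$. The coarse term, by \eqref{pc:eq-coarse}, is $[\delta^{-1}+(w+1)\log(A/\delta)]e_P\le C\lambda^{1/32}\log\lambda\cdot u^{-1/5}$, which is negligible.

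Collecting these, the dominant terms are $\sqrt{(2w+1)\delta}$ and the grid/rounding terms. Each is at most $C_{\beta,k,c_0}\lambda^{-1/128}$, which proves the first assertion. The main obstacle is exactly this balancing of $h$ against $\delta$, $A$ and $w$. I expect it to close only because the signed tail error $e_T$ decays like $\lambda^{-1/2}$, uniformly in the deterministic weights and cutoffs. That uniformity is what allows the $O(Ah^{-3})$ cell union in Theorem~\ref{pc:thm-window}.

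For the second assertion, set $w=w_n$. On $E_\beta$ we have $Z^\beta=D_*$, and the target control $Z^\beta/\sqrt\pi$ coincides with $D_*/\sqrt\pi$. The metric $d_w$ is bounded by one. By Lemma~\ref{l4:lem-late-global-crossing}, replacing $\1_{E_{\beta,b}}$ by $\1_{E_\beta}$ costs $C_\beta b^{-1/2}$. Averaging the conditional kernels over $\F_b$ on $E_\beta$ then gives the bound
\[
 d_{\mathrm{BL},w}\bigl(\Law((a_{n+j}\widehat D_*(n+j))_{|j|\le w}),\ \text{stable law with control }D_*/\sqrt\pi\bigr)\le C_\beta\lambda^{-1/128}+C_\beta b^{-1/2}+2\Prob(E_\beta^c).
\]
Here the bounded-Lipschitz distance is dominated by the averaged conditional $\mathcal W_w$. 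Letting $n\to\infty$ and then $\beta\to\infty$, with $\Prob(E_\beta)\uparrow1$ from Theorem~\ref{crm:thm-construction}(iii), yields the unconditional approximation. If a rate is wanted instead of a limit, I would choose $\beta=\beta_n\to\infty$ slowly; this requires tracking the dependence of the constants on $\beta$, which is why I plan to state only the qualitative exhaustion.
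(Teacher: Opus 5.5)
Your proposal is correct and follows the paper's own proof: after your correction of $h$ you land on exactly the paper's choice $\delta=\lambda^{-1/32}$, $A=\lambda^{1/32}$, $h=\lambda^{-1/8}$, with the same dominant exponents $\lambda^{-1/16}$, $\lambda^{-3/64}$, $\lambda^{-1/128}$, $\lambda^{-1/32}$. Your second part is the same barrier exhaustion as the paper's, with $\beta\to\infty$ last and no rate claimed; your explicit replacement of the $\F_b$-measurable indicator $\1_{E_{\beta,b}}$ by $\1_{E_\beta}$ at cost $\Prob(E_{\beta,b}\setminus E_\beta)$ just spells out a step the paper leaves implicit.
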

\begin{proof}
Choose $\delta=\lambda^{-1/32}$, $A=\lambda^{1/32}$ and
$h=\lambda^{-1/8}$. The four potentially dominant powers in
\eqref{pc:eq-window} are respectively
\begin{align*}
 Ah^{-3}\delta^{-1}e_T&=O(\lambda^{-1/16}),&
 (w+1)Ah/\delta&=O(\lambda^{-3/64}),\\
 \sqrt{(2w+1)\delta}&=O(\lambda^{-1/128}),&
 A^{-1}&=\lambda^{-1/32}.
\end{align*}
The terms containing a negative power of $u$ are negligible compared
with every fixed negative power of $\lambda$. Use \eqref{pc:eq-centers} for $e_C$ and \eqref{pc:eq-coarse} for $e_P$.
On $E_\beta$ both the source and target agree with their untruncated
versions. Since the metric is bounded by one, the untruncated error
has limsup at most $C\Prob(E_\beta^c)$; take $\beta\to\infty$ last.
This supplies no global numerical rate after exhaustion.
\end{proof}

\subsection{The canonical stable limit}
\label{canon:subsec-profile}

\begin{proposition}
\label{canon:prop-profile}
For the exact circle field $\phi$, every fixed finite $J\subset\mathbb Z$
satisfies
\[
 (a_{n+j}C_{n+j})_{j\in J}\longrightarrow(\mathcal Z_j)_{j\in J}
 \quad\hbox{stably relative to }\sigma(\phi),\qquad n\to+\infty,
\]
where the joint conditional exponent is
$-\pi^{-1/2}\int_\T|\sum_{j\in J}\overline{\xi_j}e^{\ii jx}|M(dx)$.
\end{proposition}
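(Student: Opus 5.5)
The plan is to derive the statement from the auxiliary window approximation, Theorem~\ref{pc:thm-window} and Corollary~\ref{pc:cor-window}, and then pass to the exact circle field through the canonical coupling \eqref{crm:eq-same-space-transfer}.

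\textbf{Step 1: auxiliary model.} Fix $J$ and take $w\ge\max_{j\in J}|j|$. For every fixed $\beta$, Corollary~\ref{pc:cor-window} gives
\[
 \E[\1_{E_{\beta,b}}\mathcal W_w(K_{n,\beta,w},Q_{\beta,w})]\to0 .
\]
Conditionally on $\F_b$, the kernel $Q_{\beta,w}$ has characteristic exponent
\[
 -\int|\textstyle\sum_j\overline{\xi_j}e^{ijx}|\,\Lambda_*\text{-weight},
\]
by the Haar-phase index-one computation $\int_{\mathbb S^1}\int_0^\infty(1-\cos(r\operatorname{Re}(\bar a z)))r^{-2}\,dr\,m_H(dz)=|a|$. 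This uses $\Lambda_*=\pi^{-1/2}Z^\beta\otimes m_H$.

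To upgrade to stable convergence relative to $\sigma(X)$, let $V$ be bounded and $\F_\infty$-measurable, and approximate it in $L^1$ by $\F_s$-measurable $V_s$ with $s$ fixed. For $b>s$, $V_s$ is $\F_b$-measurable, so
\[
 \E[V_sf(\cdot)]=\E[V_s\,K_{n,\beta,w}(f)] .
\]
The target kernel $Q_{\beta,w}$ depends only on $Z^\beta$, which is $\F_\infty$-measurable. Hence $\E[V_sK(f)]-\E[V_sQ(f)]\to0$ on $E_{\beta,b}$. Then let $s\to\infty$ and $\beta\to\infty$, using $Z^\beta=D_*$ on $E_\beta$, $\Prob(E_\beta)\uparrow1$, and Lemma~\ref{l4:lem-late-global-crossing} to replace $E_{\beta,b}$ by $E_\beta$. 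This gives stable convergence of $(a_{n+j}\widehat D_*(n+j))_j$ relative to $\sigma(X)$, with stable control $D_*/\sqrt\pi$.

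\textbf{Step 2: transfer to the canonical field.} By \eqref{crm:eq-same-space-transfer},
\[
 M=\sqrt{2/\pi}\,q_G^{-1}F_*D_* ,
\]
where $F_*$ is a smooth positive multiplier built from the independent field $Y$. Write
\[
 F_*(x)=\sum_k\widehat{F_*}(k)e^{-ikx},\qquad\text{so}\qquad
 \widehat{F_*D_*}(n+j)=\sum_k\widehat{F_*}(k)\,\widehat D_*(n+j-k).
\]
Because $F_*$ is smooth, its coefficients decay rapidly. Truncate to $|k|\le K$. The tail $|k|>K$ is controlled in probability using the all-mode bound \eqref{paper:eq-all-mode} with $p<1$ and $\sum_{|k|>K}|\widehat{F_*}(k)|^p\to0$. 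Conditioning on $Y$, which is independent of $X$, handles the random coefficients.

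For $|k|\le K$, apply Step 1 jointly over the finite offset set $J-[-K,K]$, with $\sigma(X,Y)$ in place of $\sigma(X)$; stability extends because $Y$ is independent. The multiplication by $q_G^{-1}$ is $\sigma(X,Y)$-measurable, and stable convergence allows products with measurable random weights. The resulting limit is
\[
 \sum_k\widehat{F_*}(k)\,\mathcal Z^{D_*}_{j-k}=\int e^{ijx}F_*\,d\mathcal S_{D_*} .
\]
At index one, $F_*$ times a stable noise with control $D_*$ is stable with control $F_*D_*$, since $|F_*\,\cdot|=F_*|\cdot|$ by positivity. After the scalar $\sqrt{2/\pi}\,q_G^{-1}>0$, the control becomes $M/\sqrt\pi$. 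The exponent therefore matches the one displayed in the statement.

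\textbf{Step 3: restriction to $\sigma(\phi)$.} Stable convergence relative to $\sigma(X,Y)$ implies stable convergence relative to $\sigma(\phi_*)$, because
\[
 \phi_*=X_*+Y-G ,
\]
where $G$ is a limit of $X$-averages, so $\phi_*$ is $\sigma(X,Y)$-measurable. The limiting conditional law given $(X,Y)$ depends only on $M$, which is $\sigma(\phi_*)$-measurable. Hence the limiting conditional law given $\phi_*$ is the same.

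\textbf{Main obstacle.} I expect the convolution tail in Step 2 to be the hardest part. It must be uniform in $n$ while the coefficients $\widehat{F_*}(k)$ are random, and \eqref{paper:eq-all-mode} only gives fractional moments localized on $E_\beta$. I would handle this with a $p$-th moment bound and subadditivity of $|\cdot|^p$, first conditioning on $Y$.
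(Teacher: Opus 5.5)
Your proposal is correct and follows essentially the same route as the paper. The paper obtains the auxiliary stable limit from Corollary~\ref{pc:cor-window} by conditioning $\F_s$-measurable tests on $\F_b$, exhausting barriers, adjoining the independent field $Y$, truncating the Fourier series of $F_*$ at fixed $K$, removing the tail uniformly in $n$ via \eqref{paper:eq-all-mode} and $p$-subadditivity, using positivity of the multiplier at index one, and projecting to $\sigma(\phi_*)$. The differences are minor: you control the random coefficients by conditioning on $Y$ where the paper localizes on $\{q_G^{-1}\le R,\ B_s(F_*)\le R\}$, and your tail sum should carry the normalizer ratio $a_{n+j}/a_{n+j-k}\le(1+\log(2+|k|)/\log2)^{1/2}$, which is harmlessly absorbed by the rapid decay of the coefficients of $F_*$.
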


\begin{proof}
For any fixed finite set of offsets, Corollary~\ref{pc:cor-window}
gives convergence of the conditional kernels on $E_{\beta,b}$.
For a bounded $\F_s$-measurable test variable, take $b\ge s$ and
condition on $\F_b$. The kernel error tends to zero in localized
$L^1$. Replacing $E_{\beta,b}$ by $E_\beta$ costs
$\Prob(E_{\beta,b}\setminus E_\beta)\to0$.
For any bounded $\F_\infty$-measurable test variable $V$, approximate
it in $L^1$ by $\E[V\mid\F_s]$ and then let $s\to\infty$.
Finally exhaust integer barriers, using $Z^\beta=D_*$ on $E_\beta$.
This proves stable convergence relative to the entire auxiliary field.

The conditional exponent of its limiting vector is
\begin{equation}\label{paper:eq-aux-profile}
 -\frac1{\sqrt\pi}\int_\T
       \left|\sum_{j\in J}\overline{\xi_j}e^{\ii jx}\right|D_*(dx).
\end{equation}
Indeed, writing $a=\operatorname{Re}(z\sum_j\overline{\xi_j}e^{\ii jx})$,
\[
 \int_0^\infty(1-\cos(ar))r^{-2}\,dr=\pi|a|/2,
 \qquad \int_{\mathbb S^1}|\operatorname{Re}z|\,m_{\rm H}(dz)=2/\pi.
\]
These identities fix the constant. The Gaussian part and the symmetric
radial drift are zero. The compensated construction yields one common
stable random measure for all finite sets $J$.

Adjoin the independent smooth field $Y$ from
\eqref{crm:eq-same-space-transfer}. Stability extends to
$\F_\infty\vee\sigma(Y)$: first use product tests, then bounded
$L^1$ approximation. Define
\[
 F_{*,k}=\frac1{2\pi}\int_\T F_*(x)e^{\ii kx}\,dx,
 \qquad F_*(x)=\sum_{k\in\mathbb Z}F_{*,k}e^{-\ii kx}.
\]
For fixed $K$, Fourier multiplication by
$F_*^{(K)}=\sum_{|k|\le K}F_{*,k}e^{-\ii kx}$ uses a finite enlarged
offset set.
Its random coefficients and \(q_G^{-1}\) are measurable with respect to
\(\F_\infty\vee\sigma(Y)\).
Joint stable mapping follows by approximating
bounded coefficients by simple random variables and exhausting their
bounds. The normalizing ratios $a_{n+j}/a_{n+j-k}$ tend to one for
each fixed $j,k$.

Here is the uniform estimate that removes $K$. Fix $0<p<1$ and
$s>1/p$, and localize to
\[
 \mathcal L_{\beta,R}=E_\beta\cap
 \{q_G^{-1}\le R,\ B_s(F_*)\le R\},\qquad
 B_s(F_*)=\sup_k(1+|k|)^s|F_{*,k}|.
\]
Since
$a_{n+j}/a_{n+j-k}\le(1+\log(2+|k|)/\log2)^{1/2}$,
$p$-subadditivity and \eqref{paper:eq-all-mode} bound the discarded
terms for every fixed offset by
\begin{equation}\label{paper:eq-multiplier-tail}
 C_{\beta,p}R^{2p}\sum_{|k|>K}(1+|k|)^{-sp}
       \left(1+\frac{\log(2+|k|)}{\log2}\right)^{p/2}.
\end{equation}
This tends to zero uniformly in $n$. Every localization indicator
except $\1_{E_\beta}$ is discarded only after taking absolute
upper bounds; no dependent expectation is factorized.
For the limiting kernels, uniform convergence of $F_*^{(K)}$ and the
finiteness of $D_*$ give the same passage to $F_*$.

The identity \eqref{crm:eq-same-space-transfer} now changes
\eqref{paper:eq-aux-profile} into
\[
 -\frac1{\sqrt\pi}\int_\T
   \left|\sum_{j\in J}\overline{\xi_j}e^{\ii jx}\right|M(dx).
\]
The events $\mathcal L_{\beta,R}$ exhaust probability one, with $K$
removed first for each fixed localization.
Projecting the stable convergence to \(\sigma(\phi_*)\) proves
Proposition~\ref{canon:prop-profile}.
\end{proof}

For the canonical scalar, put $S=M(\T)/\sqrt\pi$. Fourier inversion of
$e^{-S|\xi|}$ yields the density in the introduction at $g=0$; integration in polar coordinates gives its
radial tail. Since $C_{-n}=\overline{C_n}$ and this scalar law is
conjugation invariant, the scalar conclusion holds as $|n|\to\infty$.
Division by the finite positive random mass \(M(\T)\) gives the normalized scalar limit. More generally the normalized vector has control
$M/(\sqrt\pi M(\T))$, so its spatial dependence persists.

The scalar limit also fixes the probability scale: if $d_n/a_n\to0$,
then $d_nC_n\to0$ in probability, by tightness. If $d_n/a_n\to\infty$,
then $|d_nC_n|\to\infty$ in probability, because the limiting Cauchy
law has no atom at zero. None of these distributional statements
substitutes for the almost-sure argument below.

\subsection{The growing canonical window}\label{paper:sec-window-transfer}
\begin{theorem}
\label{canon:thm-window}
For the exact canonical circle measure, let
$w_n=\lfloor(\log\log n)^{1/64}\rfloor$. Then
\begin{equation}\label{canon:eq-window}
 d_{\mathrm{BL},w_n}\left(
 \Law((a_{n+j}C_{n+j})_{|j|\le w_n}),
 \Law((\mathcal Z_j)_{|j|\le w_n})\right)\longrightarrow0.
\end{equation}
The approximation is unconditional. No optimality of the exponent is
asserted. The corresponding fixed-covariance extension is proved in
Theorem~\ref{uni:thm-window}.
\end{theorem}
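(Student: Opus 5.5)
The plan is to transfer Corollary~\ref{pc:cor-window}, which already gives the unconditional growing-window approximation for the auxiliary measure $D_*$, through the same-space identity \eqref{crm:eq-same-space-transfer}. On that coupling $M=\sqrt{2/\pi}\,q_G^{-1}F_*D_*$, so
\[
 C_{n+j}=\sqrt{2/\pi}\,q_G^{-1}\sum_{k}F_{*,k}\,\widehat D_*(n+j-k).
\]
The target will be the conditional stable window with control $M/\sqrt\pi$, realized as the same Fourier multiplication applied to the stable window of $D_*$. Since $d_{\mathrm{BL},w}$ is bounded by one and is controlled by expected $d_w$-cost of couplings, it suffices to build, for each $n$, a coupling of source and target windows whose expected $d_w$-distance tends to zero.

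First I localize to $\mathcal L_{\beta,R}$ as in the proof of Proposition~\ref{canon:prop-profile}, and truncate the multiplier to $|k|\le K_n$, with $K_n$ a slowly growing power of $\lambda=\log\log n$. The truncated source window of size $w_n$ then depends only on the $D_*$ window of size $w_n+K_n$. If $K_n\le\lambda^{1/64}$, then $w_n+K_n\le2\lambda^{1/64}$, and Corollary~\ref{pc:cor-window} with $c_0=2$ applies to this enlarged window. It gives a conditional coupling, given $\F_b$, of the enlarged auxiliary window with the stable window of control $D_*/\sqrt\pi$, at expected cost $O(\lambda^{-1/128})$ on $E_{\beta,b}$.

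The map from the enlarged window to the truncated multiplied window is linear with coefficients $q_G^{-1}F_{*,k}$. These coefficients are bounded on $\mathcal L_{\beta,R}$ in $\ell^1$ by a constant depending on $R$. The map is therefore Lipschitz for the max metric, uniformly in $n$, and the coupling cost is preserved up to a constant. The normalization ratios $a_{n+j}/a_{n+j-k}$ equal $1+O(K_n/n)$ for $|j|\le w_n$, $|k|\le K_n$.

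The main obstacle is that the coefficients $F_{*,k}$ and $q_G$ are not $\F_b$-measurable, whereas Corollary~\ref{pc:cor-window} couples kernels conditionally on $\F_b$. I would first approximate $F_*$ and $q_G$ by functions of $Y$ and $G_b$. Here $Y$ is independent of $X$, so conditioning on $\F_b\vee\sigma(Y)$ leaves the coupling intact. For $G$, the bound $\operatorname{Var}(G-G_b)=\sigma_G^2e^{-b}$ lets me replace $q_G$ by the $\F_b$-measurable $q_{G_b}$ at negligible cost. The localization to $\mathcal L_{\beta,R}$ is then made with these substitutes.

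The truncation tails are handled in $p$-th moment for the source via \eqref{paper:eq-all-mode} and \eqref{paper:eq-multiplier-tail}. For the target they follow from the analogous fractional moment of stable Fourier coefficients. Because the window now grows, the tail must be bounded for the maximum over $|j|\le w_n$; a union bound over the $2w_n+1$ offsets multiplies the per-offset bound by $w_n$. Taking $s$ large in the definition of $B_s(F_*)$, which is legitimate since $F_*$ is smooth, makes $\sum_{|k|>K_n}(1+|k|)^{-sp}$ decay like a high negative power of $K_n$. This absorbs the factor $w_n$, giving $w_nK_n^{1-sp}\to0$ with $K_n=\lambda^{1/64}/2$.

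Finally, on the target side, multiplication of the stable measure with control $D_*/\sqrt\pi$ by the density $\sqrt{2/\pi}\,q_G^{-1}F_*$ yields the stable measure with control $M/\sqrt\pi$, because the index is one. This identification was already used at the end of the proof of Proposition~\ref{canon:prop-profile}. Removing the localization costs $\Prob(\mathcal L_{\beta,R}^c)+\Prob(E_{\beta,b}\setminus E_\beta)$; letting $n\to\infty$ first and then $\beta,R\to\infty$ proves \eqref{canon:eq-window}.
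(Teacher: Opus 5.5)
Your proposal is correct and follows essentially the same route as the paper. Both arguments localize to $\mathcal L_{\beta,R}$, truncate the Fourier series of $F_*$ at $K\asymp\lambda^{1/64}$ so that the enlarged window stays within $2\lambda^{1/64}$, and replace $q_G$ by the $\F_b$-measurable $q_b$ using $\operatorname{Var}(G-G_b)=\sigma_G^2e^{-b}$. They then apply \eqref{pc:eq-window} to the $C_R$-Lipschitz composed map, whose coefficients are $\F_b\vee\sigma(Y)$-measurable, and absorb the $N_w$ factor in the truncation tails by taking $s$ large.

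The differences are cosmetic. $F_*$ needs no approximation, since it is already $\sigma(Y)$-measurable. The paper bounds the target truncation error through the Cauchy scale $Cq_G^{-1}\|F_*-F_*^{(K)}\|_\infty D_*(\T)$ of one common stable measure, rather than offset by offset. Finally, your ``negligible'' $q_G\to q_b$ step needs H\"older or Cauchy--Schwarz at exponent $2p<1$, because $G$ is not independent of $D_*$.
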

\begin{proof}
Write $N_w=2w+1$ for the number of coordinates. The metric remains $d_w$ as defined in the introduction.
The same-space comparison \eqref{crm:eq-same-space-transfer} gives
$M=\sqrt{2/\pi}\,q_G^{-1}F_*D_*$, with smooth $F_*$ independent of the
auxiliary field, $q_G=e^{\sqrt2G-\sigma_G^2}$, and
$\operatorname{Var}(G-G_b)=\sigma_G^2e^{-b}$ for
$G_b=(2\pi)^{-1}\int X_b$. The endpoint estimate \eqref{paper:eq-all-mode} holds for every
\(0<p<1\).
Also
$\E[\mathbf1_{E_\beta}D_*(\T)^p]\le(2\pi\beta)^p$.

Write $F_*=\sum F_{*,k}e^{-ikx}$, $B_s(F_*)=\sup_k(1+|k|)^s|F_{*,k}|$, and
$F_*^{(K)}=\sum_{|k|\le K}F_{*,k}e^{-ikx}$. On
$\mathcal L_{\beta,R}=E_\beta\cap\{q_G^{-1}\le R,B_s(F_*)\le R\}$,
subadditivity, \eqref{paper:eq-all-mode},  and
\[
 \frac{a_{n+j}}{a_{n+j-k}}
 \le\left(1+\frac{\log(2+|k|)}{\log2}\right)^{1/2}
\]
bound the expected $p$th power of the prelimit truncation error by
$C_{\beta,R,p,s}N_wK^{1-sp}(\log(2+K))^{p/2}$ when $sp>1$.
For the limit use the same stable random measure for the multipliers
$F_*$ and $F_*^{(K)}$. Each coordinate error is Cauchy of scale at most
$Cq_G^{-1}\|F_*-F_*^{(K)}\|_\infty D_*(\T)$, so the corresponding bound
is $C_{\beta,R,p,s}N_wK^{-p(s-1)}$. This retains the dependence among
offsets. For $w+K\le n/2$ and $s>2$, changing finite-convolution
normalizers costs at most
$C_{\beta,R,p,s}(n\sqrt u)^{-p}$, because
$|a_{n+j}-a_{n+j-k}|\le C|k|/(n\sqrt u)$ and
$\sum_k|k||F_{*,k}|\le C_sR$.

The finite-convolution multiplier must be coarse measurable. Put
$q_b=e^{\sqrt2G_b-\sigma_G^2}$. On $\{q_G^{-1},q_b^{-1}\le R\}$,
$|q_G^{-1}-q_b^{-1}|\le\sqrt2R|G-G_b|$.
For $p<1/2$, Cauchy--Schwarz, Gaussian moments and the mass estimate
at exponent $2p$ show that this replacement costs
$C_{\beta,R,p,s}(u^{p/2}+N_w^{1/2})e^{-bp/2}$ in expected truncated
sup distance, counting source and target together. Moreover,
\[
 \Prob(q_b^{-1}>R)\le\Prob(q_G^{-1}>R/2)+C\exp(-ce^b).
\]
No independence of $G$ from $D_*$ is used.

On $\{q_b^{-1}\le R,B_s(F_*)\le R\}$ the map
\[
 \Psi_{b,K}(x)_j=\sqrt{2/\pi}\,q_b^{-1}
                         \sum_{|k|\le K}F_{*,k}x_{j-k}
\]
is $C_sR^2$-Lipschitz for truncated sup metrics, independently of
$K,w$. Its coefficients are measurable with respect to
$\F_b\vee\sigma(Y)$. Adjoining the independent smooth field leaves
the conditional source law unchanged. Apply
\eqref{pc:eq-window} to the composed test on the enlarged
window $w+K$, retaining the coarse event $E_{\beta,b}$.
Replace $Z^\beta$ by $D_*$ only on $E_\beta$. The bounded-test cost
of each barrier replacement is at most $2\Prob(E_\beta^c)$.
With
\[
 \Delta_{\beta,R}=\Prob(E_\beta^c)+\Prob(q_G^{-1}>R/2)
                         +\Prob(B_s(F_*)>R/2),
\]
we have therefore proved that the canonical BL distance is bounded by
\begin{align*}
 &C\Delta_{\beta,R}+C_R e_{\rm aux}(n,w+K)\\
 &\quad+C_{\beta,R,p,s}\bigl\{N_wK^{1-sp}(\log(2+K))^{p/2}
       +N_wK^{-p(s-1)}+(n\sqrt u)^{-p}\\
 &\hspace{4cm}+(u^{p/2}+N_w^{1/2})e^{-bp/2}+e^{-ce^b}\bigr\},
\end{align*}
where $e_{\rm aux}$ denotes the localized conditional distance in
\eqref{pc:eq-window}, bounded by Corollary~\ref{pc:cor-window}. Choose
$p=1/4$, $s=16$, $K=w=\lfloor\lambda^{1/64}\rfloor$.
The two Fourier-tail terms are
$O(\lambda^{-1/32}(\log\lambda)^{1/8})$ and
$O(\lambda^{-11/256})$; the other transfer errors decrease faster.
The doubled auxiliary window satisfies the same estimate.
Thus the bound is
$C\Delta_{\beta,R}+C_{\beta,R}\lambda^{-1/128}$.
First send $n\to\infty$, then exhaust $R,\beta\to\infty$.
This proves Theorem~\ref{canon:thm-window} and the estimate at each fixed
localization. No numerical rate is asserted after removing the localization.
\end{proof}

\section{The convergent integral class}
\label{paper:sec-upper}
The proof uses one coarse partition throughout each block of logarithmic frequency scales.
For each block, the mass in cells above the cutoff is controlled once, while concentration is applied
simultaneously to the contributions from the remaining cells.

\begin{proposition}[Convergent case for the auxiliary measure]
\label{l4v:cor-upper-integral-class}
For a positive deterministic $C^1$ gauge $H$ satisfying \eqref{intro:eq-gauge},
if $\int^\infty dx/H(x)<\infty$, then almost surely
\begin{equation}\label{l4v:eq-upper-integral-class}
 \lim_{|n|\to\infty}
 \frac{a_n|\widehat D_*(n)|}{H(\vartheta_n)}=0.
\end{equation}
\end{proposition}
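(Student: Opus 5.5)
Fix $\beta>0$ and an integer $\epsilon$-free gauge. It suffices to show that, almost surely on $E_\beta$, for every $\varepsilon>0$ only finitely many $n>0$ have $a_n|\widehat{Z^\beta}(n)|>\varepsilon H(\vartheta_n)$. Here $Z^\beta=D_*$ on $E_\beta$, negative $n$ follow by conjugation, and exhausting $\beta$ and rational $\varepsilon$ finishes the argument.

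\textbf{Block structure and splitting.} Group frequencies into double-exponential blocks $\mathcal B_i=\{n:\ e^{e^{i}}\le n<e^{e^{i+1}}\}$, so that $\vartheta_n\in[i,i+1+o(1)]$. The regularity condition \eqref{intro:eq-gauge} gives $H(\vartheta_n)\asymp H(i)$ on the block, and $\sum_i 1/H(i)<\infty$ is equivalent to the integral condition. For each block fix one coarse scale: take $b_i=u_i+4\log u_i$ with $u_i=e^{i}$, the lower end of the block. For every $n\in\mathcal B_i$, use the mesh of cell length $\le e^{-b_n}$ adapted to $n$. (Alternatively, use the single finest mesh of the block; its phase error is $\le Lnd\,\mu(\T)$, and this is summable as long as $d\le e^{-b}$ with $b$ built from the top of the block. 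I would choose the top, $b_i=e^{i+1}+4(i+1)$, so that one partition serves the whole block.) Split each cell mass as $U_l=U_l\1_{\{U_l\le \varepsilon H(i)/2\}}+U_l\1_{\{U_l>\varepsilon H(i)/2\}}$, with the normalization $L$ fixed per $n$ but comparable across the block up to the factor $a_n/a_{n'}$.

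\textbf{Large cells and small cells.} The large-cell contribution is bounded in modulus, simultaneously for all $n$ in the block, by $\sum_l U_l\1_{\{U_l>\varepsilon H/2\}}$ times a bounded ratio. It is nonzero only if some cell exceeds $\varepsilon H(i)/2$. By \eqref{paper:eq-budget}, that probability on $E_{\beta,b_i}$ is at most $C\,\E[\mathcal A]/(\varepsilon H(i))$, which is summable in $i$. Borel--Cantelli over blocks then shows this part vanishes eventually; this is the point where positivity and a single partition per block matter. For the small-cell part, write it as its conditional center $D_{n,\delta}$ with $\delta=\varepsilon H(i)/2$ plus a centered sum. The centered sum over $n\in\mathcal B_i$ consists of sums of conditionally independent (by three colours) bounded variables, with conditional variance $\le C\delta\mathcal A_n$ from Proposition~\ref{crm:prop-budget}. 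Conditional Bernstein at level $\varepsilon H(i)$ gives failure probability $\exp(-c\,\varepsilon H(i)/\mathcal A)$ for bounded $\mathcal A$. A union bound over the at most $e^{e^{i+1}}$ frequencies is too costly, though. I would therefore instead union only over frequencies modulo the phase resolution: replace $n$ by a net of $n$-values, using Lipschitz dependence of the cell sum on $n$ through $e^{iny_l}$. This still needs $e^{b}$ points, so the real trick must be that $H(i)\ge i$ only gives $e^{-ci}$, which is not enough against $e^{e^i}$ frequencies.

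\textbf{Main obstacle.} Controlling the union over the whole block is the main difficulty. My plan is to bound instead, for each $n$, the conditional $p$-th moment, $p$ large, via Rosenthal for the truncated centered sum. This yields a bound of roughly $(\delta^{p-1}\mathcal A)/(\varepsilon H)^p$ per frequency, and the sum over the block is still too big. I would therefore combine this with the Fourier-in-$n$ structure. For fixed mesh, $n\mapsto\sum_l z_l f_l U_l$ is a trigonometric polynomial in $n$ of degree $\le m$, so Bernstein/Salem--Zygmund chaining gives $\sup_n\lesssim\sqrt{\log m}\,\cdot$(variance)$^{1/2}$. Since $\log m\asymp e^{i+1}$ and the variance is $\delta\mathcal A/L^2$-like after normalization, the extra $\sqrt{\log m}$ is exactly compensated by $L=\sqrt{u}$. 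The centered part then contributes $O(\sqrt{\delta})=O(\sqrt{H})=o(H)$ with sub-Gaussian tails $\exp(-cH(i)/\mathcal A)\le e^{-ci}$, which is summable. The centers $D_{n,\delta}$ are handled for all $n$ in the block by the common envelope argument of Lemma~\ref{l3c:lem-common-early} together with \eqref{l3c:eq-cap-errors}, since these bounds are uniform over arbitrary frequency sets, and Markov plus Borel--Cantelli apply along blocks. Finally, $E_{\beta,b_i}$ is replaced by $E_\beta$ using Lemma~\ref{l4:lem-late-global-crossing}.
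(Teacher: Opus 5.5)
\textbf{There is a genuine gap.} It is exactly the step you flag as the main obstacle: controlling the small-mark part and its center simultaneously over the roughly $e^{u}$ frequencies of a block, where $u\asymp e^{i}$. Your treatment of cells above $\varepsilon H(i)/2$ is fine: with one partition per block, \eqref{paper:eq-budget} makes the exceedance probability summable.

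\emph{The Bernstein step.} With a cutoff $\delta\asymp\varepsilon H(i)$ comparable to the threshold, conditional Bernstein does not give $\exp(-c\varepsilon H(i)/\mathcal A)$. The bounded-summand term $\delta x$ in its denominator is already of order $\varepsilon^2H(i)^2$, so the exponent is $O(1)$.

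\emph{The Salem--Zygmund repair.} This also fails. In normalized units the conditional variance of the centered small-mark sum is already $C\delta\mathcal A_n$ with $\mathcal A_n=O(1)$ (Proposition~\ref{crm:prop-budget}), so no factor $1/L$ remains to cancel $\sqrt{\log m}\asymp\sqrt u$. A union or chaining bound over the $m$ distinct frequencies only yields a bound of order $\sqrt{u\delta}+\delta u$, far above $H(i)\le Ci^{A_H}$. The claimed compensation by $L$ counts $L$ twice.

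\emph{The centers.} These are not controlled either. The bound \eqref{l3c:eq-truncated-expectation} contains $\sqrt{\mathcal V_{n,w,\delta}}$, which carries the factor $N_w$ (the number of frequencies). Lemma~\ref{l3c:lem-common-early} covers only $k\in[n/2,3n/2]$ at scales tied to that $n$. Only the cap errors \eqref{l3c:eq-cap-errors} are uniform over arbitrary frequency sets. So with one coarse time serving a whole double-exponential block, nothing in Section~\ref{l3c:sec-uniform-center} controls the centers at its lower frequencies.

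\textbf{The missing idea} is to use a \emph{very small} cutoff and pay for the discarded mass through the index-one logarithm.
\begin{itemize}
\item \emph{Coarse time and cutoff.} The paper takes the coarse time $t_j=N_j\log2-6\log N_j$ at the \emph{bottom} of the block $2^{N_j}\le|n|<2^{2N_j}$, with one three-color mesh there. It keeps the exact cell integrals $V_i(n)=\int_{J_i}e^{\ii nx}Z^\beta(dx)$, so no phase is frozen. It truncates raw cell masses at $\sigma_j\asymp\eta_j^2/N_j$, where $\eta_j=\delta H(j)/\sqrt{t_j}$.
\item \emph{Union bound.} Bernstein's exponent is then $\gtrsim N_j$, which beats the union over all $\asymp2^{2N_j}$ frequencies; see \eqref{l4v:eq-block-concentration}.
\item \emph{Discarded cells.} By positivity, the cells above $\sigma_j$ contribute at most $T_j+B_j$ to every $n$ at once. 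Lemma~\ref{l4v:lem-positive-tail-budget} bounds $B_j\le h_jW_{t_j}(\T)+C\mathcal A_{t_j}$ with $h_j\approx\log(1/\sigma_j)/\sqrt2=O(j)$; this is only logarithmic because of the $1/x$ tail.
\item \emph{Where the gauge enters.} Since $\sqrt{t_j}W_{t_j}(\T)$ stays bounded along the sequence by the summable Seneta--Heyde rate, this cost is $O(j/\sqrt{t_j})=o(\eta_j)$. That holds precisely because $H(j)/j\to\infty$, which is where the gauge hypotheses enter beyond Borel--Cantelli.
\item \emph{Centering.} Centering at $\F_{t_j}$ leaves only $\widehat{Z^\beta_{t_j}}(n)$. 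This is small uniformly over the block by integration by parts, since $e^{t_j}/|n|\le N_j^{-6}$. No center estimates from Section~\ref{l3c:sec-uniform-center} are needed.
\end{itemize}
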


\subsection{The predictable positive tail at an arbitrary cutoff}
At time $t$, use the complete three-color circular mesh
\[
 m(t)=3\left\lceil\frac{2\pi e^t}{3}\right\rceil,
 \quad d=2\pi/m(t),\quad y_i=id,\quad J_i=[y_i,y_i+d),
 \quad\theta=e^td\in(1/2,1].
\]
Cells in the same color are separated by $2d>e^{-t}$, so their terminal
restrictions are conditionally independent given $\mathcal F_t$.
Set
\[
 H_i(v)=X_t(y_i+e^{-t}v)-X_t(y_i),\quad
 a_i=e^{-t}w_t(y_i),\quad
 \mathcal A_t=\sum_i a_i e^{2\sqrt2\|H_i\|_\infty}.
\]
Here and below \(\|H_i\|_\infty\) denotes the supremum norm on \([0,1]\).

\begin{lemma}
\label{l4v:lem-positive-tail-budget}
For $0<\sigma<1$ put $h=\beta+\log(1/\sigma)/\sqrt2$ and
\[
 T_{t,\sigma}=\sum_i Z^\beta(J_i)
                           \mathbf1_{\{Z^\beta(J_i)>\sigma\}},
 \qquad B_{t,\sigma}=\mathbb E[T_{t,\sigma}\mid\mathcal F_t].
\]
On $E_{\beta,t}$,
\begin{align}
 \mathbb P(Z^\beta(J_i)>x\mid\mathcal F_t)
 &\le C_k a_i e^{2\sqrt2\|H_i\|_\infty}/x,
       \qquad x>0,\label{l4v:eq-cell-weakL1}\\
 B_{t,\sigma}&\le hW_t(\T)+C_k\mathcal A_t,
 \label{l4v:eq-predictable-positive-tail}\\
 \mathbb E[\mathbf1_{E_{\beta,t}}\mathcal A_t]
 &\le C_{\beta,k}t^{-1/2}.
 \label{l4v:eq-shape-budget}
\end{align}
Consequently
\begin{equation}\label{l4v:eq-discarded-deviation}
 \mathbb P\left(E_{\beta,t}\cap
       \{T_{t,\sigma}>B_{t,\sigma}+x\}\right)
 \le \frac{C_{\beta,k}}{x\sqrt t},\qquad x>0.
\end{equation}
The right-hand side of
\eqref{l4v:eq-discarded-deviation} is independent of \(h\).
\end{lemma}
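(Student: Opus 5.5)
We condition on $\mathcal F_t$ and use the coarse/fine scaling of each cell, exactly as in \eqref{paper:eq-cell-scaling} but now at time $t$ with normalizer $1$. On $E_{\beta,t}$ the restriction of $Z^\beta$ to $J_i$ equals $a_iZ^{[R_t(y_i)-H_i]}(g_i)$, where $g_i=e^{\sqrt2H_i}\1_{[0,\theta]}$ and $Z^{[q]}$ is the fine positive measure killed at the spatial starting barrier $q$. The three estimates are then handled as follows.

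For \eqref{l4v:eq-cell-weakL1} we use the domination $Z^{[q]}(g_i)\le D(g_i)\le e^{\sqrt2\|H_i\|_\infty}D(I)$ from \eqref{paper:eq-fine}. The weak-$L^1$ bound $\Prob(D(I)>v)\le C_D/v$ then gives $C_Da_ie^{\sqrt2\|H_i\|_\infty}/x$, which is stronger than the stated bound.

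For \eqref{l4v:eq-predictable-positive-tail} we write
\[
\E[Z^\beta(J_i)\1_{\{Z^\beta(J_i)>\sigma\}}\mid\F_t]=\E[Z^\beta(J_i)\mid\F_t]-\E[Z^\beta(J_i)\1_{\{Z^\beta(J_i)\le\sigma\}}\mid\F_t].
\]
The first term is $Z_t^\beta(J_i)$ by martingale closure. On $E_{\beta,t}$ this equals $\int_{J_i}R_tw_t$, since the barrier indicator is one.

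The second term is the capped first-moment quantity, which we bound below. Replace $Z^{[q]}$ by $D$: they agree off a crossing event of probability $\le C_ka_ie^{C\|H_i\|}e^{-\sqrt2\beta}\cdot(\text{const})$, so the hard-truncated means differ by at most $\sigma$ times that crossing probability, which is $\le C a_ie^{2\sqrt2\|H_i\|}$. Then use $\E[X\1_{X\le\sigma}]\ge\E\min(X,\sigma)-\sigma\Prob(X>\sigma)$ with $X=a_iD(g_i)$. Lemma~\ref{crm:lem-capped-first-moment} with $T=\sigma/a_i$ gives
\[
\E\min(a_iD(g_i),\sigma)\ge a_i\kappa G_i\log_+(\sigma/a_i)-Ca_i(1+e^{2\sqrt2\|H_i\|}),
\]
and the weak-$L^1$ bound gives $\sigma\Prob(X>\sigma)\le C a_ie^{\sqrt2\|H_i\|}$.

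It remains to compare $\log_+(\sigma/a_i)$ with the radius. We have $\log(\sigma/a_i)=\sqrt2(R_t(y_i)-h)$, where $h=\beta+\log(1/\sigma)/\sqrt2$ is exactly this choice. Subtracting, the cell term is at most
\[
\int_{J_i}R_tw_t-a_iG_i(R_t(y_i)-h)_+ +Ca_ie^{2\sqrt2\|H_i\|}.
\]
Since $a_iG_i=W_t(J_i)$, and $\int_{J_i}R_tw_t=R_t(y_i)W_t(J_i)-\int_{J_i}H_iw_t$ with $|\int_{J_i}H_iw_t|\le a_i\|H_i\|e^{\sqrt2\|H_i\|}$, we use $r-(r-h)_+\le h$ to get the bound $hW_t(J_i)+C_ka_ie^{2\sqrt2\|H_i\|}$. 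Summing over $i$ gives \eqref{l4v:eq-predictable-positive-tail}.

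For \eqref{l4v:eq-shape-budget} we follow the proof of Lemma~\ref{l4:lem-late-global-crossing}. First, $\1_{E_{\beta,t}}\le I_t^\beta(y_i)$. The rooted identity then gives
\[
\E[\1 a_ie^{2\sqrt2\|H_i\|}]\le e^{-t}\beta\,\E_{\mathbb Q}[e^{2\sqrt2\|H_i\|}/R_t].
\]
Cauchy--Schwarz with the uniform rooted exponential moments and $\E_{\mathbb Q}R_t^{-2}\le2/t$ bounds this by $Ce^{-t}t^{-1/2}$. Summing over the $O(e^t)$ cells gives \eqref{l4v:eq-shape-budget}.

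Finally, for \eqref{l4v:eq-discarded-deviation} we apply Chebyshev's inequality conditionally. Split $T_{t,\sigma}-B_{t,\sigma}$ into three color classes; within each class the summands are conditionally independent given $\F_t$. The centered sum of the truncated-above variables is controlled by a conditional second-moment bound, but the summands are heavy-tailed, so instead we use a weak-type estimate. Write $Y_i=Z^\beta(J_i)\1_{\{>\sigma\}}$ and truncate at level $x$. The probability that some $Y_i>x$ is at most $\sum_i C a_ie^{2\sqrt2\|H_i\|}/x=C\mathcal A_t/x$ by \eqref{l4v:eq-cell-weakL1}. For the part below $x$, the conditional variance is $\le\sum_i\E[Y_i^2\1_{Y_i\le x}]\le 2C x\mathcal A_t$ by the weak-$L^1$ layer-cake bound, so Chebyshev contributes $\le C\mathcal A_t/x$. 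Removing the truncation only decreases the mean, which helps the one-sided bound. Taking expectations on $E_{\beta,t}$ and using \eqref{l4v:eq-shape-budget} yields $C_{\beta,k}/(x\sqrt t)$, independently of $h$.

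The main obstacle is the lower bound on the truncated mean. It requires the exact matching of $\log(\sigma/a_i)$ with $\sqrt2(R_t(y_i)-h)$, and it requires controlling the discrepancy between the killed fine measure $Z^{[q]}$ and $D$ uniformly in $\sigma$, without any $\log(1/\sigma)$ loss in the remainder.
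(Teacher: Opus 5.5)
Your proposal is correct. For \eqref{l4v:eq-cell-weakL1}, \eqref{l4v:eq-shape-budget} and \eqref{l4v:eq-discarded-deviation} you follow the paper's proof essentially step for step:
\begin{itemize}
\item domination by the untruncated fine mass together with the weak-$L^1$ tail;
\item anchor rooting, Cauchy--Schwarz and $\E_{\mathbb Q}R_t^{-2}\le 2/t$;
\item truncation at level $x$, a union bound for the large marks, and colorwise Chebyshev.
\end{itemize}
In the last step, your remark that the truncated sum has conditional mean at most $B_{t,\sigma}$ makes the one-sided reduction explicit, which the paper leaves implicit.

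The genuine difference is in \eqref{l4v:eq-predictable-positive-tail}. The paper applies the shifted-barrier Lemma~\ref{crm:lem-cell-shift} with $L$ replaced by $1/\sigma$. That lemma couples $Z^{[q]}$ with the lowered-barrier measure $Z^{[(q-h)_+]}$, whose mean deficit is exactly $\int g\min(q,h)$. This gives a two-sided estimate with the pointwise integrand $w_t(x)\min(R_t^\beta(x),h)$.

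You proceed differently. You start from the martingale-closure mean $Z_t^\beta(J_i)=\int_{J_i}R_tw_t$ and subtract a lower bound for the small-mark mean. That lower bound combines:
\begin{itemize}
\item the crossing coupling with $D$, at cost at most $\sigma C_ke^{-\sqrt2\beta}a_ie^{\sqrt2\sup H_i}$;
\item the identity $\E[X\1_{\{X\le\sigma\}}]=\E\min(X,\sigma)-\sigma\Prob(X>\sigma)$;
\item the bounded-remainder Lemma~\ref{crm:lem-capped-first-moment};
\item the exact matching $\kappa\log_+(\sigma/a_i)=(R_t(y_i)-h)_+$.
\end{itemize}
This yields only an upper bound. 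In it, $\min(R_t(y_i),h)$ is frozen at the anchor, and there is an extra error $a_i\|H_i\|_\infty e^{\sqrt2\|H_i\|_\infty}$ from replacing $R_t$ by its anchor value; both are harmless after absorption into $\mathcal A_t$.

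What each route buys: yours is essentially the exact cutoff identity \eqref{l3c:eq-cap-identity} used one-sidedly at time $t$, and it relies only on main-text lemmas. The paper's route reuses an appendix lemma built for the endpoint moment bound, which also supplies the matching lower bound. That lower bound is not needed here.
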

\begin{proof}
The coarse/fine factorization on $E_{\beta,t}$ is
\[
 Z^\beta(J_i)=a_i Z^{[q_i]}(g_i\mathbf1_{[0,\theta]}),
 \quad g_i=e^{\sqrt2H_i},\quad
 q_i(v)=R_t^\beta(y_i)-H_i(v)>0.
\]
The untruncated fine mass dominates the positive-barrier mass, and
Lemma~\ref{crm:lem-finetail} yields \eqref{l4v:eq-cell-weakL1}.

Apply Lemma~\ref{crm:lem-cell-shift} of Appendix~\ref{crm:sec-endpoint-closure} with
$L$ there replaced by $1/\sigma$. Its identity uses only
$c=a_i/\sigma=e^{\sqrt2(h-R_t^\beta(y_i))}$, so it holds at
the present arbitrary time and cutoff. Multiplication by $\sigma$
gives
\[
 \E[Z^\beta(J_i)\1_{\{Z^\beta(J_i)>\sigma\}}\mid\F_t]
 \le \int_{J_i}w_t(x)\min(R_t^\beta(x),h)\,dx
       +C_ka_i e^{2\sqrt2\|H_i\|_\infty}.
\]
Summation proves \eqref{l4v:eq-predictable-positive-tail}.

Since $E_{\beta,t}$ implies survival at every anchor, rooting at $y_i$
and applying Cauchy--Schwarz gives
\[
 \mathbb E[\mathbf1_{E_{\beta,t}}a_i e^{2\sqrt2\|H_i\|_\infty}]
 \le\beta e^{-t}\mathbb E_{\mathbb Q_{t,y_i}^\beta}
       [R_t^{-1}e^{2\sqrt2\|H_i\|_\infty}]
 \le C_{\beta,k}e^{-t}t^{-1/2}.
\]
There are $O(e^t)$ cells, proving \eqref{l4v:eq-shape-budget}.

To prove \eqref{l4v:eq-discarded-deviation}, set
\[
 Y_i=
 Z^\beta(J_i)\1_{\{Z^\beta(J_i)>\sigma\}}.
\]
On \(E_{\beta,t}\), \eqref{l4v:eq-cell-weakL1} gives
\[
 \Prob(Y_i>x\mid\mathcal F_t)
 \le
 \frac{C_k a_i e^{2\sqrt2\|H_i\|_\infty}}{x},
 \qquad x>0.
\]
Hence the conditional probability that some \(Y_i>x\) is at most
\(C_k\mathcal A_t/x\). On the complementary event, truncate each
\(Y_i\) at \(x\). Its conditional variance satisfies
\[
 \operatorname{Var}
 \bigl(Y_i\1_{\{Y_i\le x\}}\mid\mathcal F_t\bigr)
 \le
 C_k x a_i e^{2\sqrt2\|H_i\|_\infty}.
\]
Within each of the three color classes the variables are conditionally
independent given \(\mathcal F_t\). Chebyshev's inequality for the three
centered color sums, followed by a union bound, therefore gives
\[
 \Prob\left(
 T_{t,\sigma}>B_{t,\sigma}+x
 \,\middle|\,\mathcal F_t
 \right)
 \le \frac{C_k\mathcal A_t}{x}
 \qquad\text{on }E_{\beta,t}.
\]
Multiplying by \(\1_{E_{\beta,t}}\), taking expectations, and using
\eqref{l4v:eq-shape-budget} proves
\eqref{l4v:eq-discarded-deviation}.
\end{proof}

\subsection{Proof of Proposition~\ref{l4v:cor-upper-integral-class}}
The derivative condition implies that $H$ is eventually nondecreasing
and $H(x)\le C_Hx^{A_H}$. The integral and series tests are equivalent.
Moreover
\[
 \frac{j}{2H(j)}
 \le
 \sum_{k=\lceil j/2\rceil}^j\frac1{H(k)}
 \to0,
\]
so $H(j)/j\to\infty$.
Fix a positive rational $\delta$.
Let $j$ range over sufficiently large integers. Put
\[
 N_j=2^j,\quad t_j=N_j\log2-6\log N_j,\quad
 g_j=\delta H(j),\quad
 \eta_j=g_j/\sqrt{t_j},
\]
\[
 \mathcal L_j=\{n\in\mathbb Z:2^{N_j}\le|n|<2^{2N_j}\}.
\]
These blocks are disjoint and cover all sufficiently large frequencies.
Use only the time $t_j$ and its three-color mesh throughout
$\mathcal L_j$, even at its largest frequencies. Fix $K\ge1$ and choose
\[
 \sigma_j=\frac{\eta_j^2}
                   {576(K+1)(\log2)N_j},\qquad
 h_j=\beta+\frac{\log(1/\sigma_j)}{\sqrt2}.
\]
For large $j$, $\eta_j,\sigma_j<1$ and $h_j\le C_{\beta,K,H,\delta}j$.

For each cell in this single partition define
\[
 V_i(n)=\int_{J_i}e^{inx}Z^\beta(dx),\quad
 Y_i(n)=V_i(n)\mathbf1_{\{Z^\beta(J_i)\le\sigma_j\}}
 -\mathbb E[V_i(n)\mathbf1_{\{Z^\beta(J_i)\le\sigma_j\}}
                                  \mid\mathcal F_{t_j}].
\]
Each scalar component has conditional variance at most
$\sigma_jZ_{t_j}^\beta(J_i)$, and $|Y_i|\le2\sigma_j$.
With three colors and real/imaginary components, Bernstein at threshold
$\eta_j/6$ yields
\begin{align}
 &\mathbb P\left(
   \max_{n\in\mathcal L_j}\left|\sum_iY_i(n)\right|>\eta_j,
                  Z_{t_j}^\beta(\T)\le K\right)\notag\\
 &\quad\le24\exp\left\{2N_j\log2
           -\frac{\eta_j^2}{72\sigma_j(K+\eta_j/9)}\right\}
 \le24\,2^{-6N_j}.
 \label{l4v:eq-block-concentration}
\end{align}
No cell phase is frozen: the Fourier integrals $V_i(n)$ are exact. Thus
using this coarse partition at the top of the frequency block creates
no approximation error.

At these geometric scale times, \eqref{l3:eq-SH-BL-rate} is summable:
\[
 \sum_j\mathbb E\left|\sqrt{t_j}W_{t_j}^\beta(\T)
                         -\sqrt{2/\pi}Z^\beta(\T)\right|<\infty.
\]
Consequently $\sup_j\sqrt{t_j}W_{t_j}^\beta(\T)<\infty$ almost surely.
The total derivative martingale also converges and is bounded along
this sequence. Hence the events
\[
 \Omega_{\beta,K,Q}=
 \left\{\sup_j Z_{t_j}^\beta(\T)\le K,\quad
        \sup_j\sqrt{t_j}W_{t_j}^\beta(\T)\le Q\right\},
 \qquad K,Q\in\mathbb N,
\]
exhaust the probability space. On $E_\beta$ we have
$W_{t_j}=W_{t_j}^\beta$, and on $E_\beta\cap\Omega_{\beta,K,Q}$,
\[
 \frac{h_jW_{t_j}(\T)}{\eta_j}
 \le C_{\beta,K,H,\delta}\frac{Qj}{g_j}\longrightarrow0.
\]
The event is used only after the unconditional bounds; all conditional
estimates use its corresponding time-$t_j$ measurable conditions.

Write $T_j=T_{t_j,\sigma_j}$ and $B_j=B_{t_j,\sigma_j}$. Fix any constant $c>0$. By
\eqref{l4v:eq-shape-budget} and \eqref{l4v:eq-discarded-deviation},
\begin{align}
 \mathbb P(E_{\beta,t_j}\cap\{\mathcal A_{t_j}>c\eta_j\})
 &\le C_{\beta,k,c}/g_j,\label{l4v:eq-shape-summable}\\
 \mathbb P(E_{\beta,t_j}\cap\{T_j>B_j+\eta_j\})
 &\le C_{\beta,k}/g_j.\label{l4v:eq-tail-summable}
\end{align}
Both sequences are summable because
\(\sum_j1/H(j)<\infty\).
On $E_\beta\cap\Omega_{\beta,K,Q}$, these estimates and
\eqref{l4v:eq-predictable-positive-tail} give
$T_j+B_j=O(\eta_j)$ almost surely. Crucially, $B_j$ is evaluated at one
time only, so no maximal inequality for predictable tails is needed.

The exact capped decomposition, valid at every frequency, is
\[
 \left|\widehat Z^\beta(n)-\widehat Z_{t_j}^\beta(n)
                         -\sum_iY_i(n)\right|\le T_j+B_j.
\]
On $E_\beta$, the smooth coarse density is $R_{t_j}^\beta w_{t_j}$.
Its expected total variation is at most
$C_{\beta,k}e^{t_j}(1+\sqrt{t_j})$, by Gaussian exponential tilting
and independence of the field from its spatial derivative at a point.
Integration by parts, using $|n|\ge2^{N_j}$ throughout the block, gives
\[
 \mathbb P\left(E_\beta\cap
  \left\{\max_{n\in\mathcal L_j}|\widehat Z_{t_j}^\beta(n)|>\eta_j
  \right\}\right)
 \le C_{\beta,k}\frac{N_j^{-5}}{g_j}.
\]
This is summable as well. Thus, for fixed $\beta,K,Q$, sufficiently
large $j$ depending on these parameters, and a fixed sufficiently large
constant $C_0=C_0(k)$, the entire argument supplies
\begin{align}
 &\mathbb P\left(E_\beta\cap\Omega_{\beta,K,Q}\cap
   \left\{\max_{n\in\mathcal L_j}|\widehat Z^\beta(n)|
                                       >C_0\eta_j\right\}\right)
 \notag\\
 &\qquad\le C_{\beta,k,K,Q,H,\delta}
          \left\{g_j^{-1}+2^{-6N_j}+N_j^{-5}/g_j\right\}.
 \label{l4v:eq-final-summable-block}
\end{align}

Borel--Cantelli and countable exhaustion in $K,Q$ and the integer
barriers give
\[
 \max_{n\in\mathcal L_j}|\widehat D_*(n)|
 \le C_k\delta H(j)/\sqrt{t_j}\quad\hbox{eventually almost surely}.
\]
Throughout $\mathcal L_j$, $\log|n|\asymp t_j\asymp2^j$ and
$\vartheta_n\asymp j$. Integrating the logarithmic derivative bound
therefore gives $H(j)/H(\vartheta_n)\le C_{A_H}$, with a constant
independent of $\delta$. Thus the limsup in
\eqref{l4v:eq-upper-integral-class} is at most $C_{k,A_H}\delta$.
Intersect the probability-one events for positive rational $\delta$
and then let $\delta\downarrow0$.
This proves Proposition~\ref{l4v:cor-upper-integral-class}.
Its canonical transfer is included in the proof of Proposition~\ref{canon:prop-integral}.

\section{The divergent integral class}
\label{paper:sec-lower}
\subsection{Relative excursion probabilities}
\label{pc:sec-excursion}

\begin{theorem}[Relative excursion estimate]
\label{pc:thm-excursion}
Fix $a>0$. Use $S=Z^\beta(\T)/\sqrt\pi$ and
$E=E_{\beta,b}$. Let $r\ge1$, $0<\epsilon\le1$, and
$v=\epsilon r$, with
\(
 r,v\in[\lambda^{-a},\lambda^a].
\)
There is a nonnegative remainder $\mathcal R$ such that, pointwise
on $E$,
\begin{equation}
 \Prob(a_n|\widehat Z^\beta(n)|>r\mid\F_b)
 \ge \frac S{2r}e^{-S/v}-\mathcal R,
 \label{pc:eq-excursion}
\end{equation}
and
\begin{equation}
 \E[\1_E\mathcal R]\le C_{\beta,k,q,a}\left\{
 \frac{\lambda^{-1/2}+u^{-1/5}}v+u^{-1/2}
 +\frac\epsilon r+\frac{\lambda^{-1/2}}r
 +\frac{u^{-7/2}}r+B_n(v)\right\}.
 \label{pc:eq-relative}
\end{equation}
The comparison variable $S$ and the remainder need not be
$\F_b$-measurable.
\end{theorem}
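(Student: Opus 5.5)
The lower bound $\frac S{2r}e^{-S/v}$ is the Poisson probability that exactly one mark of the limiting intensity $\Lambda_*\otimes r^{-2}\,dr$ exceeds $v$ and that mark exceeds $r$. The plan is to isolate one large cell and show that, on this event, everything else is small.

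\textbf{Decomposition.} On $E$ write
\[
 a_n\widehat Z^\beta(n)=\sum_l z_lU_l+(\text{phase, normalizer errors}),
\]
with the errors of expected size $O(u^{-7/2})$ by \eqref{crm:eq-mesherror}; Markov at level of order $r$ gives the term $u^{-7/2}/r$. Split the cell sum at the cutoff $v=\epsilon r$:
\[
 \sum_l z_lU_l=D_v(0)+\sum_l z_l\bigl(U_l\1_{\{U_l\le v\}}-\E[\,\cdot\mid\F_b]\bigr)+\sum_l z_lU_l\1_{\{U_l>v\}}.
\]

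\textbf{Reduction to one large cell.} Let $\mathcal G$ be the event that exactly one cell has $U_l>v$, that cell has $U_l>2r$, and the first two pieces together have modulus at most $r$. On $\mathcal G$ we get $a_n|\widehat Z^\beta(n)|>r$ after the mesh correction, so
\[
 \Prob(a_n|\widehat Z^\beta(n)|>r\mid\F_b)\ge\Prob(\text{one cell}>v,\ \text{it}>2r\mid\F_b)-\Prob(|D_v(0)|>r/2\mid\F_b)-\Prob(|\text{small sum}|>r/2\mid\F_b).
\]
\begin{itemize}
\item The center term is bounded by Corollary~\ref{l3c:cor-scalar-growing-center} and Markov, contributing $\lambda^{-1/2}/r$ (the supremum over $v$ there is outside the expectation, which suffices since $v$ is deterministic).
\item The small centered sum has conditional second moment at most $Cv\mathcal A_n$. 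Chebyshev gives $Cv\mathcal A_n/r^2=C\epsilon\mathcal A_n/r$, hence the term $\epsilon/r$ after localized averaging.
\end{itemize}

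\textbf{The one-large-cell probability.} Apply Lemma~\ref{pc:lem-PPP}: the marks above $v$ are coupled with a Poisson process of intensity $\nu_v$ at cost $\Pi_v$, with $\E[\1_E\Pi_v]\le CB_n(v)$. For the Poisson process, the probability of exactly one point above $v$, located above $2r$, is
\[
 p_{2r}\,e^{-p_v}
\]
by splitting the intensity into $(v,2r]$ and $(2r,\infty)$. By \eqref{pc:eq-scalar-tail}, $p_{2r}\approx S/(2r)$ and $p_v\approx S/v$. Since $x\mapsto e^{-x}$ is $1$-Lipschitz on $[0,\infty)$ and $p_{2r}\le 1$,
\[
 p_{2r}e^{-p_v}\ge \frac S{2r}e^{-S/v}-|p_{2r}-S/(2r)|-\frac S{2r}|p_v-S/v|.
\]
The tail errors of \eqref{pc:eq-scalar-tail}, at $v$ and $2r$, give
\[
 \frac{\lambda^{-1/2}+u^{-1/5}}v+u^{-1/2}
\]
(the $2r$ version is dominated since $r\ge v$). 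Any factor $S$ multiplying these errors is handled by $S\le C$ on a localization, or by $\frac S{2r}\le \frac{S}{2v}$ combined with the $L^1$ bounds; this is where the comparison variable $S$ need not be $\F_b$-measurable. Collecting these losses into $\mathcal R$ yields \eqref{pc:eq-relative}.

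\textbf{Main obstacle.} The delicate step is controlling the error term $\frac S{2r}|p_v-S/v|$ in $L^1$ without a moment bound on $S\cdot|p_v-S/v|$. I would first try truncation, $S\le\lambda^{a}$ or $e^{-S/v}S/v\le 1/e$, writing
\[
 \frac S{2r}\,|e^{-p_v}-e^{-S/v}|\le\frac{v}{2r}\sup_{x}\bigl(xe^{-x}\bigr)\frac{|p_v-S/v|}{\min(\cdot)}\,,
\]
using the mean value theorem with the minimum of $p_v$ and $S/v$. If that fails, I would localize to $\{S\le K\}$ with $\E Z^\beta(\T)=2\pi\beta$, accepting a constant depending on $\beta$.
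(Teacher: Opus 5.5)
Your architecture is the paper's: the Poisson one-jump probability $p_{2r}e^{-p_v}$ via Lemma~\ref{pc:lem-PPP}, and the decomposition into large marks, the centered small sum, the center $D_v(0)$ and the Fourier-to-cell error. Conditional Chebyshev, Corollary~\ref{l3c:cor-scalar-growing-center} and \eqref{pc:eq-scalar-tail} then supply the terms of \eqref{pc:eq-relative}. (A minor bookkeeping point: thresholds $r/2$ for both $D_v(0)$ and the small sum leave no slack for the mesh error. Use, e.g., $r/4$, $r/2$, $r/4$ for the center, small sum and mesh error, as the paper does.)

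\textbf{The gap: the interpolation step you flag as the main obstacle.} Your split $(p_{2r}-\frac S{2r})e^{-p_v}+\frac S{2r}(e^{-p_v}-e^{-S/v})$, with only the $1$-Lipschitz property of $e^{-x}$, leaves the term $\frac S{2r}|p_v-S/v|$. But \eqref{pc:eq-scalar-tail} controls only $\E[\1_E|p_v-S/v|]$, and $S$ is unbounded. Neither proposed repair closes this. Your displayed mean-value bound divides by $\min(p_v,S/v)$, which is uncontrolled when $p_v\ll S/v$. Localizing to $\{S\le K\}$ forces $\mathcal R$ to contain $\frac S{2r}e^{-S/v}\1_{\{S>K\}}$ on $E$. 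For large $v$ its mean is of order $r^{-1}\E[S\1_{\{S>K\}}]$, which for fixed $K$ is not dominated by the right side of \eqref{pc:eq-relative} (take $\epsilon$ small but $\epsilon\gg\lambda^{-1/2}$). Letting $K$ grow instead inflates the constant in front of $\E[\1_E|p_v-S/v|]$. (Also, $p_{2r}\le1$ is not justified, since $p_{2r}$ is a sum of conditional probabilities; your split does not need it anyway.)

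\textbf{The missing observation: no factor of $S$ need appear.} Since $v\le r<2r$ and $p_{2r}\le p_v$, both $(p_{2r},p_v)$ and $(S/(2r),S/v)$ lie in the convex set $\{0\le x\le y\}$. There $F(x,y)=xe^{-y}$ satisfies $|\partial_xF|=e^{-y}\le1$ and $|\partial_yF|=xe^{-y}\le ye^{-y}\le e^{-1}$. The mean value inequality along the segment gives directly $p_{2r}e^{-p_v}\ge\frac S{2r}e^{-S/v}-|p_{2r}-S/(2r)|-|p_v-S/v|$, which is what the paper uses.

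Alternatively, you can keep your split. Use the elementary bound $y|e^{-y'}-e^{-y}|\le(1+e^{-1})|y-y'|$ for $y,y'\ge0$, with $y=S/v$, $y'=p_v$ and $\frac S{2r}=\frac\epsilon2\cdot\frac Sv$. Your hint $\frac Sve^{-S/v}\le e^{-1}$ covers only the case $p_v\ge S/v$; the case $p_v<S/v$ needs the extra $|y-y'|$ term. With either fix the remainder has exactly the form \eqref{pc:eq-relative}, and no moment of $S$ beyond \eqref{pc:eq-scalar-tail} is used.
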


\begin{proof}
The Poisson process in Lemma~\ref{pc:lem-PPP} has exactly one radius
above $2r$ and no other radius above $v$ with probability
$p_{2r}e^{-p_v}$. Its large-jump sum then has modulus above $2r$,
regardless of the phase. On $0\le x\le y$, the function $xe^{-y}$
has both partial derivatives bounded by one. Thus this probability
is bounded below by
\[
 \frac S{2r}e^{-S/v}
       -|p_{2r}-S/(2r)|-|p_v-S/v|.
\]
Coupling back to the actual point process costs $\Pi_v$.
The exact scalar decomposition is
\[
 \sum_lz_lU_l=\sum_lz_lU_l\1_{\{U_l>v\}}+V_v+D_v(0),
 \quad
 V_v=\sum_lz_l\{U_l\1_{\{U_l\le v\}}
                  -\E[U_l\1_{\{U_l\le v\}}\mid\F_b]\}.
\]
Subtract the conditional probability of $|V_v|>r/2$, the coarse
indicator of $|D_v(0)|>r/4$, and the conditional probability that
the Fourier-to-cell error exceeds $r/4$. Off these events the
one-jump event implies $a_n|\widehat Z^\beta(n)|>r$.
These subtractions and the two tail errors define $\mathcal R$.

By \eqref{paper:eq-budget}, conditional Chebyshev costs $Cv/r^2=C\epsilon/r$ after
averaging. Corollary~\ref{l3c:cor-scalar-growing-center} and Markov cost $C\lambda^{-1/2}/r$.
The phase and normalization errors have expected size $O(u^{-7/2})$
as in Theorem~\ref{pc:thm-window}. Finally apply
\eqref{pc:eq-scalar-tail} at $v,2r$ and Lemma~\ref{pc:lem-PPP}.
Enlarging the fixed exponent range to include $2r$ is harmless.
No second moment of $S$ or independence of the large and small sums
is used.
\end{proof}

In the proof of Theorem~\ref{p4:thm-lower-assembly},
\eqref{pc:eq-relative} will be applied on a geometric sequence of frequencies, for which the remainder is summable.
Since the terminal Fourier coefficients are not adapted to the corresponding coarse
filtrations, we first approximate them by finite-time coefficients.

\subsection{Approximation by finite-time Fourier coefficients}
\label{l4w:sec-weak-bridge}

The next estimate compares the terminal Fourier coefficient with the coefficient at time \(T=K\log n\).
Its leading error has order
\(1/(\sqrt K\,r)\), so its coefficient can be made arbitrarily small relative to the \(1/r\) excursion probability by increasing the fixed constant \(K\).

\begin{theorem}[Weak-$L^1$ finite-time approximation]
\label{l4w:thm-weak-bridge}
Fix $\beta>0$.  Put $u=\log n$, $b=u+4\log u$, and let
$T=Ku$, where $K\ge2$ is deterministic and $u$ is sufficiently large
that $T\ge b$.  Then, uniformly over
$1\le r\le u^{1/4}$,
\begin{equation}\label{l4w:eq-weak-bridge}
 \Prob\left(E_{\beta,b}\cap
 \left\{\sqrt u\,
    |\widehat Z^\beta(n)-\widehat Z_T^\beta(n)|>r\right\}\right)
 \le \frac{C_{\beta,k}}{\sqrt K\,r}
           +C_{\beta,k}(1+\log u)^3u^{-1/5}.
\end{equation}
The constants are independent of $K,n,r$.  In particular, on a
geometric sequence $u_j=A^j$, with any fixed $A>1$, the second error
in \eqref{l4w:eq-weak-bridge} is summable in $j$.
The same assertion holds with $a_n=\sqrt{\log(2+n)}$ in place of
$\sqrt u$, after changing the constants by an absolute factor.
\end{theorem}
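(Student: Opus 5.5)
We would prove Theorem~\ref{l4w:thm-weak-bridge} by working at the mesh of scale $T$ rather than at $b$. The difference $\widehat Z^\beta(n)-\widehat Z_T^\beta(n)$ is the Fourier coefficient of the late martingale increment $Z^\beta-Z^\beta_T$. Because $T=Ku\gg u$, each cell of the $T$-mesh has diameter $e^{-T}\ll 1/n$, so the phase $e^{\ii nx}$ is essentially frozen on each cell.

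First introduce the three-color circular mesh at time $T$, with anchors $y_i$, weights $a_i=e^{-T}w_T(y_i)$ and local increments $H_i$. Replace the exact integral by the frozen-phase cell sum $\sum_i e^{\ii ny_i}(Z^\beta(J_i)-Z_T^\beta(J_i))$. The mesh error is at most $\sqrt u\,n e^{-T}(Z^\beta(\T)+Z_T^\beta(\T))$. Its expectation is $O_\beta(\sqrt u\,e^{-(K-1)u})$, which is negligible.

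Next split each cell increment $Y_i=Z^\beta(J_i)-Z_T^\beta(J_i)$ at a cutoff $\sigma=r/\sqrt u$. On $E_{\beta,T}$, the martingale closure of Theorem~\ref{crm:thm-construction}(i) gives $\E[Y_i\mid\F_T]=0$. So the large part $\sum_i Z^\beta(J_i)\1_{\{Z^\beta(J_i)>\sigma\}}$ is compensated by the truncated center of the small part.
\begin{itemize}
\item \emph{Large part.} The weak-$L^1$ bound \eqref{l4v:eq-cell-weakL1} bounds the probability that some cell exceeds $\sigma$ by $C\mathcal A_T/\sigma$. The budget \eqref{l4v:eq-shape-budget} at time $T$ gives $\E[\1_{E_{\beta,T}}\mathcal A_T]\le C T^{-1/2}$. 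The resulting probability is $C\sqrt u/(r\sqrt{Ku})=C/(\sqrt K r)$, which is exactly the leading term.
\item \emph{Small part.} The centered sum has conditional variance at most $C\sigma Z_T^\beta(\T)$ by three-color independence. Chebyshev at level $r/(3\sqrt u)$ gives $C u\sigma/r^2\cdot\E Z_T^\beta(\T)$. This is only $O(\sqrt u/r)$, which is too crude.
\end{itemize}
To repair the small part, use instead the truncated second moment bound $\E[U^2\1_{\{U\le\sigma\}}]\le C\sigma a_i e^{C\|H_i\|}$ from Proposition~\ref{crm:prop-budget}. The conditional variance then becomes $C\sigma\mathcal A_T$, which has expectation $C\sigma T^{-1/2}$. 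Chebyshev then yields $C u\sigma/(r^2\sqrt{Ku})=C/(\sqrt K r)$.

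The center term $\sum_i e^{\ii ny_i}\E[Z^\beta(J_i)\1_{\{Z^\beta(J_i)>\sigma\}}\mid\F_T]$ is bounded in absolute value by $B_{T,\sigma}-\E[\text{tail of }Z_T^\beta]$. \textbf{This is the main obstacle.} By \eqref{l4v:eq-predictable-positive-tail} it is at most $hW_T(\T)+C\mathcal A_T$, with $h\asymp\log(\sqrt u/r)$, and $W_T(\T)\asymp T^{-1/2}$. After multiplication by $\sqrt u$ this gives $\sqrt u\,h/\sqrt{Ku}\asymp \log u/\sqrt K$. That is not small in $r$, and it must be handled by phase cancellation rather than by absolute bounds.

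For the center we would reuse the compensation machinery of Section~\ref{l3c:sec-uniform-center}, with the coarse time $T$ in place of $b$ and an intermediate time $t'\in(u,T)$:
\begin{itemize}
\item The cutoff identity \eqref{l3c:eq-cap-identity} writes the center as the additive term $hL\sum e^{\ii ny_i}W_T(J_i)$ plus $\Phi$-remainders and signed tails.
\item The additive term is a Fourier coefficient of $W_T=Z_T^{\beta+1}-Z_T^\beta$ at frequency $n\ll e^T$. The late bracket \eqref{paper:eq-bracket} and integration by parts at the early time control it, as in Theorem~\ref{l3c:thm-uniform-center}.
\item The remaining remainders are controlled by Lemma~\ref{l3c:lem-common-early}, Theorem~\ref{lev1:thm-late-decoration} and Corollary~\ref{l3:cor-common-early-tail-audit}.
\end{itemize}
Together these give an $L^1$ error of order $(1+\log u)^3u^{-1/5}$; the cube of the logarithm is absorbed from $h^2$ and the coarse Seneta--Heyde rate \eqref{l3:eq-SH-BL-rate}. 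Finally, pass from $E_{\beta,T}$ to $E_{\beta,b}$ using Lemma~\ref{l4:lem-late-global-crossing}, which costs $Cb^{-1/2}$ and is absorbed into $(\log u)^3u^{-1/5}$. Replacing $\sqrt u$ by $a_n$ changes $r$ by a factor $1+o(1)$.
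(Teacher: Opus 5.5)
Your outer structure matches the paper's proof. The shared pieces are: the time-$T$ mesh; the cutoff $\sigma=r/\sqrt u$, which is the paper's $v=q/8$, $q=r\sqrt K$, in unnormalized units; the weak-$L^1$ and truncated-second-moment bounds with $\E[\1_{E_{\beta,T}}\mathcal A_T]\le CT^{-1/2}$ giving $C/(\sqrt K r)$; and the final move from $E_{\beta,T}$ to $E_{\beta,b}$. You also correctly isolate the logarithmic drift $h\sqrt u\sum_i e^{\ii ny_i}W_T(J_i)$ as the obstacle.

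\textbf{The mechanism you propose for the drift fails.} The bracket \eqref{paper:eq-bracket} works in Theorem~\ref{l3c:thm-uniform-center} only because $b-t=10\log u$. Over $[t,T]$ with $T=Ku$ one has $\int_t^T s^{-3/2}\,ds\asymp u^{-1/2}$. So $\E|\sqrt u\,h(\widehat Z^a_T(n)-\widehat Z^a_t(n))|^2$ is only bounded by $Ch^2\sqrt u$, and Markov or Chebyshev at level $r\le u^{1/4}$ gives nothing. Integration by parts needs the early time below $u-O(\log u)$, so an intermediate $t'\in(u,T)$ does not help. The needed cancellation is between $\sqrt T\widehat Z^{\beta+1}_T(n)$ and $\sqrt T\widehat Z^\beta_T(n)$, each heavy-tailed and of order $\sqrt K$, which second moments cannot see.

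\textbf{What the paper uses instead.} It applies the test-function Seneta--Heyde rate \eqref{l3:eq-SH-test-rate} with $b=T$ and $f=e^{\ii n\cdot}$. Its constant depends only on $\|f\|_{L^1}+\|f\|_{L^2}$, hence is uniform in $n$; the bounded-Lipschitz rate \eqref{l3:eq-SH-BL-rate} you cite loses a factor $n$. This replaces the drift by $h\,c_{\rm SH}\widehat Z^\beta(n)$ up to an $L^1$ error $O(hT^{-1/5})$. That term is then made negligible by the independent endpoint bound of Theorem~\ref{crm:thm-endpoint-moment} with $p=1/2$. That bound gives $\Prob(E_\beta,\ h|\widehat Z^\beta(n)|>cq)\le C(h/(q\sqrt u))^{1/2}$, and Lemma~\ref{l4:lem-late-global-crossing} moves the localization between $E_\beta$ and $E_{\beta,T}$. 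Without these two ingredients the drift costs an extra factor $\log u$ in the leading term.

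\textbf{The rest of the center needs no cancellation.} In the $\sqrt T$-normalization the following all have localized $L^1$ norm $O(1+v/\sqrt T+T^{1/2-\eta}h^{2\eta})$:
the capped finite-part remainder;
the log-positive-part correction, via the small-radius H\"older bound;
the fine-barrier correction;
and the signed tail $v\sum_\ell\Prob(U_\ell>v\mid\F_T)\le C\mathcal A_T^{\rm br}$.
Markov at threshold $q=r\sqrt K$ then gives $C/(\sqrt K r)$, which is absorbed into the leading term.

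Your plan to rerun Section~\ref{l3c:sec-uniform-center} with an intermediate time $t'$ cannot be carried out. Theorem~\ref{lev1:thm-late-decoration} then gives a variance of order $T(T-t')t'^{-\eta}$, which diverges when $T-t'\asymp u$. Lemma~\ref{l3c:lem-common-early} requires $e^{t'}\lesssim n$, and its bound contains $\sqrt T\,e^{t'}/n$. No single $t'$ satisfies both. Your claim that the whole center is $O((\log u)^3u^{-1/5})$ in $L^1$ is therefore unsupported, though it is also unnecessary.
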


\begin{proof}
Use the time-$T$ mesh, set $L_T=\sqrt T$, and write
\[
 U_\ell=L_TZ^\beta(J_\ell),\quad
 z_\ell=e^{iny_\ell},\quad
 S=\sum_\ell z_\ell U_\ell,\quad
 P=L_T\sum_\ell z_\ell Z_T^\beta(J_\ell).
\]
Define
\[
 H_\ell(s)=X_T(y_\ell+e^{-T}s)-X_T(y_\ell),
 \qquad 0\le s\le1,
\]
and set
\[
 \mathcal A_T^{\mathrm{br}}
 =\sqrt T\sum_\ell e^{-T}w_T(y_\ell)
             e^{2\sqrt2\|H_\ell\|_\infty}
 =\sqrt T\,\mathcal A_T.
\]
By \eqref{l4v:eq-shape-budget},
\[
\E[\1_{E_{\beta,T}}\mathcal A_T^{\mathrm{br}}]
 \le C_{\beta,k}.
\]
Set $q=r\sqrt{T/u}=r\sqrt K$ and choose the mass cutoff $v=q/8$.
All cap and tail identities below are valid for every $v>0$.
Let
\[
 B_v=\sum_\ell z_\ell
       \E[U_\ell\1_{\{U_\ell\le v\}}\mid\F_T].
\]
The conditional variance of the centered sum of marks at most $v$
is at most $Cv\mathcal A_T^{\mathrm{br}}$, and
$\sum_\ell\Prob(U_\ell>v\mid\F_T)
 \le C\mathcal A_T^{\mathrm{br}}/v$.
On the event that no mark exceeds $v$, $S-B_v$ is exactly that
centered small-mark sum.  Conditional Chebyshev and a union bound give
\[
 \Prob(|S-B_v|>q/4\mid\F_T)
 \le C\mathcal A_T^{\mathrm{br}}/q.
\]
Since $E_{\beta,T}\in\F_T$ and
$\E[\1_{E_{\beta,T}}\mathcal A_T^{\mathrm{br}}]\le C_{\beta,k}$,
this contributes $C_{\beta,k}/q$ to the localized probability.

The exact cutoff-cap identity gives, on $E_{\beta,T}$,
\begin{equation}\label{l4w:eq-cutoff-bridge-center}
 \begin{split}
 B_v-P&=-h_v L_T\sum_\ell z_\ell W_T(J_\ell)+\mathcal R_v,\\
 h_v&=\beta+\frac1{\sqrt2}\log(L_T/v)
     =\beta+\frac1{\sqrt2}\log(8\sqrt u/r).
 \end{split}
\end{equation}
The remainder here includes the fine-field correction term, the log-positive cap correction, the fine-barrier correction, and
$-v\sum_\ell z_\ell\Prob(U_\ell>v\mid\F_T)$.
Taking the absolute value before summing the cell envelopes gives,
for any fixed $1<\eta<3/2$,
\begin{equation}\label{l4w:eq-center-remainder-L1}
 \E[\1_{E_{\beta,T}}|\mathcal R_v|]
 \le C_{\beta,k,\eta}
       \{1+v/\sqrt T+T^{1/2-\eta}h_v^{2\eta}\}.
\end{equation}
The first term uses the uniform capped finite-part bound, independent
of $v$; the last term is the small-radius H\"older estimate. The
fine-barrier coupling costs $v/\sqrt T$ because the cap has range
$[0,v]$. Finally, the weak-$L^1$ tail bound gives
\[
v\sum_\ell\Prob(U_\ell>v\mid\F_T)
 \le C\mathcal A_T^{\mathrm{br}}.
\]
No oscillatory estimate for this last sum is required.
For the stated range of $r$, $h_v>0$ and
$h_v\le C_\beta(1+\log u)$, uniformly in $K$.
Markov's inequality applied to \eqref{l4w:eq-center-remainder-L1}
therefore contributes at most
\[
 \frac{C_{\beta,k}}q+C_{\beta,k}T^{-1/2}
       +\frac{C_{\beta,k,\eta}}q T^{1/2-\eta}
                       (1+\log u)^{2\eta}.
\]

It remains to control the Fourier-specific additive drift in
\eqref{l4w:eq-cutoff-bridge-center}.  This is where a crude total-mass
bound would introduce an unnecessary factor $\log T$ into the leading
term.  Let $W_T^\beta(dx)=I_T^\beta(x)w_T(x)\,dx$ and
$c_{\rm SH}=\sqrt{2/\pi}$.  Applying \eqref{l3:eq-SH-test-rate} with
\(b=T\) and \(f(x)=e^{inx}\) gives
\begin{equation}\label{l4w:eq-SH-Fourier-uniform}
 \E\left|\sqrt T\,\widehat W_T^\beta(n)
                       -c_{\rm SH}\widehat Z^\beta(n)\right|
 \le C_{\beta,k}T^{-1/5},\qquad n\in\mathbb Z.
\end{equation}
The constant is independent of the Fourier frequency: the two test
norms of $e^{inx}$ are fixed.  On $E_{\beta,T}$, $W_T=W_T^\beta$.
Thus the probability that the $h_v$-multiple of the error in
\eqref{l4w:eq-SH-Fourier-uniform} exceeds a fixed multiple of $q$ is
at most $C_{\beta,k}h_vT^{-1/5}/q$.

By \eqref{crm:eq-endpoint-pmoment},
\[
 \E[\1_{E_\beta}|\sqrt u\,\widehat D_*(n)|^{1/2}]
 \le C_{\beta,k},\qquad n\ge2.
\]
Since $Z^\beta=D_*$ on $E_\beta$, Markov gives
\begin{equation}\label{l4w:eq-drift-small}
 \Prob\left(E_\beta\cap
       \{h_v|\widehat Z^\beta(n)|>c q\}\right)
 \le C_{\beta,k,c}\left(\frac{h_v}{q\sqrt u}\right)^{1/2}.
\end{equation}
Lemma~\ref{l4:lem-late-global-crossing} gives
\[
 \Prob(E_{\beta,T}\setminus E_\beta)
 \le C_{\beta,k}T^{-1/2}.
\]
This use of terminal localization is outside all conditional
expectations and therefore creates no independence assertion.

Freezing the Fourier phase in $S$ and $P$ costs at most
$C\sqrt T\,n e^{-T}(Z^\beta(\T)+Z_T^\beta(\T))$.
The analogous drift mesh error is at most
$Ch_v\sqrt T\,n e^{-T}W_T(\T)$.
The two derivative masses have bounded first moments, and
$\E W_T(\T)=2\pi$.  After division by $q$, their total probability
cost is at most
\[
 C_{\beta,k}(1+h_v)\frac{\sqrt u}{r}e^{-(K-1)u},
\]
which is smaller than the displayed remainder in
\eqref{l4w:eq-weak-bridge}, uniformly in $K\ge2$.

Choose $\eta=5/4$ and combine the bounds.  Since $q\ge\sqrt2$,
$T\ge u$, and $h_v=O_\beta(1+\log u)$, every error other than
$C_{\beta,k}/q$ is bounded by
$C_{\beta,k}(1+\log u)^3u^{-1/5}$.
The event in \eqref{l4w:eq-weak-bridge} is therefore controlled by the corresponding time-\(T\) cell difference at threshold \(q\), up to the mesh errors above.
Finally move the localization from $E_{\beta,T}$ to
$E_{\beta,b}$, at cost $C_{\beta,k}b^{-1/2}$, again absorbed by that
remainder.  This proves \eqref{l4w:eq-weak-bridge}.
For the alternate normalizer take $q=r\sqrt T/a_n$ throughout.
Since $a_n/\sqrt u=1+O((nu)^{-1})$, this changes the displayed bounds
by fixed factors only and keeps all constants independent of $K$.
\end{proof}

\begin{remark}
Let $K\ge2$ and $A>2K$, and set
\[
 n_j=\lceil e^{A^j}\rceil,\qquad
 u_j=\log n_j,\qquad
 b_j=u_j+4\log u_j,\qquad T_j=Ku_j.
\]
Then $u_j=A^j+O(e^{-A^j})$ and, for all sufficiently large $j$,
$T_j<b_{j+1}$. The Fourier coefficient at time $T_j$ is therefore
measurable at the next coarse filtration.  If $r_j\ge1$ grows at most
polynomially in $j$, Theorem~\ref{l4w:thm-weak-bridge} gives
\[
 \sum_{j\le N}\Prob\left(E_{\beta,b_j}\cap
 \{\sqrt{u_j}|\widehat Z^\beta(n_j)
                    -\widehat Z_{T_j}^\beta(n_j)|>r_j\}\right)
 \le\frac{C_{\beta,k}}{\sqrt K}\sum_{j\le N}\frac1{r_j}+O_{\beta,k}(1).
\]
Thus the approximation error can be made an arbitrarily small proportion of
\(\sum_{j\le N}r_j^{-1}\), but need not be summable.
Consequently, ordinary conditional Borel--Cantelli does not suffice to pass from the finite-time coefficients to the terminal coefficients.
Lemma~\ref{p4:lem-count} is designed to use precisely this cumulative control.
\end{remark}

\subsection{An adapted counting lemma}

\begin{lemma}
\label{p4:lem-count}
Let $(\mathcal G_j)$ be increasing, let $X_j$ be terminal complex
random variables, let $V_j$ be $\mathcal G_{j+1}$-measurable, and let
$J_j\in\mathcal G_j$. Let deterministic $r_j\ge1$ satisfy
$Q_N=\sum_{j\le N}r_j^{-1}\to\infty$. Suppose $F$ is an event of
probability $p_0>0$, and $c>0$ satisfies
\begin{equation}\label{p4:eq-cumulative-lower}
 \liminf_N\frac1{Q_N}\sum_{j\le N}\mathbf1_{J_j}
     \mathbb P(|X_j|>4r_j\mid\mathcal G_j)\ge c
 \quad\text{almost surely on }F.
\end{equation}
If
\begin{equation}\label{p4:eq-cumulative-error}
 \limsup_N\frac1{Q_N}\sum_{j\le N}
       \mathbb P(J_j,|X_j-V_j|>r_j)\le\eta,
\end{equation}
then
\[
 \mathbb P(F\cap\{|X_j|>r_j\text{ infinitely often}\})
 \ge p_0-16\eta/c.
\]
Neither $F$ nor $X_j$ is required to be adapted.
\end{lemma}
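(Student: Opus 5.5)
The plan is to pass from the terminal variables $X_j$ to the adapted surrogates $V_j$, apply a Lévy-type conditional Borel--Cantelli theorem to the surrogate events, and control the mismatch between $X_j$ and $V_j$ by Markov's inequality on the cumulative sums in \eqref{p4:eq-cumulative-error}. Define
\[
 E_j=J_j\cap\{|V_j|>2r_j\}\in\mathcal G_{j+1},\qquad
 D_j=J_j\cap\{|X_j-V_j|>r_j\},\qquad
 e_j=\mathbb P(D_j\mid\mathcal G_j).
\]
On $E_j\setminus D_j$ we have $|X_j|\ge|V_j|-|X_j-V_j|>r_j$. It therefore suffices to show that $E_j\setminus D_j$ occurs infinitely often on a subset of $F$ of probability at least $p_0-16\eta/c$.

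\textbf{Step 1: lower bound for the conditional probabilities.} Since $J_j\in\mathcal G_j$ and $\{|X_j|>4r_j\}\subset\{|V_j|>2r_j\}\cup\{|X_j-V_j|>2r_j\}$,
\[
 \mathbb P(E_j\mid\mathcal G_j)\ge \mathbf1_{J_j}\mathbb P(|X_j|>4r_j\mid\mathcal G_j)-e_j .
\]

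\textbf{Step 2: Markov bounds for the error sums.} By \eqref{p4:eq-cumulative-error}, for large $N$ both $\mathbb E\sum_{j\le N}e_j$ and $\mathbb E\sum_{j\le N}\mathbf1_{D_j}$ are at most $(\eta+o(1))Q_N$. Let
\[
 G_N=\Bigl\{\textstyle\sum_{j\le N}e_j\le cQ_N/4\Bigr\}\cap\Bigl\{\textstyle\sum_{j\le N}\mathbf1_{D_j}\le cQ_N/8\Bigr\}.
\]
Markov's inequality gives $\mathbb P(G_N^c)\le 12\eta/c+o(1)$. Reverse Fatou then yields $\mathbb P(\limsup_N G_N)\ge 1-12\eta/c$, so
\[
 \mathbb P\bigl(F\cap\limsup_NG_N\bigr)\ge p_0-12\eta/c\ge p_0-16\eta/c .
\]
The events are taken at the same $N$, and only infinitely many good $N$ are needed, not eventually all $N$.

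\textbf{Step 3: Lévy's extension of Borel--Cantelli.} Almost surely on $\{\sum_j\mathbb P(E_j\mid\mathcal G_j)=\infty\}$,
\[
 \sum_{j\le N}\mathbf1_{E_j}\Big/\sum_{j\le N}\mathbb P(E_j\mid\mathcal G_j)\to1 .
\]
This follows from the martingale strong law, since the increments $\mathbf1_{E_j}-\mathbb P(E_j\mid\mathcal G_j)$ are bounded and their conditional variances are dominated by the compensator. Here adaptedness is needed: $E_j\in\mathcal G_{j+1}$, which holds because $V_j$ is $\mathcal G_{j+1}$-measurable.

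\textbf{Step 4: assembly.} On $F\cap\limsup_NG_N$, combine \eqref{p4:eq-cumulative-lower} with Step 1. Along the infinitely many $N$ with $G_N$ holding and $N$ large,
\[
 \sum_{j\le N}\mathbb P(E_j\mid\mathcal G_j)\ge (c-o(1))Q_N-cQ_N/4\ge cQ_N/2 .
\]
Since $Q_N\to\infty$, this sum diverges. Step 3 then gives $\sum_{j\le N}\mathbf1_{E_j}\ge (c/2-o(1))Q_N$ along these $N$, which eventually exceeds $\sum_{j\le N}\mathbf1_{D_j}\le cQ_N/8$ by a margin tending to infinity. Hence $E_j\setminus D_j$ occurs for infinitely many $j$, so $|X_j|>r_j$ infinitely often.

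The main obstacle is the bookkeeping in Steps 2 and 4. The hypotheses control only Cesàro-type averages normalized by $Q_N$, and only in expectation and asymptotically, so the favourable events hold only along a random subsequence of $N$. One must check that the ratio limit in Step 3, which holds for all large $N$ almost surely, can be evaluated along that subsequence; it can, since an almost sure limit holds along every subsequence. Neither $F$ nor $X_j$ is required to be adapted, because $X_j$ enters only through $D_j$ and Markov bounds, and $F$ enters only through an intersection.
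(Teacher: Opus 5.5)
Your proposal is correct and follows essentially the paper's proof: the same events ($E_j$, $D_j$ are the paper's $B_j$, $C_j$), the same comparison $\mathbb P(E_j\mid\mathcal G_j)\ge \mathbf 1_{J_j}\mathbb P(|X_j|>4r_j\mid\mathcal G_j)-e_j$, the same conditional Borel--Cantelli step via the martingale $\mathbf 1_{E_j}-\mathbb P(E_j\mid\mathcal G_j)$, and Markov plus Fatou for the cumulative error sums. The only variation is in that last step: you apply Markov at each fixed $N$ and then reverse Fatou to $\limsup_N G_N$, whereas the paper applies Fatou to $\liminf_N (D_N+N_C(N))/Q_N$; your version even gives the slightly better constant $12\eta/c$.
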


\begin{proof}
Set $B_j=J_j\cap\{|V_j|>2r_j\}$,
$C_j=J_j\cap\{|X_j-V_j|>r_j\}$, and
\[
 p_j=\mathbb P(B_j\mid\mathcal G_j),\quad
 d_j=\mathbb P(C_j\mid\mathcal G_j),\quad
 a_j=\mathbf1_{J_j}\mathbb P(|X_j|>4r_j\mid\mathcal G_j).
\]
Then $p_j\ge a_j-d_j$. Write $P_N=\sum_{j\le N}p_j$,
$D_N=\sum_{j\le N}d_j$, and $N_C(N)=\sum_{j\le N}\mathbf1_{C_j}$.
The tower property gives $\mathbb E D_N=\mathbb E N_C(N)$.
Fatou's lemma and \eqref{p4:eq-cumulative-error} imply
\[
 \mathbb E\liminf_N\frac{D_N+N_C(N)}{Q_N}\le2\eta.
\]
Consequently
\[
 F'=F\cap\left\{\liminf_N\frac{D_N+N_C(N)}{Q_N}<c/8\right\}
 \quad\text{satisfies}\quad
 \mathbb P(F')\ge p_0-16\eta/c.
\]
On $F'$, there are arbitrarily large $N$ for which
$D_N+N_C(N)<cQ_N/8$. Along these indices,
\[
 P_N\ge\sum_{j\le N}a_j-D_N
       \ge(7c/8-o(1))Q_N\longrightarrow\infty.
\]
Since $P_N$ is nondecreasing, it tends to infinity on $F'$.

The variable $\mathbf1_{B_j}-p_j$ is a martingale difference
from $\mathcal G_j$ to $\mathcal G_{j+1}$, and
\[
 \sum_j\frac{\mathbb E[(\mathbf1_{B_j}-p_j)^2
                              \mid\mathcal G_j]}{(1+P_j)^2}
 \le\sum_j\frac{p_j}{(1+P_j)^2}\le1.
\]
Thus the normalized martingale series converges almost surely.
Kronecker's lemma gives
$\sum_{j\le N}\mathbf1_{B_j}/P_N\to1$ on $\{P_N\to\infty\}$.
If $B_j$ occurs and $C_j$ fails, then $|X_j|>r_j$. Along the
selected indices and for large $N$,
\begin{align*}
 \sum_{j\le N}\mathbf1_{\{|X_j|>r_j\}}
 &\ge \tfrac12 P_N-N_C(N)\\
 &\ge\tfrac12\sum_{j\le N}a_j
        -\tfrac12D_N-N_C(N)
 \ge(3c/8-o(1))Q_N\longrightarrow\infty.
\end{align*}
The event $F$ remains in the conclusion because the entire argument
has been made on $F'\subset F$.
\end{proof}

\subsection{Infinitely many auxiliary excursions}

\begin{theorem}[Positive-probability infinite auxiliary limsup]
\label{p4:thm-lower-assembly}
Let $H$ be a fixed deterministic positive $C^1$ function such that,
eventually,
\[
 H(x)\ge x,\qquad 0\le xH'(x)/H(x)\le A_H<\infty,
 \qquad \int^\infty\frac{dx}{H(x)}=\infty.
\]
Then
\[
 \mathbb P\left(
 \limsup_{n\to\infty}\frac{a_n|\widehat D_*(n)|}{H(\vartheta_n)}
       =\infty\right)>0.
\]
\end{theorem}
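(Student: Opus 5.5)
The plan is to apply Lemma~\ref{p4:lem-count} along the doubly exponential sequence of the Remark. Fix $K\ge2$ large, $A>2K$, and set $n_j=\lceil e^{A^j}\rceil$, $u_j=\log n_j$, $b_j=u_j+4\log u_j$, $T_j=Ku_j$, $\lambda_j=\log u_j\sim j\log A$. Take $\mathcal G_j=\F_{b_j}$, $X_j=a_{n_j}\widehat Z^\beta(n_j)$, $V_j=a_{n_j}\widehat Z^\beta_{T_j}(n_j)$ (which is $\mathcal G_{j+1}$-measurable since $T_j<b_{j+1}$), and $J_j=E_{\beta,b_j}$. For a fixed $\delta>0$ put $r_j=\delta H(j\log A)/4$, rounded so that $r_j\ge1$. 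Since $H(x)\ge x$ and $H(x)\le Cx^{A_H}$, the thresholds satisfy $r_j\in[\lambda_j^{-a},\lambda_j^a]$ for a fixed $a$. Because $\int^\infty dx/H=\infty$ and $H$ is eventually monotone, $Q_N=\sum_{j\le N}r_j^{-1}\to\infty$. The event is $F=E_\beta\cap\{S\ge s_0\}\cap\{S\le s_1\}$, where $S=Z^\beta(\T)/\sqrt\pi$ and $0<s_0<s_1$ are chosen so that $p_0=\Prob(F)>0$; this is possible since $D_*(\T)>0$ a.s. and $\Prob(E_\beta)\to1$.

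\textbf{Cumulative lower bound \eqref{p4:eq-cumulative-lower}.} Apply Theorem~\ref{pc:thm-excursion} at radius $4r_j$ with $v_j=1$, i.e.\ $\epsilon_j=1/(4r_j)$. On $J_j$ this gives
\[
\mathbf1_{J_j}\Prob(|X_j|>4r_j\mid\mathcal G_j)\ge \frac{S}{8r_j}e^{-S}-\mathcal R_j .
\]
On $F$ the first term is at least $c_0/r_j$, with $c_0=s_0e^{-s_1}/8$. By \eqref{pc:eq-relative}, $\E[\1_{J_j}\mathcal R_j]$ is bounded by $C\lambda_j^{-1/2}$ plus negligible terms, plus $C(\epsilon_j+\lambda_j^{-1/2})/r_j$. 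The term $\epsilon_j/r_j\sim r_j^{-2}$ is summable, and so are $B_n(1)$ and all powers of $u_j=A^j$.

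The dangerous term is the non-summable $\lambda_j^{-1/2}\sim j^{-1/2}$. I would remove it by choosing $v$ instead of $1$, namely $v_j=\lambda_j^{1/4}$ and $\epsilon_j=v_j/(4r_j)\le1$ when $r_j\ge\lambda_j^{1/4}$ (true since $r_j\gtrsim j$). Then the lower bound is $(S/8r_j)e^{-S/v_j}\ge c_0/r_j$ with $c_0=s_0/16$ for large $j$. The errors become $\lambda_j^{-3/4}$, $v_j/r_j^2$ and $\lambda_j^{-1/2}/r_j$. The main obstacle is that $\lambda_j^{-3/4}$ is still not $o(1/r_j)$ in the cumulative sense when $H$ grows fast. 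The fallback is to choose $v_j=\lambda_j^{a'}$ with $a'$ large, so that the remainder $(\lambda^{-1/2}+u^{-1/5})/v$ is summable, and to check that the $\epsilon/r=v/r^2$ term, together with the case distinction between $r_j\ge v_j$ and $r_j<v_j$, still gives $o(Q_N)$. Here one uses $\sum r_j^{-1}=\infty$ together with the Cesàro averaging allowed by the $\liminf$. Ultimately $\sum_{j\le N}\mathcal R_j=o(Q_N)$ in $L^1$, so along a subsequence it holds almost surely. Since the hypothesis involves a $\liminf$, I would pass to an a.s.\ statement via Fatou, as in the proof of the lemma itself, or restate the lemma with the expected form.

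\textbf{Error and conclusion.} For \eqref{p4:eq-cumulative-error}, Theorem~\ref{l4w:thm-weak-bridge} with $a_n$ gives
\[
\sum_{j\le N}\Prob\bigl(J_j,|X_j-V_j|>r_j\bigr)\le \frac{C_{\beta,k}}{\sqrt K}\,Q_N+O(1),
\]
provided $r_j\le u_j^{1/4}$, which holds since $r_j$ is polynomial in $j$ while $u_j=A^j$. Hence $\eta=C_{\beta,k}/\sqrt K$. Taking $K$ large so that $16\eta/c_0<p_0/2$, Lemma~\ref{p4:lem-count} gives, with probability at least $p_0/2$,
\[
a_{n_j}|\widehat Z^\beta(n_j)|>\delta H(j\log A)/4\quad\text{infinitely often on }E_\beta .
\]
There $Z^\beta=D_*$ and $\vartheta_{n_j}\sim j\log A$, and the regularity of $H$ gives $H(\vartheta_{n_j})\le C H(j\log A)$. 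Thus $\limsup a_n|\widehat D_*(n)|/H(\vartheta_n)\ge c\delta$. Replacing $H$ by $H/M$ (again an admissible divergent gauge) in place of $\delta$, with $p_0$ independent of $M$ since $F$ does not involve $H$, shows $\Prob(\limsup\ge M')\ge p_0/2$ for every $M'$. Letting $M'\to\infty$ yields the claimed positive probability of an infinite limsup.
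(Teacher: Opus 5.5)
Your architecture is the paper's: doubly exponential frequencies, Lemma~\ref{p4:lem-count} with $\mathcal G_j=\F_{b_j}$ and $V_j$ the time-$T_j$ coefficient, Theorem~\ref{l4w:thm-weak-bridge} for \eqref{p4:eq-cumulative-error}, and $K$ chosen after $F$. The gap is in \eqref{p4:eq-cumulative-lower}, the one step that needs an actual parameter choice, and you do not make a choice that works.

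Theorem~\ref{pc:thm-excursion} at radius $4r_j$ requires $v_j=4\epsilon_jr_j$ with $\epsilon_j\le1$. Its remainder contains $\lambda_j^{-1/2}/v_j$ and $\epsilon_j/r_j$. Since $H(x)\ge x$ only gives $r_j\ge c\,j$, $Q_N$ may grow like $\log N$, so both error sums must be $o(Q_N)$. This forces two conditions:
\begin{itemize}
\item $\epsilon_j\to0$, because otherwise $\epsilon_j/r_j$ is of the same order as the main term $S/(8r_j)$, with an uncontrolled constant;
\item $\epsilon_j\gg\lambda_j^{-1/2}$, so that $\lambda_j^{-1/2}/v_j=o(1/r_j)$.
\end{itemize}
Your explicit choices $v_j=1$ and $v_j=\lambda_j^{1/4}$ both fail. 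For the second, $\sum_{j\le N}j^{-3/4}\asymp N^{1/4}\gg\log N$, and this happens already for $H(x)=x$, not only for fast-growing $H$. Your fallback ``$v_j=\lambda_j^{a'}$ with $a'$ large'' also fails. For $H(x)=x$ one has $r_j\asymp\lambda_j$: then $a'>1$ violates $\epsilon_j\le1$, and $a'=1$ makes $\epsilon_j$ constant, hence $\epsilon_j/r_j\asymp1/r_j$. The case distinction you leave open has no good branch when $v_j>4r_j$.

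The missing step is to tie $v_j$ to $r_j$, as the paper does. Take $\epsilon_j=j^{-1/4}$, i.e.\ $v_j=4j^{-1/4}r_j\gtrsim j^{3/4}$; equivalently $v_j=\lambda_j^{a'}$ with $1/2<a'<1$. Using $r_j\ge c\,j$:
\begin{itemize}
\item the first and third terms of \eqref{pc:eq-relative} are $O(j^{-1/4}/r_j)=O(j^{-5/4})$;
\item the center term is $O(j^{-3/2})$;
\item every term with a negative power of $u_j$ decays geometrically.
\end{itemize}
Hence $\sum_j\E[\1_{J_j}\mathcal R_j]<\infty$ and $\sum_j\1_{J_j}\mathcal R_j<\infty$ almost surely. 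Since $e^{-S/v_j}\to1$, \eqref{p4:eq-cumulative-lower} then holds exactly as stated with $c=s_0/16$, and no subsequence, Ces\`aro, or restated lemma is needed.

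Two smaller points. An $L^1$ bound $o(Q_N)$ that holds almost surely only along a subsequence does not give the $\liminf$ over all $N$ that the lemma requires; it would have to be folded into the Fatou step of the lemma's proof. Also, replacing $H$ by $H/M$ goes the wrong way: it yields $\limsup\ge c\delta/M$. You need the threshold multiplied by a large constant, i.e.\ $MH$, or the paper's $r_j=mH(\vartheta_{n_j})$. Since $p_0-16\eta/c$ does not depend on that constant, continuity from above then finishes the proof.
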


\begin{proof}
Choose $\beta>0$ and $s\in(0,1)$ once and for all so that
\[
 S_\beta=Z^\beta(\T)/\sqrt\pi,\qquad F=E_\beta\cap\{s\le S_\beta\le s^{-1}\},\qquad
 p_0=\mathbb P(F)>0.
\]
This is possible by positivity and finiteness of $D_*(\mathbb T)$ and
exhaustion by $E_\beta$. Put $c=s/16$, and choose $K\ge2$ so large
that $\eta=C_\beta/\sqrt K\le cp_0/32$. Set $d=4K$ and
\[
 n_j=\lceil e^{d^j}\rceil,\quad u_j=\log n_j,\quad
 b_j=u_j+4\log u_j,\quad T_j=Ku_j.
\]
Eventually $T_j<b_{j+1}$, and $\log u_j=j\log d+o(1)$.
All of $F,p_0,c,K,d$ are chosen before the excursion amplitude.

Fix an arbitrary integer $m\ge1$, and let
\[
 r_j=mH(\vartheta_{n_j}),\quad
 X_j=a_{n_j}\widehat Z^\beta(n_j),\quad
 V_j=a_{n_j}\widehat Z_{T_j}^\beta(n_j),\quad
 \mathcal G_j=\mathcal F_{b_j},\quad J_j=E_{\beta,b_j}.
\]
Only finitely many initial indices need to be deleted. Integration of
the derivative bound gives
$1\le H(y)/H(x)\le(y/x)^{A_H}$ for sufficiently large $y\ge x$.
Consequently $r_j\ge c_mj$, $r_j\le C_mj^{A_H}$,
$r_j\le u_j^{1/4}$ eventually, and
$Q_N=\sum_{j\le N}r_j^{-1}\to\infty$. The last assertion is the
integral test on the fixed-spacing mesh $j\log d$; the negligible
change to $\vartheta_{n_j}$ is harmless by the same ratio estimate.
The variables $V_j$ are $\mathcal G_{j+1}$-measurable.

Apply Theorem~\ref{pc:thm-excursion} at $4r_j$, with $\epsilon_j=j^{-1/4}$. The thresholds
belong to one fixed logarithmic-power range for each fixed $m$.
The first and third terms in \eqref{pc:eq-relative} are at most
constants times $j^{-1/4}/r_j=O(j^{-5/4})$; the center term is
$O(j^{-3/2})$, and every term containing a negative power of $u_j$
decays exponentially up to a polynomial factor. Thus the resulting
remainders $R_j$ satisfy
\[
 \sum_j\mathbb E[\mathbf1_{J_j}R_j]<\infty,
 \qquad \sum_j\mathbf1_{J_j}R_j<\infty\quad\text{almost surely}.
\]
On $F$, all $J_j$ occur, $s\le S_\beta\le s^{-1}$, and
$\epsilon_jr_j\to\infty$. Hence
\[
 \mathbb P(|X_j|>4r_j\mid\mathcal G_j)
 \ge \frac{S_\beta}{8r_j}
         e^{-S_\beta/(4\epsilon_jr_j)}-R_j
 \ge \frac c{r_j}-R_j
\]
eventually on $F$. Summing proves
\eqref{p4:eq-cumulative-lower}. It does not assert that
$R_j=o(1/r_j)$ pointwise.

Theorem~\ref{l4w:thm-weak-bridge} gives
\[
 \sum_{j\le N}\mathbb P(J_j,|X_j-V_j|>r_j)
 \le\eta Q_N+C_\beta\sum_j(1+\log u_j)^3u_j^{-1/5}.
\]
The second sum is finite. Dividing by $Q_N\to\infty$ proves
\eqref{p4:eq-cumulative-error}. Lemma~\ref{p4:lem-count} therefore
gives
\[
 \mathbb P\bigl(F\cap
  \{a_{n_j}|\widehat Z^\beta(n_j)|>
              mH(\vartheta_{n_j})\text{ infinitely often}\}\bigr)
 \ge p_0/2.
\]
The lower probability is the same for every integer $m$. Since
$Z^\beta=D_*$ on $F$, the event on the left is contained in
$F\cap\{\limsup_n a_n|\widehat D_*(n)|/H(\vartheta_n)\ge m\}$.
These latter events decrease in $m$. Continuity from above proves
that their intersection, the infinite-limsup event intersected with
$F$, has probability at least $p_0/2>0$.
\end{proof}

\subsection{Smooth multiplication and the canonical integral classes}
\label{canon:subsec-integral}
\begin{definition}[Moderate asymptotically shift-invariant weight]
A positive sequence $(h_n)_{n\in\mathbb Z}$ is admissible here if
\[
 \frac{h_n}{h_{n-k}}\longrightarrow1\quad(|n|\to\infty)
 \quad\text{for every fixed integer }k,
\]
and for some finite $C,A$,
\[
 \frac{h_n}{h_{n-k}}\le C(1+|k|)^A
 \qquad(n,k\in\mathbb Z).
\]
For a finite measure $\mu$ put
$\mathcal L_h(\mu)=\limsup_{|n|\to\infty}h_n|\widehat\mu(n)|$,
allowing values in $[0,\infty]$.
\end{definition}

\begin{lemma}
\label{l4t:lem-pathwise-multiplier}
Let $h$ be admissible, $\mu$ a finite complex measure, and
$b\in C^\infty(\mathbb T;\mathbb C)$. Write
$b(x)=\sum_{k\in\mathbb Z}b_ke^{-ikx}$. Then
\begin{equation}\label{l4t:eq-envelope-multiplier}
 \mathcal L_h(b\mu)\le
       \left(\sum_k|b_k|\right)\mathcal L_h(\mu).
\end{equation}
If $b$ is nowhere zero, its reciprocal is smooth and also
\begin{equation}\label{l4t:eq-envelope-inverse}
 \mathcal L_h(\mu)\le
       \left(\sum_k|(b^{-1})_k|\right)\mathcal L_h(b\mu).
\end{equation}
In particular the three alternatives
$\mathcal L_h=0$, $0<\mathcal L_h<\infty$, and
$\mathcal L_h=\infty$ are preserved by such multiplication.
\end{lemma}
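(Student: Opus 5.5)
The inequality \eqref{l4t:eq-envelope-multiplier} comes from expanding the Fourier coefficient of $b\mu$ as a convolution and splitting it into a finite head and a tail. With the paper's convention $\widehat\nu(n)=\int e^{inx}\nu(dx)$ and $b(x)=\sum_kb_ke^{-ikx}$,
\[
 \widehat{b\mu}(n)=\sum_{k\in\mathbb Z}b_k\,\widehat\mu(n-k).
\]
Smoothness of $b$ makes $(b_k)$ rapidly decreasing, so the series converges absolutely. It is also dominated by $|b_k|\,\|\mu\|_{\rm TV}$, where $\|\mu\|_{\rm TV}$ is finite.

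Fix $K$ and split the sum into $|k|\le K$ and $|k|>K$.

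For the head, each fixed $k$ satisfies
\[
 h_n|\widehat\mu(n-k)|=\frac{h_n}{h_{n-k}}\,h_{n-k}|\widehat\mu(n-k)|.
\]
As $|n|\to\infty$ we have $|n-k|\to\infty$ and $h_n/h_{n-k}\to1$. Hence the limsup of this term is at most $\mathcal L_h(\mu)$. Since the head is a finite sum, its limsup is at most $\sum_{|k|\le K}|b_k|\,\mathcal L_h(\mu)$.

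For the tail, first assume $\mathcal L_h(\mu)<\infty$; otherwise there is nothing to prove. Then $S=\sup_m h_m|\widehat\mu(m)|$ is finite, because the limsup is finite and each $h_m|\widehat\mu(m)|$ is finite. The moderate-growth bound gives
\[
 h_n|\widehat\mu(n-k)|\le C(1+|k|)^A S.
\]
The tail is therefore at most $CS\sum_{|k|>K}|b_k|(1+|k|)^A$, uniformly in $n$. This tends to $0$ as $K\to\infty$ by the rapid decay of $b_k$. Letting $K\to\infty$ gives $\mathcal L_h(b\mu)\le(\sum_k|b_k|)\mathcal L_h(\mu)$.

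For \eqref{l4t:eq-envelope-inverse}, suppose $b$ is nowhere zero. On the compact circle, $|b|$ is then bounded below, so $b^{-1}$ is smooth and its Fourier coefficients also decay rapidly. Apply \eqref{l4t:eq-envelope-multiplier} to the finite measure $b\mu$ with multiplier $b^{-1}$, using $b^{-1}(b\mu)=\mu$.

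Preservation of the three alternatives follows from the two inequalities together:
\[
 \mathcal L_h(\mu)\Big/\sum_k|(b^{-1})_k|\;\le\;\mathcal L_h(b\mu)\;\le\;\sum_k|b_k|\,\mathcal L_h(\mu),
\]
and both constants are finite and positive.

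The only delicate step is the uniform tail bound. It needs the moderate growth of $h$ together with finiteness of the global supremum $S$, which is where the reduction to $\mathcal L_h(\mu)<\infty$ is used; everything else is routine.
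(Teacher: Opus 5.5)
Your proposal is correct and follows essentially the same route as the paper. Both arguments use the exact Fourier convolution $\widehat{b\mu}(n)=\sum_k b_k\widehat\mu(n-k)$, take the limsup of the finitely many head terms via the fixed-shift ratios, bound the tail uniformly by $CS\sum_{|k|>K}|b_k|(1+|k|)^A$ after reducing to $\mathcal L_h(\mu)<\infty$, and obtain the inverse inequality by applying the first one to $b\mu$ with multiplier $b^{-1}$.
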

\begin{proof}
If $\mathcal L_h(\mu)=\infty$, the first inequality is automatic.
Otherwise $B=\sup_n h_n|\widehat\mu(n)|<\infty$, since each of
the finitely many remaining coefficients is finite. The exact Fourier
convolution gives
\[
 h_n\widehat{b\mu}(n)
   =\sum_k b_k\frac{h_n}{h_{n-k}}
                      h_{n-k}\widehat\mu(n-k).
\]
For $|k|\le K$, take the limsup using the fixed-shift ratios. The
remaining absolute tail is bounded, uniformly in $n$, by
$CB\sum_{|k|>K}|b_k|(1+|k|)^A$, which tends to zero by smoothness.
Letting $K\to\infty$ proves
\eqref{l4t:eq-envelope-multiplier}. Apply it to $b\mu$ and $b^{-1}$
to prove \eqref{l4t:eq-envelope-inverse}. The three alternatives
follow, including the infinite case by contradiction if necessary.
\end{proof}

For $h_n=a_n/H(\vartheta_n)$, with any positive extension over finitely
many initial indices, these hypotheses hold. To verify this, set
$\theta(t)=\log\log(e^e+t)$ and
$q(t)=\sqrt{\log(2+t)}/H(\theta(t))$. For large $t$,
\[
 \frac{d}{dt}\log q(t)
 =\frac1{2(2+t)\log(2+t)}
 -\frac{H'(\theta(t))}{H(\theta(t))}
                 \frac1{(e^e+t)\log(e^e+t)}.
\]
Since $0\le H'(\theta)/H(\theta)\le A_H/\theta\to0$, this derivative
is eventually nonnegative and bounded above by
$1/[2(2+t)\log(2+t)]$. Thus $q$ is eventually increasing and fixed shifts
have ratio one. If $|n|\ge|n-k|$ are both large, monotonicity of $H$ bounds
$h_n/h_{n-k}$ by $a_n/a_{n-k}\le C(1+|k|)^{1/2}$; in the reverse
ordering use monotonicity of $q$. Positivity of the finitely many initial
values changes only the constant. Hence the weights are admissible.

On the coupling \eqref{crm:eq-same-space-transfer}, the multiplier
$F_*$ is smooth and strictly positive, and $q_G$ is a finite positive
scalar. 
By \eqref{crm:eq-same-space-transfer} and
Lemma~\ref{l4t:lem-pathwise-multiplier}, the conclusion
\(\mathcal L_h=0\) from
Proposition~\ref{l4v:cor-upper-integral-class} and the
positive-probability divergence conclusion of
Theorem~\ref{p4:thm-lower-assembly} transfer from \(D_*\) to \(M\).
The following zero--one argument upgrades the latter conclusion to an almost-sure statement.
\begin{proposition}
\label{l4t:prop-envelope-zero-one}
For canonical zero-constant-mode circle chaos and a fixed admissible
weight $h$, each of the events
\[
 \{\mathcal L_h(M)=0\},\quad
 \{0<\mathcal L_h(M)<\infty\},\quad
 \{\mathcal L_h(M)=\infty\}
\]
has probability zero or one. In particular, if
\(\{\mathcal L_h(M)=\infty\}\)
has positive probability, then it holds almost surely.
\end{proposition}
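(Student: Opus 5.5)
The plan is a Cameron--Martin zero--one argument. The key point is that $\mathcal L_h(M)$ is a tail-type functional: it is insensitive to smooth perturbations of the field. I will show that each event is invariant under shifting $\phi$ by smooth deterministic functions. Then a Gaussian zero--one law for events invariant under a dense set of Cameron--Martin shifts gives probability zero or one.

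\emph{Step 1 (invariance under smooth shifts).} Let $f\in C^\infty(\T;\mathbb R)$ have zero mean, so that $f$ lies in the Cameron--Martin space of $\phi$. That space is the Sobolev space $H^{1/2}$ with zero mode removed. The chaos of $\phi+f$ is obtained pathwise from the chaos of $\phi$. Since mollification commutes with the deterministic shift, $e^{\sqrt2(\phi+f)_\varepsilon-\E\phi_\varepsilon^2}=e^{\sqrt2 f_\varepsilon}e^{\sqrt2\phi_\varepsilon-\E\phi_\varepsilon^2}$. Moreover $f_\varepsilon\to f$ uniformly. Passing to the limit in probability, using the canonical measurable version of Proposition~\ref{uni:prop-construction} (or Theorem~\ref{crm:thm-construction}), gives almost surely
\[
 M[\phi+f]=e^{\sqrt2 f}M[\phi].
\]
The multiplier $b=e^{\sqrt2f}$ is smooth and nowhere zero. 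Lemma~\ref{l4t:lem-pathwise-multiplier} therefore shows that each of the three alternatives for $\mathcal L_h$ holds for $M[\phi+f]$ exactly when it holds for $M[\phi]$. Writing $A$ for any one of the three events, viewed as a measurable set of field realizations, this means $\mathbf 1_A(\phi+f)=\mathbf 1_A(\phi)$ almost surely, for every such $f$.

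\emph{Step 2 (zero--one law).} The smooth zero-mean functions, for instance trigonometric polynomials, are dense in the Cameron--Martin space. A Borel event invariant modulo null sets under a dense subspace of Cameron--Martin shifts has probability $0$ or $1$. To see this, expand $\phi=\sum_{k\ge1}\sigma_k(\xi_k\cos kx+\xi_k'\sin kx)$ with independent standard Gaussian coefficients. The canonical field $\log|e^{ix}-e^{iy}|^{-1}$ has $\sigma_k^2=1/k$ (up to convention), and every coefficient is nondegenerate. Invariance under shifts in each finite-dimensional coordinate block implies that $\mathbf 1_A$ is, almost surely, a function not depending on the first $N$ coordinates. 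Here I use the Cameron--Martin quasi-invariance on $\mathbb R^{2N}$, and the fact that an $L^1$ function invariant a.e.\ under all rational translations in $\mathbb R^{2N}$ is a.e.\ constant in those variables. The Hewitt--Savage/Kolmogorov zero--one law for the i.i.d.\ sequence then gives $\Prob(A)\in\{0,1\}$. The last sentence of the proposition follows immediately.

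\emph{Main obstacle.} The delicate point is Step 1: identifying the shifted measure pathwise on a full-measure set, so that the \emph{same} measurable functional $\mathcal L_h$ is compared, rather than only equality in law. The Seneta--Heyde limit is only in probability along the full scale family. I would fix a deterministic subsequence $\varepsilon_m$ along which the convergence is almost sure for $\phi$. By Cameron--Martin absolute continuity, the same subsequence converges almost surely for $\phi+f$, and along it the identity holds with $f_{\varepsilon_m}\to f$ uniformly. This makes $M$ a measurable map of the field defined via this subsequence, with $M[\phi+f]=e^{\sqrt2f}M[\phi]$ off a null set. That is exactly the invariance used in Step 2.
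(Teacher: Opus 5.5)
Your proposal is correct and follows essentially the same route as the paper: smooth mean-zero (trigonometric-polynomial) Cameron--Martin shifts give $M_{\phi+f}=e^{\sqrt2 f}M_\phi$, Lemma~\ref{l4t:lem-pathwise-multiplier} makes each class invariant, and invariance under rational translations of finitely many Fourier coordinates together with Kolmogorov's zero--one law finishes the argument. Your subsequence discussion only makes explicit the measurable-version point the paper leaves implicit, and the absolute-continuity step there is not even needed, since the exact finite-scale factorization already makes the convergence set invariant under smooth shifts; also, Kolmogorov's law is the relevant one here, not Hewitt--Savage.
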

\begin{proof}
Realize the exact circle field through its independent sine and cosine
Gaussian coordinates. A deterministic real trigonometric polynomial
$v$ of mean zero is a Cameron--Martin shift. The canonical mollifier
definition gives, on the same probability space,
\[
 M_{\phi+v}=e^{\sqrt2v}M_\phi.
\]
The multiplier is smooth and strictly positive. Therefore each displayed
event is invariant, modulo null sets, under every finite Gaussian
coordinate translation. It suffices to use rational coordinate shifts,
a countable family, so the exceptional sets can be intersected.

Condition on all but the first $m$ Gaussian
coordinates. Their density is strictly positive with respect to
Lebesgue measure. By Fubini and translation equivalence, the conditional
event indicator is almost everywhere invariant under rational
translations of these $m$ coordinates. A bounded measurable function
with this invariance is constant almost everywhere: convolve it with
a smooth approximate identity, use continuity and density of rational
translations, and then remove the convolution. The event is therefore
independent of the first $m$ coordinates for every $m$. Kolmogorov's
zero--one law for the independent Gaussian coordinate sequence completes
the proof. This proves class membership, not constancy of a finite
positive numerical limsup.
\end{proof}
\begin{proposition}[Canonical integral classes]\label{canon:prop-integral}
For the exact circle measure $M$, and any positive deterministic $C^1$
gauge satisfying \eqref{intro:eq-gauge}, almost surely
\[
 \mathcal L_h(M)=
 \begin{cases}
 0,&\displaystyle\int_{x_0}^\infty\frac{dx}{H(x)}<\infty,\\[3pt]
 \infty,&\displaystyle\int_{x_0}^\infty\frac{dx}{H(x)}=\infty,
 \end{cases}
 \qquad h_n=\frac{a_n}{H(\vartheta_n)}.
\]
The same alternatives hold after division by $M(\T)$; one event works
for each prescribed countable family of such gauges.
\end{proposition}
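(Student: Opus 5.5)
The plan is to transfer the auxiliary conclusions to $M$ through the same-space identity \eqref{crm:eq-same-space-transfer}, $M=\sqrt{2/\pi}\,q_G^{-1}F_*D_*$. By the computation preceding Lemma~\ref{l4t:lem-pathwise-multiplier}, the weight $h_n=a_n/H(\vartheta_n)$ is admissible. We apply Lemma~\ref{l4t:lem-pathwise-multiplier} pathwise with the smooth, strictly positive multiplier $b=\sqrt{2/\pi}\,q_G^{-1}F_*$. For almost every $\omega$, the coefficients of $b$ and $b^{-1}$ are absolutely summable, and $q_G$ is a finite positive scalar. Hence the class of $\mathcal L_h(D_*)$ among $0$, finite positive, and $\infty$ coincides with the class of $\mathcal L_h(M)$ on a full-probability event. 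This event does not depend on $H$, because it uses only smoothness and positivity of $F_*$.

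\emph{Convergent case.} Proposition~\ref{l4v:cor-upper-integral-class} gives $\mathcal L_h(D_*)=0$ almost surely, where the limsup over $|n|\to\infty$ covers negative $n$ by conjugation. Inequality \eqref{l4t:eq-envelope-multiplier} then yields $\mathcal L_h(M)=0$ almost surely.

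\emph{Divergent case.} Theorem~\ref{p4:thm-lower-assembly} gives $\mathcal L_h(D_*)=\infty$ with positive probability. Inequality \eqref{l4t:eq-envelope-inverse} transfers this to $\{\mathcal L_h(M)=\infty\}$ having positive probability. Here I must check two points. First, the auxiliary limsup is along $n\to+\infty$ while $\mathcal L_h$ uses $|n|\to\infty$; the latter is at least the former. Second, the transfer must hold on a common event, which is immediate from the pathwise nature of the lemma. Proposition~\ref{l4t:prop-envelope-zero-one} then upgrades positive probability to probability one. Since $M$ is a measurable functional of $\phi$ and the event depends only on $M$, the zero--one law applies on the canonical space. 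This Cameron--Martin step is the delicate point, so I will verify that the coupling \eqref{crm:eq-same-space-transfer} really realizes $M$ as the canonical measure of $\phi_*$. Only then is the law of $\mathcal L_h(M)$ the same as on the abstract canonical space.

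\emph{Normalization and countable families.} Dividing by $M(\T)\in(0,\infty)$ multiplies every coefficient by a finite positive scalar, which does not change the class. For a countable family of gauges, intersect the countably many probability-one events. The main obstacle is the divergent case, but it is already fully handled by Theorem~\ref{p4:thm-lower-assembly} and the zero--one proposition; the remaining work is bookkeeping of null sets.
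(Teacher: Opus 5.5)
Your proposal is correct and follows essentially the same route as the paper. The paper likewise transfers pathwise through $M=\sqrt{2/\pi}\,q_G^{-1}F_*D_*$ using Lemma~\ref{l4t:lem-pathwise-multiplier}, takes the convergent case from Proposition~\ref{l4v:cor-upper-integral-class}, and takes the divergent case from Theorem~\ref{p4:thm-lower-assembly}, upgraded by the zero--one law of Proposition~\ref{l4t:prop-envelope-zero-one}; it then divides by $M(\T)$ and intersects countably many events. Your two extra checks are bookkeeping that the paper leaves implicit: that the one-sided limsup in Theorem~\ref{p4:thm-lower-assembly} is dominated by the two-sided $\mathcal L_h$, and that the probability of $\{\mathcal L_h(M)=\infty\}$ on the coupling is determined by the law of $\phi_*$.
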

\begin{proof}
For a divergent gauge,
Theorem~\ref{p4:thm-lower-assembly},
\eqref{crm:eq-same-space-transfer}, and
Lemma~\ref{l4t:lem-pathwise-multiplier}
show that
\(\mathbb P(\mathcal L_h(M)=\infty)>0\).
Proposition~\ref{l4t:prop-envelope-zero-one} then gives
\(\mathcal L_h(M)=\infty\) almost surely.
For a convergent gauge,
Proposition~\ref{l4v:cor-upper-integral-class} and the same smooth
transfer give $\mathcal L_h(M)=0$ almost surely.
This proves the canonical alternatives. Multiplication by the positive
finite scalar $1/M(\T)$ preserves both alternatives. Finally a
countable intersection yields one event for any prescribed countable
family of gauges; no assertion for a sample-dependent gauge is used.
\end{proof}

\section{Smooth covariance universality}
\label{uni:sec-universality}
The preceding sections establish the stable Fourier limits and integral classes for the canonical field.
We now extend these conclusions to every covariance in \eqref{intro:eq-covariance}.
The stable-limit argument also uses the independent endpoint bound, whereas the integral-class argument does not use the stable-limit theorem.
Smooth covariance decompositions provide a broader setting for such comparisons \cite{JSW}; here we use an explicit two-sided comparison at the joint field--measure level.

\subsection{A two-sided smooth covariance comparison and critical chaos}
\label{uni:subsec-construction}
\begin{lemma}
\label{uni:lem-sandwich}
For every real symmetric $g\in C^\infty(\T^2)$ there are real stationary
smooth covariances $C$ and $S_0$ such that $S_0$ has strictly positive,
rapidly decreasing Fourier coefficients and
$C+g\ge S_0$, $C-g\ge S_0$ as quadratic forms. In particular
$C$ and $C\pm g$ are covariances. Every real smooth covariance on
$\T^2$ has a Gaussian realization with almost surely smooth paths.
\end{lemma}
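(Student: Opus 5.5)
The plan is to work entirely in Fourier space on $\T^2$. Expand $g(x,y)=\sum_{k,l\in\mathbb Z}\hat g_{k,l}e^{\ii kx-\ii ly}$, a smooth symmetric real kernel, so its coefficient matrix is Hermitian with $|\hat g_{k,l}|\le C_N(1+|k|)^{-N}(1+|l|)^{-N}$ for every $N$. A stationary covariance $C(x-y)=\sum_k c_ke^{\ii k(x-y)}$ is diagonal in this basis. The inequality $C\pm g\ge S_0$ as quadratic forms then means that the Hermitian form $\sum_k (c_k-s_k)|\xi_k|^2\mp\sum_{k,l}\hat g_{k,l}\xi_k\overline{\xi_l}$ is nonnegative for all finitely supported $\xi$.

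First choose $s_k=(1+|k|)^{-2}$ times a rapidly decreasing positive factor, for instance $s_k=e^{-|k|}$, so that $S_0$ is smooth with strictly positive coefficients. Next dominate $g$ by a diagonal form using Schur's test. Set $\alpha_k=(1+|k|)^{-N}$ and bound
\[
 \Bigl|\sum_{k,l}\hat g_{k,l}\xi_k\overline{\xi_l}\Bigr|
 \le C_N\sum_{k,l}\alpha_k\alpha_l|\xi_k||\xi_l|
 \le C_N\Bigl(\sum_l\alpha_l\Bigr)\sum_k\alpha_k|\xi_k|^2 ,
\]
which follows from $2|\xi_k||\xi_l|\le|\xi_k|^2+|\xi_l|^2$ and symmetry. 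The whole double sum is then at most $A\sum_k\alpha_k|\xi_k|^2$ with $A=C_N\sum_l\alpha_l<\infty$. Define $c_k=s_k+A\alpha_k$. These coefficients are positive, real and even in $k$ (take $\alpha_k$ even), and they decay faster than any power. Hence $C$ is a real, stationary, smooth covariance, and both $C+g\ge S_0$ and $C-g\ge S_0$ hold on finitely supported vectors. They extend by density to all test functions, since every form involved is bounded on $L^2$. Because $S_0\ge0$, it follows that $C$ and $C\pm g$ are positive semidefinite, hence covariances.

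For the final assertion, take a real smooth covariance kernel $\Sigma\in C^\infty(\T^2)$. It defines a positive trace-class operator on $L^2(\T)$. Mercer's theorem gives $\Sigma=\sum_i\lambda_ie_i(x)e_i(y)$ with smooth real eigenfunctions, since the $e_i=\lambda_i^{-1}\Sigma e_i$ are smooth. Let $X=\sum_i\sqrt{\lambda_i}\,\zeta_ie_i$ with i.i.d.\ standard Gaussians $\zeta_i$. Then $\E\|X\|_{H^s}^2=\sum_i\lambda_i\|e_i\|_{H^s}^2$, and this equals the trace of $(1-\partial_x^2)^{s/2}\otimes(1-\partial_y^2)^{s/2}$ applied to $\Sigma$, restricted to the diagonal. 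That trace is finite for every $s$ by smoothness of $\Sigma$. So $X\in H^s$ almost surely for all $s$, and Sobolev embedding gives smooth paths.

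I expect the only delicate step to be justifying this last trace identity, namely that $\sum_i\lambda_i\|e_i\|_{H^s}^2=\int(\partial^s_x\partial^s_y\Sigma)(x,x)\,dx$ up to lower-order terms. One can instead avoid it via Kolmogorov--Chentsov: $\E|\partial^j X(x)-\partial^jX(y)|^2$ is controlled by the smooth derivatives of $\Sigma$, which gives Hölder continuity of every derivative. The stationary comparison itself is routine once Schur's bound is in place.
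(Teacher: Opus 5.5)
Your route is essentially the paper's: dominate the Hermitian coefficient matrix of $g$ by a diagonal form using $2|\xi_k||\xi_l|\le|\xi_k|^2+|\xi_l|^2$, add an exponentially decaying stationary $S_0$, and get smooth paths from $\E\|X\|_{H^s}^2<\infty$. One step, however, fails as written.

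\textbf{The gap.} With a fixed $N$, your majorant $\alpha_k=(1+|k|)^{-N}$ decays only polynomially. So $c_k=s_k+A\alpha_k\asymp(1+|k|)^{-N}$ does \emph{not} decay faster than every power, and the resulting $C$ is only finitely differentiable, not $C^\infty$. You also cannot let $N\to\infty$, because $A=C_N\sum_l\alpha_l$ depends on $N$. Smoothness of $C$ is part of the statement, and the paper needs it: $Y_-$ has covariance $C$ and must have smooth paths for $F_\pm$ to be smooth multipliers.

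\textbf{The repair} is what the paper does. Your AM--GM step, together with the Hermitian symmetry $|\hat g_{k,l}|=|\hat g_{l,k}|$, already gives
\[
 \Bigl|\sum_{k,l}\hat g_{k,l}\xi_k\overline{\xi_l}\Bigr|
 \le\sum_k r_k|\xi_k|^2,\qquad r_k=\sum_l|\hat g_{k,l}|.
\]
Integrating by parts separately in each variable gives $|\hat g_{k,l}|\le C_{N,M}(1+|k|)^{-N}(1+|l|)^{-M}$ for all $N,M$. Hence $r_k\le C_N(1+|k|)^{-N}\sum_l(1+|l|)^{-2}$ for every $N$. Set $d_k=r_k+r_{-k}$ and $c_k=s_k+d_k$. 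This sequence is even, positive and rapidly decreasing, so it defines a real smooth stationary $C$ with $C\pm g\ge S_0$.

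\textbf{Two smaller remarks.}
\begin{enumerate}
\item The trace identity you flag as delicate is not. By Tonelli and Parseval,
\[
 \sum_i\lambda_i\|e_i\|_{H^s}^2=\sum_m(1+m^2)^s\langle\Sigma e^{\ii m\cdot},e^{\ii m\cdot}\rangle .
\]
The diagonal Fourier coefficients of a smooth kernel decay rapidly, so the sum is finite. This is exactly the paper's formula $\E\|Y_A\|_{H^q}^2=\sum_m(1+m^2)^qA_{mm}$; Mercer's theorem is not needed.
\item To call $C\pm g$ covariances of a process, you must pass from $L^2$-positivity to positive semidefiniteness on finite point sets. For continuous kernels this follows by approximating point masses with smooth functions. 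The paper states this step explicitly, and you should too.
\end{enumerate}
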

\begin{proof}
Use normalized Haar measure only for this operator calculation. In the
orthonormal basis $e_n(x)=e^{\ii nx}$ the operator with kernel $g$ has matrix
\[
 G_{mn}=\frac1{(2\pi)^2}\int_{\T^2}g(x,y)e^{\ii ny-\ii mx}\,dx\,dy.
\]
For every integer $N\ge0$, integration by parts gives
$|G_{mn}|\le C_N(1+m^2)^{-N}(1+n^2)^{-N}$. Set
$r_m=\sum_n|G_{mn}|$ and $d_m=r_m+r_{-m}$. These sequences decrease
rapidly, and $d_m$ is even and nonnegative. Since $G$ is Hermitian, for
finitely supported $z$,
\[
 |\langle Gz,z\rangle|
 \le\tfrac12\sum_{m,n}|G_{mn}|(|z_m|^2+|z_n|^2)
 =\sum_m r_m|z_m|^2\le\sum_m d_m|z_m|^2.
\]
Thus $-D\le G\le D$, where $De_m=d_me_m$, and density extends these
inequalities to $L^2$. Choose any even sequence $s_m>0$ decreasing
exponentially in $|m|$, let
$S_0(x,y)=\sum_ms_me^{\ii m(x-y)}$, and let $S$ be its diagonal
operator. Replace $D$ by $D+S$ and let $C$ be the corresponding stationary
kernel. Then $C$ is real and smooth and satisfies $C\pm g\ge S_0$.
Positivity on finite sets of points follows by smooth
approximation of point masses. This proves the first assertion, including
the zero mode.

For any real smooth positive covariance $A$, its positive operator is
trace class. Its real $L^2$ Gaussian realization satisfies
\[
 \E\|Y_A\|_{H^q}^2=\sum_m(1+m^2)^q A_{mm}<\infty
 \qquad(q\in\mathbb N).
\]
A countable intersection and Sobolev embedding give one smooth version.
Its Fourier truncations converge in every smooth seminorm, so it is a
measurable $C^\infty$-valued random element with covariance $A$.
\end{proof}

Fix admissible $g$. On one space take $Y_+$ independent of $\phi$, with
covariance $C+g$; on another, extend the given target space by $Y_-$
independent of $X^g$, with covariance $C$. Then
\begin{equation}\label{uni:eq-field-bridge}
 H_+=\phi+Y_+\ \stackrel{\Law}=\ H_-=X^g+Y_-,\qquad
 \operatorname{Cov}(H_\pm)=K_g+C=:K_\sharp.
\end{equation}
The equality holds in a fixed separable distribution space, for example
$H^{-1}(\T;\mathbb R)$. The Fourier variances of $K_0$ and the rapid
decay of the smooth remainder ensure realizations there. No inverse
covariance or nondegeneracy assumption is used.

For a centered field $X$ write $M_{X,\varepsilon}$ for the approximation
in \eqref{intro:eq-chaos}. If $Y$ is independent of $X$ and smooth, define
\[
 F_{Y,\varepsilon}=e^{\sqrt2Y_\varepsilon-\E Y_\varepsilon^2},\qquad
 F_Y=e^{\sqrt2Y-\E Y^2}.
\]
Additivity of the variances gives the exact finite-scale identity
\begin{equation}\label{uni:eq-finite-factorization}
 M_{X+Y,\varepsilon}=F_{Y,\varepsilon}M_{X,\varepsilon}.
\end{equation}
Both $F_{Y,\varepsilon}$ and its inverse converge uniformly almost surely
to $F_Y$ and its inverse. The variance correction in these formulas is a
function of $x$, not necessarily a constant. Continuity of multiplication
in the weak topology of finite measures shows that either approximation
family converges in probability if and only if the other does; their limits
then satisfy $M_{X+Y}=F_YM_X$ on this coupling.

\begin{proposition}
\label{uni:prop-construction}
For every admissible $g$, \eqref{intro:eq-chaos} has a full-family weak
probability limit $M_g$, measurable with respect to $X^g$, with
$0<M_g(\T)<\infty$ almost surely.
It is independent of the auxiliary construction above and inherits the standard mollifier-independence of canonical circle chaos.
There is one Borel map $\mathfrak M_\sharp$ on the common augmented
field space such that, with $F_\pm=e^{\sqrt2Y_\pm-\E Y_\pm^2}$,
\begin{equation}\label{uni:eq-measure-bridge}
\begin{gathered}
 N_+=\mathfrak M_\sharp(H_+)=F_+M,\qquad
 N_-=\mathfrak M_\sharp(H_-)=F_-M_g,\\
 (H_+,N_+)\stackrel{\Law}=(H_-,N_-).
\end{gathered}
\end{equation}
\end{proposition}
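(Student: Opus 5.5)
The plan is to build $M_g$ from the canonical chaos by two applications of the factorization \eqref{uni:eq-finite-factorization}, using the field bridge \eqref{uni:eq-field-bridge}.

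\textbf{Step 1: the augmented chaos on the canonical side.} Start with the exact circle field $\phi$. Its mollified chaos $M_{\phi,\varepsilon}$ converges in probability to $M$ along the full family; this is the standard canonical construction \cite{DRSV,SH} together with its mollifier independence. Take $Y_+$ smooth and independent of $\phi$ with covariance $C+g$, as provided by Lemma~\ref{uni:lem-sandwich}. The factorization then shows that $M_{H_+,\varepsilon}=F_{Y_+,\varepsilon}M_{\phi,\varepsilon}$ converges in probability to $N_+=F_+M$.

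\textbf{Step 2: a Borel map and the law transfer.} Define $\mathfrak M_\sharp$ on $H^{-1}(\T;\mathbb R)$ as a Borel selection of the limit. Concretely, take $\mathfrak M_\sharp(h)=\lim_k M_{h,\varepsilon_k}$ along a fixed sequence $\varepsilon_k\downarrow0$, where the limit is taken in the weak topology wherever it exists and set to $0$ otherwise. The sequence should be chosen fast enough that convergence is almost sure under the law of $H_+$. Each map $h\mapsto M_{h,\varepsilon}$ is continuous from $H^{-1}$ to measures, since convolution with a smooth mollifier is continuous into $C(\T)$. Hence $\mathfrak M_\sharp$ is Borel, and $N_+=\mathfrak M_\sharp(H_+)$ almost surely.

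Because $H_-\stackrel{\Law}=H_+$, the same sequence converges almost surely for $H_-$, and $(H_-,\mathfrak M_\sharp(H_-))\stackrel{\Law}=(H_+,N_+)$. The full-family convergence in probability also transfers, because it is a statement about the joint law of the continuous images $M_{H,\varepsilon}$. So $M_{H_-,\varepsilon}\to N_-:=\mathfrak M_\sharp(H_-)$ in probability along the whole family.

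\textbf{Step 3: inverting the smooth factor.} Apply the factorization on the target space, where $M_{H_-,\varepsilon}=F_{Y_-,\varepsilon}M_{X^g,\varepsilon}$. Since $F_{Y_-,\varepsilon}^{-1}\to F_-^{-1}$ uniformly almost surely, $M_{X^g,\varepsilon}$ converges in probability to $M_g:=F_-^{-1}N_-$, which yields \eqref{uni:eq-measure-bridge}.

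\textbf{Step 4: measurability and mass.} $M_g$ is a limit in probability of $\sigma(X^g)$-measurable measures, so a version of it is $X^g$-measurable. This also shows that $M_g$ does not depend on the auxiliary $Y_-$ or on the auxiliary construction. Mollifier independence follows by running the same argument with a second mollifier on the canonical side.

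For the mass, $0<N_+(\T)<\infty$ holds almost surely because $0<M(\T)<\infty$ and $F_+$ is continuous and strictly positive. This property passes to $N_-$ by equality in law. Dividing by the continuous positive $F_-$ then gives $0<M_g(\T)<\infty$.

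\textbf{Main obstacle.} The delicate point is Step 2. The transfer must be a genuine joint-law identity for field and measure, which requires a single Borel map defined off a null set common to both laws. Choosing a fixed deterministic subsequence with almost sure convergence under the law of $H_+$, and then using equality of laws on $H^{-1}$, handles this.
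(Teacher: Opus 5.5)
Your proposal is correct and follows essentially the same route as the paper: you factorize forward on the canonical side, choose a Borel subsequential limit map $\mathfrak M_\sharp$ and transfer both it and the full-family convergence through equality of the field laws, then invert the smooth factor $F_-$ on the target side, and use uniqueness of probability limits for measurability and independence of the construction. The only point worth stating explicitly is one the paper also records. Because both the prelimits and a version of $M_g$ are Borel functions of $X^g$ alone, the convergence you prove on the extended space also holds on the original target space.
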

\begin{proof}
We first record the measurability point. If Borel maps
$\mathcal A_\varepsilon:S\to T$, with $S$ standard Borel and $T$ Polish,
satisfy $\mathcal A_\varepsilon(U)\to V$ in probability, a deterministic
almost surely convergent subsequence defines a Borel limit map
$\mathfrak M$ on its convergence set; give it a fixed value elsewhere.
Then $V=\mathfrak M(U)$ almost surely. For $U'\stackrel{\Law}=U$,
\[
 \Prob\big(d(\mathcal A_\varepsilon(U'),\mathfrak M(U'))>\eta\big)
 =\Prob\big(d(\mathcal A_\varepsilon(U),\mathfrak M(U))>\eta\big)
 \longrightarrow0.
\]
Thus the same full-family convergence holds for \(U'\).
The subsequence selects a version; it does not replace full-family convergence.

For fixed covariance $Q$, convolution and the deterministic variance
function make $M_{X,\varepsilon}$ a Borel map
$\mathcal A_{Q,\varepsilon}(X)$ into the Polish space of finite positive
measures. Canonical construction and \eqref{uni:eq-finite-factorization}
give $M_{H_+,\varepsilon}\to F_+M$. The observation above supplies
$\mathfrak M_\sharp(H_+)=F_+M$ and gives the same convergence for $H_-$,
using the same maps $\mathcal A_{K_\sharp,\varepsilon}$ on both sides.
It also gives the joint field--measure law in \eqref{uni:eq-measure-bridge}.
The inverse identity
\[
 M_{X^g,\varepsilon}=F_{-,\varepsilon}^{-1}M_{H_-,\varepsilon}
\]
therefore converges weakly in probability to $M_g:=F_-^{-1}N_-$.
Positivity, finiteness, and positive total mass follow from those of $N_-$
and the strictly positive continuous multiplier on the compact circle.
Every target prelimit is a function of $X^g$ alone. A second use of the
Borel-limit observation makes $M_g$ a function of $X^g$, and proves the
convergence on the original target space as well.

Uniqueness of the probability limit shows that \(M_g\) does not depend on the auxiliary construction. For two standard mollifiers whose canonical limits agree, the two forward limits
are both $F_+M$. Their Borel limit maps agree under the common augmented
law, hence also on $H_-$. Multiplication by $F_-^{-1}$ proves agreement
of the target limits. This is pairwise mollifier-independence, not an
uncountable intersection of exceptional events.
\end{proof}

\subsection{Random smooth multiplication and localized endpoint bounds}
\label{uni:subsec-multiplier}
Throughout this subsection,
\[
 f_k=\frac1{2\pi}\int_\T f(x)e^{\ii kx}\,dx,\quad
 f(x)=\sum_k f_ke^{-\ii kx},\quad
 B_s(f)=\sup_k(1+|k|)^s|f_k|.
\]
In particular $\widehat{f\mu}(m)=\sum_kf_k\widehat\mu(m-k)$.
We shall use the all-integer bound
\begin{equation}\label{uni:eq-ratio}
 \frac{a_m}{a_{m-k}}\le\ell(k):=
 \left(1+\frac{\log(1+|k|)}{\log2}\right)^{1/2},
 \qquad \frac{a_m}{a_{m-k}}\longrightarrow1\quad(|m|\to\infty)
\end{equation}
for each fixed $k$. It follows from
$2+|m|\le(2+|m-k|)(1+|k|)$.

\begin{proposition}
\label{uni:prop-multiplier}
Let \(\mu\) be an almost surely finite positive random measure that is \(\mathcal E\)-measurable, where \(\mathcal E\) is a sigma-field, and fix \(c>0\).
Suppose that,
for every finite $J\subset\mathbb Z$, the vectors
$(a_{n+j}\widehat\mu(n+j))_{j\in J}$ converge stably relative to
$\mathcal E$, as $n\to+\infty$, with conditional exponent
\begin{equation}\label{uni:eq-input-profile}
 -c\int_\T\left|\sum_{j\in J}\overline{\xi_j}e^{\ii jx}\right|\mu(dx)
 \qquad(J\subset\mathbb Z\text{ finite}).
\end{equation}
Suppose also that, for one $p\in(0,1)$, increasing $A_r\in\mathcal E$
exhaust probability one and
\begin{equation}\label{uni:eq-endpoint-input}
 \sup_{m\in\mathbb Z}\E[\1_{A_r}|a_m\widehat\mu(m)|^p]\le C_r<\infty.
\end{equation}
Then the following assertions hold.
\begin{enumerate}
\item[(i)] \textup{Weighted stable limits.} For every fixed finite family of \(\mathcal E\)-measurable random \(C^\infty\) functions \(f_1,\ldots,f_d\), the vector
$(a_n\widehat{f_\ell\mu}(n))_{\ell=1}^d$ converges stably relative to
$\mathcal E$ to a stable vector with conditional exponent
\begin{equation}\label{uni:eq-weighted-kernel}
 -c\int_\T\left|\sum_{\ell=1}^d\overline{\xi_\ell}f_\ell(x)\right|\mu(dx).
\end{equation}

\item[(ii)] \textup{Localized endpoint bounds.}
For every $\mathcal E$-measurable random smooth $b$, the measure $b\mu$ inherits a
localized all-mode bound of the form \eqref{uni:eq-endpoint-input}, with
the same $p$. This inheritance requires only the endpoint hypothesis,
not \eqref{uni:eq-input-profile}.
\end{enumerate}

In particular, positive smooth multiplication, and its inverse when nowhere zero, preserve both hypotheses with the same $c$ and $p$. No independence or moments of the smooth norms
are assumed. The functions are measurable as $C^\infty$-valued random elements, and all families and functions are fixed as $n$ varies.
\end{proposition}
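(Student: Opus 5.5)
The plan is to reproduce, in abstract form, the Fourier-truncation argument already used in the proof of Proposition~\ref{canon:prop-profile} and Theorem~\ref{canon:thm-window}, with the canonical inputs replaced by the hypotheses \eqref{uni:eq-input-profile} and \eqref{uni:eq-endpoint-input}. I would prove (ii) first, since it uses only the endpoint hypothesis and then supplies the tail control needed in (i).

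\textbf{Step 1: part (ii).} Fix $s>1/p+1$. For the random smooth $b$, localize to the events
\[
A_{r,R}=A_r\cap\{B_s(b)\le R\}\in\mathcal E .
\]
These events exhaust probability one as $r,R\to\infty$, because $B_s(b)<\infty$ almost surely by smoothness. From $\widehat{b\mu}(m)=\sum_kb_k\widehat\mu(m-k)$, $p$-subadditivity and \eqref{uni:eq-ratio} give
\[
|a_m\widehat{b\mu}(m)|^p\le\sum_k|b_k|^p\ell(k)^p|a_{m-k}\widehat\mu(m-k)|^p .
\]
On $A_{r,R}$ we have $|b_k|^p\le R^p(1+|k|)^{-sp}$, a deterministic bound. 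Taking expectations, the localized quantity is therefore at most
\[
R^pC_r\sum_k(1+|k|)^{-sp}\ell(k)^p<\infty ,
\]
uniformly in $m$. The key point is that the random coefficients are removed by a pointwise bound on the localizing event before any expectation is taken, so no independence is needed. Reindexing the exhausting events along a diagonal sequence gives a single increasing family, which proves (ii).

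\textbf{Step 2: part (i) for truncated weights.} Write $f_\ell^{(K)}=\sum_{|k|\le K}f_{\ell,k}e^{-\ii kx}$. Then $a_n\widehat{f_\ell^{(K)}\mu}(n)$ is a finite linear combination of the values $a_{n-k}\widehat\mu(n-k)$ for $|k|\le K$. The coefficients are $f_{\ell,k}\,a_n/a_{n-k}$, and the ratio $a_n/a_{n-k}$ tends to one for each fixed $k$.

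Stable convergence of the vector over the finite offset set $\{-K,\dots,K\}$, relative to $\mathcal E$, passes through $\mathcal E$-measurable linear maps. To see this, approximate the coefficients $f_{\ell,k}$ by simple $\mathcal E$-measurable variables. On each atom, stable convergence reduces to convergence with a fixed test variable and a fixed continuous map. The approximation error is then removed by bounding coefficients on events of large probability.

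The limiting conditional characteristic exponent is obtained by substituting the linear map into \eqref{uni:eq-input-profile}. Since $\overline{\xi_\ell f_{\ell,k}}\,e^{-\ii kx}$ summed over $k$ gives $\overline{\xi_\ell}\,\overline{f_\ell^{(K)}(x)}$ up to the paper's conjugation convention, the exponent is
\[
-c\int\Bigl|\sum_\ell\overline{\xi_\ell}f_\ell^{(K)}\Bigr|\,d\mu .
\]
I would check this convention carefully against \eqref{contract:kernel}.

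\textbf{Step 3: removing $K$.} This is the main obstacle. On the prelimit side, the estimate of Step~1 restricted to $|k|>K$ gives
\[
\sup_n\E\bigl[\1_{A_{r,R}}|a_n\widehat{(f-f^{(K)})\mu}(n)|^p\bigr]\le C_{r,R}\sum_{|k|>K}(1+|k|)^{-sp}\ell(k)^p\to0 ,
\]
uniformly in $n$. On the limit side, I would realize all the limits through one conditional stable random measure with control $c\,\mu$, as in the canonical proof. The difference between the $K$-truncated and full limits is then conditionally stable with scale at most $c\|f-f^{(K)}\|_\infty\mu(\T)$, which tends to zero.

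With these two bounds, a standard $3\varepsilon$ argument for bounded Lipschitz test functions, multiplied by a bounded $\mathcal E$-measurable variable $V$, gives the stable limit. The steps are: localize to $A_{r,R}$, let $n\to\infty$, then $K\to\infty$, then $r,R\to\infty$. The delicate point is that the localizing events depend on $f$ and on the index $r$, so the order of limits must be kept exactly as stated.

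\textbf{Final assertion.} If $b>0$ is smooth and $\mathcal E$-measurable, part (i) with the weights $e^{\ii jx}b$ yields the input profile \eqref{uni:eq-input-profile} for the measure $b\mu$, with the same constant $c$. Part (ii) yields its endpoint bound \eqref{uni:eq-endpoint-input}. When $b$ is nowhere zero, the reciprocal $b^{-1}$ is also smooth and $\mathcal E$-measurable, so the same argument handles inversion.
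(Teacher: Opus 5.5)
Your proposal is correct and follows essentially the same route as the paper. It truncates the Fourier series of the weights, uses that finite $\mathcal E$-measurable linear maps preserve stable convergence, obtains a prelimit tail bound from $p$-subadditivity and the endpoint hypothesis on $A_r\cap\{B_s\le R\}$, takes one common conditional stable noise on the limit side, and passes to the limit in the order $n\to\infty$, then $K\to\infty$, then $r,R\to\infty$. The only differences are minor. On the limit side you let the Cauchy scale $c\|f-f^{(K)}\|_\infty\mu(\T)$ vanish pathwise and conclude by dominated convergence, whereas the paper bounds its localized $p$th moment via the zero mode of the endpoint hypothesis; both arguments suffice. In Step 2 you should say explicitly that tightness of the source vectors (which follows from their convergence in law) is what controls the coefficient-approximation error.
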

\begin{proof}
Fix $s>1/p$. For $f_\ell^{(K)}=\sum_{|k|\le K}(f_\ell)_ke^{-\ii kx}$,
\[
 a_n\widehat{f_\ell^{(K)}\mu}(n)
 =\sum_{|k|\le K}(f_\ell)_k\frac{a_n}{a_{n-k}}
                         a_{n-k}\widehat\mu(n-k).
\]
For fixed \(K\), this is a finite linear map whose coefficients are \(\mathcal E\)-measurable. Such maps preserve stable convergence: for a finite-valued matrix, partition according to its
\(\mathcal E\)-measurable values in the stable tests; approximate a general matrix $B$ in probability by finite-valued $B^{(q)}$ and use tightness of the source vectors $V_n$, since
\[
 \Prob(\|(B-B^{(q)})V_n\|>\eta)
 \le\Prob(\|B-B^{(q)}\|>\eta/T)+\Prob(\|V_n\|>T).
\]
The same bound treats the limiting vector and the convergent normalizer
ratios in \eqref{uni:eq-ratio}. This proves the truncated assertion.

Set $L_{r,R}=A_r\cap\{\max_\ell B_s(f_\ell)\le R\}$ and
\[
 \tau(K)=\sum_{|k|>K}(1+|k|)^{-sp}\ell(k)^p,\qquad
 \eta_s(K)=\sum_{|k|>K}(1+|k|)^{-s}.
\]
Both tend to zero. Exact Fourier convolution, \eqref{uni:eq-ratio}, and
$p$-subadditivity give
\begin{equation}\label{uni:eq-prelimit-tail}
 \sup_n\E\!\left[\1_{L_{r,R}}
  \max_{\ell\le d}|a_n\widehat{(f_\ell-f_\ell^{(K)})\mu}(n)|^p\right]
 \le dC_rR^p\tau(K).
\end{equation}
First bound the random coefficients by $R(1+|k|)^{-s}$ on the localization;
only then discard the extra smooth-norm indicator in nonnegative expectations.

On one extension, take a conditional Poisson random measure on
$\T\times\mathbb S^1\times(0,\infty)$, with intensity
\[
 c\mu(dx)m_{\rm H}(dz)r^{-2}dr.
\]
Compensate $r\le1$ and integrate
$f(x)rz$. After localization of $\mu(\T)$ and $\|f\|_\infty$, the small
jumps converge in $L^2$ and there are finitely many large jumps. Angular
symmetry gives zero drift. The identities
\[
 \int_0^\infty(1-\cos(ar))r^{-2}dr=\tfrac\pi2|a|,\qquad
 \int_{\mathbb S^1}|\operatorname{Re}(zH)|\,m_{\rm H}(dz)
 =\tfrac2\pi|H|
\]
give \eqref{uni:eq-weighted-kernel}.
This constructs a common noise for all weights and truncations.
A coordinate error is isotropic Cauchy of scale at most
$cR\eta_s(K)\mu(\T)$. Its conditional $p$th moment is this scale to the
power $p$ times a finite constant depending only on $p$. The zero mode of
\eqref{uni:eq-endpoint-input} gives
$\E[\1_{A_r}\mu(\T)^p]\le(\log2)^{-p/2}C_r$. Consequently, for the full and truncated
limiting vectors $Z$ and $Z^{(K)}$,
\begin{equation}\label{uni:eq-limit-tail}
 \widetilde\E[\1_{L_{r,R}}\|Z-Z^{(K)}\|_\infty^p]
 \le C_{p,c}\,dC_rR^p\eta_s(K)^p.
\end{equation}
The factor $(\log2)^{-p/2}$ has been absorbed into the constant.

For any bounded \(\mathcal E\)-measurable variable and bounded Lipschitz test, insert the two truncated vectors and use \eqref{uni:eq-prelimit-tail} and
\eqref{uni:eq-limit-tail} with Markov's inequality. First let $n\to\infty$
at fixed $K$, then $K\to\infty$ at fixed $r,R$, then exhaust $R,r$.
The localization complements cost only their probabilities times the test
bound. This proves stable convergence for bounded Lipschitz tests. Taking
the environmental variable equal to one gives tightness; bounded weighted
laws are then tight by domination, which extends the conclusion to all
bounded continuous tests. Fixed offsets in the weighted statement follow
by replacing $f_\ell$ by $e^{\ii j_\ell x}f_\ell$ and using
$a_{n+j_\ell}/a_n\to1$.

For endpoint inheritance alone, on
$A_r\cap\{B_s(b)\le R\}$ the same convolution calculation gives
\begin{equation}\label{uni:eq-endpoint-inheritance}
 \sup_m\E[\1_{A_r\cap\{B_s(b)\le R\}}
                 |a_m\widehat{b\mu}(m)|^p]
 \le C_rR^p\sum_k(1+|k|)^{-sp}\ell(k)^p<\infty.
\end{equation}
Taking $R=r$ gives increasing exhausting events. This proof does not use the stable convergence assumption. For positive $F$, apply the weighted result to
$Fe^{\ii jx}$: the exponent is \eqref{uni:eq-input-profile} with control
$F\mu$. If $F$ is nowhere zero, $F^{-1}$ is a measurable smooth random
function and the same argument applies again.
\end{proof}

\begin{lemma}
\label{uni:lem-environment}
The following assertions hold.
\begin{enumerate}
\item[(i)] \textup{Projection.}
Stable convergence relative to $\mathcal E$ projects to a smaller
sigma-field whenever its limit kernel is measurable there.

\item[(ii)] \textup{Enlargement.}
If the prelimit is $\mathcal E$-measurable, stable convergence
relative to $\mathcal E$ also holds relative to any larger base-space
sigma-field, with the same kernel.

\item[(iii)] \textup{Localization.}
Fix $p>0$. If each $V_m$ is measurable with respect to a standard Borel random
element $U$ and
\[
 L_r\uparrow\Omega\text{ almost surely},\qquad
 \sup_m\E[\1_{L_r}|V_m|^p]\le K_r,
\]
there are increasing Borel conditions $D_r$ on $U$, exhausting its law,
such that
\[
 \sup_m\E[\1_{\{U\in D_r\}}|V_m|^p]\le2K_r.
\]
\end{enumerate}

Thus, for a measure measurable with respect to the field,
the stable Fourier convergence and the localized endpoint bound
are preserved under equality in law of the underlying fields.
\end{lemma}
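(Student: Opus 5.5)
Each of the three assertions is a soft measure-theoretic fact; the plan is to prove them separately and then combine them for the final remark.

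\textbf{(i) Projection.} Let $\mathcal E'\subset\mathcal E$, and suppose the limit kernel $Q(\omega,\cdot)$ is $\mathcal E'$-measurable. Take a bounded $\mathcal E'$-measurable $V$ and a bounded continuous $f$. Since $V$ is also $\mathcal E$-measurable, stable convergence relative to $\mathcal E$ gives
\[
\E[Vf(V_n)]\to\E\Bigl[V\int f\,dQ\Bigr].
\]
The right-hand side involves only $\mathcal E'$-measurable objects. It is therefore exactly the required limit relative to $\mathcal E'$.

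\textbf{(ii) Enlargement.} Let $\mathcal E''\supset\mathcal E$ and take a bounded $\mathcal E''$-measurable $V$. Because the prelimit $V_n$ is $\mathcal E$-measurable, the tower property gives
\[
\E[Vf(V_n)]=\E\bigl[\E[V\mid\mathcal E]\,f(V_n)\bigr].
\]
Stable convergence relative to $\mathcal E$ applies to the bounded variable $\E[V\mid\mathcal E]$ and yields the limit $\E[\E[V\mid\mathcal E]\int f\,dQ]$. The kernel is $\mathcal E$-measurable, so this equals $\E[V\int f\,dQ]$.

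\textbf{(iii) Localization.} Set $\psi_r=\Prob(L_r\mid U)$, which is a Borel function of $U$, and define
\[
D_r=\{\psi_r\ge1/2\}.
\]
Since $L_r$ increases to $\Omega$, the functions $\psi_r$ increase to $1$ almost surely. Hence the sets $D_r$ increase, and their probabilities tend to one. Each $V_m$ is $\sigma(U)$-measurable, and on $D_r$ we have $1\le2\psi_r$. Therefore
\[
\E[\1_{D_r}|V_m|^p]\le2\E[\psi_r|V_m|^p]=2\E[\1_{L_r}|V_m|^p]\le2K_r.
\]
If the $D_r$ are required to be exactly increasing, replace $D_r$ by $\bigcup_{s\le r}D_s$; the bound then becomes $2\max_{s\le r}K_s$, which can be renamed.

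\textbf{Final remark.} Suppose $X\stackrel{\Law}=X'$ and the measure is $\mu=\mathfrak M(X)$ for a Borel map $\mathfrak M$. First, stable convergence relative to $\sigma(X)$ with kernel $\kappa(X)$ is a statement about the joint law of $(X,(a_{n+j}\widehat\mu(n+j))_j)$ tested against functions $V=v(X)$. That joint law is transported identically to $X'$, so the convergence holds for $X'$ with kernel $\kappa(X')$. Second, by (iii) the localization events in the endpoint bound can be taken to be Borel conditions $\{X\in D_r\}$. These conditions transfer verbatim under equality in law.

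The main obstacle is only in (iii): the original localizing events need not be $U$-measurable. Conditioning on $U$ together with the threshold $1/2$ handles this, and it is what produces the factor $2$.
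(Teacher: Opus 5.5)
Your proposal is correct and follows the paper's proof essentially line by line. Projection is restriction of the test variables; enlargement uses the tower property with $\E[V\mid\mathcal E]$; localization thresholds the Borel conditional probability $\Prob(L_r\mid U)$ at $1/2$; and the transport remark reduces to equality of joint laws under a common Borel measure map. The only cosmetic difference is that the paper takes everywhere-monotone versions of $\Prob(L_r\mid U)$, which agree with yours almost surely, so the sets $D_r$ are exactly increasing and already satisfy the bound $2K_r$ without your renaming step.
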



\begin{proof}
Projection follows by restricting the class of measurable test variables in the definition of stable convergence.
For enlargement, if $A$ is
a bounded variable on the larger base space and $h$ is bounded continuous,
\[
 \E[Ah(V_n)]=\E[\E(A\mid\mathcal E)h(V_n)]
 \longrightarrow\E[\E(A\mid\mathcal E)Qh]=\E[AQh].
\]
The added limiting noise is conditionally independent of further base
information given the kernel environment. For localization, take monotone
versions $q_r(U)=\E[\1_{L_r}\mid U]$ and let $D_r=\{q_r\ge1/2\}$.
Then $q_r\uparrow1$ and
\[
 \E[\1_{\{U\in D_r\}}|V_m|^p]
 \le2\E[q_r(U)|V_m|^p]=2\E[\1_{L_r}|V_m|^p].
\]
Truncation justifies this identity without global integrability. Borel
versions of the countably many conditions can be chosen consistently.
The concluding transport statement is equality of expectations of Borel
functions of the field, using the same Borel measure map on both spaces.
\end{proof}

\begin{lemma}
\label{uni:lem-canonical-endpoint}
For every fixed $p\in(0,1)$ there are increasing
$A_r\in\sigma(\phi)$, exhausting probability one, such that
\begin{equation}\label{uni:eq-canonical-endpoint}
 \sup_{m\in\mathbb Z}\E[\1_{A_r}|a_m C_m|^p]<\infty.
\end{equation}
\end{lemma}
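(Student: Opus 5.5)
We obtain the canonical bound by transferring the auxiliary all-mode estimate \eqref{paper:eq-all-mode} through the same-space identity \eqref{crm:eq-same-space-transfer}. This avoids Proposition~\ref{canon:prop-profile}. On that coupling,
\[
 M=\sqrt{2/\pi}\,q_G^{-1}F_*D_*,
\]
where $F_*$ is smooth, strictly positive and independent of the auxiliary field, and $q_G$ is a positive scalar.

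First we localize the auxiliary estimate. Put
\[
 \mathcal L_r=E_{\beta_r}\cap\{q_G^{-1}\le r,\ B_s(F_*)\le r\},
\]
with integer barriers $\beta_r\uparrow\infty$ and a fixed $s>1/p+1$. By \eqref{paper:eq-all-mode}, $\sup_m\E[\1_{E_{\beta_r}}|a_m\widehat D_*(m)|^p]<\infty$. Exact Fourier convolution gives
\[
 \widehat{F_*D_*}(m)=\sum_k F_{*,k}\widehat D_*(m-k).
\]
We combine this with the ratio bound \eqref{uni:eq-ratio} and $p$-subadditivity. On $\mathcal L_r$ we bound the coefficients by $r(1+|k|)^{-s}$ before taking expectations, and only then drop the extra indicators. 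This yields
\[
 \sup_m\E[\1_{\mathcal L_r}|a_mC_m|^p]\le C_{r,p}\,r^{2p}\sum_k(1+|k|)^{-sp}\ell(k)^p<\infty.
\]
This is exactly the computation of Proposition~\ref{uni:prop-multiplier}(ii), applied with multiplier $\sqrt{2/\pi}\,q_G^{-1}F_*$. The scalar factor $q_G^{-1}$ is handled by the localization rather than by independence. The events $\mathcal L_r$ increase, since the barriers increase and $E_\beta$ is monotone in $\beta$. They exhaust probability one because $\Prob(E_\beta)\uparrow1$, $q_G^{-1}<\infty$ and $B_s(F_*)<\infty$ almost surely.

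Next we move the localization onto $\sigma(\phi)$. The events $\mathcal L_r$ are measurable with respect to the auxiliary field together with $Y$, not with respect to $\phi_*$. Since $M$ and all $C_m$ are Borel functions of $\phi_*$, we apply Lemma~\ref{uni:lem-environment}(iii) with $U=\phi_*$, $V_m=a_mC_m$ and $L_r=\mathcal L_r$. It produces increasing Borel conditions $D_r$ on $\phi_*$ that exhaust its law and satisfy
\[
 \sup_m\E[\1_{\{\phi_*\in D_r\}}|a_mC_m|^p]\le 2\sup_m\E[\1_{\mathcal L_r}|a_mC_m|^p].
\]
Because $\phi_*$ has the exact-circle law, we then set $A_r=\{\phi\in D_r\}$ on the canonical space. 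The moments are unchanged, since they are expectations of Borel functions of a field with the same law.

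The main obstacle is the measurability and monotonicity bookkeeping in the second step. The conditional probabilities $q_r(U)=\E[\1_{\mathcal L_r}\mid U]$ must be chosen as monotone Borel versions, so that the $D_r$ are genuinely increasing. This is the point Lemma~\ref{uni:lem-environment}(iii) addresses. The convolution estimate in the first step is routine once \eqref{paper:eq-all-mode} is available.
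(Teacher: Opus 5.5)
Your proposal is correct and follows the paper's proof essentially verbatim. You transfer \eqref{paper:eq-all-mode} through $M=\sqrt{2/\pi}\,q_G^{-1}F_*D_*$ by the convolution/endpoint-inheritance bound on $E_{\beta}\cap\{q_G^{-1}\le r,\ B_s(F_*)\le r\}$, and then use Lemma~\ref{uni:lem-environment}(iii) to replace these events by $\sigma(\phi)$-measurable ones. Your choice $s>1/p+1$ is harmless but stronger than needed: $s>1/p$ already makes $\sum_k(1+|k|)^{-sp}\ell(k)^p$ finite.
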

\begin{proof}
The all-mode auxiliary bound \eqref{paper:eq-all-mode} was derived
directly from Theorem~\ref{crm:thm-endpoint-moment} in
Section~\ref{paper:sec-common}. On the coupling
\eqref{crm:eq-same-space-transfer},
$M=\sqrt{2/\pi}\,q_G^{-1}F_*D_*$. Endpoint inheritance
\eqref{uni:eq-endpoint-inheritance}, or its direct convolution proof,
gives canonical bounds on
$E_r\cap\{q_G^{-1}\le r,B_s(F_*)\le r\}$, for $s>1/p$.
These increase to full probability. Lemma~\ref{uni:lem-environment}
makes their replacements measurable with respect to $\phi$ alone.
\end{proof}

\subsection{Proof of the general stable Fourier results}
\label{uni:subsec-profile-proof}
\begin{proof}[Proof of Theorem~\ref{contract:A}]
Use the coupling \eqref{uni:eq-field-bridge} together with the measures
in \eqref{uni:eq-measure-bridge}. Fix any $p\in(0,1)$.
Proposition~\ref{canon:prop-profile} and
Lemma~\ref{uni:lem-canonical-endpoint} give the canonical stable Fourier
limit and the localized endpoint moment bound.
Since its prelimit is $\phi$-measurable, the canonical stable convergence
enlarges to $\sigma(\phi,Y_+)$. Proposition~\ref{uni:prop-multiplier}
applied to $F_+$ gives the stable Fourier limit and endpoint bound for $N_+=F_+M$,
with unchanged stable constant $1/\sqrt\pi$.

Now $N_+=\mathfrak M_\sharp(H_+)$. Its limiting kernel is a function
of $H_+$, so Lemma~\ref{uni:lem-environment} projects the stable convergence and
the endpoint localizations to $\sigma(H_+)$. Both transfer by the joint
law \eqref{uni:eq-measure-bridge} to $H_-$. The prelimit coefficients
of $N_-$ are $H_-$-measurable, so the stable convergence may therefore be viewed relative to
\(\sigma(H_-,Y_-)=\sigma(X^g,Y_-)\).

Apply Proposition~\ref{uni:prop-multiplier} to the dependent multiplier
$b=F_-^{-1}$. For $P(x)=\sum_{j\in J}\overline{\xi_j}e^{\ii jx}$,
its exponent is exactly
\[
 -\frac1{\sqrt\pi}\int_\T|bP|\,dN_-
 =-\frac1{\sqrt\pi}\int_\T|P|\,dM_g.
\]
The positivity of $b$ and stable index one give this identity with no
additional diagonal correction. Both the target measure and this kernel
are $X^g$-measurable; projection to $\sigma(X^g)$ proves the theorem.
The common-noise construction in Proposition~\ref{uni:prop-multiplier}
retains the joint law of every finite offset set.

Taking $J=\{0\}$ gives the stated planar Cauchy law. Multiplication by
the finite
\(\sigma(X^g)\)-measurable scalar $1/M_g(\T)$ gives its deterministic
normalized scale and the normalized vector control
$M_g/(\sqrt\pi M_g(\T))$.
\end{proof}

\begin{corollary}
\label{uni:cor-target-endpoint}
For each fixed admissible $g$ and $p\in(0,1)$, there are increasing events
$E_r^g\in\sigma(X^g)$ exhausting probability one such that, for every $r$,
\[
 \sup_{m\in\mathbb Z}\E[\1_{E_r^g}|a_m C_{g,m}|^p]<\infty.
\]
For any fixed finite family of $X^g$-measurable random smooth functions,
the weighted stable limit with conditional exponent
\eqref{uni:eq-weighted-kernel} holds with
$\mu=M_g$ and $c=1/\sqrt\pi$.
\end{corollary}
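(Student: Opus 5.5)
The plan is to reuse the transfer chain from the proof of Theorem~\ref{contract:A}, now tracking the endpoint bound in addition to the stable limit. The starting point is Lemma~\ref{uni:lem-canonical-endpoint}, which gives increasing events $A_r\in\sigma(\phi)$ exhausting probability one with
\[
 \sup_m\E[\1_{A_r}|a_mC_m|^p]<\infty.
\]

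First I would apply Proposition~\ref{uni:prop-multiplier}(ii) with $\mu=M$ and the positive smooth multiplier $b=F_+$. This requires treating $F_+$ as $\mathcal E$-measurable with $\mathcal E=\sigma(\phi,Y_+)$. That is allowed because the events $A_r$ remain in the enlarged sigma-field, and the endpoint hypothesis needs no independence. The output is a localized all-mode bound for $N_+=F_+M$, on the events $A_r\cap\{B_s(F_+)\le r\}$ with $s>1/p$.

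Next I would make these events functions of $H_+$ alone. Lemma~\ref{uni:lem-environment}(iii) applies with $U=H_+$ and $V_m=a_m\widehat{N_+}(m)$; this is legitimate because $N_+=\mathfrak M_\sharp(H_+)$ is a Borel function of $H_+$. It yields increasing Borel conditions $D_r$ on the field space, exhausting its law, with
\[
 \sup_m\E[\1_{\{H_+\in D_r\}}|a_m\widehat{N_+}(m)|^p]\le2K_r.
\]
By the joint law identity \eqref{uni:eq-measure-bridge}, the same bound holds with $(H_-,N_-)$ in place of $(H_+,N_+)$, since both sides are expectations of the same Borel functional.

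Then I would apply Proposition~\ref{uni:prop-multiplier}(ii) again, now with $b=F_-^{-1}$, which is smooth and measurable with respect to $\sigma(X^g,Y_-)$. This gives a localized bound for $M_g=F_-^{-1}N_-$ on events in $\sigma(X^g,Y_-)$. A final application of Lemma~\ref{uni:lem-environment}(iii), with $U=X^g$, produces events $E_r^g\in\sigma(X^g)$ with constant at most doubled. This is possible because each $C_{g,m}$ is $X^g$-measurable by Proposition~\ref{uni:prop-construction}.

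For the weighted statement, the proof of Theorem~\ref{contract:A} has already established the stable Fourier limit of $M_g$ relative to $\sigma(X^g)$, with exponent constant $1/\sqrt\pi$. Together with the endpoint bound just proved and $\mathcal E=\sigma(X^g)$, both hypotheses of Proposition~\ref{uni:prop-multiplier} hold, and part (i) gives the weighted kernel \eqref{uni:eq-weighted-kernel}.

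The only delicate step is the repeated change of sigma-field. The issue is that the localizing events in Proposition~\ref{uni:prop-multiplier}(ii) involve smoothness norms of multipliers that are not functions of the base field. I will handle this each time by the conditional-expectation thresholding in Lemma~\ref{uni:lem-environment}(iii), which costs only the factor $2$.
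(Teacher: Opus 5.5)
Your proposal is correct and follows the paper's own proof: the endpoint bound is carried through $F_+$ by the inheritance estimate \eqref{uni:eq-endpoint-inheritance}, made $H_+$-measurable by the thresholding in Lemma~\ref{uni:lem-environment}(iii), transported through the joint law \eqref{uni:eq-measure-bridge}, pushed through $F_-^{-1}$, and projected to $\sigma(X^g)$, all without using Theorem~\ref{contract:A}. The weighted assertion is then Proposition~\ref{uni:prop-multiplier}(i) with $\mathcal E=\sigma(X^g)$, using Theorem~\ref{contract:A} together with the endpoint bound just obtained, exactly as in the paper.
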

\begin{proof}
Transport endpoint bounds through $F_+$, the common field law, and
$F_-^{-1}$ using only \eqref{uni:eq-endpoint-inheritance} and
Lemma~\ref{uni:lem-environment}; project once more to $X^g$. Thus the
endpoint assertion does not use Theorem~\ref{contract:A}. The weighted assertion then
follows from Theorem~\ref{contract:A} and
Proposition~\ref{uni:prop-multiplier}.
\end{proof}

\begin{proof}[Proof of Corollary~\ref{contract:functional}]
Fix $s>1$. Choose $p\in(1/s,1)$ and then $r$ with
$1/p<r<s$. Use the equivalent Fourier norm
\[
 \|T\|_{H^{-r}}^2
 =\sum_{j\in\mathbb Z}(1+j^2)^{-r}|T(e^{ij\cdot})|^2.
\]
By Corollary~\ref{uni:cor-target-endpoint}, for increasing
$E_R^g\in\sigma(X^g)$ exhausting probability one,
\[
 K_R:=\sup_{m\in\mathbb Z}
 \E[\1_{E_R^g}|a_mC_{g,m}|^p]<\infty.
\]
Since $p/2<1$, subadditivity and \eqref{uni:eq-ratio}, used with
$m=n$ and $k=-j$, give uniformly in $n$
\begin{align*}
 \E[\1_{E_R^g}\|\mathcal T_n^g\|_{H^{-r}}^p]
 &\le\sum_{j\in\mathbb Z}(1+j^2)^{-rp/2}
       \E[\1_{E_R^g}|a_nC_{g,n+j}|^p]\\
 &\le K_R\sum_{j\in\mathbb Z}(1+j^2)^{-rp/2}\ell(j)^p<\infty.
\end{align*}
The last series converges because $rp>1$. Thus, after choosing $R$ and
then a large $H^{-r}$ ball, Markov's inequality and the compact embedding
$H^{-r}(\T)\hookrightarrow H^{-s}(\T)$ give tightness of
$(\mathcal T_n^g)_n$ in $H^{-s}$.

The conditional stable noise $\mathcal S_{M_g}$ has an $H^{-s}$-valued
realization. Indeed, in the compensated Poisson construction used in
Proposition~\ref{uni:prop-multiplier},
$\sup_x\|\delta_x\|_{H^{-s}}<\infty$ for $s>1/2$. Hence the compensated
small jumps converge conditionally in $L^2(H^{-s})$, while only finitely
many jumps have radius greater than one.

For a fixed finite $J\subset\mathbb Z$,
\[
 \bigl(\mathcal T_n^g(e^{ij\cdot})\bigr)_{j\in J}
 =\left(\frac{a_n}{a_{n+j}}a_{n+j}C_{g,n+j}\right)_{j\in J}.
\]
The ratios tend to one, so Theorem~\ref{contract:A} gives stable
convergence of every finite family of Fourier coordinates to those of
$\mathcal S_{M_g}$.

It remains to pass from coordinates to bounded continuous functions on
$H^{-s}$. Let $V$ be bounded and $\sigma(X^g)$-measurable and let
$F\in C_b(H^{-s};\mathbb R)$. Tightness of the prelimit family and of the
limiting law gives a compact set $K\subset H^{-s}$ carrying arbitrarily
large probability for both. On $K$, bounded continuous cylinder functions
depending on finitely many real and imaginary Fourier coordinates form a
point-separating self-adjoint algebra containing constants. By
Stone--Weierstrass they are uniformly dense in $C(K)$. Choose such a
cylinder function $G$ approximating $F$ on $K$ and clip $G$ to the range
$[-\|F\|_\infty,\|F\|_\infty]$. The finite-coordinate stable convergence
applies to $G$. The two replacement errors are bounded by the uniform
approximation error on $K$ plus a constant times the probability of
$K^c$. Letting first $n\to\infty$, then the approximation error and tail
probability tend to zero, proves the asserted stable convergence in
$H^{-s}$.

Finally, $H^s(\T)$ is separable and embeds continuously into
$C(\T)$ for $s>1$, while the dual pairing
$H^{-s}\times H^s\to\mathbb C$ is continuous. The deterministic weighted
consequence follows by continuous mapping. For a fixed finite family of
$\sigma(X^g)$-measurable $H^s$-valued tests, approximate the tests in probability by finite-range
\(\sigma(X^g)\)-measurable \(H^s\)-valued simple functions and localize
their $H^s$ norms; tightness of $\mathcal T_n^g$ and the same argument for
the limit remove the approximation. This gives the stated random-test
extension without independence assumptions.
\end{proof}

\subsection{Transfer of the explicit growing window}
\label{uni:subsec-window-transfer}
The fixed-dimensional covariance transfer above permits a multiplier that
depends on the augmented target field.  For a growing window we need one
additional quantitative fact: the smooth multiplier constructed below can
be replaced, to arbitrarily high logarithmic accuracy, by a smooth multiplier
measurable at the same coarse scale as the canonical window comparison.

\begin{lemma}
\label{uni:lem-coarse-regression}
Fix an admissible $g$.  The covariance $C$ in
Lemma~\ref{uni:lem-sandwich} may be chosen so that there is a stationary
smooth covariance $S_0$ with strictly positive rapidly decreasing Fourier
coefficients and
\[
 C+g\ge S_0,\qquad C-g\ge S_0
\]
as quadratic forms.  On an extension of the canonical auxiliary coupling
there are positive smooth random functions $B_g$ and $B_{g,b}$ such that
\begin{enumerate}
\item[(i)] $M_g^+:=B_gM$ has the same law as $M_g$;
\item[(ii)] $B_{g,b}$ is measurable with respect to
$\F_b\vee\mathcal G$, where $\mathcal G$ is generated by finitely many
smooth Gaussian fields independent of the auxiliary scale field;
\item[(iii)] for every integer $q\ge0$ and every $A<\infty$,
\begin{equation}\label{uni:eq-regression-certificate}
 \E\|B_g-B_{g,b}\|_{C^q}^2
 \le C_{A,q,g}\{N^{-A}+e^{-b}(1+N)^3\}
\end{equation}
when the regression cutoff is $N\ge1$;
\item[(iv)] after taking $N=\lfloor\lambda\rfloor$, where
$\lambda=\log\log n$, the families $B_{g,b}$ and $B_{g,b}^{-1}$ are tight
in every fixed smooth seminorm.  Moreover, for every fixed $q$ there are increasing events
\(\mathcal G_R^g\uparrow\Omega\)
such that
\begin{equation}\label{uni:eq-regression-bad}
 \Prob\bigl(\mathcal G_R^g,\
       \|B_g-B_{g,b}\|_{C^q}>\lambda^{-1}\bigr)
 \le C_{R,q,g}\lambda^{-2}
\end{equation}
for all sufficiently large $n$.
\end{enumerate}
\end{lemma}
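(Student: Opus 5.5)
The plan is to make the transfer of Lemma~\ref{uni:lem-sandwich} and \eqref{uni:eq-field-bridge} explicit on the canonical side by Gaussian regression, and then to truncate the regression to finitely many Fourier modes of the coarse field. Take $C$ and $S_0$ exactly as in Lemma~\ref{uni:lem-sandwich}; nothing new is needed for the displayed quadratic-form inequalities. On the canonical auxiliary coupling, adjoin $Y_+$ with covariance $C+g$, independent of everything else, and set $H_+=\phi+Y_+$. The field $H_+$ has covariance $K_\sharp=K_0+C+g$. On the target side $Y_-$ is independent of $X^g$, so $(H_-,Y_-)$ is a jointly Gaussian pair with $\operatorname{Cov}(Y_-,H_-)=C$.

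\textbf{Construction of $B_g$.} Reproduce that joint law on the $+$ side:
\[
 Y_-'=\mathcal R H_++\Xi,\qquad \mathcal R=C K_\sharp^{-1},
\]
where $\Xi$ is an independent centered Gaussian field with covariance $C-CK_\sharp^{-1}C$. Then $(H_+,Y_-')$ has the law of $(H_-,Y_-)$, and hence $H_+-Y_-'$ has the law of $X^g$. By \eqref{uni:eq-measure-bridge} and the finite-scale factorization \eqref{uni:eq-finite-factorization},
\[
 M_g^+:=F_{Y_-'}^{-1}F_+M\stackrel{\Law}=M_g,
\]
so I take $B_g=F_+F_{Y_-'}^{-1}$. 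This gives (i), and $B_g$ is positive and smooth.

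\textbf{Smoothness of $\mathcal R$.} The key analytic point is that $\mathcal R$ is a smoothing operator. The constant mode is missing from $K_0$ but supplied by $C+g\ge S_0$, so $K_\sharp\ge K_0+S_0$ has strictly positive Fourier symbol of order $\gtrsim(1+|k|)^{-1}$ on every mode. Hence $K_\sharp^{-1}$ maps $H^{-1}$ boundedly into $H^{1}$-type spaces. Composing with the smooth kernel $C$, whose matrix entries decay rapidly by the integration-by-parts bound in the proof of Lemma~\ref{uni:lem-sandwich}, gives an operator with rapidly decaying matrix in both indices. I expect this step to be the main obstacle. $K_\sharp$ is not diagonal, since $g$ is nonstationary, so one must verify from the two-sided form bounds, not from symbols alone, that $K_\sharp^{-1}$ does not destroy the rapid decay of $C$. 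I would try a Neumann or Schur-test argument: write $K_\sharp=D_0+E$, with $D_0$ the diagonal part, $E$ rapidly decaying off the diagonal in both indices, and $D_0\ge S_0>0$.

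\textbf{Coarse replacement (ii)--(iii).} Let $\Pi_N$ be the Fourier projection onto $|k|\le N$. By the comparison of the auxiliary and canonical fields, $H_+=\phi_*+Y_+$ and $\phi_*=X_*+Y-G$. Define
\[
 Y_{-,b}'=\mathcal R\,\Pi_N(X_b+Y-G_b+Y_+)+\Xi,
 \qquad B_{g,b}=F_+F_{Y_{-,b}'}^{-1},
\]
which is measurable with respect to $\F_b\vee\sigma(Y,Y_+,\Xi)$; this is (ii). The $C^q$ error $\|\mathcal R(1-\Pi_N)H_+\|_{C^q}$ has second moment $O_{A,q}(N^{-A})$ by the rapid decay of $\mathcal R$. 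Each retained mode satisfies $\E|\widehat{X_*-X_b}(k)|^2\le Ce^{-b}$, since the scale covariance beyond $b$ has Fourier mass $e^{-b}$ at fixed mode, and $\operatorname{Var}(G-G_b)=\sigma_G^2e^{-b}$. Summing over $|k|\le N$ with $C^q$ weights absorbed into the decay of $\mathcal R$ leaves $e^{-b}(1+N)^3$. Passing from $Y_-'$ to the exponential multipliers costs a factor $e^{C(\|Y_-'\|_{C^q}+\|Y_{-,b}'\|_{C^q})}$. I would handle it by Cauchy--Schwarz against the Gaussian exponential moments of these smooth fields, with the $C^q$ exponential bounds stated in (iii) after adjusting constants.

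\textbf{Step (iv).} Put $N=\lfloor\lambda\rfloor$. All the fields involved are Gaussian with uniformly bounded smooth seminorms, so $B_{g,b}^{\pm1}$ are tight in every fixed smooth seminorm. Take $\mathcal G_R^g$ to be the event that the $C^q$ norms of $Y_-',Y_+$ and $Y_{-,b}'$ are at most $R$; on it the map from fields to multipliers is Lipschitz. Chebyshev applied to the field difference, with $A=4$, gives
\[
 \lambda^2\{\lambda^{-4}+e^{-b}\lambda^3\}\le C\lambda^{-2},
\]
since $e^{-b}$ is superpolynomially small in $\lambda$. This proves \eqref{uni:eq-regression-bad}.
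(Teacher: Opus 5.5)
\textbf{Verdict: genuine gap (fixable), at the step you yourself flag.} Your construction of $B_g$ and your passage to the exponentials follow the paper. That is: Gaussian regression of $\widetilde Y_-$ on $H_+$ plus an independent smooth residual, $B_g=F_+F_{\widetilde Y_-}^{-1}$, uniform Gaussian exponential moments, and Markov at $N=\lfloor\lambda\rfloor$. The real difference is the coarsening. You keep the full regression operator $\mathcal R=CK_\sharp^{-1}$ and truncate only its input. So all of (ii)--(iv) rests on the claim that $\mathcal R$ has a matrix decaying rapidly in both Fourier indices.

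The justification you sketch for that claim does not work. The lower bound $K_\sharp\ge K_0+S_0$, with symbol $\gtrsim(1+|k|)^{-1}$, gives an \emph{upper} bound $K_\sharp^{-1}\le(K_0+S_0)^{-1}$ of order $1+|k|$. Thus $K_\sharp^{-1}$ is unbounded and loses a derivative: in the form sense it maps $H^{1/2}$ into $H^{-1/2}$. It does not map $H^{-1}$ into $H^{1}$. Likewise, $D_0\ge S_0>0$ gives no smallness of $D_0^{-1}E$. Since $g$ is an arbitrary admissible remainder, a Neumann series is not justified by that alone.

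The claim is nevertheless true, and here is one way to prove it.
\begin{itemize}
\item Write $K_\sharp=\Lambda+G$, where $\Lambda=K_0+C$ is diagonal with $\Lambda_{kk}^{-1}\lesssim 1+|k|$ (this uses $C_{00}>0$), and $G$ is the rapidly decaying matrix of $g$.
\item For $v=Ce_j=C_{jj}e_j$, the form $\langle K_\sharp w,w\rangle$ is coercive on $H^{-1/2}$. Lax--Milgram then gives $w=K_\sharp^{-1}v$ with $\|w\|_{H^{-1/2}}\lesssim C_{jj}(1+|j|)^{1/2}$.
\item The identity $w=\Lambda^{-1}(v-Gw)$ bounds $|w_k|$ by $C_{jj}$ times a rapidly decaying function of $(j,k)$, because $G$ sends $H^{-1/2}$ into rapidly decaying sequences.
\end{itemize}
Alternatively, the majorant $d_m=r_m+r_{-m}$ of Lemma~\ref{uni:lem-sandwich} makes $K_\sharp$ strictly diagonally dominant, with margin at least $(K_0)_{mm}+s_m$. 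A Jacobi--Neumann series then converges in $\ell^\infty$. Note that this uses the specific $C$, not merely $D_0\ge S_0$.

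With this lemma your route is sound. It even gives the $e^{-b}$ term without the factor $(1+N)^3$, since the columns of $\mathcal R$ are summable in $C^q$ uniformly in $N$.

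The paper sidesteps any global inversion. It conditions only on $P_NH_+$, so the explicit regression involves only the finite matrix $Q_{LL}^{-1}$, with $\|Q_{LL}^{-1}\|\le C(1+N)$. The error $m-m_N$ is then controlled in the quadratic-form sense through the Schur complement bound $S_N\ge A_{UU}$, obtained variationally from $Q\ge A$. The price is a polynomial factor in $N$, harmless at $N=\lfloor\lambda\rfloor$.

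Two minor points.
\begin{itemize}
\item Your events $\mathcal G_R^g$ involve $Y'_{-,b}$ and therefore depend on $n$. Take $\{\|B_g\|_{C^q}+\|B_g^{-1}\|_{C^q}\le R\}$ instead; Markov applied to (iii) already gives the bound unconditionally.
\item After Cauchy--Schwarz in the exponential step you need the Gaussian moment comparison $\E\|Z\|^4\le3(\E\|Z\|^2)^2$ to return to second moments.
\end{itemize}
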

\begin{proof}
Start with the diagonal majorant $D$ in the proof of
Lemma~\ref{uni:lem-sandwich}.  Let $S_0$ be any real stationary smooth
covariance with strictly positive rapidly decreasing Fourier coefficients,
and replace $C$ by $D+S_0$.  Then $C\pm g\ge S_0$.

On the canonical auxiliary coupling, choose a centered smooth Gaussian
field $Y_+$ of covariance $C+g$, independently of the joint auxiliary
construction $(X,Y)$, where $X$ denotes the entire auxiliary scale field
and $Y$ is the smooth field in \eqref{crm:eq-same-space-transfer}.
Write $H_+=\phi_*+Y_+$ and adjoin a jointly Gaussian smooth field
$\widetilde Y_-$ so that
\[
 (H_+,\widetilde Y_-)\stackrel{\Law}=(H_-,Y_-).
\]
Thus
\[
 \operatorname{Cov}(\widetilde Y_-)=C,
 \qquad
 \operatorname{Cov}(\widetilde Y_-,H_+)=C.
\]
Let
\[
 m=\E[\widetilde Y_-\mid H_+].
\]
The residual covariance is deterministic, so we may realize
$\widetilde Y_-=m+Y_0$ with $Y_0$ a smooth Gaussian field independent of
the entire canonical-side construction.  Then
$X^{g,+}:=H_+-\widetilde Y_-$ has covariance $K_g$ and is independent of
$\widetilde Y_-$.  Applying the smooth critical-chaos factorization in
two ways gives
\begin{equation}\label{uni:eq-target-copy}
 M_g^+=B_gM,
 \qquad
 B_g=F_+F_{\widetilde Y_-}^{-1},
 \qquad
 M_g^+\stackrel{\Law}=M_g.
\end{equation}

It remains to coarsen $m$.  Let $P_N$ be Fourier projection to
$|k|\le N$ and put
$m_N=\E[\widetilde Y_-\mid P_NH_+]$.  In the Fourier basis write
$Q=\operatorname{Cov}(H_+)=K_0+C+g$, where $K_0=K_g-g$, and split the
modes into $L=\{|k|\le N\}$ and $U=L^c$.  Since
$Q\ge K_0+S_0=:A$ and $A$ is diagonal, with
$A_{kk}\ge c(1+|k|)^{-1}$, the Schur complement
\[
 S_N=Q_{UU}-Q_{UL}Q_{LL}^{-1}Q_{LU}
\]
satisfies
\begin{equation}\label{uni:eq-schur-regression}
 S_N\ge A_{UU},
 \qquad S_N^{-1}\le A_{UU}^{-1},
 \qquad \|Q_{LL}^{-1}\|\le C(1+N).
\end{equation}
The first inequality follows from the variational characterization of the Schur complement applied to $Q\ge A$.

The inverse inequality is understood in the sense of quadratic forms.
More explicitly, let $U_J=\{k:N<|k|\le J\}$ and
$S_{N,J}=(S_N)_{U_JU_J}$.  For every vector $v$ on $U_J$,
\[
 \langle v,S_{N,J}^{-1}v\rangle
 \le \sum_{k\in U_J}A_{kk}^{-1}|v_k|^2
 \le C\sum_{k\in U_J}(1+|k|)|v_k|^2.
\]
The infinite-mode regression estimate below follows by letting
$J\to\infty$, using $L^2$ convergence of conditional expectations
with respect to the increasing Fourier sigma-fields.

The cross-covariance of $\partial_x^q\widetilde Y_-(x)$ with the Fourier
modes of $H_+$ is a Fourier coefficient of the smooth kernel $C$; the
off-diagonal Fourier matrix of $Q$ is that of the smooth kernel $C+g$.
Hence both decrease faster than every power.  Conditioning first on the
low modes, the residual cross-covariance vector on $U$ is therefore bounded
by
\[
 |b_k|\le C_{q,M}(1+N)(1+|k|)^{-M}
\]
for every $M$.  Equation~\eqref{uni:eq-schur-regression} yields, for every
$A,q<\infty$,
\[
 \sup_x\E|\partial_x^q(m-m_N)(x)|^2
 \le C_{A,q,g}N^{-A}.
\]
Using two additional derivatives and Sobolev embedding gives the same
bound for $\E\|m-m_N\|_{C^q}^2$.

Now use the canonical auxiliary coupling
$\phi_*=X_*+Y-G$ from \eqref{crm:eq-same-space-transfer}, so that
\[
 H_+=X_*+Y-G+Y_+.
\]
Put
\[
 H_{+,b}=X_b+Y-G_b+Y_+.
\]
The future covariance kernel has $L^1$ mass $O(e^{-b})$, hence
\[
 \sup_k\E|\widehat{X_*-X_b}(k)|^2\le Ce^{-b},
 \qquad \E|G-G_b|^2=\sigma_G^2e^{-b}.
\]
Consequently
\[
 \E\|P_N(H_+-H_{+,b})\|_{\ell^2}^2
 \le Ce^{-b}(2N+1).
\]
By the finite-dimensional regression formula, the map
$H\mapsto m_N(H)$ is the low-mode cross-covariance followed by
$Q_{LL}^{-1}P_N$.  Rapid decay of that cross-covariance and
\eqref{uni:eq-schur-regression} show that its $C^q$ operator norm is at
most $C_{q,g}(1+N)$.  Hence
\[
 \E\|m_N(H_+)-m_N(H_{+,b})\|_{C^q}^2
 \le C_{q,g}e^{-b}(1+N)^3.
\]
Set
\[
 \widetilde Y_{-,b,N}=m_N(H_{+,b})+Y_0,
 \qquad
 B_{g,b}=F_+\exp\{-\sqrt2\widetilde Y_{-,b,N}+C(x,x)\}.
\]
The asserted coarse measurability holds with
$\mathcal G=\sigma(Y,Y_+,Y_0)$.
We now justify the second-moment estimate for the exponential multiplier.

Fix $q\ge0$, put $r=q+2$, and write $m_{N,b}=m_N(H_{+,b})$.
The decomposition
\[
 H_+=H_{+,b}+(X_*-X_b)-(G-G_b)
\]
has a future term independent of $H_{+,b}$.
Hence, as covariance forms,
\[
 \operatorname{Cov}(m_{N,b})
 \le \operatorname{Cov}(m_N(H_+))
 \le \operatorname{Cov}(\widetilde Y_-)=C.
\]
Since $C$ is smooth, this gives uniformly bounded covariance traces
in $H^r$ for $m_{N,b}$; the same holds for $m$.

Set
\[
 U=\log B_g,\qquad U_{N,b}=\log B_{g,b}.
\]
These Gaussian fields have the same deterministic mean $-g(x,x)$.
The preceding covariance bounds, together with the fixed smooth Gaussian
fields $Y_+$ and $Y_0$, give uniformly bounded $H^r$ covariance traces
for their centered versions.
The Gaussian exponential-moment formula therefore gives a uniform
square-exponential moment in $H^r$ at a sufficiently small positive parameter.
After absorbing the fixed mean, Sobolev embedding yields, for every
fixed $c>0$,
\[
 \sup_{N\ge1,\,b\ge0}
 \E\exp\!\left\{
 c\bigl(\|U\|_{C^q}+\|U_{N,b}\|_{C^q}\bigr)
 \right\}<\infty.
\]

Moreover, $U-U_{N,b}=-\sqrt2(m-m_{N,b})$ is centered Gaussian.
Applying the preceding derivative estimates through order $r$ and
integrating over $\T$ yields
\[
 \E\|U-U_{N,b}\|_{H^r}^2
 \le C_{A,q,g}\bigl(N^{-A}+e^{-b}(1+N)^3\bigr).
\]
For real $u,v\in C^q(\T)$, the algebra property of $C^q$ gives
\[
 \|e^u-e^v\|_{C^q}
 \le C_q\|u-v\|_{C^q}
 \exp\!\left\{C_q\bigl(\|u\|_{C^q}+\|v\|_{C^q}\bigr)\right\}.
\]
Using Cauchy--Schwarz, the uniform exponential moments, and the
centered Gaussian Hilbert-space bound
$\E\|Z\|^4\le3(\E\|Z\|^2)^2$, we obtain
\[
 \begin{aligned}
 \E\|B_g-B_{g,b}\|_{C^q}^2
 &\le C_{q,g}
       \bigl(\E\|U-U_{N,b}\|_{H^r}^4\bigr)^{1/2}\\
 &\le C_{q,g}\E\|U-U_{N,b}\|_{H^r}^2\\
 &\le C_{A,q,g}\bigl(N^{-A}+e^{-b}(1+N)^3\bigr).
 \end{aligned}
\]
This proves \eqref{uni:eq-regression-certificate}.

Take $N=\lfloor\lambda\rfloor$, $b=\log n+4\log\log n$, and $A=6$.
Markov's inequality gives, for all sufficiently large $n$,
\[
 \Prob\!\left(\|B_g-B_{g,b}\|_{C^q}>\lambda^{-1}\right)
 \le C_{q,g}\lambda^2
       \bigl(\lambda^{-6}+e^{-b}(1+\lambda)^3\bigr)
 \le C_{q,g}\lambda^{-2}.
\]
For example, the increasing events
\[
 \mathcal G_R^g
 =\{\|B_g\|_{C^q}+\|B_g^{-1}\|_{C^q}\le R\}
\]
exhaust probability one and satisfy \eqref{uni:eq-regression-bad}.

Finally, \eqref{uni:eq-regression-certificate} gives
$B_{g,b}\to B_g$ in probability in every fixed $C^q$ norm.
Since $B_g$ is strictly positive on the compact circle, continuity of
inversion on positive $C^q$ functions also gives
$B_{g,b}^{-1}\to B_g^{-1}$ in probability in $C^q$.
Both families are therefore tight in every fixed smooth seminorm.
\end{proof}

\begin{theorem}[Smooth-covariance growing-window refinement]
\label{uni:thm-window}
Fix an admissible $g$ and let
\[
 w_n=\lfloor(\log\log n)^{1/64}\rfloor.
\]
Then
\begin{equation}\label{uni:eq-window}
 d_{\mathrm{BL},w_n}\left(
 \Law((a_{n+j}C_{g,n+j})_{|j|\le w_n}),
 \Law((\mathcal Z_{g,j})_{|j|\le w_n})\right)\longrightarrow0.
\end{equation}
The approximation is unconditional and pointwise in the fixed covariance
remainder $g$; no uniformity in $g$ or optimality of the exponent $1/64$
is asserted.
\end{theorem}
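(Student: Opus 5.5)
The plan is to repeat the proof of Theorem~\ref{canon:thm-window}, with the coupling \eqref{crm:eq-same-space-transfer} replaced by the target copy of Lemma~\ref{uni:lem-coarse-regression}. By part (i) of that lemma, $M_g^+=B_gM$ has the law of $M_g$. Combined with \eqref{crm:eq-same-space-transfer}, this gives
\[
 M_g^+=\sqrt{2/\pi}\,q_G^{-1}B_gF_*D_*=:\Theta D_* .
\]
Here $\Theta$ is a positive smooth random function. Since the distance in \eqref{uni:eq-window} involves only the law of the window of $M_g$, it suffices to compare $(a_{n+j}\widehat{M_g^+}(n+j))_{|j|\le w_n}$ with the stable window whose control is $M_g^+/\sqrt\pi$. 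The limiting law depends only on the law of $M_g^+$, because the kernel \eqref{contract:kernel} is a functional of the measure.

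The first step is to replace $\Theta$ by a coarse-measurable multiplier
\[
 \Theta_b=\sqrt{2/\pi}\,q_b^{-1}B_{g,b}F_*,
\]
which is $\F_b\vee\mathcal G$-measurable by Lemma~\ref{uni:lem-coarse-regression}(ii). I would localize on
\[
 \mathcal L=E_\beta\cap\{q_G^{-1}\le R,\ B_s(F_*)\le R\}\cap\mathcal G_R^g,
\]
together with a bound $R$ on $B_s(B_{g,b})$, which is legitimate by the tightness in (iv). I would then truncate the Fourier series of $\Theta_b$ at $|k|\le K$. The prelimit truncation error is controlled exactly as before, via \eqref{paper:eq-all-mode}, $p$-subadditivity and \eqref{uni:eq-ratio}, giving $C N_wK^{1-sp}(\log(2+K))^{p/2}$. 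The limit truncation error is Cauchy, of order $N_wK^{-p(s-1)}$. The replacement of $\Theta$ by $\Theta_b$ costs, in the truncated sup distance, at most
\[
 \|\Theta-\Theta_b\|_{C^q}
\]
times $\sum_{|k|\le K}$ of the endpoint-controlled coordinates. By \eqref{uni:eq-regression-bad} with $q$ large, this is $O(\lambda^{-1})$ in the Fourier-coefficient sense, uniformly in the window, outside an event of probability $O(\lambda^{-2})$. This has to be combined with $p$-th moments and Markov at the $1/4$-power scale; the factor $N_w$ is only a power $\lambda^{1/64}$, so this is harmless. The $q_G\to q_b$ step is reused verbatim, and its limiting counterpart is handled through the common stable noise.

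With $\Theta_b^{(K)}$ coarse-measurable and $C_sR^2$-Lipschitz as a convolution map $\Psi$, the second step applies \eqref{pc:eq-window} and Corollary~\ref{pc:cor-window} on the enlarged window $w+K$. Adjoining the independent $\mathcal G$ does not change the conditional source law, since $\mathcal G$ is independent of the auxiliary scale field. This gives the localized conditional bound $C_R\lambda^{-1/128}$, with the choices $p=1/4$, $s=16$, $K=w$. I would then send $n\to\infty$ and exhaust $R,\beta$. On the limit side, the image of the stable window with control $D_*/\sqrt\pi$ under multiplication by $\Theta$ is the stable window with control $\Theta D_*/\sqrt\pi=M_g^+/\sqrt\pi$; this is the index-one identity used in the proof of Theorem~\ref{contract:A}.

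The main obstacle is the multiplier-replacement step. Unlike $F_*$, the multiplier $B_g$ is not independent of the scale field, since it depends on $H_+$ through the regression $m$. This is why the coarse surrogate $B_{g,b}$ from Lemma~\ref{uni:lem-coarse-regression} is needed before conditioning on $\F_b$, and why its error must be quantified at logarithmic precision against the window size. I would try first to bound that error through $\sum_k|(B_g-B_{g,b})_k|\lesssim\|B_g-B_{g,b}\|_{C^2}$, multiplied by the localized all-mode $p$-moments. Here $\lambda^{-1}$ beats $N_w^{1/p}=\lambda^{4/64}$.
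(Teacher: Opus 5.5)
Your proposal is correct and is essentially the paper's proof. You realize $M_g$ in law as $M_g^+=B_gM=\sqrt{2/\pi}\,B_gq_G^{-1}F_*D_*$; swap $B_g,q_G$ for the coarse surrogates $B_{g,b},q_b$ at cost $CN_w\lambda^{-p}$ plus an $O(\lambda^{-2})$ bad event, on both the source side and the common-noise limit side; apply \eqref{pc:eq-window} on the enlarged window $w+K$ under a coarse localization with $p=1/4$, $\sigma=16$, $K=w$; and identify the limit through the positive multiplier.

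One correction concerns your last paragraph. The replacement error must be estimated by $p$-subadditivity, through $\sum_k|(B_g-B_{g,b})_k|^p\ell(k)^p$ with $|(B_g-B_{g,b})_k|\le C_q\|B_g-B_{g,b}\|_{C^q}(1+|k|)^{-q}$ and $qp>1$, as your ``$q$ large'' already allows. It cannot go through $\sum_k|(B_g-B_{g,b})_k|\lesssim\|B_g-B_{g,b}\|_{C^2}$ times a supremum over modes, because \eqref{paper:eq-all-mode} controls individual modes, not their supremum.
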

\begin{proof}
Use the representation \eqref{uni:eq-target-copy} on the canonical-side coupling, together with
\[
 M=\sqrt{2/\pi}\,q_G^{-1}F_*D_*,
 \qquad
 M_g^+=\sqrt{2/\pi}\,B_gq_G^{-1}F_*D_*.
\]
Put $u=\log n$, $b=u+4\log u$, and
$\lambda=\log u$.  Choose
\[
 p=\frac14,
 \qquad \sigma=16,
 \qquad K=w=\lfloor\lambda^{1/64}\rfloor,
 \qquad \varepsilon_n=\lambda^{-1},
\]
and take the regression cutoff $N=\lfloor\lambda\rfloor$ in
Lemma~\ref{uni:lem-coarse-regression}.  Let $N_w=2w+1$.

We first replace $B_g$ by $B_{g,b}$.  Choose $q>\sigma+2$ and put
\[
 \mathcal R_R^g=\mathcal G_R^g\cap
 \{q_G^{-1}\le R,\ \|F_*\|_{C^q}\le R\},
 \qquad
 \mathcal A_n^g=\{\|B_g-B_{g,b}\|_{C^q}\le\varepsilon_n\}.
\]
On $\mathcal R_R^g\cap\mathcal A_n^g$, Fourier convolution,
\eqref{uni:eq-ratio}, the localized all-mode endpoint bound and
$p$-subadditivity give
\begin{equation}\label{uni:eq-window-regression-pre}
 \E\left[\1_{E_\beta\cap\mathcal R_R^g\cap\mathcal A_n^g}
  \max_{|j|\le w}
  \left|a_{n+j}\widehat{(B_g-B_{g,b})M}(n+j)\right|^p
  \right]
 \le C_{\beta,R,g}N_w\varepsilon_n^p.
\end{equation}
Here the factor $q_G^{-1}F_*$ from
$M=\sqrt{2/\pi}\,q_G^{-1}F_*D_*$ is included in the localized smooth
multiplier before applying the endpoint estimate.  The same common stable
random measure used in the canonical window proof gives
\begin{equation}\label{uni:eq-window-regression-limit}
 \widetilde\E\left[\1_{E_\beta\cap\mathcal R_R^g\cap\mathcal A_n^g}
  \max_{|j|\le w}|Z_{g,j}^{+}-Z_{g,b,j}^{+}|^p\right]
 \le C_{\beta,R,g}N_w\varepsilon_n^p.
\end{equation}
Here \(Z_{g,j}^+\) has control measure \(B_gM/\sqrt\pi\), while
\(Z_{g,b,j}^+\) has control measure \(B_{g,b}M/\sqrt\pi\).
By Lemma~\ref{uni:lem-coarse-regression},
$\Prob(\mathcal G_R^g\cap(\mathcal A_n^g)^c)
\le C_{R,g}\lambda^{-2}$.
Since $1\wedge x\le x^p$, these estimates apply directly to the truncated
sup metric.

For the actual conditional window comparison use only coarse events.  Put
\[
 \mathcal C_{\beta,R,b}^g
 =E_{\beta,b}\cap\{q_b^{-1}\le R,
                   B_\sigma(B_{g,b}F_*)\le R\}.
\]
This event is measurable with respect to
$\F_b\vee\mathcal G$, with $\mathcal G$ independent of the auxiliary
future.  The families $q_b^{-1}$ and $B_\sigma(B_{g,b}F_*)$ are tight;
indeed $q_b\to q_G$ and $B_{g,b}\to B_g$ in probability in every fixed smooth seminorm.  
For $R\ge1$, define
\[
 \Delta^g_{\beta,R}
 :=R^{-1}+\limsup_{n\to\infty}
 \Prob\bigl((\mathcal C_{\beta,R,b}^g)^c
       \cup(\mathcal R_R^g)^c\bigr).
\]
The preceding tightness and localization bounds give
\[
 \lim_{R\to\infty}\lim_{\beta\to\infty}
 \Delta^g_{\beta,R}=0.
\]
The positive slack $R^{-1}$ ensures that, for each fixed
$\beta,R,g$ and all sufficiently large $n$,
\[
 \Prob\bigl((\mathcal C_{\beta,R,b}^g)^c
       \cup(\mathcal R_R^g)^c\bigr)
 \le \Delta^g_{\beta,R}.
\]
Here $E_\beta\subset E_{\beta,b}$ and the existing estimate for
$q_b-q_G$ controls the scalar part.  The non-coarse exact variables are
used only in the replacement bounds, never as conditioning events.

On $\mathcal C_{\beta,R,b}^g$, truncate the Fourier series of
$B_{g,b}F_*$ at $|k|\le K$.  The resulting convolution map, including
the scalar $q_b^{-1}$, is $C_R$-Lipschitz for the truncated sup metrics,
uniformly in $K,w$, and is measurable with respect to the enlarged coarse
sigma-field.  Conditioning also on the independent fields in $\mathcal G$
does not change the auxiliary future law.  Therefore the conditional
window estimate \eqref{pc:eq-window} applies to the composed test on the
enlarged source window $w+K$.  Exactly as in the proof of
Theorem~\ref{canon:thm-window}, the Fourier tails and normalizer replacement
contribute
\begin{align*}
 C_{\beta,R,g}\bigl\{
 &N_wK^{1-\sigma p}(\log(2+K))^{p/2}
 +N_wK^{-p(\sigma-1)}+(n\sqrt u)^{-p}\\
 &+(u^{p/2}+N_w^{1/2})e^{-bp/2}+e^{-ce^b}\bigr\},
\end{align*}
while the conditional source-window term is
$C_{R,g}e_{\rm aux}(n,w+K)$.  Hence
\begin{align}\label{uni:eq-window-master}
 d_{\mathrm{BL},w} \le{}&
 C\Delta^g_{\beta,R}+C_{R,g}\lambda^{-2}
 +C_{\beta,R,g}N_w\lambda^{-p}
 +C_{R,g}e_{\rm aux}(n,w+K)\notag\\
 &+C_{\beta,R,g}\bigl\{
 N_wK^{1-\sigma p}(\log(2+K))^{p/2}
 +N_wK^{-p(\sigma-1)}+(n\sqrt u)^{-p}\notag\\
 &\hspace{34mm}+(u^{p/2}+N_w^{1/2})e^{-bp/2}+e^{-ce^b}\bigr\}.
\end{align}
Corollary~\ref{pc:cor-window}, with the harmless constant enlargement
$w+K\le2\lambda^{1/64}$, gives
$e_{\rm aux}(n,w+K)=O_{\beta}(\lambda^{-1/128})$.
The terms introduced by the present multiplier approximation satisfy
\[
 N_w\lambda^{-p}=O(\lambda^{-15/64}),
 \qquad
 N_wK^{1-\sigma p}(\log K)^{p/2}
 =O(\lambda^{-1/32}(\log\lambda)^{1/8}),
\]
\[
 N_wK^{-p(\sigma-1)}=O(\lambda^{-11/256}).
\]
For each fixed $\beta,R,g$, all explicit $n$-dependent error terms in \eqref{uni:eq-window-master}
are $O_{\beta,R,g}(\lambda^{-1/128})$.
Consequently, for all sufficiently large $n$,
\[
 d_{\mathrm{BL},w_n}
 \le C\Delta^g_{\beta,R}
      +C_{\beta,R,g}\lambda^{-1/128}.
\]
First let $n\to\infty$, then $\beta\to\infty$, and finally $R\to\infty$.
The displayed limit for $\Delta^g_{\beta,R}$ gives convergence to zero.
No quantitative rate is asserted after removing the localization.

The positive multiplier $B_g$ sends the common index-one stable noise with
control $M/\sqrt\pi$ to that with control $B_gM/\sqrt\pi=M_g^+/\sqrt\pi$.
By \eqref{uni:eq-target-copy}, the prelimit window of $M_g^+$ has the same
law as that of $M_g$, and the corresponding conditional stable window has
the same unconditional law as $(\mathcal Z_{g,j})$.  This proves
\eqref{uni:eq-window}.
\end{proof}

\subsection{Transfer of the integral classes}
\label{uni:subsec-integral-proof}
\begin{proof}[Proof of Theorem~\ref{contract:B}]
Fix $g$ and $H$, and put $h_n=a_n/H(\vartheta_n)$. The weights are
admissible for Lemma~\ref{l4t:lem-pathwise-multiplier}, as verified in
Section~\ref{canon:subsec-integral}. Proposition~\ref{canon:prop-integral}
gives $\mathcal L_h(M)=v_H$ almost surely, where $v_H=0$ or $\infty$
according as $\int^\infty dx/H(x)$ converges or diverges.

By \eqref{uni:eq-measure-bridge} and the deterministic multiplier lemma,
$\mathcal L_h(N_+)=v_H$ almost surely. Every Fourier coefficient is a
continuous function of a finite measure in the weak topology, and
$\mathcal L_h$ is a countable limsup of Borel functions. Equality in law
therefore transfers this event to $N_-$. Multiplication by the positive
smooth inverse $F_-^{-1}$ then gives $\mathcal L_h(M_g)=v_H$.
This is a statement about $X^g$ alone and hence holds on its original
space. No new zero--one argument is needed, since the canonical event
already has probability one. The covariance-transfer step above uses neither
Theorem~\ref{contract:A} nor any additional endpoint moment estimate.

Division by $M_g(\T)$ preserves both alternatives. Intersecting the
probability-one events for a prescribed countable family of gauges gives
the final assertion. There is no sample-dependent gauge claim.
\end{proof}

\begin{remark}
All covariance statements are for each fixed admissible $g$, with no
uniform rate over a class of remainders. The approximation in Theorem~\ref{uni:thm-window} is likewise pointwise in $g$ and makes no claim of optimality for the displayed exponent. The choice $H(x)=x^2$
in Theorem~\ref{contract:B} also gives
$\widehat M_g(n)\to0$ almost surely, since $\vartheta_n^2/a_n\to0$.
\end{remark}

\appendix

\section{Fine-field estimates and bounds under the rooted law}
\label{crm:sec-fine}

Let $Y_s(v)$ be the real-line fine field with scale covariance kernel
$k(e^s(v-v'))$, initially zero. The fine-measure estimates below are on $I=[0,1]$;
the rooted estimates for local field increments may use any fixed bounded rescaled window. Write $Z^a$ for its terminal positive derivative measure with constant barrier $a>0$, and $D$ for its untruncated derivative measure.
For a bounded nonnegative function $q(v)$ the notation $Z^{[q]}(g)$ means
\[
 \lim_{s\to\infty}\int_I g(v)(q(v)+\sqrt2s-Y_s(v))
 \1_{\{\inf_{r\le s}(q(v)+\sqrt2r-Y_r(v))>0\}}
 e^{\sqrt2Y_s(v)-s}\dd v.
\]
Zero barrier contributes zero. For bounded $q,g$, domination by a
constant-barrier measure supplies convergence and uniform integrability;
thus
\begin{equation}\label{crm:eq-profilemean}
 \E Z^{[q]}(g)=\int_I q(v)g(v)\dd v.
\end{equation}
Use versions monotone in the barrier and common to all comparisons.
They follow first at finite time, then by terminal limits and a countable
constant-barrier approximation. Deterministic piecewise continuous barrier functions \(q\), including the positive parts below, suffice.

\begin{lemma}
\label{crm:lem-crossing}
For $S_I=\sup_{s\ge0,v\in I}(Y_s(v)-\sqrt2s)$,
\[
 \Prob(S_I\ge a)\le C_k e^{-\sqrt2 a}\qquad(a\ge0).
\]
The same estimate holds on the auxiliary circle, with a different constant.
\end{lemma}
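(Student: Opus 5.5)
The plan is to reduce the crossing bound to the sharp one-point estimate for a Brownian motion with drift, and then to pay for the spatial supremum by a scale-by-scale union bound. At a fixed point $v$, the process $s\mapsto Y_s(v)$ is a standard Brownian motion, because $k(0)=1$. The standard formula for drifted Brownian motion then gives exactly
\[
 \Prob\bigl(\sup_{s\ge0}(Y_s(v)-\sqrt2s)\ge a\bigr)=e^{-2\sqrt2a}.
\]
This one-point bound decays faster than $e^{-\sqrt2a}$. The required exponent $\sqrt2$ is the cost of a union bound over the roughly $e^s$ spatially decorrelated cells at time $s$. The natural tool is therefore the first-moment (many-to-one) argument used for branching random walk maxima.

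First, I will discretize time and space. For integer $N\ge0$, cover $I$ by $O(e^{N})$ intervals of length $e^{-N}$ with anchors $v_i$. Crossing $a$ during $[N,N+1]$ means that some anchor $v_i$ satisfies
\[
 Y_N(v_i)-\sqrt2N\ge a-\Delta_{N,i},
\]
where $\Delta_{N,i}$ is the oscillation of $Y_s(v)-Y_N(v_i)$ over $s\in[N,N+1]$ and $v$ in the cell. By Gaussian chaining (as in Lemma~\ref{l3c:lem-common-increment}), $\Delta_{N,i}$ has uniformly bounded exponential moments of every order. The naive union bound gives $\sum_N e^{N}\Prob(B_N-\sqrt2N\ge a-O(1))$. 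This sum is $\sum_N e^{N}e^{-(a+\sqrt2N)^2/(2N)}$, which is of order $e^{-\sqrt2a}\sum_N e^{-a^2/(2N)}$ and diverges in $N$. So the naive bound alone fails, and the fix is the usual barrier/first-hitting decomposition.

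The key step is to count only \emph{first} crossings. Let $\tau$ be the first integer time $N$ at which some cell satisfies $Y_N(v_i)-\sqrt2N\ge a-1$ while the path at that anchor stayed below the line before $N$. Then
\[
 \Prob(S_I\ge a)\le\sum_N\sum_i\Prob\bigl(Y_r(v_i)-\sqrt2r<a+c\ \forall r<N,\ Y_N(v_i)-\sqrt2N\ge a-\Delta_{N,i}\bigr).
\]
The slack $c$ handles the discretization error on the interpolated segments. Under the Girsanov tilt by $e^{\sqrt2Y_N(v_i)-N}$, each summand equals $e^{-N}$ times
\[
 \E\bigl[e^{-\sqrt2(B_N+\text{overshoot})}\ \text{on}\ \{B_r<a+c\}\bigr],
\]
where $B$ is now a driftless Brownian motion. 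This expectation is at most
\[
 e^{-\sqrt2a}\,\E\bigl[e^{\sqrt2\Delta}\,\1_{\{\text{$B$ stays below $a+c$ on $[0,N]$, ends within $O(\Delta)$ of $a$}\}}\bigr].
\]
The ballot/reflection estimate bounds this probability by $C(1+a)(1+\Delta)^2N^{-3/2}$. Multiplying by the $e^N$ cells cancels the $e^{-N}$. Summing $N^{-3/2}$ then gives $\Prob(S_I\ge a)\le C(1+a)e^{-\sqrt2a}$.

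The main obstacle is the polynomial factor $(1+a)$, since the statement demands $e^{-\sqrt2a}$ exactly. I would remove it using the derivative martingale, that is, through the rooted law of Theorem~\ref{crm:thm-construction}(ii). On the crossing event, stop at the first time $\sigma$ at which $Y_\sigma(v)-\sqrt2\sigma\ge a$ for some $v$. Apply optional stopping to the positive martingale $Z^{a+1}_s(I)$, whose mean is $(a+1)|I|$. On this event the barrier-$(a+1)$ mass near the crossing point is, conditionally, at least of order $e^{-\sqrt2a}$ times a quantity bounded below with positive probability, using the rooted BES(3) picture for the future. This yields
\[
 \Prob(S_I\ge a)\cdot c\,e^{\sqrt2 a}\lesssim \E Z^{a+1}(\cdot)\ \text{restricted suitably},
\]
and the restriction should cancel the factor $a$ (in practice the restriction to cells near the crossing point is exactly what must produce this cancellation). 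Alternatively, I would use the sharp bound $\Prob(S_I\ge a)\le Ce^{-\sqrt2a}$ known for the maximum of the derivative martingale at the level of overall crossing, which is the continuous analogue of the branching random walk bound $\Prob(\sup_n\max(S-\text{line})\ge a)\le Ce^{-\sqrt2a}$, proved by the "one-line" martingale argument with $W_t$ rather than $Z_t$. Concretely: on $\{\sigma<\infty\}$, the additive martingale satisfies
\[
 \E[W_\infty\text{-type mass generated after }\sigma\mid\F_\sigma]\ge c\,e^{\sqrt2a}e^{-\sigma}\cdot e^{\sigma}\cdot e^{0}.
\]
Since the additive martingale $W_s(I)$ of the fine field has mean $|I|$, optional stopping at $\sigma\wedge N$ gives $\E[W_{\sigma}(I)\1_{\sigma\le N}]\le1$. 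At time $\sigma$ there is a ball of radius $e^{-\sigma}$, with $O(1)$-oscillation controlled by the chaining moments, on which $w_\sigma\ge e^{\sqrt2a}e^{\sigma}$. This gives $W_\sigma(I)\ge c\,e^{\sqrt2a}$ with conditional probability bounded below, hence $\Prob(S_I\ge a)\le Ce^{-\sqrt2a}$ directly. I would try this additive-martingale stopping argument first, since it avoids the $(1+a)$ loss entirely. The circle case is identical, with $O(e^s)$ cells covering $\T$.
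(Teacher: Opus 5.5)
Your first-crossing, many-to-one setup (a union over a mesh, anchors that survived below the line, a Girsanov tilt) is the route the paper takes. However, you discard the factor that makes it sharp, and then try to repair a loss that is not genuine.

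\textbf{The $(1+a)$ loss is an artifact.} Your bound $C(1+a)(1+\Delta)^2N^{-3/2}$ drops the Gaussian factor $e^{-(a-z)^2/(2N)}$ of the killed density, where $z$ is the anchor's distance below the barrier at time $N$. If you keep it, the sum over times is the killed Brownian Green kernel. The paper uses $p_r^0(a,z)\le2\sqrt2\,p_s^0(a,z)$ for $s\in[r,r+1]$ and
\[
 \sum_{r\ge1}p_r^0(a,z)\le2\sqrt2\int_0^\infty p_s^0(a,z)\,\dd s
 =4\sqrt2\min(a,z)\le4\sqrt2\,z .
\]
Equivalently, $\sum_N aN^{-3/2}e^{-a^2/(2N)}=O(1)$. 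So no factor $a$ survives once $z$ is integrated against a weight with fast decay in $z$.

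\textbf{The missing weight in $z$.} That weight is the second ingredient you lack. The oscillation $\Delta_{N,i}$ is not independent of the conditioned anchor path. So writing $\E[e^{\sqrt2\Delta}\1\{\cdots\}]\le C(1+a)(1+\Delta)^2N^{-3/2}$, with the random $\Delta$ on the right, is not a legitimate factorization. The paper regresses on the whole root path through $r$:
\[
 Y_{r+v}(x+e^{-r}w)-Y_r(x)=-\int_0^rA_s\,\dd Y_s(x)+G(v,w),
\]
with $G$ independent of that path. Integration by parts bounds the regression mean by $R_r/2$, using that $A_s=1-k(e^{s-r}w)\ge0$ is nondecreasing, $A_r\le1/2$, and $R_s>0$ on survival. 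A crossing near a surviving anchor therefore forces $\sup G\ge R_r/2$, which has probability $g(R_r)=\min(1,Ce^{-cR_r^2})$. The tilt then turns everything into $Ce^{-\sqrt2a}\int_0^\infty ze^{\sqrt2z}g(z)\,\dd z$.

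\textbf{The proposed repair does not close the gap.} At the stopping time $\sigma$, the mass $W_\sigma(I)$ is $\F_\sigma$-measurable. So ``$W_\sigma(I)\ge ce^{\sqrt2a}$ with conditional probability bounded below'' has no content. What the optional-stopping argument actually needs is that $Y_\sigma$ stays within $O(1)$ of its value at the random crossing point on a window of length comparable to $e^{-\sigma}$, outside an event of probability $O(e^{-\sqrt2a})$.

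In branching Brownian motion the crossing particle carries the full weight $e^{\sqrt2a}$, but there is no atom here. Local oscillations are also not uniformly bounded: their maximum over the $e^\sigma$ cells grows like $\sqrt\sigma$. Estimating the bad event requires precisely the anchor-by-anchor tilted bound above, and that bound already gives the lemma. The derivative-martingale variant (``the restriction should cancel the factor $a$'') is not an argument.

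As written, the proposal yields at best $C(1+a)e^{-\sqrt2a}$. That is too weak for later uses: in Lemma~\ref{crm:lem-capped-first-moment}, for instance, $T\Prob(S_I\ge q)$ with $q=\log T/\sqrt2$ must stay bounded.
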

\begin{proof}
Choose $0<\eta<1/4$ with $k(v)\ge1/2$ on $|v|\le\eta$.
For integer $r\ge1$, deterministic anchor $x$, $0\le v\le1$ and
$|w|\le\eta$, Gaussian regression on the entire root path through $r$ gives
\[
 Y_{r+v}(x+e^{-r}w)-Y_r(x)
 =\int_0^r[k(e^{s-r}w)-1]\dd Y_s(x)+G(v,w),
\]
where $G$ is independent of that path. Suppose the root has stayed below
$a+\sqrt2s$ through $r$, and let $R_s=a+\sqrt2s-Y_s(x)$.
With $A_s=1-k(e^{s-r}w)$, monotonicity gives $A_s\ge0$, $\dd A_s\ge0$,
$A_r\le1/2$. Integration by parts bounds the regression mean by
\[
 A_rR_r-A_0a-\int_0^rR_s\dd A_s-\sqrt2\int_0^rA_s\dd s
 \le R_r/2.
\]
Variance contraction under orthogonal projection and smoothness of $k$ give
\[
 \E|G(v,w)-G(v',w')|^2\le C_k(|v-v'|+|w-w'|^2),\qquad G(0,0)=0.
\]
The covering number at canonical distance $\epsilon$ is at most
$C\epsilon^{-3}$. The Gaussian entropy bound gives a uniform expected
supremum, and the Gaussian supremum concentration bound gives
\[
 \Prob(\sup G\ge z/2)\le g(z):=\min(1,C e^{-c z^2}),\quad z\ge0.
\]
This is an estimate for the independent residual, not for a field
conditioned on a global survival event.

Let $\tau_a$ be the first global crossing time. Use a deterministic
$\eta e^{-r}$ mesh of $I$, containing its endpoints, with at most $C_Ie^r$
points. If $\tau_a\in[r,r+1]$, a neighboring deterministic mesh root has
survived through $r$, and its residual supremum is at least $R_r/2$.
Consequently
\[
 \Prob(\tau_a\in[r,r+1])\le C_Ie^r
 \E[\1_{\{\inf_{s\le r}R_s>0\}}g(R_r)].
\]
The root under the original probability has drift $+\sqrt2$. With
\[
 p_s^0(a,z)=(2\pi s)^{-1/2}
 \big(e^{-(a-z)^2/(2s)}-e^{-(a+z)^2/(2s)}\big),
\]
the last expectation is
$e^{-r-\sqrt2a}\int_0^\infty e^{\sqrt2z}p_r^0(a,z)g(z)\dd z$.
For $s\in[r,r+1]$, direct comparison in the reflection formula gives
$p_r^0(a,z)\le2\sqrt2p_s^0(a,z)$. Hence the killed Brownian Green kernel yields
\[
 \sum_{r\ge1}p_r^0(a,z)\le2\sqrt2\int_0^\infty p_s^0(a,z)\dd s
 =4\sqrt2\min(a,z)\le4\sqrt2z.
\]
The mesh factor cancels $e^{-r}$, and summation gives
$C_Ie^{-\sqrt2a}\int_0^\infty z e^{\sqrt2z}g(z)\dd z$.
The initial time interval $[0,1]$ has a Gaussian supremum tail, which is
bounded by $Ce^{-\sqrt2a}$ for $a\ge1$. Extend to $0\le a<1$ by enlarging
$C$. Finally use levels increasing to $a$ to pass from finite-time
crossings to $\{S_I\ge a\}$; no attainment at infinite time is assumed.
The periodic proof is identical with a circular mesh.
\end{proof}

\begin{lemma}
\label{crm:lem-finel2}
For real bounded $g$ supported in $I$ and $a>0$,
\[
 \E|Z^a(g)|^2\le a^2\left|\int_Ig\right|^2
       +C_k e^{\sqrt2a}\|g\|_2^2.
\]
In particular $\E Z^a(I)^2\le C_k'e^{\sqrt2a}$, uniformly in $a>0$.
\end{lemma}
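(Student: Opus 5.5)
The plan is a martingale second-moment computation for the fixed real-line fine field, built on the positive finite-time truncations
\[
 Z^a_s(g)=\int_I g(v)R^a_s(v)I^a_s(v)e^{\sqrt2Y_s(v)-s}\dd v,\qquad R^a_s=a+\sqrt2s-Y_s .
\]
By martingale closure, $\E Z^a(g)=a\int_Ig$, so it suffices to bound the increasing variance. I would compute $\E|Z^a(g)|^2$ as $|\E Z^a(g)|^2$ plus the expected total bracket $\E\int_0^\infty \dd\langle Z^a(g)\rangle_s$. This is legitimate once we have $L^2$ convergence of $Z^a_s(g)$ and the isometry, which follow from the bracket bound itself. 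It\^o's formula before killing gives the stochastic coefficient $(\sqrt2R_s-1)I_sw_s$ at each point, with $w_s=e^{\sqrt2Y_s-s}$, and $\dd\langle Y_s(v),Y_s(v')\rangle=k(e^s(v-v'))\dd s$. Hence
\[
 \frac{\dd}{\dd s}\E\langle Z^a(g)\rangle_s=\iint g(v)g(v')k(e^s(v-v'))\,\E\bigl[(\sqrt2R_s(v)-1)(\sqrt2R_s(v')-1)I_s(v)I_s(v')w_s(v)w_s(v')\bigr]\dd v\dd v'.
\]

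By Cauchy--Schwarz in the expectation, the inner expectation is at most the diagonal quantity $\E[(\sqrt2R_s-1)^2I_sw_s^2]$ by stationarity. The spatial kernel then contributes $\|k(e^s\cdot)\|_{L^1}=e^{-s}\|k\|_1$, and Young's inequality gives $\iint|g(v)||g(v')|k(e^s(v-v'))\le e^{-s}\|k\|_1\|g\|_2^2$. For the diagonal I would use the double Gaussian tilt, exactly as in the proof of Lemma~\ref{l3:lem-SH-bridge-martingale}:
\[
 \E\bigl[(\sqrt2R_s-1)^2I_sw_s^2\bigr]=e^{s+\sqrt2a}\int_0^\infty(\sqrt2r-1)^2e^{-\sqrt2r}p^0_s(a,r)\dd r .
\]
With the killed kernel bound $p^0_s(a,r)\le Car s^{-3/2}$ (and $p^0_s\le Cs^{-1/2}$ for small $s$), the $e^{s}$ cancels the $e^{-s}$, and the bracket density is bounded by $C_k\|g\|_2^2\,e^{\sqrt2a}\min(a s^{-3/2},s^{-1/2})$.

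The main obstacle is integrating this over $s\in(0,\infty)$ with a bound uniform in $a$. The $a s^{-3/2}$ tail integrates to $O(a^{1/2})$ only after matching with the small-time regime, which gives $C e^{\sqrt2a}(1+a)$. For the stated bound one must absorb the polynomial factor. I would first try the sharper estimate $p^0_s(a,r)\le C\min(ar s^{-3/2},s^{-1/2})$, split at $s=a^2$, and check that the integral is $O(1+a)$. Then I would use $(1+a)e^{\sqrt2a}\le C'e^{\sqrt2a'}$ after slightly enlarging the constant in the exponent. If that loses the exact rate $\sqrt2$, I would instead refine by tilting once and using $\E_{\mathbb Q}$-type BES$(3)$ rooting to integrate $r e^{-\sqrt2 r}$ against a density of order $e^{-(r-a)^2/2s}$; the factor $e^{-\sqrt2r}$ forces $r=O(1)$, which makes the integral $O(a\,\min(s^{-3/2},1))$ and therefore bounded by $Ca$ after integration. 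For small $a$ the $a$ factor is harmless. For large $a$, the region $s\lesssim a^2$ has $p^0_s(a,r)$ exponentially small for $r=O(1)$ until $s\gtrsim a$, which removes the $a$ in favor of a constant. The final step is the in-particular claim: take $g=\1_I$, so that $a^2\le C e^{\sqrt2 a}$ uniformly in $a>0$.
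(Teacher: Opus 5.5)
Your setup coincides with the paper's. It uses the isometry around the deterministic mean $a\int_I g$, then Cauchy--Schwarz and Young to reduce the bracket density to $e^{-s}\|k\|_1\|g\|_2^2\,\E V_s(0)^2$, then the double-tilt formula. The gap is in the one step that is not routine, the integration over $s\in(0,\infty)$. If you bound $p^0_s(a,r)$ by $\min(ars^{-3/2},s^{-1/2})$ before integrating in $s$, you pick up a polynomial factor: $\int_0^\infty\min(as^{-3/2},s^{-1/2})\dd s=4\sqrt a$. Your primary repair absorbs this into $e^{(\sqrt2+\epsilon)a}$, which proves a weaker statement than the lemma. The exact exponent $\sqrt2$ is what the paper needs. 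In Lemma~\ref{crm:lem-capped-first-moment} the bound is used at $q=\log T/\sqrt2$ and divided by $T$, so $C_ke^{\sqrt2q}J/T=C_kJ$. A factor $1+q$ or $e^{\epsilon q}$ would turn this into $C_kJ\log T$ or $C_kJT^{\epsilon/\sqrt2}$. That destroys the bound on $B_g(T)$ uniform in $T$ that Section~\ref{l3c:sec-uniform-center} relies on.

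The missing idea is to integrate in $s$ before estimating the kernel. All integrands are nonnegative, so you can apply Tonelli and the Green function of Brownian motion killed at $0$, namely $\int_0^\infty p^0_s(a,r)\dd s=2\min(a,r)\le 2r$. The total expected bracket is then at most $2\|k\|_1\|g\|_2^2e^{\sqrt2a}\int_0^\infty r(\sqrt2r-1)^2e^{-\sqrt2r}\dd r$. This is an absolute constant times $e^{\sqrt2a}$, with no residual dependence on $a$, and it is exactly the paper's proof.

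Your fallback heuristic does not supply this as stated. For $r=O(1)$, the factor $e^{-(a-r)^2/2s}$ is negligible only for $s\ll a^2$, and exponentially small in $a$ only for $s\lesssim a$. A cutoff at $s\asymp a$ still leaves $\int_a^\infty as^{-3/2}\dd s\asymp\sqrt a$. On the range $a\lesssim s\lesssim a^2$ you must keep both the Gaussian factor and the factor $\min(1,ar/s)$, and summing these is the Green-function computation in disguise.

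The remaining points are fine: justifying the isometry by stopping plus the finite bound, and the particular case via boundedness of $a^2e^{-\sqrt2a}$.
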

\begin{proof}
In this proof, write
\[
 R_s(x)=a+\sqrt2s-Y_s(x),\qquad
 I_s(x)=\mathbf1_{\{\inf_{0\le r\le s}R_r(x)>0\}},
\]
and let
\[
 Z_s^a(g)=\int_I g(x)R_s(x)I_s(x)
                  e^{\sqrt2Y_s(x)-s}\dd x
\]
be the finite-time approximation of $Z^a(g)$.
At finite time the stochastic coefficient of the positive derivative
density is
$V_s(x)=I_s(x)(\sqrt2R_s(x)-1)e^{\sqrt2Y_s(x)-s}$.
The covariance kernel has $L^1$ norm $e^{-s}\|k\|_1$. Cauchy--Schwarz
and Young's inequality bound the expected bracket density of $Z_s^a(g)$ by
$e^{-s}\|k\|_1\|g\|_2^2\E V_s(0)^2$.
A $2\sqrt2$ exponential tilt and the killed drifted Brownian transition give
\begin{equation}
\label{crm:eq-fine-double-tilt}
 \E V_s(0)^2
 =e^{s+\sqrt2a}
 \int_0^\infty
 (\sqrt2z-1)^2e^{-\sqrt2z}p_s^0(a,z)\dd z.
\end{equation}
Integrate in $s$ and use
$\int_0^\infty p_s^0(a,z)\dd s=2\min(a,z)\le2z$.
The remaining integral is finite and independent of $a$.
Stopped stochastic integrals first justify the calculation; the finite
bound removes stopping and supplies terminal $L^2$ convergence.
Finally $a^2e^{-\sqrt2a}$ is bounded.
\end{proof}

\begin{lemma}
\label{crm:lem-finetail}
There is $C_D<\infty$ such that
$\Prob(D(I)>v)\le C_D/v$ for every $v>0$.
Moreover $0\le Z^{[q]}(g)\le D(g)$ for nonnegative bounded $g,q$.
On $\{S_I<\inf_I q\}$ these terminal quantities are equal.
\end{lemma}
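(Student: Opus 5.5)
The weak-$L^1$ bound is the fixed-weight tail estimate of \cite{Wong}; the plan is to import it as stated there and not reprove it. Wong shows that the total mass of the critical derivative measure of a star-scale invariant field on a unit interval has an exact $1/v$ tail. Our fine field $Y$ has scale kernel $k(e^s(v-v'))$ with $k$ compactly supported, positive definite and $k(0)=1$, so it is of exactly this star-scale type, started from zero. The only checks are that Wong's normalization agrees with ours (derivative normalization, unnormalized Lebesgue measure on $I$) and that his result covers the whole interval $I=[0,1]$. If his statement is localized to a smaller window, cover $I$ by finitely many such windows and apply the union bound; the constant $C_D$ only grows. For $v\le1$ the bound is trivial once $C_D\ge1$. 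Thus $\Prob(D(I)>v)\le C_D/v$ for all $v>0$.

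For the comparison $0\le Z^{[q]}(g)\le D(g)$, argue from the monotone versions fixed above. At finite time $s$, the integrand $(q+\sqrt2s-Y_s)\1_{\{\inf_{r\le s}(\cdot)>0\}}$ is nonnegative and nondecreasing in the barrier. So $Z^{[q]}(g)\le Z^{[q']}(g)$ whenever $q\le q'$ pointwise. Compare with the constant barrier $a=\sup q$ to get $Z^{[q]}(g)\le Z^a(g)$.

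Next identify $Z^a$ with $D$ on the no-crossing event. On $\{S_I<a\}$ the indicator equals one for every $s$, so $Z^a_s(g)=a\,W_s(g)+D_s(g)$, where $W_s$ is the additive martingale and $D_s$ is the finite-time derivative density. By the critical construction (\cite{DRSV}; Theorem~\ref{crm:thm-construction}), $W_s\to0$ and $D_s\to D$, so $Z^a(g)=D(g)$ on $\{S_I<a\}$. These events increase to full probability as $a\to\infty$ by Lemma~\ref{crm:lem-crossing}. Since $Z^a$ is nondecreasing in $a$ and equals $D$ eventually, $Z^a(g)\le D(g)$ almost surely for every $a$, and the inequality passes to $Z^{[q]}$ by the monotonicity above. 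Nonnegativity is immediate from the finite-time integrand.

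For the equality on $\{S_I<\inf_I q\}$, note that on this event $q(v)+\sqrt2s-Y_s(v)>0$ for all $s$ and $v$. Hence every indicator in $Z^{[q]}_s$ equals one, and $Z^{[q]}_s(g)=W_s(qg)+D_s(g)$. Since $q$ is bounded, $W_s(qg)\to0$, and the same limit argument gives $Z^{[q]}(g)=D(g)$ there.

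The main obstacle is the measure-theoretic bookkeeping, not any estimate. The limits involved hold in probability or along versions, and they must be compatible across the uncountable family of barriers $q$. I would handle this exactly as in the preceding paragraph of the appendix. First fix versions for rational constant barriers. Then define $Z^{[q]}$ through countable monotone approximation. Finally deduce the identity and the inequality on a single full-probability event.
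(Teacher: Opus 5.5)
Your proposal is correct and follows the paper's proof. You import Wong's fixed-weight tail theorem, enlarging $C_D$ for bounded $v$; you compare $Z^{[q]}$ with a constant barrier above $\|q\|_\infty$; and you use that on the no-crossing event the finite-time densities differ only by a bounded multiple of the additive critical chaos, whose terminal limit is zero. Two minor points need tightening. Wong's theorem is stated for covariances $\log(1/|x-y|)$ plus a regular remainder satisfying his decomposition condition, not specifically for star-scale fields, so the paper checks the hypothesis by writing the fine covariance as $\log(1/|z|)+c_k-\int_0^{|z|}(k(v)-1)v^{-1}\,dv$. His normalization also does not agree with yours: it is Seneta--Heyde, so a deterministic conversion factor is needed.
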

\begin{proof}
For the tail, use Wong's fixed-weight theorem \cite{Wong}, Theorem 1.2,
with $g=1$ on the interval. The fine covariance for $0<|z|\le1$ is
\[
 \int_1^\infty k(vz)\frac{\dd v}{v}
 =\log\frac1{|z|}+c_k-\int_0^{|z|}\frac{k(v)-1}{v}\dd v.
\]
Here
\[
 c_k=\int_0^1\frac{k(v)-1}{v}\dd v.
\]
The regular part is smooth on a neighborhood of $I\times I$; Wong's
Lemma 2.3 verifies the difference-of-continuous-covariances hypothesis.
The interval boundary has zero Lebesgue measure. After the deterministic change from Wong's Seneta--Heyde normalization to the derivative normalization used here, his theorem gives the required \(v^{-1}\) tail bound for all sufficiently large \(v\). Increasing \(C_D\) covers the remaining \(v>0\).
For comparisons, first compare with a constant barrier exceeding
$\|q\|_\infty$, then let that barrier increase. On a no-crossing event,
the only difference in derivative factors is a bounded function times
the additive critical approximation, whose terminal limit is zero.
This proves the equalities for bounded spatial barrier functions \(q\) as well.
\end{proof}

\subsection{Rooted increment estimates and cell estimates}
\label{crm:sec-rooted}
\label{crm:sec-upper}

\begin{lemma}
\label{crm:lem-shape}
For every $A,\lambda<\infty$, with
$H_{b,y}(v)=X_b(y+e^{-b}v)-X_b(y)$ on $[-A,A]$, one has
\[
 \sup_{\beta>0,\,b\ge b_A,\,y\in\T}
 \E_{\mathbb Q_{b,y}^\beta}e^{\lambda\|H_{b,y}\|_\infty}<\infty.
\]
Here $b_A$ is any sufficiently large fixed time that excludes period
images on this window. For $A=1$ one may use $b_A=1$.
\end{lemma}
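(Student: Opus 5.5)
The plan is to write the rooted law explicitly through Gaussian regression on the root path at $y$, and then bound exponential moments of the supremum of $H_{b,y}$ by Gaussian chaining. By Theorem~\ref{crm:thm-construction}(ii), under $\mathbb Q_{b,y}^\beta$ the density $\beta^{-1}R_b(y)I_b(y)w_b(y)$ depends on the field only through the root path $(X_s(y))_{s\le b}$. Conditionally on that path, the field $X_b(y+e^{-b}v)$ therefore keeps its original conditional Gaussian law. Following the regression in the proof of Lemma~\ref{crm:lem-crossing}, I would write
\[
 H_{b,y}(v)=\int_0^b\bigl[k(e^{s-b}v)-1\bigr]\dd X_s(y)+G_b(v),
\]
where $G_b$ is a centered Gaussian process independent of the root path; period images are absent once $b\ge b_A$. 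Since $v\mapsto X_b(y+e^{-b}v)$ restricted to $[-A,A]$ has covariance increments $\E|G_b(v)-G_b(v')|^2\le C_{k,A}|v-v'|^2$ (the scale integral of $e^{2s-2b}$), Dudley and Borell--TIS give $\E e^{\lambda\sup|G_b|}\le C_{\lambda,A,k}$, uniformly in $b,y$.

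For the regression mean, put $A_s(v)=1-k(e^{s-b}v)$, which is nonnegative and nondecreasing in $s$, with $A_b(v)\le 1$ and $\int_0^bA_s\,ds\le C_{k,A}$ (indeed $1-k(x)\le C x^2$, so $\int_0^b A_s\,ds\le C A^2\int_0^b e^{2(s-b)}ds$). Writing $\dd X_s(y)=\sqrt2\,\dd s-\dd R_s(y)$ and integrating by parts, the mean equals
\[
 -A_b R_b+A_0\beta+\int_0^b R_s\,\dd A_s+\sqrt2\int_0^bA_s\,\dd s .
\]
Here $A_0$ is exponentially small in $b$, and $\dd A_s$ has total mass at most one, concentrated on $s\in[b-\log(C/|v|),b]$ with density of order $e^{2(s-b)}$. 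The mean is thus bounded by $C(1+\sup_{s\le b}|R_s-R_b|\,\text{weighted by }e^{2(s-b)})+C\beta e^{-2b}$. More precisely, I would bound it by
\[
 C\Bigl(1+\int_0^b e^{2(s-b)}|R_s-R_b|\,ds\Bigr),
\]
uniformly in $v\in[-A,A]$.

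Under the rooted law, $R$ is a BES$(3)$ process started at $\beta$. It remains to show that $\E\exp\{\lambda\int_0^b e^{-2(b-s)}|R_s-R_b|\,ds\}$ is bounded uniformly in $\beta$ and $b$. I expect this to be the main obstacle, because the estimate must hold uniformly in the starting point. My approach is time reversal: the increments $R_b-R_{b-r}$, for $r\le r_0$, of a BES$(3)$ satisfy $\sup_{r\le \ell}|R_b-R_{b-r}|$ having Gaussian tails of scale $\sqrt\ell$, uniformly in the start. This follows because BES$(3)$ is Brownian motion plus the drift $1/R_s$, and one can compare the two in their bulk; alternatively, write $R=|W|$ for a three-dimensional Brownian motion $W$, so that $|R_s-R_b|\le|W_s-W_b|$, which settles it at once. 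Then
\[
 \int_0^b e^{-2r}|W_{b-r}-W_b|\,dr\le\sum_{\ell\ge0}e^{-2\ell}\sup_{r\le\ell+1}|W_b-W_{b-r}|,
\]
and the exponential moments are finite by Gaussian tails and H\"older's inequality across the summands, independently of $\beta$ and $b$. Combining this with the independent residual $G_b$ through Cauchy--Schwarz proves the lemma. For $A=1$ the kernel support shows there are no period images already at $b\ge1$.
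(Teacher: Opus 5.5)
Your proposal is correct and follows essentially the paper's argument: regression on the root path, integration by parts of the regression mean, the representation $R=|W|$ giving $|R_s-R_b|\le|W_s-W_b|$ uniformly in $\beta$, time reversal, and a Gaussian supremum bound for the independent residual. Where the paper uses the derivative-variance bound with the fundamental theorem of calculus and Jensen against $e^{-v}\,dv$, you use Dudley/Borell--TIS and a unit-block H\"older summation; both work.

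One bookkeeping repair is needed. The $A_0\beta$ term must be paired with $-A_0R_b$ to give the endpoint term $A_0(v)(\beta-R_b)$. Bound it by $Ce^{-2b}|W_b-W_0|$, which has a uniformly bounded Gaussian scale, not by $C\beta e^{-2b}$, which is not uniform over $\beta>0$. This endpoint term must also be added to your final bound $C(1+\int_0^b e^{2(s-b)}|R_s-R_b|\,ds)$, since that integral does not control it pointwise.
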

\begin{proof}
Let $a_s(v)=k(e^{s-b}v)-1$. Orthogonal regression on the entire root path gives
$H(v)=m(v)+G(v)$, where $G$ is independent of that path and
$m(v)=\int_0^b a_s(v)\dd X_s(y)$. Rooting changes only the root path.
Under rooting it has $R_s=|\beta e_1+B_s^{(3)}|$, and integration by parts gives
\[
 m(v)=\sqrt2\int_0^ba_s(v)\dd s-a_0(v)(R_b-\beta)
       +\int_0^b\partial_sa_s(v)(R_s-R_b)\dd s.
\]
Smoothness and evenness give
$|a_s(v)|+|\partial_sa_s(v)|\le C_{k,A}e^{-2(b-s)}$.
Since $|R_s-R_b|\le|B_s-B_b|$,
\[
 \|m\|_\infty\le C+C e^{-2b}|B_b|
       +C\int_0^b e^{-2(b-s)}|B_b-B_s|\dd s.
\]
Time reversal and H\"older reduce the last term to
$I=\int_0^\infty e^{-2v}|B_v|\dd v$. Jensen with probability measure
$e^{-v}\dd v$ gives
$\E e^{\lambda I}\le\int_0^\infty e^{-v}
\E e^{\lambda e^{-v}|B_v|}\dd v<\infty$, since $ve^{-2v}$ is bounded.
The endpoint term has a uniformly bounded Gaussian scale.

For the Gaussian residual, $G(0)=0$ and variance contraction under projection gives
$\sup_v\E|G'(v)|^2\le-k''(0)/2$.
The fundamental theorem of calculus bounds $\|G\|_\infty$ by a constant
times $\|G'\|_{L^2[-A,A]}$.
Jensen on that finite interval bounds the exponential of a sufficiently
small multiple of $\|G'\|_2^2$ by Gaussian one-dimensional exponential
squares. The inequality $\lambda r\le cr^2+\lambda^2/(4c)$ supplies
all fixed linear exponential moments. Combine the two parts.
\end{proof}

For cell $J_j$ write
\[
 H_j(v)=X_b(y_j+\rho v)-X_b(y_j),\quad
 a_j=e^{\sqrt2X_b(y_j)-2b},\quad
 M_j=e^{\sqrt2\sup_{[0,1]}H_j},\quad c_j=La_j.
\]
The supremum includes $v=0$, so $M_j\ge1$. Let
$\mathcal A_n=\sum_jc_jM_j$.

\subsection*{Proof of Proposition~\ref{crm:prop-budget}}

\begin{proof}
Conditionally on $\F_b$, the rescaled future is the fixed fine field. For each $j$, let $D_j$ be the untruncated derivative measure
of the rescaled future field
\[
 \bigl(X_{b+s}(y_j+\rho v)-X_b(y_j+\rho v)\bigr)
       _{s\ge0,\,v\in[0,1]},
\]
constructed from this same future field.
Its exact exponential and spatial Jacobian give the prefactor $a_j$.
The starting spatial barrier is $r_j-H_j(v)$, where $r_j=R_b(y_j)$;
coarsely killed points contribute zero. Positive-barrier comparison gives
\[
 \mu(J_j)\le a_jD_j(e^{\sqrt2H_j}\1_{[0,d/\rho]})
            \le a_jM_jD_j([0,1]).
\]
Each $D_j$ has the fixed fine law and is independent of $\F_b$; distinct
$D_j$ are not claimed independent. Lemma~\ref{crm:lem-finetail} gives
the one-cell weak-$L^1$ bound, and summing proves the second estimate in
\eqref{paper:eq-budget}.

The root identity and survival at the deterministic anchor imply
\[
 \E[\1_{E_{\beta,b}}a_j e^{C\|H_j\|_\infty}]
 \le\beta e^{-b}\E_{\mathbb Q_{b,y_j}^\beta}
           [R_b^{-1}e^{C\|H_j\|_\infty}].
\]
The BES density is $(r/\beta)p_b^0(\beta,r)$, and
$1-e^{-2\beta r/b}\le2\beta r/b$ gives
$\E_{\mathbb Q}R_b^{-2}\le2/b$.
Cauchy--Schwarz, Lemma~\ref{crm:lem-shape}, $m\le Ce^b$, and
$L/\sqrt b=O(1)$ prove \eqref{crm:eq-general-envelope}.
Finally use
$\E[U^2\1_{U\le\delta}\mid\F_b]
\le2\int_0^\delta v\Prob(U>v\mid\F_b)\dd v$.
\end{proof}

\begin{lemma}
\label{crm:lem-bracket}
For deterministic $f\in L^2(\T;\mathbb C)$ and $s\ge1$,
\[
 \E\frac{\dd}{\dd s}\langle Z^\beta(f),\overline{Z^\beta(f)}\rangle_s
 \le C_{\beta,k}\|f\|_2^2s^{-3/2}.
\]
Hence for $1\le t<b$,
$\E|Z_b^\beta(f)-Z_t^\beta(f)|^2
 \le C_{\beta,k}\|f\|_2^2(b-t)t^{-3/2}$.
\end{lemma}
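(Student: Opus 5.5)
The plan is to compute the quadratic variation of the finite-time martingale $s\mapsto Z_s^\beta(f)$ explicitly, and then bound its expectation by the double exponential tilt already used in the proofs of Lemma~\ref{l3:lem-SH-bridge-martingale} and Lemma~\ref{crm:lem-finel2}. Before killing, It\^o's formula for the density $R_s(x)I_s(x)w_s(x)$ gives stochastic coefficient
\[
 V_s(x)=I_s(x)\bigl(\sqrt2R_s(x)-1\bigr)w_s(x).
\]
The drift terms cancel, and there is no boundary term at killing because $R=0$ there. Since $\dd\langle X(x),X(y)\rangle_s=q_{e^s}(x-y)\,\dd s$, the bracket density is
\[
 \frac{\dd}{\dd s}\langle Z^\beta(f),\overline{Z^\beta(f)}\rangle_s
 =\int_{\T^2}f(x)\overline{f(y)}V_s(x)V_s(y)q_{e^s}(x-y)\,\dd x\,\dd y .
\]
I will justify this first for stopped stochastic integrals and then remove the stopping once the final bound is finite.

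Next I take expectations. Using $|V_s(x)V_s(y)|\le\frac12(V_s(x)^2+V_s(y)^2)$, Young's inequality with $\|q_{e^s}\|_{L^1(\T)}\le e^{-s}\|k\|_1$ (valid for $s\ge1$, where periodization is harmless), and stationarity, the expected bracket density is at most
\[
 e^{-s}\|k\|_1\|f\|_2^2\,\E V_s(0)^2 .
\]
The step I expect to be the main obstacle is this pointwise second moment. I would tilt by $e^{2\sqrt2X_s(0)-4s}$. Under the tilt, $R_s$ becomes a drift-$\sqrt2$-shifted Brownian motion killed at zero. Girsanov then gives, as in \eqref{crm:eq-fine-double-tilt},
\[
 \E V_s(0)^2=e^{s+\sqrt2\beta}\int_0^\infty(\sqrt2z-1)^2e^{-\sqrt2z}p_s^0(\beta,z)\,\dd z .
\]
The factor $e^{s}$ cancels the spatial factor $e^{-s}$.

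Using $p_s^0(\beta,z)\le C\beta z\,s^{-3/2}$ from the reflection formula, the remaining integral is at most $C\beta s^{-3/2}\int_0^\infty z(\sqrt2z-1)^2e^{-\sqrt2z}\,\dd z$. This proves the bound with $C_{\beta,k}=C\|k\|_1\beta e^{\sqrt2\beta}$.

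For the consequence, I apply the martingale isometry on $[t,b]$ to the integrable bracket and integrate $s^{-3/2}\le t^{-3/2}$ over an interval of length $b-t$. This gives $\E|Z_b^\beta(f)-Z_t^\beta(f)|^2\le C_{\beta,k}\|f\|_2^2(b-t)t^{-3/2}$. Complex $f$ is handled through real and imaginary parts, or directly, since the Hermitian bracket estimate used only $|f|$.
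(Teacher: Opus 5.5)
Your proposal is correct and follows the paper's proof essentially step for step. It uses the same stochastic coefficient $V_s=I_s(\sqrt2R_s-1)w_s$, the same reduction to $e^{-s}\|k\|_1\|f\|_2^2\,\E V_s(0)^2$ by Young's inequality (your pointwise AM--GM bound plays the role of the paper's Cauchy--Schwarz), the double-tilt identity \eqref{crm:eq-fine-double-tilt} with $p_s^0(\beta,z)\le C\beta z s^{-3/2}$, and the isometry justified by stopping. One small aside: the identity $\|q_{e^s}\|_{L^1(\T)}=e^{-s}\|k\|_1$ holds for every $s$, since periodizing a nonnegative kernel preserves its integral, so your restriction to $s\ge1$ at that step is unnecessary.
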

\begin{proof}
The stochastic coefficient is the same $V_s$ as in
Lemma~\ref{crm:lem-finel2}, now periodic. The kernel $q_{e^s}$ has $L^1$
norm $e^{-s}\|k\|_1$. Young and Cauchy--Schwarz give the bracket bound
$e^{-s}\|k\|_1\|f\|_2^2\E V_s(0)^2$.
Use \eqref{crm:eq-fine-double-tilt}, together with
$p_s^0(\beta,r)\le2\beta r/(\sqrt{2\pi}s^{3/2})$.
The remaining exponentially weighted polynomial integral is finite.
The fine second-moment argument also proves global square integrability
and validates the stochastic isometry. No sign of $V_s(x)V_s(y)$ is assumed.
\end{proof}

\begin{lemma}
\label{crm:lem-linear}
For fixed $f\in C^1(\T)$ and the scales \eqref{crm:eq-scales},
\[
 \E[\1_{E_\beta}|L\widehat{fZ_t^\beta}(n)|]
 \le C_{\beta,k}\|f\|_{C^1}u^{-5},
\]
\[
 \E|L(\widehat{fZ_b^\beta}(n)-\widehat{fZ_t^\beta}(n))|^2
 \le C_{\beta,k}\|f\|_2^2\frac{\log u}{\sqrt u}.
\]
Both conclusions remain negligible after multiplication by any fixed
power of $\log u$. The same holds for each fixed nonzero integer multiple
of $n$, and for a fixed integer frequency offset.
\end{lemma}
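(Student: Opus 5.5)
The plan is to treat the two estimates separately and to work on the full (unlocalized) space wherever possible. The second bound is the simpler one.

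\textbf{Second bound (late increment).} Apply Lemma~\ref{crm:lem-bracket} with the deterministic function $f e^{\ii n\cdot}\in L^2$. Its $L^2$ norm equals $\|f\|_2$, so the bound reads
\[
 \E|\widehat{fZ_b^\beta}(n)-\widehat{fZ_t^\beta}(n)|^2
 \le C_{\beta,k}\|f\|_2^2(b-t)t^{-3/2}.
\]
Multiply by $L^2=u$. With $b-t=10\log u$ and $t\asymp u$, this gives $C\|f\|_2^2\log u/\sqrt u$. Nothing depends on the frequency, so offsets and integer multiples of $n$ are covered as well.

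\textbf{First bound (early coefficient).} On $E_\beta\subset E_{\beta,t}$ all indicators $I_t^\beta$ equal one, so $Z_t^\beta$ has the smooth density $R_t^\beta w_t$. The plan is to integrate by parts once in $x$ on the periodic circle. This gives
\[
 |\widehat{fZ_t^\beta}(n)|\le \frac1n\int_\T\bigl|\partial_x\bigl(fR_t^\beta w_t\bigr)\bigr|\,dx .
\]
The derivative splits into two pieces:
\[
 |f'|R_t^\beta w_t
 \qquad\text{and}\qquad
 |f|\,|\sqrt2R_t^\beta-1|\,w_t\,|\partial_xX_t| .
\]
The indicator can be dropped after this pointwise bound, since $\1_{E_\beta}\le1$ and the integrand is nonnegative. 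To take expectations, use two facts:
\begin{itemize}
\item By evenness of $k$, at each fixed point $\partial_xX_t(x)$ is a centered Gaussian independent of $X_t(x)$, with variance $\int_0^te^{2a}|k''(0)|\,da\le Ce^{2t}$. Hence $\E|\partial_xX_t|\le Ce^t$.
\item Tilting by $w_t(x)$ makes $X_t(x)-\sqrt2t$ a $N(0,t)$ variable. Hence $\E[w_t|R_t^\beta|]\le \beta+C\sqrt t$.
\end{itemize}
The first piece is then at most $\|f'\|_\infty(\beta+C\sqrt t)$. The second is at most $\|f\|_\infty C(1+\beta+\sqrt t)e^t$.

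\textbf{Combining.} Summing the two pieces,
\[
 \E\bigl[\1_{E_\beta}L|\widehat{fZ_t^\beta}(n)|\bigr]\le C_{\beta}\|f\|_{C^1}\,\frac{L(1+\sqrt t)e^t}{n}.
\]
Since $e^t/n=u^{-6}$, $L=\sqrt u$ and $\sqrt t\le\sqrt u$, this is $O(u^{-5})$. For a fixed nonzero multiple $cn$, or for $n+j$ with $j$ fixed, the same argument goes through with $|cn|$ or $|n+j|\ge n/2$ in the denominator, changing only the constant. Both bounds are negative powers of $u$, so multiplying by a fixed power of $\log u$ leaves them negligible. The only mildly delicate step is the independence of $\partial_xX_t(x)$ from $X_t(x)$, which comes from $k'(0)=0$; periodized images of the kernel do not affect this at large $t$.
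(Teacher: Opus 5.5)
Your proposal is correct and follows essentially the same route as the paper: on the survival event you integrate by parts once against the smooth density $fR_t^\beta w_t$, factor the derivative term using the evenness-induced independence of $X_t(x)$ and $\partial_xX_t(x)$, and evaluate the remaining factor under the exponential tilt, which makes $R_t^\beta$ distributed as $\beta-N(0,t)$. For the late increment you apply Lemma~\ref{crm:lem-bracket} to $fe^{\ii n\cdot}$ with $L^2=u$ and $b-t=10\log u$, which is exactly how the paper proves it.
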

\begin{proof}
On $E_\beta$ the early density is $R_tw_t$, smooth and periodic, with derivative
$(\sqrt2R_t-1)w_t\partial_xX_t$. Evenness gives independence of
$X_t(x)$ and $\partial_xX_t(x)$; the latter has standard deviation at most
$Ce^t$. The exponential tilt by $w_t$ makes $R_t$ have law
$\beta-N(0,t)$. Therefore the expected variation of
$fR_tw_t$ is at most
$C\|f\|_{C^1}(1+\beta+\sqrt t)(1+e^t)$, without needing a survival
indicator on this upper bound. Integration by parts gives the first estimate,
since $e^t/n=u^{-6}$. Lemma~\ref{crm:lem-bracket} gives the second,
as $L^2=u$ and $b-t=10\log u$. Fixed multiples and offsets only change
constants in integration by parts; their $L^2$ norms remain bounded.
\end{proof}

\section{An independent endpoint moment bound}
\label{crm:sec-endpoint-closure}

We prove the fractional-moment estimate without using a limiting
Fourier law. Use the scales in \eqref{crm:eq-scales}, and set
$h=\beta+\log L/\sqrt2$. The lowered fine barrier retains the
conditional first moment responsible for the truncated center.
Write $\ell=d$ for the cell length and $\mu=Z^\beta$, and recall that
$U_j=L\mu(J_j)$.
\begin{lemma}
\label{crm:lem-cell-shift}
For a cell anchored at $y$, put $\alpha=e^b\ell\le1$ and
\[
 H(v)=X_b(y+e^{-b}v)-X_b(y),\qquad r=R_b(y),
\]
\[
 g(v)=e^{\sqrt2H(v)},\qquad c=L e^{\sqrt2\beta-\sqrt2r}=e^{\sqrt2(h-r)}.
\]
On the coarse event $E_{\beta,b}$, let $Z^{[q]}$ be the positive
terminal fine chaos with spatial starting barrier $q(v)=r-H(v)>0$,
and set
\[
 Z=Z^{[q]}(g\1_{[0,\alpha]}),\quad
 V=Z^{[(q-h)_+]}(g\1_{[0,\alpha]}),\quad \Delta=Z-V.
\]
A zero starting barrier denotes the identically zero killed process.
The fine field is independent of $\F_b$, and $U=cZ$. With fine
expectation denoted temporarily by $\E_{\mathrm f}$,
\begin{align}
 \E_{\mathrm f}\Delta
 &=\int_0^\alpha g(v)\min(q(v),h)\dd v,
 \label{crm:eq-shift-mean}\\
 \left|\E[U\1_{\{U>1\}}\mid\F_b]
       -c\E_{\mathrm f}\Delta\right|
 &\le C_k c\exp\{2\sqrt2\|H\|_\infty\}.
 \label{crm:eq-cell-shift-error}
\end{align}
The norm is on $[0,1]$. The estimate is uniform in \(r\) and \(H\) subject to \(q(v)=r-H(v)>0\); no compact restriction on the endpoint
\(r\) or on the function \(H\) is imposed.
\end{lemma}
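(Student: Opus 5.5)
For \eqref{crm:eq-shift-mean} I will use the mean formula \eqref{crm:eq-profilemean}, which says $\E Z^{[q]}(g)=\int qg$ for bounded nonnegative barriers. Applied to $Z$ and to $V$, it gives
\[
 \E_{\mathrm f}\Delta=\int_0^\alpha g\,(q-(q-h)_+)\,dv=\int_0^\alpha g\min(q,h)\,dv .
\]
Both barriers are bounded because $H$ is continuous on $[0,1]$, and $(q-h)_+$ is a piecewise continuous barrier of the admissible kind. Monotone versions in the barrier give $0\le V\le Z$, so $\Delta\ge0$. Conditionally on $\F_b$, the scaling \eqref{paper:eq-cell-scaling} gives $U=cZ$ with the fine field independent of $\F_b$. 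Hence $\E[U\1_{\{U>1\}}\mid\F_b]=c\,\E_{\mathrm f}[Z\1_{\{cZ>1\}}]$.

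For \eqref{crm:eq-cell-shift-error}, I write
\[
 Z\1_{\{cZ>1\}}-\Delta=V-Z\1_{\{cZ\le1\}}.
\]
The task is then to show that $c\E_{\mathrm f}V$ and $c\E_{\mathrm f}[Z\1_{\{cZ\le 1\}}]$ are each $O(c\,e^{2\sqrt2\|H\|_\infty})$, uniformly in $r$.

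For $V$, the mean formula gives $c\E V=c\int g(q-h)_+$, and $q-h\le r-h+\|H\|_\infty$. The identity $c=e^{\sqrt2(h-r)}$ gives $c(r-h)_+=(r-h)_+e^{-\sqrt2(r-h)}\le C$. So $c\E V\le C e^{\sqrt2\|H\|}(1+c\|H\|)$. This has the wrong form when $c$ is tiny, and I think the cleaner route is to avoid estimating $V$ and the small part separately. I will instead compare directly with the lowered barrier: on the event that the fine field does not cross height $q-h$, we have $V=Z-\Delta$ with $\Delta$ playing the role of ``mass below the cap''. The heuristic is that $V>0$ requires the fine barrier to survive at level $q-h$.

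Concretely, I will take the difference $\E[Z\1_{\{cZ>1\}}]-\E\Delta$ and proceed in two steps. First, bound $\E[Z\1_{\{cZ\le1\}}]-\E[(\Delta)\wedge\cdot]$ using the capped first moment Lemma~\ref{crm:lem-capped-first-moment} with $T=1/c$. That lemma gives
\[
 \E\min(D(g),1/c)=\frac{G}{\sqrt2}\log_+(1/c)+O(1+J+G^2),
\]
where $\log_+(1/c)/\sqrt2=(r-h)_+$. Since $J+G^2\le Ce^{2\sqrt2\|H\|}$, the error has the required size. Second, compare this with $\E\min(Z,1/c)$, using the bound $D-Z\le$ crossing mass. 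Crossing of level $q$ has probability at most $C e^{-\sqrt2(r-\|H\|)}=Cc\,e^{\sqrt2(\|H\|-h)}$, and on that event the cap costs at most $1/c$, so the cost is $O(e^{\sqrt2\|H\|})$. Combining,
\[
 c\E\min(Z,1/c)=cG\,\min(r,h)^{\ast}+O\bigl(c\,e^{2\sqrt2\|H\|}\bigr),
\]
where $\min(r,h)^\ast$ agrees with $\int g\min(q,h)$ up to $\|H\|G$. Finally, $\E[Z\1_{\{cZ>1\}}]=\E Z-\E\min(Z,1/c)+c^{-1}\Prob(cZ>1)$. Here $\E Z=\int gq$, and the weak-$L^1$ tail \eqref{paper:eq-fine} gives $\Prob(cZ>1)\le C_Dc\,e^{\sqrt2\|H\|}$. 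Assembling the pieces, and using $\int gq-\int g(r-h)_+$ versus $\int g\min(q,h)$, should yield \eqref{crm:eq-cell-shift-error}.

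The main obstacle is the regime $r\le h$ (that is, $c\ge1$) and the bookkeeping between $(r-h)_+$ and the spatially varying $\min(q,h)$. I expect both to be absorbed by bounding $\|H\|\,G\,c$ and $c\,e^{\sqrt2\|H\|}$ by $c\,e^{2\sqrt2\|H\|}$, but the exact cancellation of the logarithmic terms is the step I would check first.
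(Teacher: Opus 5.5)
Your ``Concretely'' argument is correct, but it takes a different route from the paper's. The paper never passes through the untruncated chaos. It uses the exact identity $Z\1_A-\Delta=V\1_A-\Delta\1_{A^c}$ with $A=\{cZ>1\}$ and controls the two corrections directly. First, $\Delta$ vanishes unless the fine field crosses the lowered level $r-h-\sup H$, because off that event the two terminal measures differ by $h$ times the vanishing additive chaos. Hence $c\E_{\mathrm f}[\Delta\1_{A^c}]\le\Prob_{\mathrm f}(\Delta>0)\le C_kcM$, where $M=e^{\sqrt2\sup H}$. Second, $c\E_{\mathrm f}[V\1_A]$ is bounded by Cauchy--Schwarz, the weak-$L^1$ tail, and the constant-barrier second moment $\E_{\mathrm f}V^2\le CM^2e^{\sqrt2p_0}$ with $p_0=(r-h-\inf H)_+$; the cancellation there comes from $ce^{\sqrt2p_0}=e^{-\sqrt2\inf H}$.

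You instead write $Z\1_{\{cZ>1\}}=Z-\min(Z,1/c)+c^{-1}\1_{\{cZ>1\}}$ and use the exact means of $Z$ and $\Delta$. You then import Lemma~\ref{crm:lem-capped-first-moment} at $T=1/c$, and pass from $D(g)$ to $Z$ with the crossing bound at the original level $r-\sup H$. That transfer costs at most $C_ke^{-\sqrt2 r}M=C_kce^{-\sqrt2h}M\le C_kcM$, since $h>0$. The result is
\[
 c\E_{\mathrm f}[Z\1_{\{cZ>1\}}]-c\E_{\mathrm f}\Delta
 =c\int_0^\alpha g\bigl[(q-h)_+-(r-h)_+\bigr]\,dv+O\bigl(c\,e^{2\sqrt2\|H\|_\infty}\bigr).
\]
Since $|(q-h)_+-(r-h)_+|\le|H|$, the integral is at most $cG\|H\|_\infty$. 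This is exactly the cancellation you wanted to check, and it holds uniformly in $r$, including the case $c\ge1$, where $\log_+(1/c)=0$.

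What each route buys: given the capped-first-moment lemma, yours is shorter. It shows the cell-shift estimate is just that bounded remainder plus a Lipschitz correction for the spatially varying barrier; the lemma's barrier level $\log T/\sqrt2=r-h$ is the paper's lowered barrier, made constant. The paper's route stays inside the appendix tools (crossing, tail, second moment). It also yields the extra localization that $\Delta$ vanishes off an event of probability $O(cM)$.

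When you write it up, fix three things:
\begin{itemize}
\item Delete the earlier claim that $c\E_{\mathrm f}V$ and $c\E_{\mathrm f}[Z\1_{\{cZ\le1\}}]$ are each $O(ce^{2\sqrt2\|H\|_\infty})$. For large $r$ each is of order $cG(r-h)_+$, and only their difference is small.
\item Correct the display: $c\E_{\mathrm f}\min(Z,1/c)=cG(r-h)_++O(ce^{2\sqrt2\|H\|_\infty})$, not $cG\min(r,h)$.
\item The quantity that equals $cG\min(r,h)-c\int gH$ up to that error is $c\E_{\mathrm f}Z-c\E_{\mathrm f}\min(Z,1/c)$.
\end{itemize}
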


\begin{proof}
The coarse/fine change of variables gives the factor $c$ exactly.
Positive spatial-barrier martingales are monotone in their starting
barrier, have terminal expectation equal to that barrier, and are
bounded by a constant-barrier martingale on this fixed interval.
These facts also apply to a bounded nonnegative barrier that can vanish:
use the zero killed process at its zeros, or approximate from above
and use the common constant-barrier $L^2$ bound. Thus
$0\le V\le Z$ and \eqref{crm:eq-shift-mean} follows by subtraction.

We use the following fine-field estimates. Put
$M=e^{\sqrt2 H_{\max}}$, where $H_{\max}=\max H$ and
$H_{\min}=\min H$.
By Lemma~\ref{crm:lem-finetail}, the fixed untruncated unit-cell chaos $D$ satisfies
$\Prob(D([0,1])>x)\le C_D/x$. Monotonicity gives
\begin{equation}\label{crm:eq-shift-original-tail}
 \Prob_{\mathrm f}(cZ>1)\le C_DcM.
\end{equation}
The sharp all-time barrier estimate of Lemma~\ref{crm:lem-crossing} is
\[
 \Prob_{\mathrm f}(S\ge a)\le C_k e^{-\sqrt2a},\qquad
 S=\sup_{s\ge0,\,v\in[0,1]}(Y_s(v)-\sqrt2s).
\]
Its established statement for $a\ge1$ extends to all real $a$ by
increasing $C_k$ and using the bound one for probabilities.
If $r-h-H_{\max}>0$ and $S<r-h-H_{\max}$, neither barrier is ever
hit. On that event the difference of the finite-time densities is
$h g(v)e^{\sqrt2Y_s(v)-s}$. Its total mass tends to zero, since
critical additive chaos tends to zero and $g$ is a fixed bounded
function after conditioning. Hence $\Delta=0$ on this event.
If $r-h-H_{\max}\le0$, the trivial probability bound suffices.
In both cases,
\begin{equation}\label{crm:eq-shift-support}
 \Prob_{\mathrm f}(\Delta>0)
 \le C_k e^{-\sqrt2(r-h-H_{\max})}=C_kcM.
\end{equation}
This support statement concerns terminal measures; it does not assert
that their finite-time difference vanishes before the crossing.

Let $p_0=(r-h-H_{\min})_+$. If $p_0=0$, then $V=0$.
Otherwise spatial monotonicity and the explicit constant-barrier
second moment give
\begin{equation}\label{crm:eq-shift-second}
 \E_{\mathrm f}V^2
 \le M^2\{p_0^2+6e^{\sqrt2p_0}\}
 \le C M^2e^{\sqrt2p_0}.
\end{equation}
The harmless numerical constant can be enlarged for the chosen kernel
normalization. There is no polynomial barrier loss in the exponential
term. Put $A=\{cZ>1\}$. The exact identity
\[
 \E_{\mathrm f}[cZ\1_A]-c\E_{\mathrm f}\Delta
 =c\E_{\mathrm f}[V\1_A]
   -c\E_{\mathrm f}[\Delta\1_{A^c}]
\]
retains both truncation corrections. By \eqref{crm:eq-shift-support},
\[
 c\E_{\mathrm f}[\Delta\1_{A^c}]
 \le\Prob_{\mathrm f}(\Delta>0)\le C_kcM.
\]
If $p_0>0$, Cauchy--Schwarz, \eqref{crm:eq-shift-original-tail}, and
\eqref{crm:eq-shift-second} imply
\[
 c\E_{\mathrm f}[V\1_A]
 \le C_k c\{c M^3e^{\sqrt2p_0}\}^{1/2}
 =C_kcM^{3/2}e^{-H_{\min}/\sqrt2}.
\]
Here $ce^{\sqrt2p_0}=e^{-\sqrt2H_{\min}}$ follows from
$c=e^{\sqrt2(h-r)}$. Since $H(0)=0$, we have
$H_{\min}\le0\le H_{\max}$, and the two displayed errors are bounded
by the right side of \eqref{crm:eq-cell-shift-error}.
\end{proof}

\begin{lemma}
\label{crm:lem-additive-negligible}
For every deterministic $f\in C^1(\T)$,
\begin{equation}\label{crm:eq-additive-negligible}
 \1_{E_\beta}hL\int_\T f(x)e^{\ii nx}w_b(x)\dd x
 \longrightarrow0
 \quad\hbox{in probability}.
\end{equation}
For each $0<p<1$, its localized $p$th absolute moment tends to zero.
\end{lemma}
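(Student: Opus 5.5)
The plan is to separate the early oscillatory part from the late fluctuation, exactly as in the proof of Theorem~\ref{l3:thm-marked-phase-rate}, and to use that $h=\beta+\log L/\sqrt2=O(\log u)$ costs only a logarithmic factor. The main point is that $LW_b(f e^{\ii n\cdot})$ is small at frequency $n$ by a negative power of $u$, not merely bounded. On $E_\beta\subset E_{\beta,b}$ both barriers $\beta$ and $\beta+1$ survive through $b$, so
\[
 W_b=(Z_b^{\beta+1}-Z_t^{\beta+1})-(Z_b^\beta-Z_t^\beta)+W_t .
\]
I will bound the three resulting Fourier contributions at frequency $n$, each multiplied by $hL$, separately and then combine them by the triangle inequality.

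\emph{Late increments.} Use Lemma~\ref{crm:lem-linear}, second estimate, with $fe^{\ii n\cdot}$ replaced by $f$ and frequency $n$, for $a=\beta,\beta+1$. This gives
\[
 \E|L(\widehat{fZ_b^a}(n)-\widehat{fZ_t^a}(n))|^2\le C\|f\|_2^2(\log u)u^{-1/2}.
\]
After multiplication by $h^2=O((\log u)^2)$ the bound is still $O((\log u)^3u^{-1/2})\to0$. This estimate is unlocalized, so the indicator can be inserted freely.

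\emph{Early term.} Integrate by parts once in $x$ against $e^{\ii nx}$. This bounds $L|\int fe^{\ii nx}w_t|$ by $(L/n)\int|(fw_t)'|$. At a fixed point $\partial_xX_t$ is independent of $X_t$ and has standard deviation $O(e^t)$, and $\E w_t=1$. Hence the expectation is $O(\|f\|_{C^1}Le^t/n)=O(u^{1/2-6})$, which stays negligible after multiplication by $h$. Again no survival indicator is needed.

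\emph{Conclusion.} Combining, $\E[\1_{E_\beta}\,hL|\int fe^{\ii nx}w_b|\wedge 1]\to0$ on the $L^2$ and $L^1$ pieces. Markov and Chebyshev then give \eqref{crm:eq-additive-negligible}. For the localized $p$th moment with $0<p<1$, I will use $p$-subadditivity and Jensen ($\E|Y|^p\le(\E|Y|^2)^{p/2}$ and $\le(\E|Y|)^p$) on each of the three pieces, each of which has a vanishing moment of order one or two. There is no uniform-integrability issue because the piecewise moments themselves tend to zero.

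I expect no serious obstacle. The only point to check is that the barrier decomposition of $W_b$ holds pointwise on $E_{\beta,b}$ while the moment bounds are global, and this is handled by discarding the indicator only inside nonnegative upper bounds.
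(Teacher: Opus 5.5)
Your proposal is correct and follows essentially the paper's argument: barrier subtraction on $E_\beta$, Lemma~\ref{crm:lem-bracket} for the band $[t,b]$ at both barriers $\beta,\beta+1$, periodic integration by parts for the early part, the harmless factor $h=O(\log u)$, and Jensen for the $p$th moments. The one difference is that you integrate by parts against $W_t=Z_t^{\beta+1}-Z_t^\beta$ directly, using $\E w_t=1$. The paper instead integrates by parts against each $Z_t^\gamma$ with density $R_t^\gamma w_t$ on $E_\gamma$. Your version is marginally cleaner: it avoids the extra $(1+\gamma+\sqrt t)$ factor and needs no indicator for the early piece.
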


\begin{proof}
On $E_\beta$ the two fixed barriers $\beta$ and $\beta+1$ survive
through time $b$, so, as finite-time measures,
\[
 Z_b^{\beta+1}(\dd x)-Z_b^\beta(\dd x)=w_b(x)\dd x.
\]
For either fixed barrier $\gamma\in\{\beta,\beta+1\}$, apply
Lemma~\ref{crm:lem-bracket} to the deterministic function
$f(x)e^{\ii nx}$. It gives
\[
 \E\left|hL\left(Z_b^\gamma(fe^{\ii n\cdot})
             -Z_t^\gamma(fe^{\ii n\cdot})\right)\right|^2
 \le C_{\beta,k}h^2L^2\|f\|_2^2(b-t)t^{-3/2}
 \le C_{\beta,k}\|f\|_2^2
       \frac{(\log u)^3}{\sqrt u}\longrightarrow0.
\]
Integration by parts as in Lemma~\ref{crm:lem-linear}, applied on
$E_\gamma$ to the smooth density $R_t^\gamma w_t$, uses the bound
\[
 \E\|(fR_t^\gamma w_t)'\|_1
 \le C_k\bigl(\|f'\|_\infty(\gamma+\sqrt t)
       +\|f\|_\infty e^t(1+\gamma+\sqrt t)\bigr).
\]
Consequently
\[
 \E[\1_{E_\gamma}L|Z_t^\gamma(fe^{\ii n\cdot})|]
 \le C_{\beta,k}\|f\|_{C^1}u^{-5}
\]
for large $n$. Multiplication by $h=O_\beta(\log u)$ still tends
to zero. Since $E_\beta\subset E_{\beta+1}$, subtraction proves
\eqref{crm:eq-additive-negligible}. The $p$th moment assertion follows
from Jensen applied to the early $L^1$ and band $L^2$ bounds.
\end{proof}

\begin{proposition}
\label{crm:prop-compensator-expansion}
For deterministic $f\in C^1(\T)$ define
\[
 K_n(f)=\sum_j f(y_j)e^{\ii ny_j}
      \E[U_j\1_{\{U_j>1\}}\mid\F_b].
\]
There are $\F_b$-measurable errors $\mathcal E_n(f)$ and
$\mathcal B_n(f)$ such that on $E_{\beta,b}$,
\begin{equation}\label{crm:eq-compensator-expansion}
 K_n(f)=hL\sum_j f(y_j)e^{\ii ny_j}\int_{J_j}w_b(x)\dd x
          +\mathcal E_n(f)-\mathcal B_n(f),
\end{equation}
where
\begin{align}
 \sup_n\E[\1_{E_{\beta,b}}|\mathcal E_n(f)|]
 &\le C_{\beta,k}\|f\|_\infty,
 \label{crm:eq-compensator-error}\\
 \E[\1_{E_{\beta,b}}|\mathcal B_n(f)|]
 &\le C_{\beta,k}\|f\|_\infty\frac{h^3}{u}
 \longrightarrow0.
 \label{crm:eq-low-radial-error}
\end{align}
In particular $K_n(f)$ is tight on $E_\beta$, and its localized
$p$th moments are uniformly bounded for every $0<p<1$.
\end{proposition}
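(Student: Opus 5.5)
The plan is to sum the cellwise identity of Lemma~\ref{crm:lem-cell-shift} over the mesh and then identify the leading term. On $E_{\beta,b}$, \eqref{crm:eq-cell-shift-error} gives, for each cell,
\[
 \E[U_j\1_{\{U_j>1\}}\mid\F_b]=c_j\E_{\mathrm f}\Delta_j+e_j,
 \qquad |e_j|\le C_kc_je^{2\sqrt2\|H_j\|_\infty}.
\]
I will multiply by $f(y_j)e^{\ii ny_j}$ and sum. The sum of the $e_j$ becomes part of $\mathcal E_n(f)$. Its localized expectation is bounded by $C\|f\|_\infty\E[\1_{E_{\beta,b}}\sum_jc_je^{2\sqrt2\|H_j\|_\infty}]$, and \eqref{crm:eq-general-envelope} bounds this uniformly in $n$.

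Next I rewrite the main term using \eqref{crm:eq-shift-mean}. Write
\[
 \min(q_j(v),h)=h-(h-q_j(v))_+ .
\]
The first part gives $c_jh\int_0^\alpha g_j=hL\int_{J_j}w_b$, by the exact change of variables $c_jG_j=LW_b(J_j)$. This is the displayed leading term of \eqref{crm:eq-compensator-expansion}. The remaining part is
\[
 \mathcal B_n(f)=\sum_jf(y_j)e^{\ii ny_j}c_j\int_0^\alpha g_j(v)\bigl(h-R_b(y_j)+H_j(v)\bigr)_+\dd v .
\]
It is $\F_b$-measurable, as required.

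To bound $\mathcal B_n$, I use
\[
 (h-r+H)_+\le (h+\|H_j\|_\infty-r)_+\le (h-r)_++\|H_j\|_\infty\1_{\{r\le h+\|H_j\|_\infty\}} ,
\]
and absorb $g_j$ and the powers of $\|H_j\|_\infty$ into a factor $e^{A\|H_j\|_\infty}$. The first piece is then controlled by \eqref{l3c:eq-radial-positivepart}. The second piece needs more care, because its radius $h+\|H_j\|_\infty$ is random. I will split it according to whether $\|H_j\|_\infty\le h$ or not.
\begin{itemize}
\item On $\{\|H_j\|_\infty\le h\}$, the indicator is at most $\1_{\{R_b\le 2h\}}$, so \eqref{l3c:eq-radial-indicator} applies with radius $2h$.
\item On the complement, $\1_{\{\|H_j\|_\infty>h\}}\le e^{\|H_j\|_\infty-h}$. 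The rooted estimate of Proposition~\ref{crm:prop-budget} then gives a factor $e^{-h}$ times a bounded quantity, which is negligible.
\end{itemize}

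The main obstacle is the target rate $h^3/u$. Lemma~\ref{l3c:lem-small-radius} with $\eta<3/2$ only yields $Lb^{-\eta}h^{2\eta}$, which is slightly worse than $h^3u^{-1}$. To get the sharp rate I will redo that computation at the endpoint. Under the rooted law, $\E[(h-R_b)_+/R_b]$ with the BES(3) density $\le Cr^2b^{-3/2}$ gives $Cb^{-3/2}h^3$. The difficulty is that the Hölder step separating $R_b$ from $e^{A\|H_j\|_\infty}$ loses this exponent. I will instead condition on the root path: by the regression decomposition in Lemma~\ref{crm:lem-shape}, the residual is independent of the root, and the mean part has exponential moments controlled by root increments near time $b$. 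This should give $\E_{\mathbb Q}[e^{A\|H\|}\mid R_b]\le C$ uniformly, possibly after a mild enlargement of $h$. Multiplying by $\beta Le^{-b}$ and summing over $O(e^b)$ cells then gives $C\|f\|_\infty h^3L/b^{3/2}\le Ch^3/u$.

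Finally, tightness on $E_\beta$ and the uniform $p$th moments follow from the expansion. The leading term is exactly $h$ times the quantity in Lemma~\ref{crm:lem-additive-negligible}, up to a mesh phase error of order $Lnd\,W_b(\T)$. I then combine this with the $L^1$ bounds above, Markov's inequality and Jensen's inequality.
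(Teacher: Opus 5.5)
Your skeleton matches the paper's. Summing Lemma~\ref{crm:lem-cell-shift}, putting the cell errors into $\mathcal E_n(f)$ and bounding them by \eqref{crm:eq-general-envelope}, writing $\min(q_j,h)=h-(h-q_j)_+$, and the tightness argument through Lemma~\ref{crm:lem-additive-negligible} all coincide, and your $\mathcal B_n(f)$ is exactly the paper's. You diverge only in bounding $\mathcal B_n(f)$, and there you missed a one-line simplification. You already used the change of variables $x=y_j+e^{-b}v$ for the leading term. Apply it here too: $R_b(y_j)-H_j(v)=R_b(x)$ and $c_jg_j(v)\,\dd v=Lw_b(x)\,\dd x$, so
\[
 \mathcal B_n(f)=L\sum_j f(y_j)e^{\ii ny_j}\int_{J_j}w_b(x)\bigl(h-R_b(x)\bigr)_+\,\dd x .
\]
On $E_{\beta,b}$ every spatial point survives, so $\1_{E_{\beta,b}}\le I_b(x)$ for each $x$. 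The one-point tilt at $x$ itself, not at the anchor, gives $\E[I_b(x)w_b(x)(h-R_b(x))_+]=\int_0^h(h-r)p_b^0(\beta,r)\,\dd r\le C\beta h^3b^{-3/2}$. Integrating over $\T$ gives $C\beta\|f\|_\infty Lh^3b^{-3/2}\asymp\|f\|_\infty h^3/u$. No local increment survives the change of variables, so there is nothing to decouple from $R_b$ and no H\"older loss.

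Your anchor-rooted route can be completed, but only after proving what you leave as ``should give'': that $\E_{\mathbb Q}[e^{A\|H_j\|_\infty}\mid R_b=r]$ is bounded uniformly over the relevant $r$. This is true. Under rooting, the regression residual in Lemma~\ref{crm:lem-shape} stays independent of the root path. Given $R_b=r$, the root is the norm of a mixture of three-dimensional Brownian bridges with terminal drift $O((\beta+r)/b)$, so $\int_0^be^{-2(b-s)}|R_s-R_b|\,\dd s$ has uniform exponential moments. With this, $\E_{\mathbb Q}[(h-R_b+\|H_j\|_\infty)_+e^{\sqrt2\|H_j\|_\infty}/R_b]\le Cb^{-3/2}(1+h)^3$ follows directly, and your case split becomes unnecessary.

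As written, that split does not reach the stated rate. The bound $\1_{\{\|H_j\|_\infty>h\}}\le e^{\|H_j\|_\infty-h}$ yields only $e^{-h}\asymp e^{-\beta}u^{-1/(2\sqrt2)}$, far larger than $h^3/u$; you would need $e^{k(\|H_j\|_\infty-h)}$ with $k\ge2\sqrt2$. The $\1_{\{R_b\le2h\}}$ piece via \eqref{l3c:eq-radial-indicator} also loses the endpoint exponent, just as your first piece does.

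What your route buys is an endpoint ($\eta=3/2$) version of Lemma~\ref{l3c:lem-small-radius} for general anchor functionals. For this proposition it is strictly more work than the pointwise tilt.
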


\begin{proof}
Sum Lemma~\ref{crm:lem-cell-shift} with coefficients
$f(y_j)e^{\ii ny_j}$. The exact mean in
\eqref{crm:eq-shift-mean}, and the coarse/fine Jacobian, give
\[
 c_j\E_{\mathrm f}\Delta_j
 =L\int_{J_j}w_b(x)\min(R_b(x),h)\dd x.
\]
Thus \eqref{crm:eq-compensator-expansion} holds with
\[
 \mathcal B_n(f)=L\sum_j f(y_j)e^{\ii ny_j}
                   \int_{J_j}w_b(x)(h-R_b(x))_+\dd x.
\]
For the sum of one-cell errors, Lemma~\ref{crm:lem-shape}, together with the rooting estimate used in
Proposition~\ref{crm:prop-budget}, gives
\[
 \E\left[\1_{E_{\beta,b}}
     L\sum_j e^{\sqrt2X_b(y_j)-2b}
                 e^{2\sqrt2\|H_j\|_\infty}\right]
 \le C_{\beta,k}L b^{-1/2}\le C_{\beta,k}.
\]
For precision, finite rooting bounds each summand's expectation by
\[\beta e^{-b}\E_{\mathbb Q_{b,y_j}^\beta}
[e^{2\sqrt2\|H_j\|_\infty}/R_b(y_j)],\]
Cauchy--Schwarz, Lemma~\ref{crm:lem-shape}, and
\(\E_{\mathbb Q}R_b^{-2}\le2/b\)
give the preceding bound.

Let $p_b^0(\beta,r)$ be the killed Brownian transition density.
At any deterministic point the exponential tilt gives
\[
 \E[I_b(x)w_b(x)\phi(R_b(x))]
   =\int_0^\infty\phi(r)p_b^0(\beta,r)\dd r,
 \qquad
 p_b^0(\beta,r)\le\frac{2\beta r}{\sqrt{2\pi}b^{3/2}}.
\]
Since $E_{\beta,b}$ implies survival of this one root,
\[
 \E[\1_{E_{\beta,b}}|\mathcal B_n(f)|]
 \le 2\pi L\|f\|_\infty
             \int_0^h(h-r)p_b^0(\beta,r)\dd r
 \le C\beta\|f\|_\infty Lh^3b^{-3/2}.
\]
This is \eqref{crm:eq-low-radial-error} because $L=\sqrt u$ and
$b\asymp u$.

The mesh-to-integral error for the first term in
\eqref{crm:eq-compensator-expansion} is at most
\[
 hL\ell(n\|f\|_\infty+\|f'\|_\infty)W_b(\T),\qquad
 W_b(\T)=\int_\T w_b(x)\dd x.
\]
It tends to zero in $L^1$ because $\E W_b(\T)=2\pi$ and
$Lne^{-b}=u^{-7/2}$. Lemma~\ref{crm:lem-additive-negligible}
settles the resulting Fourier integral.
Together with
\eqref{crm:eq-compensator-error},
\eqref{crm:eq-low-radial-error}, and
Lemma~\ref{crm:lem-additive-negligible},
these estimates prove the asserted tightness and localized \(p\)th moment bounds.
All conditional fine statements were made
given $\F_b$; $E_{\beta,b}$ was used only as a coarse indicator.
\end{proof}

\subsection*{Proof of Theorem~\ref{crm:thm-endpoint-moment}}

\begin{proof}
Put $a_j=f(y_j)e^{\ii ny_j}$, so that
$T_{n,f}=\sum_j a_jU_j$.
The phase and amplitude replacement obeys
\begin{equation}
\label{crm:eq-endpoint-mesh-error}
 \left|
 L\int_\T f(x)e^{\ii nx}\mu(\dd x)-T_{n,f}
 \right|
 \le
 L\ell
 \bigl(n\|f\|_\infty+\|f'\|_\infty\bigr)\mu(\T).
\end{equation}
Its $L^1$ norm is bounded by $C_\beta\|f\|_{C^1}u^{-7/2}$.
The linear conditional mean
$P_n(f)=\E[T_{n,f}\mid\F_b]$ differs from
$LZ_b^\beta(fe^{\ii n\cdot})$ by the same deterministic coefficient
times $Z_b^\beta(\T)$. Its localized $p$th moment is bounded, and
indeed tends to zero, by the early/band estimates used above.
The conditional mean of the marks below the unit cutoff is $D_n(f)=P_n(f)-K_n(f)$.
Proposition~\ref{crm:prop-compensator-expansion} bounds its localized
$p$th moment by $C_{\beta,k,p}\|f\|_{C^1}^p$.

It remains to bound the stochastic remainder. For
\[
 S_n(f)=\sum_j a_j\left(U_j\1_{\{U_j\le1\}}
       -\E[U_j\1_{\{U_j\le1\}}\mid\F_b]\right),
\]
conditional independence within each of the three colors and the
conditional second-moment estimate in
Proposition~\ref{crm:prop-budget} give
\[
 \E[|S_n(f)|^2\mid\F_b]
 \le C_k\|f\|_\infty^2\mathcal A_n,
 \qquad
 \sup_n\E[\1_{E_{\beta,b}}\mathcal A_n]<\infty.
\]
Using $E_\beta\subset E_{\beta,b}$ before taking expectations, then
Jensen, yields the required localized $p$th moment bound for $S_n(f)$.
For the uncentered large jumps, subadditivity for $0<p<1$ gives
\[
 \left|\sum_j a_jU_j\1_{\{U_j>1\}}\right|^p
 \le\|f\|_\infty^p\sum_jU_j^p\1_{\{U_j>1\}}.
\]
The conditional tail bound in \eqref{paper:eq-budget} implies,
by the scalar layer-cake identity,
\begin{align*}
 \sum_j\E[U_j^p\1_{\{U_j>1\}}\mid\F_b]
 &=\sum_j\Prob(U_j>1\mid\F_b)
   +p\int_1^\infty v^{p-1}
                  \sum_j\Prob(U_j>v\mid\F_b)\dd v\\
 &\le \frac{C_D}{1-p}\mathcal A_n.
\end{align*}
This gives the desired localized moment bound after conditioning on
$\F_b$ with the coarse indicator $\1_{E_{\beta,b}}$.
Finally $T_{n,f}=D_n(f)+S_n(f)+\sum_j a_jU_j\1_{\{U_j>1\}}$.
Subadditivity, \eqref{crm:eq-endpoint-mesh-error}, and the preceding estimates prove
\eqref{crm:eq-endpoint-pmoment} for large $n$. The finitely many
remaining integers are absorbed using
$\E\mu(\T)=2\pi\beta$ and Jensen. Homogeneity in $f$ gives the stated
norm dependence.
\end{proof}

\section{Barrier comparison and removal of the early-scale increment}
\label{crm:sec-response-toolkit}

We first compare the actual finite-barrier cell mass with the corresponding untruncated fine-field mass. We then show that the contribution of scales up to time \(t\) can be removed from the local spatial increment with a summable \(L^1\) error.

At the late time \(b\) from \eqref{crm:eq-scales}, use deterministic cells
$J_j=[y_j,y_j+d)$, where $0<d\le e^{-b}$, and put
\[
 \theta=e^bd,\quad
 r_j=\beta+\sqrt2b-X_b(y_j),\quad
 H_j(u)=X_b(y_j+e^{-b}u)-X_b(y_j),\quad 0\le u\le1,
\]
\[
 g_j(u)=e^{\sqrt2H_j(u)}\mathbf1_{[0,\theta]}(u),\quad
 M_j=e^{\sqrt2\sup_{[0,1]}H_j},
\]
\[
 a_j=e^{\sqrt2X_b(y_j)-2b}=e^{\sqrt2\beta-\sqrt2r_j},\quad
 c_j=L\,a_j.
\]
The symbol $\mathbb E_D$ below means expectation in an independent fine
field, with the displayed coarse parameters held fixed.

\subsection*{Proof of Proposition~\ref{crm:prop-cap-barrier-removal}}

\begin{proof}
The exact coarse/fine scaling identity
\eqref{paper:eq-cell-scaling} gives, on \(E_{\beta,b}\),
\[
 \mu(J_j)=a_j Z^{[r_j-H_j]}(g_j)
\]
as a conditional fine-field identity. The spatial starting barrier is
positive on the entire unit window, since $E_{\beta,b}$ imposes coarse
survival at every spatial point. Lemma~\ref{crm:lem-finetail} gives
\(Z^{[r_j-H_j]}(g_j)\le D(g_j)\), and the two terminal masses agree on
\(\{S_I<r_j-\sup_{[0,1]}H_j\}\). Since the cap takes values in $[0,1]$, the difference between the two capped expectations is at most the crossing probability. Lemma~\ref{crm:lem-crossing} therefore gives
\[
 \mathbb P_D(S_I\ge r_j-\sup H_j)
 \le C_k e^{-\sqrt2(r_j-\sup H_j)}
 =C_k e^{-\sqrt2\beta}a_jM_j.
\]
This proves the one-cell estimate. Summation gives
\eqref{crm:eq-summed-cap-barrier-removal}; \eqref{paper:eq-budget}
bounds the expectation of
$\mathbf1_{E_{\beta,b}}\mathcal A_n$ uniformly in $n$.
\end{proof}

For completeness, the exponential-moment bound lemma provides exactly the
integrability needed for random weights in
Lemma~\ref{crm:lem-capped-first-moment}. For every fixed $\lambda\ge0$,
\begin{equation}
\label{crm:eq-general-profile-budget}
 \sup_n\mathbb E\!\left[
 \mathbf1_{E_{\beta,b}}L
 \sum_j a_j e^{\lambda\|H_j\|_\infty}\right]<\infty.
\end{equation}
Indeed, the finite derivative-root identity bounds each expected term by
\[
 e^{-b}\beta\,
 \mathbb E_{\mathbb Q_{b,y_j}^\beta}
 \frac{e^{\lambda\|H_j\|_\infty}}{R_{b}(y_j)}.
\]
By Cauchy--Schwarz, $\mathbb E_{\mathbb Q}R_b^{-2}\le2/b$,
and Lemma~\ref{crm:lem-shape}, this is bounded by
$C_{\beta,k,\lambda}e^{-b}/\sqrt b$.
There are $O(e^{b})$ cells and $b\asymp u$. Since
$\int g_j^2\le M_j^2$ and $(\int g_j)^2\le M_j^2$, the remainder in \eqref{crm:eq-capped-remainder} is summable over the cells with a uniformly bounded localized \(L^1\) norm.
The estimate \eqref{crm:eq-general-profile-budget} supplies the integrability required for the random-weight error terms following \eqref{l3c:eq-cap-identity}.

\subsection{Removal of the early-scale spatial increment}
\label{l1:sec-old-profile}

We use $\kappa=1/\sqrt2$ and the all-integer scales
$u=\log n$, $L=\sqrt u$, $b=u+4\log u$, $t=u-6\log u$.
Fix $\beta>0$ and put $\gamma=\beta+1$.
The localization in this section is the single-anchor indicator
$I_b^\gamma(y)$; it equals one on the original global event $E_\beta$.
Unlike multiplication by $\mathbf1_{E_\beta}$, it preserves spatial
locality of the random functional.

For a bounded nonnegative fine weight $g$ on $[0,1]$, define
\[
 B_g^+(T)=\mathbb E\min\{D(g),T\}
            -\kappa\left(\int g\right)\log_+T,\qquad T>0.
\]
For a continuous function $H$ on $[0,1]$ and $0<\theta\le1$, put
\[
 g_{H,\theta}(v)=e^{\sqrt2H(v)}\mathbf1_{[0,\theta]}(v),\qquad
 \Psi_\theta(H,T)=
 \int_0^\theta H(v)e^{\sqrt2H(v)}\,dv+B_{g_{H,\theta}}^+(T).
\]
Lemma~\ref{crm:lem-capped-first-moment}, together with
\(\left|\int_0^\theta He^{\sqrt2H}\right|
 \le\|H\|_\infty e^{\sqrt2\|H\|_\infty}\), 
gives
\begin{equation}\label{l1:eq-Psi-envelope}
 |\Psi_\theta(H,T)|\le C_k e^{C_k\|H\|_\infty},
 \qquad T>0,\quad0<\theta\le1.
\end{equation}
All constants below are independent of $T$ and $\theta$.

\begin{lemma}
\label{l1:lem-finitepart-continuity}
If $g,g'$ have the same support, $g\ge0$, and
$|g'-g|\le\delta g$ with $0\le\delta\le1/2$, then
\begin{equation}\label{l1:eq-relative-weight}
 |B_{g'}^+(T)-B_g^+(T)|
 \le C_k\delta(1+\|g\|_\infty^2),\qquad T>0.
\end{equation}
Consequently, for arbitrary continuous $H,H'$ on $[0,1]$,
\begin{equation}\label{l1:eq-Psi-profile-continuity}
 |\Psi_\theta(H,T)-\Psi_\theta(H',T)|
 \le C_k\min\{\|H-H'\|_\infty,1\}
       e^{C_k(\|H\|_\infty+\|H'\|_\infty)}.
\end{equation}
\end{lemma}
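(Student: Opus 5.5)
The plan mirrors the concavity argument already used in the proof of Lemma~\ref{l1:lem-response-lipschitz}, now with the relative weight bound stated explicitly. Fix $T>0$ and assume $0<\delta\le1/2$; the case $\delta=0$ is trivial. The map $w\mapsto B_w^+(T)$ is concave on nonnegative weights, since $w\mapsto\E\min\{D(w),T\}$ is concave ($D$ is linear in $w$ and $\min\{\cdot,T\}$ is concave) and the subtracted $\kappa(\int w)\log_+T$ is linear. Lemma~\ref{crm:lem-capped-first-moment} gives the uniform envelope
\[
 |B_w^+(T)|\le C_k\Bigl(1+\int w^2+\bigl(\textstyle\int w\bigr)^2\Bigr)\le C_k(1+\|w\|_\infty^2)
\]
for weights supported in $[0,1]$, uniformly in $T$.

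\textbf{Step 1: extend the segment.} Set $w_a=g+a(g'-g)$ for $a\in[-r,1+r]$, with $r=(4\delta)^{-1}$. Since $|g'-g|\le\delta g$, we have $|a(g'-g)|\le(1+r)\delta g\le(1/2+1/4)g$, so $g/4\le w_a\le 7g/4$. Each $w_a$ is therefore a nonnegative weight with $\|w_a\|_\infty\le2\|g\|_\infty$. Hence $\varphi(a)=B_{w_a}^+(T)$ is concave on $[-r,1+r]$ with $|\varphi|\le C_k(1+\|g\|_\infty^2)$.

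\textbf{Step 2: compare slopes.} Concavity places the secant slope on $[0,1]$ between the secant slopes on $[1,1+r]$ and $[-r,0]$. Each of these is bounded in absolute value by $2\sup|\varphi|/r$, so
\[
 |B_{g'}^+(T)-B_g^+(T)|=|\varphi(1)-\varphi(0)|\le 8C_k\delta(1+\|g\|_\infty^2).
\]
This proves \eqref{l1:eq-relative-weight}. The case $\delta=1/2$ is included, since then $r=1/2$ and the bounds above still hold.

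\textbf{Step 3: the consequence.} Put $\Delta=\|H-H'\|_\infty$, $g=g_{H,\theta}$ and $g'=g_{H',\theta}$; these have the same support $[0,\theta]$.
\begin{itemize}
\item If $\Delta\ge c_0$ for a small fixed $c_0>0$ chosen so that $e^{\sqrt2c_0}-1\le1/2$, the envelope \eqref{l1:eq-Psi-envelope} applied to both functions gives the bound $C_ke^{C_k(\|H\|_\infty+\|H'\|_\infty)}$. This is at most a constant multiple of $\min\{\Delta,1\}$ times the same exponential.
\item If $\Delta<c_0$, then $|g'-g|\le(e^{\sqrt2\Delta}-1)g\le C\Delta g$, with the relative constant at most $1/2$. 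Step~2 then gives $C_k\Delta(1+e^{2\sqrt2\|H\|_\infty})$ for the $B^+$ part.
\end{itemize}
For the linear part in the second case, the mean value theorem applied to $x\mapsto xe^{\sqrt2x}$ gives
\[
 \Bigl|\int_0^\theta(He^{\sqrt2H}-H'e^{\sqrt2H'})\Bigr|\le C\Delta\,e^{C(\|H\|_\infty+\|H'\|_\infty)}.
\]
Combining the two parts yields \eqref{l1:eq-Psi-profile-continuity}.

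The only delicate point is Step~1: the extended weights must stay nonnegative with comparable sup norm, which is what forces the restriction $\delta\le1/2$ and the choice of $r$. Everything else is bookkeeping of uniformity in $T$ and $\theta$, which holds because the envelope from Lemma~\ref{crm:lem-capped-first-moment} does not depend on either.
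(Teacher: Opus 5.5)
Your proof is correct and follows the paper's own argument. The paper writes $g'=g+\delta v$ with $|v|\le g$ and compares secant slopes of the bounded concave function $s\mapsto B^+_{g+sv}(T)$ on $[-1,1]$, which is your extended segment $a\mapsto B^+_{w_a}(T)$ after the rescaling $s=a\delta$. It then splits into the cases of small and large $\|H-H'\|_\infty$ and uses the mean-value bound for $xe^{\sqrt2x}$, exactly as you do.
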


\begin{proof}
For positive $g$, this is the weight-concavity argument in
Lemma~\ref{l1:lem-response-lipschitz}: write
$g'=g+\delta v$ with $|v|\le g$, use the bounded concave function
$s\mapsto B_{g+sv}^+(T)$ on $[-1,1]$, and compare its secant slopes.
The same argument allows vanishing $g$, because $v=0$ wherever $g=0$.
This proves \eqref{l1:eq-relative-weight}. Apply it to the exponential
weights when $\|H-H'\|_\infty$ is small, and to the individual
envelopes otherwise. The mean-value bound for $xe^{\sqrt2x}$ gives
\eqref{l1:eq-Psi-profile-continuity}.
\end{proof}

\begin{lemma}
\label{l1:lem-scaled-old-profile}
Let $1\le t\le b$, $\rho=e^{-b}$, and $\sigma=e^{t-b}\le1$.
At a deterministic anchor $y$, define
\[
 H^{\rm old}(v)=X_t(y+\rho v)-X_t(y),\qquad
 H(v)=X_b(y+\rho v)-X_b(y),\qquad
 H^{\rm new}=H-H^{\rm old}.
\]
Let $\widetilde{\mathbb P}_{b,y}$ be the Gaussian exponential tilt
with density $w_b(y)=e^{\sqrt2X_b(y)-b}$ relative to $\mathbb P$.
For every fixed $\lambda<\infty$,
\begin{equation}\label{l1:eq-old-profile-exponential}
 \sup_{b\ge t\ge1,\,y}
 \widetilde{\mathbb E}_{b,y}
 \exp\{\lambda\sigma^{-1}\|H^{\rm old}\|_\infty\}<\infty.
\end{equation}
The full increment \(H\) and the new increment \(H^{\rm new}\) have uniformly bounded exponential moments
of their sup norms under this same tilt.
\end{lemma}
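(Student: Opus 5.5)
Under the tilt $\widetilde{\mathbb P}_{b,y}$ the field $X$ is again Gaussian, with the same covariance as under $\mathbb P$. Its mean is shifted by $\sqrt2$ times the covariance with $X_b(y)$. The plan is therefore to split $H^{\rm old}$ into a deterministic tilted mean and a centered Gaussian part, show that both are of size $O(\sigma)$, and convert this into exponential moments at the scale $\sigma^{-1}$.

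\emph{Tilted mean.} Since $X_t(x)=\int_0^t dX_s(x)$ and the scale covariance density is $q_{e^s}$,
\[
 \widetilde{\mathbb E}_{b,y}H^{\rm old}(v)
 =\sqrt2\int_0^t\bigl[q_{e^s}(\rho v)-q_{e^s}(0)\bigr]ds .
\]
We have $|\rho v|\le\rho$, $e^s\rho v\le e^{t-b}=\sigma\le1$, and $k$ is smooth and even with $k'(0)=0$. Hence $|k(e^s\rho v)-1|\le Ce^{2s}\rho^2$. Period images vanish on this window once $t$ exceeds a fixed constant, and a bounded range of times changes only the constant. Integrating in $s$ gives $|\text{mean}|\le Ce^{2t}\rho^2=C\sigma^2\le C\sigma$, uniformly in $v\in[0,1]$.

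\emph{Centered part.} Let $G(v)$ be the centered part of $H^{\rm old}(v)$, which has the $\mathbb P$-covariance. Then $G(0)=0$, and
\[
 \E|G'(v)|^2=\rho^2\E|\partial_xX_t(y+\rho v)|^2\le\rho^2\int_0^te^{2s}\|k''\|_\infty\,ds\le C\sigma^2 .
\]
The same computation bounds $\E|G''(v)|^2$ by $C\rho^4e^{4t}\le C\sigma^2$. By the fundamental theorem of calculus, as in Lemma~\ref{l3c:lem-common-increment}, $\|G\|_\infty\le\|G'\|_{L^2[0,1]}$. It follows that $\sigma^{-1}G'$ is a Gaussian process on $[0,1]$ whose pointwise and derivative variances are uniformly bounded.

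\emph{Exponential moments.} Jensen's inequality on $[0,1]$ gives $\E\exp\{c\sigma^{-2}\|G'\|_2^2\}\le\sup_v\E\exp\{c\sigma^{-2}|G'(v)|^2\}<\infty$ for small $c>0$. The inequality $\lambda x\le cx^2+\lambda^2/(4c)$ then yields every linear exponential moment of $\sigma^{-1}\|G\|_\infty$. Combined with the bounded mean, this proves \eqref{l1:eq-old-profile-exponential}. The bound is uniform in $b\ge t\ge1$ and in $y$.

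\emph{Full and new increments.} For the full increment $H$, I would repeat this argument at time $b$ with $\sigma$ replaced by $1$, or invoke Lemma~\ref{crm:lem-shape}: the tilted law is the rooted law without the barrier, and its Brownian mean bound is the same. The new increment is then handled by $\|H^{\rm new}\|_\infty\le\|H\|_\infty+\|H^{\rm old}\|_\infty$ together with Cauchy--Schwarz.

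\emph{Main obstacle.} The only delicate point is that the tilted mean involves the covariance of $X_t(\cdot)$ with $X_b(y)$, not with $X_t(y)$. These coincide here: the increments over $[t,b]$ are independent of $X_t$, so the covariance equals $\int_0^t q_{e^s}\,ds$. The remaining work is routine uniformity, namely handling the small initial times where period images could appear.
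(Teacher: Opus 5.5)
Your proof is correct and follows essentially the same route as the paper: the exact tilted mean $\sqrt2\int_0^t[k(e^{s-b}v)-1]\,ds=O(\sigma^2)$ by evenness of $k$, derivative variance $O(\sigma^2)$ for the centered part, the fundamental theorem of calculus with $G(0)=0$, Jensen on $[0,1]$, and the passage from square-exponential to linear-exponential moments.

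The only difference is harmless: you handle $H^{\rm new}$ by the triangle inequality and Cauchy--Schwarz, whereas the paper reruns the same calculation on the scale interval $[t,b]$. One caution: Lemma~\ref{crm:lem-shape} is stated under the BES$(3)$ rooted law, not the plain tilt. So use your primary option, repeating the computation at time $b$, rather than citing that lemma directly.
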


\begin{proof}
The tilt changes Gaussian means and preserves covariance. The mean of \(H^{\rm old}\) is exactly
\[
 m_{\rm old}(v)=\sqrt2\int_0^t[k(e^{s-b}v)-1]\,ds,
\]
so evenness and smoothness give
$\|m_{\rm old}\|_\infty\le C_k\sigma^2$.
For the centered part \(G\) of \(H^{\rm old}\), $G(0)=0$ and
\[
 \sup_{v\in[0,1]}\mathbb E|G'(v)|^2
 \le-k''(0)e^{-2b}\int_0^te^{2s}\,ds\le C_k\sigma^2.
\]
The fundamental theorem of calculus gives
$\|G\|_\infty\le\|G'\|_{L^2[0,1]}$.
Jensen bounds the exponential of a small constant times
$\sigma^{-2}\|G'\|_2^2$ by the corresponding one-dimensional Gaussian
exponential squares. A linear exponential is bounded by a Gaussian
square exponential times a deterministic constant. This proves
\eqref{l1:eq-old-profile-exponential}.
Applying the same calculation to the scale intervals \([0,b]\) and \([t,b]\) gives uniformly bounded exponential moments of \(\|H\|_\infty\) and \(\|H^{\rm new}\|_\infty\), respectively.
All these calculations use the same one-point tilt; no independence
between these increments is asserted or needed.
\end{proof}

\subsection*{Proof of Lemma~\ref{l1:lem-profile-removal}}

\begin{proof}
Write $\Delta_j=\Psi_\theta(H_j,T_j)
                     -\Psi_\theta(H_j^{\rm new},T_j)$.
Its threshold $T_j$ is random, but
\eqref{l1:eq-Psi-profile-continuity} is uniform in that threshold.
By Cauchy--Schwarz, \eqref{l1:eq-Psi-profile-continuity} and
Lemma~\ref{l1:lem-scaled-old-profile} give
\[
 \widetilde{\mathbb E}_{b,y_j}|\Delta_j|
 \le C_k e^{t-b}.
\]

Drop the survival indicator only in the nonnegative absolute error.
Since $c_j=Le^{-b}w_b(y_j)$, the one-point exponential tilt gives
\[
 \mathbb E[I_b^\gamma(y_j)c_j|\Delta_j|]
 \le Le^{-b}\widetilde{\mathbb E}_{b,y_j}|\Delta_j|
 \le C_kLe^{-b}e^{t-b}.
\]
Sum over $O(e^b)$ anchors and use $|\zeta_j|\le A$.
Conditional expectation is an $L^1$ contraction, proving the final
assertion. On $E_\beta$ every anchor indicator is one, so this locally
measurable replacement agrees there with the original family of cell terms.
\end{proof}

\section{Fluctuation estimates for local functionals}
\label{lev1:sec-late-fluctuation}

Theorem~\ref{lev1:thm-late-decoration} applies to any local functional $A_j=A_j(c_j,H_j)$ satisfying the exponential bound \eqref{lev1:eq-decoration-envelope}.
Its proof uses conditional
covariance and one-point rooting.
In particular, it applies to the local functional $J_H+B_g(1/c)$.

\begin{lemma}
\label{lev1:lem-fixed-window-shape}
For every fixed $A,\lambda>0$ there is $C_{k,A,\lambda}<\infty$ such that
\[
 \sup_{\beta>0,\,s\ge1,\,x\in\T}
 \E_{\mathbb Q_{s,x}^{\beta}}
 \exp\!\left\{\lambda\sup_{|v|\le A}
 |X_s(x+e^{-s}v)-X_s(x)|\right\}\le C_{k,A,\lambda},
\]
provided the local window is interpreted in the circle and $s$ is
sufficiently large in terms of $A$; bounded remaining times can be
absorbed into the constant. For $1<q<3$,
\begin{equation}\label{lev1:eq-root-weighted-shape}
 \E_{\mathbb Q_{s,x}^{\beta}}
 \left[\frac{e^{-\sqrt2R_s(x)}}{R_s(x)}
          e^{\lambda\|H_{s,x}\|_{[-A,A]}}\right]
 \le C_{k,A,\lambda,q}s^{-3/(2q)}.
\end{equation}
The constants in these assertions are independent of $\beta>0$.
\end{lemma}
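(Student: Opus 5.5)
The first assertion follows the proof of Lemma~\ref{crm:lem-shape}, applied on the window $[-A,A]$, with $b$ replaced by $s$ and $y$ by $x$. Under the rooted law $\mathbb Q_{s,x}^\beta$, orthogonal regression on the root path $(X_r(x))_{r\le s}$ writes the increment as $m(v)+G(v)$. Here $G$ is a centered Gaussian residual, independent of the root path, with $G(0)=0$. Its derivative variance is at most $-k''(0)/2$ uniformly, by variance contraction.

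The mean term is
\[
 m(v)=\int_0^s a_r(v)\,\dd X_r(x),\qquad a_r(v)=k(e^{r-s}v)-1 .
\]
We rewrite $\dd X_r=\sqrt2\,\dd r-\dd R_r$ and integrate by parts. Rooting makes $R$ a BES$(3)$ process started at $\beta$, i.e. the modulus of a three-dimensional Brownian motion $B$ started at $\beta e_1$. Evenness and smoothness of $k$ give
\[
 |a_r|+|\partial_ra_r|\le C_{k,A}e^{-2(s-r)}\quad\text{on }|v|\le A,
\]
once $s$ exceeds the time at which period images leave the window. We then use $|R_r-R_s|\le|B_r-B_s|$. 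Time reversal and Jensen against the probability measure $e^{-v}\dd v$ bound the exponential moments of $\int_0^\infty e^{-2v}|B_v|\,\dd v$. The $\beta$-dependence disappears because only increments of $B$ appear. For the residual, the fundamental theorem of calculus on $[-A,A]$ gives $\|G\|_\infty\le C_A\|G'\|_{L^2}$. Gaussian square-exponential moments and $\lambda x\le cx^2+\lambda^2/(4c)$ then give all linear exponential moments. Finitely many bounded times contribute only a constant.

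For \eqref{lev1:eq-root-weighted-shape}, apply H\"older with exponents $q$ and $q'=q/(q-1)$. This gives
\[
 \E_{\mathbb Q}\bigl[(e^{-\sqrt2R_s}/R_s)^q\bigr]^{1/q}
 \,\E_{\mathbb Q}\bigl[e^{\lambda q'\|H\|}\bigr]^{1/q'} .
\]
The second factor is bounded by the first assertion. For the first factor, use the BES$(3)$ density bound $p^{(3)}_s(\beta,r)\le Cr^2s^{-3/2}$, valid uniformly in $\beta$. This comes from $(r/\beta)p^0_s(\beta,r)$ together with $p_s^0(\beta,r)\le C\beta r s^{-3/2}$. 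It yields
\[
 \E_{\mathbb Q}\bigl[R_s^{-q}e^{-\sqrt2qR_s}\bigr]\le Cs^{-3/2}\int_0^\infty r^{2-q}e^{-\sqrt2qr}\,\dd r .
\]
The integral is finite exactly because $q<3$, and taking the $1/q$ power gives $s^{-3/(2q)}$.

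The main point to check is that the density bound is uniform in $\beta$, including small $\beta$ where $r/\beta$ blows up. Writing $p^0_s$ via reflection gives $p^0_s(\beta,r)\le(2\pi s)^{-1/2}\cdot 2\beta r/s$ for all $\beta,r$, because $1-e^{-x}\le x$. This cancels the $1/\beta$, so the uniformity holds.
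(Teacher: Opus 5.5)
Your proposal is correct and follows the paper's proof. The first assertion is exactly Lemma~\ref{crm:lem-shape} on the window $[-A,A]$, which you re-derive by the same regression, integration-by-parts and time-reversal argument instead of citing it. The second assertion uses the same H\"older split with exponents $q$ and $q/(q-1)$, together with the $\beta$-uniform BES$(3)$ density bound $p_s^{(3)}(\beta,r)\le Cr^2s^{-3/2}$.
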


\begin{proof}
The first assertion is Lemma~\ref{crm:lem-shape}, which already
allows any fixed rescaled window $[-A,A]$.

The BES$(3)$ transition density obeys
\[
 p_s^{(3)}(\beta,r)=\frac r\beta p_s^0(\beta,r)
 \le\frac{2r^2}{\sqrt{2\pi}s^{3/2}},\qquad r>0.
\]
Consequently, for $1<q<3$,
\[
 \E_{\mathbb Q}\frac{e^{-q\sqrt2R_s}}{R_s^q}
 \le Cs^{-3/2}\int_0^\infty r^{2-q}e^{-q\sqrt2r}\dd r
 \le C_qs^{-3/2}.
\]
H\"older's inequality with conjugate exponents $q$ and $q/(q-1)$, together with the exponential-moment estimate in Lemma~\ref{crm:lem-shape}, proves \eqref{lev1:eq-root-weighted-shape}.
\end{proof}

\subsection*{Proof of Theorem~\ref{lev1:thm-late-decoration}}

\begin{proof}
Anchor rooting and Cauchy--Schwarz first give
\[
 \sum_j\E|Y_j|
 \le C_{k,\gamma}C_0\beta Lb^{-1/2}.
\]
Indeed $I_b^\beta(y_j)w_b(y_j)\dd\Prob
=\beta R_b(y_j)^{-1}\dd\mathbb Q_{b,y_j}^\beta$,
$\E_{\mathbb Q}R_b^{-2}\le2/b$, and there are $O(e^b)$ cells.
Thus the conditional mean is well-defined on the whole probability
space.

\emph{The two-point estimate.}
Take two anchors $x,y$ at circular distance $d\le3e^{-t}$.
For $d>0$ put $s=\min(b,\log(1/d))$; on the diagonal put $s=b$.
Then $s\ge t-\log3$. Choose a local lift so that both anchors and
both $\rho$-windows lie in a fixed interval of radius at most
$4e^{-s}$ about $x$. We claim
\begin{equation}\label{lev1:eq-pair-weight}
 \E\left[I_b^\beta(x)I_b^\beta(y)w_b(x)w_b(y)
 e^{\gamma(\|H_{b,x}\|_\infty+\|H_{b,y}\|_\infty)}\right]
 \le C_{k,\gamma,\eta}\beta e^{\sqrt2\beta}
                 e^ss^{-\eta}.
\end{equation}

If $s<b$, then $d=e^{-s}$. Since $k$ is supported in $[-1,1]$,
the root increments at $x$ and $y$ after $s$ have zero covariance.
The product of their future exponential martingales therefore has
conditional expectation one. Integrating against this product is an
ordinary Gaussian exponential tilt of the future field.
Under this tilt, the contributions of scales in $[s,b]$ to
$H_{b,x}$ and $H_{b,y}$ remain Gaussian, with uniformly bounded covariance and uniformly bounded mean. The contribution to the mean coming from the tilt at the same anchor is bounded by
\[
 C\int_s^b e^{-2(b-v)}\dd v.
\]
For the contribution coming from the other anchor, use
\[
 \int_s^b\!
 \left[k(e^v(x+\rho z-y))-k(e^v(x-y))\right]\dd v,
 \qquad 0\le z\le1.
\]
If $d\ge2\rho$, its integrand vanishes for $v>s+\log2$, and the
remaining interval has bounded length. If $d<2\rho$, then
$b-s<\log2$. Thus the cross-root contribution is uniformly bounded, including at the support boundary. By symmetry, the corresponding contribution to the mean of $H_{b,y}$ satisfies the same bound. Standard Gaussian supremum exponential moments
on these fixed smooth windows are uniform after the tilt.
The $\F_s$-measurable contributions to both local increments are differences of $H_{s,x}$ evaluated at arguments in $[-4,4]$.
Their sup norms are therefore bounded by $2\|H_{s,x}\|_{[-4,4]}$.

Drop the two survival indicators to the one indicator $I_s^\beta(x)$.
After conditioning on $\F_s$ and integrating the tilted future,
the left side of \eqref{lev1:eq-pair-weight} is at most
\[
 C_{k,\gamma}\E\left[I_s^\beta(x)w_s(x)w_s(y)
                     e^{C_{k,\gamma}\|H_{s,x}\|_{[-4,4]}}\right].
\]
When $s=b$, the same inequality holds directly without a future tilt.
Since $|y-x|\le e^{-s}$,
\[
 w_s(y)=e^{s+\sqrt2\beta-\sqrt2R_s(x)}
       e^{\sqrt2H_{s,x}(e^s(y-x))}.
\]
Root only at $x$. The last expectation is bounded by
\[
 C_{k,\gamma}\beta e^{s+\sqrt2\beta}
 \E_{\mathbb Q_{s,x}^\beta}
 \left[\frac{e^{-\sqrt2R_s(x)}}{R_s(x)}
                 e^{C'_{k,\gamma}\|H_{s,x}\|_{[-4,4]}}\right].
\]
Apply Lemma~\ref{lev1:lem-fixed-window-shape} with
$q=3/(2\eta)\in(1,3/2)$. This proves
\eqref{lev1:eq-pair-weight} and, by
\eqref{lev1:eq-decoration-envelope},
\begin{equation}\label{lev1:eq-Y-pair}
 \E|Y_jY_k|
 \le C_{k,\gamma,\eta}C_0^2\beta e^{\sqrt2\beta}
             L^2e^{-2b}e^ss^{-\eta}.
\end{equation}
No sign assumption on the local functionals \(A_j\) or on the corresponding covariance is used.

\emph{The product of conditional means.}
Represent two expectations conditional on $\F_t$ using two independent
copies of the future after $t$, sharing exactly the same coarse field.
For $d\le3e^{-t}$, drop the copied survival indicators to
$I_t^\beta(x)$. Each copied future root exponential tilt has mean one. The self- and
cross-root estimates used to prove \eqref{lev1:eq-pair-weight}, together with Lemma~\ref{lev1:lem-fixed-window-shape}, give uniformly
bounded exponential moments for the future contributions to the two local increments. Their $\F_t$-measurable contributions are differences
of $H_{t,x}$ evaluated in the fixed window $[-4,4]$. Rooting at $x$ at time $t$ gives
\begin{equation}\label{lev1:eq-conditional-copy}
 \E\left[\E(|Y_j|\mid\F_t)\E(|Y_k|\mid\F_t)\right]
 \le C_{k,\gamma,\eta}C_0^2\beta e^{\sqrt2\beta}
             L^2e^{-2b}e^tt^{-\eta}.
\end{equation}
There is no cross-copy future covariance. This argument includes
$j=k$ and does not require the copied future paths to survive.

\emph{Covariance support and the discrete sum.}
Each $Y_j$ depends on the future field only on its anchor path and
its $\rho$-window. Conditional on $\F_t$, disjoint such future
restrictions are independent whenever the anchor separation exceeds
$e^{-t}+2\rho\le3e^{-t}$. Thus the conditional covariance vanishes
outside this deterministic neighborhood.
For the remaining pairs,
\begin{align*}
 \E\left|\E[Y_j\overline{Y_k}\mid\F_t]
       -\E[Y_j\mid\F_t]\E[\overline{Y_k}\mid\F_t]\right|
 &\le\E|Y_jY_k|\\
 &\quad+\E\left[\E(|Y_j|\mid\F_t)
                         \E(|Y_k|\mid\F_t)\right].
\end{align*}
Equations \eqref{lev1:eq-Y-pair} and
\eqref{lev1:eq-conditional-copy} bound the two terms separately.

There are $O(e^b)$ anchors and spacing comparable to $e^{-b}$.
Pairs with $d\le\rho$ contribute at most a constant times
$L^2b^{-\eta}$. For $\rho<d\le3e^{-t}$, the regular-grid sum
is bounded, by grouping distances into mesh intervals or geometric
shells, by
\[
 C L^2\int_\rho^{3e^{-t}}
           \frac{\dd d}{d(\log(1/d))^{\eta}}
 \le C L^2\int_{t-\log3}^{b}s^{-\eta}\dd s
 \le C L^2(b-t+1)(t-\log3)^{-\eta}.
\]
The number of near pairs is
$O(e^{2b}e^{-t}+e^b)$, so summing
\eqref{lev1:eq-conditional-copy} contributes at most $CL^2t^{-\eta}$.
Multiplying by $A_0^2$ and restoring the common factor
$C_{k,\gamma,\eta}C_0^2\beta e^{\sqrt2\beta}$ proves
\eqref{lev1:eq-late-variance}.
The argument also proves the needed finite second moments, justifying
the conditional variance expansion for the finite row.
At the endpoint scales, the bound is
$O_{\beta,k,\gamma,\eta}(u^{1-\eta}\log u)$, which tends to zero.
\end{proof}

\section{Proofs of the signed tail estimates}
\label{l3:sec-radial-smoothing-audit}

We use the notation $b_H,q_H,Q_H$ from
Section~\ref{l3c:sec-uniform-center}. The Gaussian smoothing estimate
is given in Lemma~\ref{l3:lem-moving-shape-smoothing}. Here we combine it with localization and the local
fluctuation estimate.

\subsection*{Proof of Theorem~\ref{l3:thm-signed-tail-audit}}

\begin{proof}
By \eqref{l3:eq-smoothing-inputs}, the local functional
$A_j=Q_{H_j}(\log(v/c_j))$ satisfies
\eqref{lev1:eq-decoration-envelope}. Therefore
Theorem~\ref{lev1:thm-late-decoration}, followed by
Cauchy--Schwarz, gives
\[
 \mathbb E|T_v-\mathbb E(T_v\mid\mathcal F_t)|
 \le C_{\gamma,k,\eta}
       L(\tau+1)^{1/2}(t-\log3)^{-\eta/2}.
\]
We bound the conditional means without removing the early-scale spatial increment.
At an anchor write $r=R_t^\gamma(y)$, $c_b=L\rho w_b(y)$, and
$H_{\rm old}(v)=X_t(y+\rho v)-X_t(y)$. Tilt the future field by its
exponential martingale at $y$. Its root increment becomes
$\sqrt2s+B_s$, where $B$ is standard Brownian motion. The future
survival event is $\{\max_{s\le\tau}B_s<r\}$ and the terminal
threshold is exactly
\[
 \log(v/c_b)=\sqrt2(r-h_v)-\sqrt2B_\tau,
 \qquad h_v=\gamma+\kappa\log(L/v).
\]
Under this tilt, the full local increment admits the Gaussian regression
\[
 H=H_{\rm old}+m_\tau+a_\tau B_\tau+G_\tau,
\]
where $G_\tau$ is independent of $B_\tau$ and
\[
 A_\tau(v)=\int_0^\tau[k(e^{s-\tau}v)-1]ds,
 \qquad m_\tau=\sqrt2 A_\tau,\qquad a_\tau=A_\tau/\tau.
\]
Evenness and smoothness of $k$ at the origin give
\[
 \|m_\tau\|_\infty\le C_k,
 \qquad
 \|a_\tau\|_\infty\le C_k/\tau.
\]
Moreover, $G_\tau$ is independent of $B_\tau$, and the derivative-variance estimate and Gaussian supremum argument used in the proof of Lemma~\ref{crm:lem-shape} give uniformly bounded exponential moments of $\|G_\tau\|_\infty$. Hence
Lemma~\ref{l3:lem-moving-shape-smoothing} applies conditionally with
$H_0=H_{\rm old}$.

For $r\ge R$, removal of future killing has error at most
\[
 C e^{C\|H_{\rm old}\|}
    \mathbb P(\max_{s\le\tau}B_s\ge r)^{1/2}
 \le C e^{C\|H_{\rm old}\|}u^{-4},
\]
after increasing $K$. This uses Cauchy--Schwarz and the uniform exponential moments of the local increment under the tilted law; no independence between the
local increment and the Brownian maximum is used.
Similarly $Q_H(s)=q_H(s)-\kappa G_H\mathbf1_{\{s\le0\}}$ differs
from $q_H(s)$ in expectation by at most the same bound, because
$s\le0$ entails $B_\tau\ge r-h_v\ge K\sqrt{\tau\log u}$.
The remaining conditional mean is bounded by
$Ce^{C\|H_{\rm old}\|}\tau^{-1/2}$.

For $r<R$ use only the envelope. To sum this sector, root at the anchor
at time $t$:
\[
 \mathbb E[I_t^\gamma w_t e^{C\|H_{\rm old}\|}
                     \mathbf1_{\{R_t<R\}}]
 =\gamma\mathbb E_{\mathbb Q_{t,y}^\gamma}
       [R_t^{-1}e^{C\|H_{\rm old}\|}\mathbf1_{\{R_t<R\}}]
 \le C\gamma R^{1/2}t^{-3/4}.
\]
Indeed the rooted early-scale increment \(H_{\rm old}\), on the smaller window $\rho\le e^{-t}$,
has uniform exponential moments, and the BES$(3)$ density bound gives
$\mathbb E_{\mathbb Q}[R_t^{-2}\mathbf1_{\{R_t<R\}}]
\le CRt^{-3/2}$. Without the indicator the inverse-root bound gives
$\mathbb E[I_t^\gamma w_t e^{C\|H_{\rm old}\|}]
\le C\gamma t^{-1/2}$.
The conditional root tilt has prefactor $L\rho I_t^\gamma w_t$;
there are $O(\rho^{-1})$ anchors. Summation yields the first three
terms in \eqref{l3:eq-signed-tail-rate}. All estimates are uniform in
$v\in[\delta,A]$ and in the deterministic bounded weights.
\end{proof}

\subsection*{Proof of Corollary~\ref{l3:cor-common-early-tail-audit}}

\begin{proof}
The conditional-mean estimate in the preceding proof bounds each anchor
in absolute value and uses only $h_v\le h_\delta$.
It therefore holds for every $v\ge\delta$ and yields the corollary
without the late fluctuation term.
\end{proof}

\subsection*{Proof of Corollary~\ref{l3:cor-actual-tail-audit}}

\begin{proof}
On $E_{\beta,b}$ all the local indicators $I_b^\gamma(y_j)$ equal
one. The full fine variable is $c_jD_{H_j}$. It agrees with $U_j$
when the fine field does not cross the spatial barrier
$R_b^\beta(y_j)-H_j(v)$. Lemma~\ref{crm:lem-crossing} bounds the corresponding conditional crossing probability by
\[
 C\exp\{-\sqrt2R_b^\beta(y_j)+\sqrt2\sup H_j\}
 =\frac{C e^{-\sqrt2\beta}}L
                   c_j e^{\sqrt2\sup H_j}.
\]
This remains a valid upper bound if it exceeds one. By \eqref{paper:eq-budget}, the localized expectation of
$\sum_j c_j e^{\sqrt2\sup H_j}$ is uniformly bounded.
Multiplication by $v\le A$ gives the error $C_\beta A/L$.
The remaining difference is $T_v$, bounded by Theorem~\ref{l3:thm-signed-tail-audit}. No global
localization event was inserted inside a conditional expectation.
\end{proof}

\section{Adjacent cells and separated blocks}
\label{crm:sec-anticlustering}
\begin{lemma}
\label{crm:lem-adjacent-mixed-moment}
Let $D$ be the fixed real-line fine derivative chaos. For every $0<h<1$,
\[
 \mathbb E\bigl[D([-1,0])^hD([0,1])^h\bigr]<\infty.
\]
The same conclusion holds for any two fixed bounded intervals separated
by a point, including intervals with a positive gap.
\end{lemma}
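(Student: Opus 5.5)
The plan is to decouple the two cells by conditioning on the coarse field at a time where the two masses factor, and then to exploit the fact that the exponent $h<1$ lies strictly below the index of the $1/v$ tail. Set $X=D([-1,0])$ and $Y=D([0,1])$. Each has a weak-$L^1$ tail by Lemma~\ref{crm:lem-finetail} (translation invariance of the fine field moves it to $[-1,0]$), so $\E X^{2h'}<\infty$ for every $h'<1/2$. Cauchy--Schwarz therefore settles $h<1/2$ at once. The real content is the range $1/2\le h<1$, where the joint heavy tails at the common boundary point $0$ must be shown not to cluster.

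For general $h$ I would decompose dyadically around the shared point. Write $[0,1]=\bigcup_{m\ge0}I_m^+$ with $I_m^+=[2^{-m-1},2^{-m}]$, and define $I_m^-$ symmetrically. By subadditivity of $x\mapsto x^h$,
\[
 X^hY^h\le\sum_{m,m'\ge0}D(I_m^-)^hD(I_{m'}^+)^h .
\]
For a pair $(m,m')$, put $s=\max(m,m')\log2$ and condition on $\F_s$ of the fine field. The two dyadic intervals sit at distance at least $2^{-\min(m,m')-1}$, possibly zero only at the endpoint, so I would first trim each to its inner half and treat the remaining pieces by the same recursion.

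At time $s$ the coarse/fine scaling \eqref{paper:eq-cell-scaling} writes each mass as $e^{-s}w_s(\cdot)$, times a local exponential increment, times an independent fine chaos of the fixed law. The conditional $h$-moments of those fine chaoses are uniformly bounded, again because $h<1$ and the tail is weak-$L^1$. This leaves the two-point coarse quantity
\[
 \E\bigl[(e^{-s}w_s(x)e^{\sqrt2\|H_x\|})^h(e^{-s}w_s(y)e^{\sqrt2\|H_y\|})^h\bigr].
\]
I would bound it by Gaussian tilting with exponent $\sqrt2h$ at each point. The root covariance up to scale $\log(1/|x-y|)$ is shared, and after that the roots are decorrelated. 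This gives a factor of the form $e^{-c(1-h)^2\cdot(\text{scale})}$ decaying geometrically in both $m$ and $m'$. The local increments are absorbed by Lemma~\ref{crm:lem-shape} together with H\"older's inequality.

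Summing the resulting doubly geometric series gives finiteness. The main obstacle I expect is making the tilted two-point exponent strictly negative for all $h<1$. Plain Gaussian computation for $h\ge1/2$ sits near the critical $L^2$ threshold. If the naive exponent fails, I would instead use the rooted BES$(3)$ law and the weight $R_s$, as in Lemma~\ref{lev1:lem-fixed-window-shape}, to gain the extra $s^{-3/2}$-type decay on shared scales that critical derivative chaos supplies. The case of intervals with a positive gap is easier, since the roots decouple after a bounded time and the same argument applies without the dyadic recursion.
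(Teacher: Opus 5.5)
Your route is viable but genuinely different from the paper's. The paper gives no direct argument. It cites Wong's Lemma~3.2 for sets separated by a hyperplane, whose range $0\le h<\tfrac12+\tfrac1{2\sqrt d}$ is exactly $0\le h<1$ when $d=1$. It then only checks three things: the fine covariance is $-\log|x-y|$ plus a smooth part, Wong's Seneta--Heyde normalization converts to the derivative one, and the shared endpoint carries no mass. You instead rebuild the estimate from four ingredients: the exact cascade identity $D(I)=\int_I e^{\sqrt2Y_s(x)-s}\,D^{(s)}(dx)$ (exact because critical additive chaos vanishes), the one-point bound $\mathbb E D([0,1])^h<\infty$, a Whitney decomposition at the common point, and a two-point Gaussian tilt. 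This is essentially the mechanism inside Wong's lemma. It buys self-containedness and shows where the threshold comes from; the citation buys brevity and the $d$-dimensional statement.

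Three corrections are needed for your sketch to close.

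\emph{The exponent.} It is not borderline, and BES$(3)$ rooting is irrelevant here because $D$ is untruncated. Since $\mathrm{Var}(Y_s(x)+Y_s(y))\le4s$,
\[
 \mathbb E\bigl[(e^{-s}w_s(x))^h(e^{-s}w_s(y))^h\bigr]\le e^{-2hs}e^{4h^2s-2hs}=e^{-4h(1-h)s},
\]
which decays for every $0<h<1$; it is not of the form $e^{-c(1-h)^2s}$. The local increments cost only a constant, either because their means under this tilt stay bounded or by H\"older with an exponent $p<1/h$. In dimension $d$, with $2^{(d-1)m}$ boundary pairs, the same count gives $4dh^2-4dh+d-1<0$, which is exactly Wong's range.

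\emph{The conditioning scale.} Conditioning at $s=\max(m,m')\log2$ does not give the single-cell factorization you describe, because the longer interval then spans $2^{|m-m'|}$ cells. Condition instead at $s_1=(\min(m,m')+1)\log2$. Then $e^{-s_1}$ is below the gap $2^{-m-1}+2^{-m'-1}$, so the futures are conditionally independent, and the longer interval is a single cell. The shorter interval contributes $\mathbb E D([0,\epsilon])^h\le C\epsilon^{2h-h^2}$ with $\epsilon=2^{-|m-m'|}$, by the same one-point tilt at scale $\log(1/\epsilon)$. The series $\sum_{m,m'}2^{-4h(1-h)\min(m,m')}2^{-(2h-h^2)|m-m'|}$ then converges. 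Alternatively, you can keep the finer scale and sum over its cells by subadditivity; the extra factor is $R^{2h^2-4h+1}$ with $R=2^{|m-m'|}$, which decays for $h>1-1/\sqrt2$.

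\emph{The geometry.} $I_m^-$ and $I_{m'}^+$ are already at positive distance, so no trimming or recursion is needed. The dyadic bound also covers all $0<h<1$, so the separate Cauchy--Schwarz case is unnecessary.
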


\begin{proof}
Wong's Lemma~3.2 in \cite{Wong} applies to sets separated by a hyperplane
and gives the exponent range
$0\le h<1/2+1/(2\sqrt d)$. For $d=1$ this is exactly $0\le h<1$.
Use the fixed weight $1$ and intervals on a bounded open neighborhood of
$[-2,2]$. The fine covariance has the form
$-\log|x-y|+f(x,y)$ with smooth regular part on this neighborhood,
so the covariance decomposition hypothesis is the one already checked
in the proof of Lemma~\ref{crm:lem-finetail}. Wong uses the Seneta--Heyde normalization;
the deterministic conversion to the present derivative normalization
preserves finiteness. Deterministic endpoints have zero critical mass,
so the shared endpoint can be assigned to either interval.
\end{proof}

Applying Lemma~\ref{crm:lem-shape} on the fixed rescaled window
$[-A,A]$, for every fixed $A<\infty$ and $\lambda>0$,
\begin{equation}
\label{crm:eq-fixed-window-shape}
 \sup_{b\ge b_A,y,\beta>0}
 \mathbb E_{\mathbb Q_{b,y}^{\beta}}
 \exp\!\left\{\lambda\sup_{|v|\le A}
 |X_b(y+e^{-b}v)-X_b(y)|\right\}<\infty,
\end{equation}
provided $b$ is large enough that the window has a single local lift;
the finitely many remaining times are absorbed into the constant.

\begin{lemma}
\label{crm:lem-higher-power-budget}
Fix $1<q<3/2$, a fixed window size $A$, and deterministic anchors
$y_j$ with at most $C e^{b}$ points. Put
\[
 c_j=L\,e^{\sqrt2X_{b}(y_j)-2b},\qquad
 M_j^{(A)}=
 \exp\!\left\{\sqrt2\sup_{|v|\le A}
 [X_{b}(y_j+e^{-b}v)-X_{b}(y_j)]\right\}.
\]
Then
\begin{equation}
\label{crm:eq-higher-power-budget}
 \mathbb E\!\left[\mathbf1_{E_{\beta,b}}
 \sum_j(c_jM_j^{(A)})^q\right]
 \le C_{\beta,k,A,q}u^{q/2}b^{-3/4}
 \longrightarrow0.
\end{equation}
\end{lemma}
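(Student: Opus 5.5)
The plan is to bound each summand by rooting at its anchor and then sum over the $O(e^b)$ anchors; this is the same scheme as in Lemma~\ref{lev1:lem-fixed-window-shape} and Proposition~\ref{crm:prop-budget}, with the power $q>1$ changing only the radial integral. Since $E_{\beta,b}$ implies survival at the deterministic anchor $y_j$, replace $\mathbf1_{E_{\beta,b}}$ by $I_b^\beta(y_j)$. Note that $c_j=Le^{-b}w_b(y_j)$ and $w_b(y_j)=e^{b+\sqrt2\beta-\sqrt2R_b(y_j)}$, so
\[
 (c_jM_j^{(A)})^q=L^qe^{-qb}w_b(y_j)\,w_b(y_j)^{q-1}(M_j^{(A)})^q
 =L^qe^{-b}e^{(q-1)\sqrt2\beta}w_b(y_j)\,e^{-(q-1)\sqrt2R_b(y_j)}(M_j^{(A)})^q .
\]
One factor $w_b(y_j)$ goes into the root identity $I_b^\beta w_b\,\dd\Prob=\beta R_b^{-1}\dd\mathbb Q_{b,y_j}^\beta$, and the remaining powers are expressed through $R_b$.

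Each expected summand is then at most
\[
 \beta e^{(q-1)\sqrt2\beta}L^qe^{-b}\,
 \E_{\mathbb Q_{b,y_j}^\beta}\!\left[R_b^{-1}e^{-(q-1)\sqrt2R_b}(M_j^{(A)})^q\right].
\]
Apply H\"older with exponents $p$ and $p'$. The factor $(M_j^{(A)})^{qp'}$ is the exponential of a fixed multiple of the window supremum, so \eqref{crm:eq-fixed-window-shape} bounds it uniformly in $b,y,\beta$. For the radial factor, use the BES$(3)$ density bound $p_b^{(3)}(\beta,r)\le Cr^2b^{-3/2}$:
\[
 \E_{\mathbb Q}\!\left[R_b^{-p}e^{-p(q-1)\sqrt2R_b}\right]
 \le Cb^{-3/2}\int_0^\infty r^{2-p}e^{-p(q-1)\sqrt2r}\,\dd r ,
\]
which is finite for $p<3$ because $q>1$. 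Choosing $p=2$ gives the radial factor $(Cb^{-3/2})^{1/2}=Cb^{-3/4}$. Multiplying by $e^{-b}$ and summing over at most $Ce^b$ anchors gives $C_{\beta,k,A,q}L^qb^{-3/4}=C u^{q/2}b^{-3/4}$.

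Since $b\asymp u$, the bound is of order $u^{q/2-3/4}$, and this tends to zero exactly because $q<3/2$. The main point needing care is that the H\"older exponent must keep the radial integral convergent near $r=0$ while still extracting the full $b^{-3/2}$ density decay; $p=2$ does both, since $r^{2-p}=r^0$ is integrable at $0$. The factor $e^{-(q-1)\sqrt2r}$ gives integrability at infinity. The anchors need to be deterministic only for the rooting identity, and the uniformity of \eqref{crm:eq-fixed-window-shape} in $\beta$ means that all $\beta$-dependence sits in the explicit prefactor.
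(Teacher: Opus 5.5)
Your proposal is correct and follows essentially the same route as the paper's proof: you root at each anchor, apply Cauchy--Schwarz (your H\"older step with $p=2$) using the fixed-window exponential moment bound \eqref{crm:eq-fixed-window-shape}, bound the radial factor through the BES$(3)$ density estimate $p_b^{(3)}(\beta,r)\le Cr^2b^{-3/2}$ to obtain $b^{-3/4}$, and sum over the $O(e^b)$ anchors. Your prefactor bookkeeping $\beta e^{(q-1)\sqrt2\beta}L^qe^{-b}$ matches the paper's display exactly.
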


\begin{proof}
Write $r=R_{b}(y_j)$ and
$a_j=e^{\sqrt2\beta-\sqrt2r}=e^{-b}w_{b}(y_j)$.
The point-root identity bounds the expected contribution of one anchor
by
\[
 u^{q/2}e^{-b}\beta e^{\sqrt2\beta(q-1)}
 \mathbb E_{\mathbb Q_{b,y_j}^{\beta}}
 \left[\frac{e^{-\sqrt2(q-1)r}(M_j^{(A)})^q}{r}\right].
\]
Cauchy--Schwarz and \eqref{crm:eq-fixed-window-shape} reduce this to
$C u^{q/2}e^{-b}$ times
\[
 \left(\mathbb E_{\mathbb Q}
 \frac{e^{-2\sqrt2(q-1)R_{b}}}{R_{b}^2}\right)^{1/2}.
\]
The killed Brownian reflection formula implies
\[
 p_b^{(3)}(\beta,r)\le
 \frac{2r^2}{\sqrt{2\pi}\,b^{3/2}},\qquad r>0.
\]
The last display is consequently at most
$C_q b^{-3/4}$. Summing over $O(e^{b})$ anchors proves the estimate.
Its limit is zero because $b\asymp u$ and $q/2<3/4$.
\end{proof}

\subsection*{Proof of Proposition~\ref{crm:prop-local-anticlustering}}

\begin{proof}
Choose $h\in(1/2,3/4)$ and put $q=2h$. For a deterministic anchor
$y$ and a fixed $A\ge1$, write
\[
 c_b(y)=L e^{\sqrt2X_b(y)-2b},\qquad
 M_{b,A}(y)=
 \exp\!\left\{\sqrt2\sup_{|v|\le A}
 \bigl[X_b(y+e^{-b}v)-X_b(y)\bigr]\right\}.
\]

For an adjacent pair, relabel the cells cyclically as
$J_j,J_{j+1}$ and let $y^*=y_{j+1}$ be their shared boundary.
In fine coordinates the cells are $[-\theta,0]$ and
$[0,\theta]$, where $\theta=e^bd\in(1/2,1]$.
Positive terminal comparison and the common supremum of the coarse
increment give
\[
 U_j\le c_b(y^*)M_{b,1}(y^*)D([-\theta,0]),\qquad
 U_{j+1}\le c_b(y^*)M_{b,1}(y^*)D([0,\theta])
\]
in their joint conditional fine-field coupling. The fixed intervals
$[-1,0]$ and $[0,1]$ dominate the two masses. Mixed-moment Markov
inequality and Lemma~\ref{crm:lem-adjacent-mixed-moment} therefore
bound the joint conditional exceedance probability by
\[
 C_h\varepsilon^{-2h}
 \bigl(c_b(y^*)M_{b,1}(y^*)\bigr)^{2h}.
\]

For a pair at index distance two, relabel the cells cyclically as
$J_j,J_{j+2}$ and take the midpoint of the intervening cell,
\[
 y^{**}=y_{j+1}+\frac d2
 \qquad\text{modulo }2\pi.
\]
Their fine intervals are contained respectively in
$[-3/2,-1/4]$ and $[1/4,3/2]$. The same argument, now using the
coarse spatial increment on $[-2,2]$, bounds the corresponding
joint conditional exceedance probability by
\[
 C_h\varepsilon^{-2h}
 \bigl(c_b(y^{**})M_{b,2}(y^{**})\bigr)^{2h}.
\]
The adjacent-pair anchors $y^*$ and the distance-two anchors $y^{**}$
form deterministic families with $O(e^b)$ points. Hence
Lemma~\ref{crm:lem-higher-power-budget}, applied with exponent
$q=2h$ and window sizes $A=1$ and $A=2$, proves
\eqref{crm:eq-anticlustering}. Circular seam pairs admit the same local
lift for all sufficiently large $n$.

By the cell scaling \eqref{paper:eq-cell-scaling} and
Lemma~\ref{crm:lem-finetail}, 
$p_j(\varepsilon)\le C\varepsilon^{-1}c_jM_j$.
Thus the summed $q$th powers have localized expectations tending to zero
by Lemma~\ref{crm:lem-higher-power-budget}. This proves infinitesimality.
Since $0\le p_j\le1$ and $q<2$, the bounded graph degree and
$2p_jp_k\le p_j^2+p_k^2\le p_j^q+p_k^q$ also give the asserted
edge-product limit.
\end{proof}
\subsection{Protected-block geometry and square means}
\label{crm:sec-blocks}
Use the full circular mesh and all-integer scales of
\eqref{crm:eq-scales}. Put
\[
 K=\lceil u^2\rceil,\qquad p=\left\lfloor\frac{m}{K+2}\right\rfloor,
 \qquad
 B_q=\bigcup_{j=q(K+2)}^{q(K+2)+K-1}J_j,\quad 0\le q<p.
\]
After each block discard the following two mesh cells, and discard the
remaining fewer than $K+2$ cells at the end of the mesh. Write $G_n$ for
the union of these discarded cells. The block length is
$\Delta_n=Kd\asymp u^2e^{-b}$, and every circular gap between retained
blocks has length at least $2d>e^{-b}$, including the seam. In particular,
\begin{equation}\label{crm:eq-block-geometry}
 |G_n|\le C(u^{-2}+u^2e^{-b}),\qquad
 \sum_q|B_q|\le2\pi.
\end{equation}
By terminal locality, the restrictions $\mu|_{B_q}$ are mutually
independent given $\F_b$. This conditional independence is asserted under
the original probability measure, without conditioning on $E_\beta$.

Write
\[
 V_q=L\mu(B_q),\qquad
 m_q=\E[V_q\mid\F_b]=LZ_b^\beta(B_q).
\]
The gap error has the unconditional bound
\begin{equation}\label{crm:eq-block-gap}
 \E[L\mu(G_n)]=\beta L|G_n|=O_\beta(u^{-3/2}).
\end{equation}
The exponentially small term in \eqref{crm:eq-block-geometry} is absorbed
in the last bound. Freezing a Fourier phase and a $C^1$ weight at the left
endpoint $x_q$ of each block incurs an error at most
\begin{equation}\label{crm:eq-block-phase}
 L\Delta_n(n\|f\|_\infty+\|f'\|_\infty)\mu(\T),
\end{equation}
which tends to zero in $L^1$, since $Ln\Delta_n=O(u^{-3/2})$.
One may instead retain the individual cell phases inside each block.

\begin{lemma}
\label{crm:lem-block-squaremeans}
For every fixed deterministic $\beta>0$,
\begin{equation}\label{crm:eq-block-squaremeans}
 \E\sum_{q<p}m_q^2
 \le C_{\beta,k}\frac{\log u}{\sqrt u}\longrightarrow0.
\end{equation}
This estimate is unconditional; it does not require a global barrier
localization event.
\end{lemma}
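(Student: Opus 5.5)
The plan is to compute $\E m_q^2$ directly from the finite-time density of $Z_b^\beta$, without any localization. Since $m_q=LZ_b^\beta(B_q)$ and $Z_b^\beta(dx)=R_b(x)I_b(x)w_b(x)\,dx$, we have
\[
 \E m_q^2=L^2\int_{B_q}\int_{B_q}\E\bigl[R_b(x)I_b(x)w_b(x)R_b(y)I_b(y)w_b(y)\bigr]\,dx\,dy .
\]
The two-point expectation can be bounded as in the pair estimate \eqref{lev1:eq-pair-weight}. Take $s=\min(b,\log(1/|x-y|))$. Root increments after $s$ are uncorrelated, because $k$ is supported in $[-1,1]$. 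Conditioning on $\F_s$ then collapses the future exponential martingales. Under the resulting tilts, the future radial factors become the Brownian means $\E[R_b\mid\cdots]\le R_s$ plus bounded corrections, by martingale closure of $R I w$.

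After this step the integrand is at most
\[
 C\,\E\bigl[I_s(x)R_s(x)w_s(x)\,R_s(y)w_s(y)\bigr].
\]
I will write $w_s(y)=e^{s+\sqrt2\beta-\sqrt2R_s(x)}e^{\sqrt2H_{s,x}(\cdot)}$ and $R_s(y)\le R_s(x)+|H_{s,x}|$. Rooting at $x$ via $\mathbb Q^\beta_{s,x}$ then bounds the integrand by
\[
 C\beta e^{s}\,\E_{\mathbb Q}\bigl[(R_s+|H|)e^{-\sqrt2R_s}e^{C\|H\|}\bigr].
\]
The BES$(3)$ density bound $p_s^{(3)}(\beta,r)\le Cr^2s^{-3/2}$ and Lemma~\ref{crm:lem-shape} give $\E_{\mathbb Q}[\cdots]\le C s^{-3/2}$. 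So the pair density is at most $C_\beta e^{s}s^{-3/2}$, with $e^s\asymp 1/|x-y|$ when $|x-y|\ge e^{-b}$. For the diagonal range $|x-y|<e^{-b}$, take $s=b$ directly.

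Integrating over $B_q\times B_q$, whose side $\Delta_n\asymp u^2e^{-b}$, I expect
\[
 \E m_q^2\le C_\beta L^2|B_q|\int_{e^{-b}}^{\Delta_n}\frac{dr}{r(\log(1/r))^{3/2}}+C_\beta L^2|B_q|e^{-b}e^b b^{-3/2}.
\]
On this range $\log(1/r)\in[b-2\log u-O(1),\,b]$, so the integral is at most $C(\log u)\,b^{-3/2}$. Summing over blocks with $\sum_q|B_q|\le2\pi$ and $L^2=u\asymp b$ yields $C_\beta(\log u)u^{-1/2}$, which is \eqref{crm:eq-block-squaremeans}.

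The main obstacle is the two-point tilt estimate. The survival indicators cannot be dropped before the radial factors $R_b$ are absorbed, since without killing $\E[R_bw_b]$ is not small. I would therefore keep $I_s(x)$ and use the fact that, for $t\ge s$, $R_t(y)I_t(y)w_t(y)$ is a positive martingale in $t$, dominated by its killing-free version only through its conditional mean. This is exactly what martingale closure (Theorem~\ref{crm:thm-construction}(i)) supplies. The only dependence between the two futures is through the bounded cross-root mean shifts, which are already handled in the proof of Theorem~\ref{lev1:thm-late-decoration}.
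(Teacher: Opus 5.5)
Your route (integrate the two-point function of the finite-time density over $B_q\times B_q$) is a legitimate alternative to the paper's. However, the key pair estimate, as you justify it, does not give the power you claim. After rooting at $x$ you need
\[
 \E_{\mathbb Q^\beta_{s,x}}\bigl[(R_s+|H|)e^{-\sqrt2R_s}e^{C\|H\|}\bigr]\le Cs^{-3/2}.
\]
The BES$(3)$ density bound controls functions of $R_s$ alone, and Lemma~\ref{crm:lem-shape} controls exponential moments of $\|H\|$ alone. Under the rooted law these two are dependent. The regression mean of $H$ on the root path contains $\int_0^s\partial_ra_r(R_r-R_s)\dd r$, and since $|y-x|=e^{-s}$ its weight on the last unit of time is of order one.

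Combining the two inputs therefore needs H\"older, which yields only $C_ps^{-3/(2p)}$ with $p>1$. This is exactly what Lemma~\ref{lev1:lem-fixed-window-shape} records for the same kind of quantity. Fed into your spatial integral, it gives $C_\eta u^{1-\eta}\log u$ for $\eta<3/2$. That tends to zero but is weaker than the asserted $C_{\beta,k}\log u/\sqrt u$. Getting the sharp power through rooting would need an extra argument, for instance that the local increment keeps bounded exponential moments under a Bessel bridge with small endpoint, and you do not supply one.

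The repair is simple and removes the local increment entirely. Write $\varrho_s=R_sI_sw_s$ and $s=\min(b,\log(1/|x-y|))$. Your factorization at this scale is in fact exact, not merely up to bounded cross-root corrections. Given $\F_s$, the root increments at $x$ and $y$ after time $s$ are independent, so the martingale property gives $\E[\varrho_b(x)\varrho_b(y)]=\E[\varrho_s(x)\varrho_s(y)]$. Cauchy--Schwarz, stationarity, the double Gaussian tilt and $p_s^0(\beta,r)\le C\beta rs^{-3/2}$ then give
\[
 \E[\varrho_s(x)\varrho_s(y)]\le\E\varrho_s(0)^2
 =e^{s+\sqrt2\beta}\int_0^\infty r^2e^{-\sqrt2r}p_s^0(\beta,r)\dd r
 \le C_\beta e^ss^{-3/2}.
\]
Your integration over $B_q\times B_q$ then proves the lemma with the stated rate.

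With this change your argument is the spatial counterpart of the paper's proof. The paper instead:
\begin{itemize}
\item applies the martingale isometry;
\item bounds the bracket integrand $\E[V_s(x)V_s(y)]$ by $\E V_s(0)^2\le C_{\beta,k}e^s(1+s)^{-3/2}$, through the same Cauchy--Schwarz and stationarity step;
\item uses $\sum_q\iint_{B_q\times B_q}q_{e^s}(x-y)\dd x\dd y\le C_k\min(\Delta_n,e^{-s})$;
\item integrates in time.
\end{itemize}
The paper's version needs only $q_{e^s}\ge0$ and the $L^1$ norm of $q_{e^s}$. Yours uses the compact support of $k$ to get exact decorrelation beyond scale $s$. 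Both yield $\log u/\sqrt u$.
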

\begin{proof}
The stochastic coefficient of $Z_s^\beta$ is
\[
 V_s(x)=I_s^\beta(x)(\sqrt2R_s(x)-1)w_s(x).
\]
The double-tilt calculation in Lemma~\ref{crm:lem-bracket} gives
\begin{equation}\label{crm:eq-point-bracket-alltime}
 \E V_s(0)^2\le C_{\beta,k}e^s(1+s)^{-3/2},\qquad s\ge0.
\end{equation}
For $s\ge1$ this is the killed-Brownian density estimate already proved;
on $0\le s\le1$ direct finite-time Gaussian moments bound the left side.
Stationarity and Cauchy--Schwarz bound
$|\E[V_s(x)V_s(y)]|$ by $\E V_s(0)^2$.
Since $q_{e^s}\ge0$, $\|q_{e^s}\|_1=e^{-s}\|k\|_1$, and the retained
blocks have total length at most $2\pi$,
\[
 \sum_q\iint_{B_q\times B_q}q_{e^s}(x-y)\dd x\dd y
 \le C_k\min(\Delta_n,e^{-s}).
\]
The bound by $C\Delta_n$ uses $q_{e^s}\le1$ and
$\sum_q|B_q|^2\le2\pi\Delta_n$; the other bound uses its $L^1$ norm.
The geometry is circular, so no boundary term occurs at the seam.
Martingale isometry now gives
\[
 \E\sum_q Z_b^\beta(B_q)^2
 \le C\beta^2\Delta_n+
 C_{\beta,k}\int_0^b
 \min(\Delta_ne^s,1)(1+s)^{-3/2}\dd s.
\]
Put $r=\log(1/\Delta_n)=b-2\log u+O(1)\asymp u$.
On $[0,r]$, split the integral at $r/2$: the first part is exponentially
small, and the second is bounded by
$C(1+r)^{-3/2}\int_{r/2}^re^{s-r}\dd s$.
The part on $[r,b]$ is at most
$(b-r)(1+r)^{-3/2}=O((\log u)u^{-3/2})$.
Multiply by $L^2=u$ to obtain \eqref{crm:eq-block-squaremeans}.
Lemma~\ref{crm:lem-bracket} gives the required square integrability of the fixed-barrier martingales, so the martingale isometry above is
justified.
\end{proof}

\subsection*{Statements and Declarations}

\noindent\textbf{Funding.}
X. F. is partially supported by the National Science and Technology Council,
Taiwan, grant no.~114-2115-M-A49-003-MY3.

\smallskip
\noindent\textbf{Data availability.}
This article has no associated data.

\smallskip
\noindent\textbf{AI Statement.}
The authors used an artificial-intelligence tool for language editing,
\LaTeX\ formatting and limited assistance with local
mathematical reasoning; all mathematical content was written, checked, and approved
by the authors.

\end{document}